\documentclass[11pt]{amsart}

\usepackage[letterpaper,margin=1.1in]{geometry}

\usepackage{amsmath,amssymb,mathtools,mathrsfs}
\usepackage{array}
\usepackage{microtype}
\usepackage{tikz}
\usepackage{float}
\usetikzlibrary{arrows.meta,calc,positioning,fit}
\usepackage[hidelinks]{hyperref}
\hypersetup{pdftitle={Cluster Algebras for Bosonic Plethysm},%
  pdfauthor={Yelin Fan and Jiarui Fei},%
  pdfsubject={Cluster structures, theta bases, and bosonic plethysm},%
  pdfkeywords={plethysm, cluster algebras, theta bases, invariant theory, polyhedral cones}}
\usepackage{quivers_plethysm}

\newtheorem{theorem}{Theorem}[section]
\newtheorem{proposition}[theorem]{Proposition}
\newtheorem{corollary}[theorem]{Corollary}
\newtheorem{lemma}[theorem]{Lemma}
\newtheorem*{theoremA}{Theorem A}
\newtheorem*{theoremB}{Theorem B}
\newtheorem*{theoremC}{Theorem C}
\theoremstyle{definition}

\theoremstyle{remark}
\newtheorem{remark}[theorem]{Remark}
\newtheorem{example}[theorem]{Example}

\newcommand{\kk}{\Bbbk}
\newcommand{\GL}{\operatorname{GL}}
\newcommand{\Sym}{\operatorname{Sym}}
\newcommand{\SymMat}{\operatorname{SymMat}}
\newcommand{\htp}{\operatorname{ht}}
\newcommand{\cP}{\mathcal P}
\newcommand{\cR}{\mathcal R}
\newcommand{\cT}{\mathcal T}
\newcommand{\cU}{\mathcal U}
\newcommand{\frakS}{\mathfrak S}
\newcommand{\Mat}{\operatorname{Mat}}
\newcommand{\SL}{\operatorname{SL}}
\newcommand{\wt}{\operatorname{wt}}
\newcommand{\Frac}{\operatorname{Frac}}
\newcommand{\Rep}{\operatorname{Rep}}
\newcommand{\SI}{\operatorname{SI}}
\newcolumntype{L}[1]{>{\raggedright\arraybackslash}p{#1}}

\title{Cluster Algebras for Bosonic Plethysm}
\author{Yelin Fan}
\address{School of Mathematical Sciences, Shanghai Jiao Tong University, Shanghai, China}
\email{sjtu\_stanfan@sjtu.edu.cn}
\author{Jiarui Fei}
\address{School of Mathematical Sciences, Shanghai Jiao Tong University, Shanghai, China}
\email{jiarui@sjtu.edu.cn}
\subjclass[2020]{Primary 13F60; Secondary 05E10, 20G05, 52B20}
\keywords{plethysm, cluster algebras, theta bases, invariant theory, polyhedral cones}
\thanks{The authors were supported in part by the National Natural Science Foundation of China (Nos.~12131015 and 12571038).}
\thanks{An earlier version of this work was completed in 2022.}
\date{}

\begin{document}
\raggedbottom
\begin{abstract}
Let $\Bbbk$ be an algebraically closed field of characteristic zero, let
$V=\Bbbk^\ell$ and $W=\Bbbk^m$, and set
\[ \mathcal R_{\ell,m}=\operatorname{Sym}(\operatorname{Sym}^2V\otimes W)^{U_V}. \]
We construct an explicit skew-symmetrizable seed $\Sigma_{\ell,m}$ by
restricting and folding the determinantal seed for the flagged $m$-arrow
Kronecker quiver.  For every $\ell,m\ge2$, we have
\[ \mathcal R_{\ell,m}=\mathcal U(\Sigma_{\ell,m}), \]
with polynomial frozen coefficients, and $\Sigma_{\ell,m}$ admits a
reddening sequence.  The theta functions that extend across the frozen boundary are indexed
by the integral points of a rational polyhedral cone $\mathscr C_{\ell,m}$.
Its weight fibers count the multigraded highest-weight multiplicities of
$\mathcal R_{\ell,m}$, and the Jacobi--Trudi identity expresses symmetric-square
plethysm coefficients as finite alternating sums of these counts.
For odd $\ell$ and even $m$, optimized seeds for the frozen divisors give
an explicit finite system of inequalities for $\mathscr C_{\ell,m}$;
degree-zero faces give polyhedral models for odd $m$.
\end{abstract}
\maketitle

\section{Introduction}
\label{sec:introduction}

Plethysm concerns the decomposition of a composite Schur functor
\[ \mathbf S_\mu(\mathbf S_\nu V) \cong \bigoplus_\lambda a_{\mu,\nu}^{\lambda}\,\mathbf S_\lambda V. \]
Here $\mu$ is the outer partition, $\nu$ is the inner partition, and
$\lambda$ is the resulting $\GL(V)$-highest weight.  The multiplicities
$a_{\mu,\nu}^{\lambda}$ are among the basic structure constants of polynomial
representation theory, but their general behavior remains poorly understood;
see, for example, \cite[Chapter~I]{Macdonald}.
This paper treats the case $\nu=(2)$.  Thus the inner functor is the
symmetric square, and we refer to this as the bosonic case.  The exterior
square gives the corresponding fermionic problem, studied in \cite{FFP}.

Fix $V=\kk^\ell$ and $W=\kk^m$, so that
$\htp(\lambda)\leq\ell$ and $\htp(\mu)\leq m$.  The double highest-weight algebra
\[ \Sym(\Sym^2V\otimes W)^{U_V\times U_W} \]
has the plethysm coefficients as its joint weight multiplicities.  It is useful, however, to retain the full $\GL(W)$-action
and consider instead
\begin{equation}
 \cR_{\ell,m}:=\Sym(\Sym^2V\otimes W)^{U_V}.
 \label{eq:intro-main-ring}
\end{equation}
After a basis of $W$ has been chosen, the algebra is graded by
$\alpha=(\alpha_1,\ldots,\alpha_m)\in\mathbb Z_{\ge0}^m$.  Its
$T_V$-weight space of weight $\lambda$ has dimension
\[ b_{\alpha,(2)}^\lambda =\dim\operatorname{Hom}_{\GL(V)}\!\left( \mathbf S_\lambda V, \bigotimes_{r=1}^m\Sym^{\alpha_r}(\Sym^2V) \right). \]
These multiplicities are nonnegative and come from an algebra that
retains the $\GL(W)$-action, unlike the algebra of $U_W$-invariants.  The Jacobi--Trudi identity recovers
plethysm from them:
\begin{equation}
 a_{\mu,(2)}^\lambda
 =\sum_{\sigma\in\frakS_m}\operatorname{sgn}(\sigma)
   b_{\alpha^\sigma(\mu),(2)}^\lambda,
 \qquad
 \alpha^\sigma(\mu)_i=\mu_i-i+\sigma(i).
 \label{eq:intro-JT}
\end{equation}
A term is understood to vanish when some component of
$\alpha^\sigma(\mu)$ is negative.  We use a cluster structure on \eqref{eq:intro-main-ring} to describe
these weight spaces.

\subsection{Main results}

Write $X_{\ell,m}=\SymMat_\ell^m$.  The algebra
$\cR_{\ell,m}$ is the invariant ring $\kk[X_{\ell,m}]^{U_\ell}$ for the
simultaneous congruence action
\[ u\cdot(C_1,\ldots,C_m) =(u^{-T}C_1u^{-1},\ldots,u^{-T}C_mu^{-1}). \]
For each $\ell,m\ge2$ we define a seed $\Sigma_{\ell,m}$ whose extended
cluster consists of the determinants $\Delta_r=\det C_r$ and a family of
chamber determinants $z_{i,j}^{(n)}$.  Its frozen set has $m+\ell-1$
vertices, and its mutable set has $\frac{1}{2}{(m-1)(\ell-1)(\ell+2)}$
vertices.  The exchange relations are given explicitly in
Theorem~\ref{thm:folded-relations}.
The seed $\Sigma_{4,3}$ is shown in Figure~\ref{fig:seed-43}.

\begin{theoremA}
For every $\ell,m\ge2$, the exchange matrix of $\Sigma_{\ell,m}$ has full
row rank and a skew-symmetrizable principal part, and
\[ \cR_{\ell,m}=\mathcal U(\Sigma_{\ell,m}) \]
with polynomial frozen coefficients.
\end{theoremA}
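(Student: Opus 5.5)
We prove Theorem A with the standard criterion for an upper cluster algebra to coincide with a ring of invariants. The version we use (in the spirit of Fei's earlier work on flagged quiver representations and the fermionic case \cite{FFP}) says the following. Let $R$ be a finitely generated normal domain, a UFD, with only constant units. Let $\Sigma$ be a seed whose extended cluster lies in $R$ and is algebraically independent. Assume the exchange matrix has full rank, and that every one-step mutation $x_k'$ lies in $R$ and is coprime to $x_k$ in $R$. Then $R\subseteq\mathcal U(\Sigma)$ with polynomial frozen coefficients. If moreover the frozen variables generate the divisors of $R$ along which the cluster torus fails to be contained, then $R=\mathcal U(\Sigma)$. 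Since $U_\ell$ is unipotent with no characters, $\cR_{\ell,m}=\kk[X_{\ell,m}]^{U_\ell}$ is a finitely generated UFD with units $\kk^\times$. So the work reduces to checking the hypotheses for $\Sigma_{\ell,m}$.

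\textbf{Step 1: construction and exchange matrix.} We view $\Sigma_{\ell,m}$ as obtained from the determinantal seed of the flagged $m$-arrow Kronecker quiver. First we restrict to the subvariety of symmetric representations, where $C_r$ and $C_r^T$ are identified. Then we fold along the transpose involution. Folding turns pairs of orbits into vertices with weight $2$, which gives a skew-symmetrizer $D=\operatorname{diag}(d_i)$ with $d_i\in\{1,2\}$. We check $DB$ skew-symmetric directly from the explicit quiver. For full row rank we exhibit a triangular minor. Order the mutable vertices by chamber index $(n,i,j)$. Each mutable $z_{i,j}^{(n)}$ has a distinguished frozen or later neighbour with entry $\pm1$ or $\pm2$ that no earlier vertex has. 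This produces a nonsingular maximal minor. The vertex count $\frac12(m-1)(\ell-1)(\ell+2)$ comes out of the same bookkeeping.

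\textbf{Step 2: cluster variables lie in $\cR_{\ell,m}$ and are coprime.} The variables $\Delta_r$ and $z_{i,j}^{(n)}$ are $U_\ell$-invariant: they are determinants of blocks built from the $C_r$, which transform by $u^{-T}(\cdot)u^{-1}$ and have lower-triangular flag compatibility. Theorem~\ref{thm:folded-relations} gives the exchange relations explicitly as determinantal identities (Desnanot--Jacobi/Plücker type, restricted to symmetric matrices). Hence each $z_k'=(\text{binomial})/z_k$ is a polynomial invariant. For irreducibility and coprimality we argue on an open subset. We specialize the $C_r$ to diagonal or block-diagonal forms, where each chamber determinant factors as a product of distinct monomials, and check that $z_k$ and $z_k'$ have different leading terms and vanish on distinct codimension-one loci. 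Algebraic independence follows by dimension count: $\dim\cR_{\ell,m}=\dim X_{\ell,m}-\dim U_\ell$, and this equals the number of vertices, $m+\ell-1+\frac12(m-1)(\ell-1)(\ell+2)$. This count has to be checked.

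\textbf{Step 3: the reverse inclusion and the main obstacle.} We must show $\mathcal U(\Sigma_{\ell,m})\subseteq\cR_{\ell,m}$. Equivalently, the complement in $\operatorname{Spec}\cR_{\ell,m}$ of the union of the initial and one-step cluster tori has codimension $\ge2$, apart from the frozen divisors $\{\Delta_r=0\}$ and the $\ell-1$ remaining frozen chamber divisors. This is the hard step. It amounts to showing that any irreducible invariant divisor on which every mutable initial and adjacent cluster variable vanishes must be one of the frozen ones. Our approach is to pass through the unfolded Kronecker seed, where the analogous statement is known because the upper cluster algebra equals the semi-invariant ring. We then show that restriction to symmetric representations is compatible with it. Concretely, this means checking that the symmetric locus meets each relevant torus and is not contained in any non-frozen exceptional divisor. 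Folding can fail here when a mutable pair collapses. We handle such collapses by verifying directly, on generic symmetric tuples, that the $\GL(W)$-translates of the exchange identities still separate the divisors. Finally, polynomiality in the frozen variables follows because each frozen variable is a nonzero element of $\cR_{\ell,m}$ with irreducible zero locus not in the bad set.
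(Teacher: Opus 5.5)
\textbf{Main gap.} The criterion you start from runs in the wrong direction, so the inclusion $\cR_{\ell,m}\subseteq\mathcal U(\Sigma_{\ell,m})$ is never proved. Suppose the initial variables are pairwise nonassociate primes of the UFD $R$ and each $x_k'\in R$ is coprime to $x_k$. Then every element of the upper bound has nonnegative order along every height-one prime of $R$, and the conclusion is $\mathcal U(\Sigma)\subseteq R$. This is exactly how the paper uses it (Lemma~\ref{lem:initial-coprimality} together with \eqref{eq:upper-bound-equality}).

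Your Step~3 recasts $\mathcal U\subseteq\cR$ as a codimension-two condition, but that is the same coprimality content again, so both of your steps address the same inclusion. The opposite inclusion requires showing that every invariant is Laurent in the initial seed and in each adjacent seed, and nothing in the proposal does this. Passing through the unfolded seed only gives $\rho(\cT_{\ell,m})\subseteq\mathcal U(\Sigma_{\ell,m})$ (Proposition~\ref{prop:source-restriction-in-U}, via orbit mutations and Lemma~\ref{lem:restriction-folding}). You have no argument that $\rho$ maps onto $\cR_{\ell,m}$.

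The missing idea is the paper's localization step. On the Gauss charts, $\rho$ becomes the surjection $\kk[T_\ell\times\Mat_\ell^{m-1}]\to\kk[T_\ell\times\SymMat_\ell^{m-1}]$. This gives $\cR_{\ell,m}[H_s^{-1}]=\mathcal U(\Sigma_{\ell,m})[H_s^{-1}]$ for $s=1,2$. The localization is then removed by showing that $H_1$ and $H_2$ are coprime both in $\cR_{\ell,m}$ and in every Laurent ring $\mathcal L_v$ of the upper bound, so that each side equals the intersection of its two localizations.

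\textbf{Further unsupported steps.} Several inputs in Step~2 are also not established by what you propose.
\begin{enumerate}
 \item The count $|\mathbf x_{\ell,m}|=\dim\cR_{\ell,m}$ does not give algebraic independence. You need $\Frac(\cR_{\ell,m})=\kk(\mathbf x_{\ell,m})$, which the paper deduces from the $\tau$-fixed part of the unfolded cluster torus being a dense open subset of $T_\ell\times\SymMat_\ell^{m-1}$ (Theorem~\ref{thm:folded-family-transcendence-basis}).
 \item Specializations on which the chamber determinants factor into monomials cannot certify irreducibility. Distinct leading terms do not imply $z_k\nmid z_k'$ either. The paper needs the first-row weight bound, a Gauss-lemma argument with symmetric pairing determinants and rank-drop codimension estimates, the quotient-degree obstruction, and explicit transverse curves for the leftover families.
 \item Your triangular-minor argument for full row rank is only asserted. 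The paper gets full rank without computation, from surjectivity of $\widetilde B_{\ell,m}$ and exactness of $\tau$-invariants in characteristic zero.
\end{enumerate}
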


\begin{theoremB}
For every $\ell,m\ge2$, the seed $\Sigma_{\ell,m}$ admits a reddening
sequence.
\end{theoremB}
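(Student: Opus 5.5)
The plan is to deduce the existence of a reddening sequence for $\Sigma_{\ell,m}$ from the corresponding property of the unfolded determinantal seed, using two standard facts. First, reddening sequences are inherited by full subquivers (Muller's theorem): restricting a seed with a reddening sequence to a subset of mutable vertices, and freezing or deleting the rest, yields a seed that still admits one. Second, for a quiver with a group action that is globally foldable, a reddening (or maximal green) sequence made of orbit mutations folds to a reddening sequence of the folded skew-symmetrizable seed. Since $\Sigma_{\ell,m}$ is built by restricting and then folding the determinantal seed for the flagged $m$-arrow Kronecker quiver, the argument follows the same two moves in reverse.

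The first step is to establish a reddening sequence for the unfolded seed, i.e. the determinantal seed for semi-invariants of the flagged Kronecker quiver. I would take this from the earlier literature on that seed (Fei's work on flagged quivers and the fermionic analogue \cite{FFP}), or else construct it directly. The mutable part is a grid-like quiver of chamber determinants, layered by the arrow index $n$. I would aim for an explicit maximal green sequence: mutate the layers in order, and within each layer sweep the rows from one side, in the style of the standard green sequences for double Bruhat cells and grid quivers. One then checks by tracking $c$-vectors, or equivalently by identifying the sequence with a Coxeter-type sequence whose final $c$-matrix is $-\mathrm{Id}$ up to permutation. An alternative route is to show that the seed is mutation equivalent to an acyclic or triangular one, which automatically has a maximal green sequence.

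Next comes the equivariant version. The folding comes from the involution (or group $\Gamma$) identifying the flag data with its transpose, which reflects $\Sym^2$ inside $V\otimes V$. I would choose the green sequence above to be $\Gamma$-invariant, so that it is a composite of orbit mutations. Then I would verify that every intermediate quiver is $\Gamma$-admissible: no arrows inside an orbit, and arrows between orbits all in one direction. This is what makes the folded mutations meaningful, and it implies the folded sequence is reddening for the folded seed. After this, restrict to the subset of vertices defining $\Sigma_{\ell,m}$ and invoke the subquiver inheritance theorem in its skew-symmetrizable form, which holds via the scattering-diagram or $c$-vector sign-coherence proof.

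I expect the main obstacle to be the admissibility of the intermediate quivers along a $\Gamma$-invariant green sequence. A reddening sequence that is merely invariant need not stay admissible, since arrows between two vertices of an orbit can appear mid-sequence. If the sweep order fails, I would first try ordering orbits so that vertices within an orbit are never adjacent, so that their mutations commute, exploiting the layered structure. If that fails, I would bypass folding entirely: use Theorem~A's description of $\cR_{\ell,m}$ and the explicit exchange relations of Theorem~\ref{thm:folded-relations} to compute $c$-vectors of a proposed sequence in $\Sigma_{\ell,m}$ directly, by induction on $\ell$ via the restriction $\Sigma_{\ell,m}\supset\Sigma_{\ell-1,m}$.
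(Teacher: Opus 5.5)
There is a genuine gap. Your plan rests on the existence of a transpose-invariant reddening sequence for the unfolded seed $\widetilde\Sigma_{\ell,m}$ that is a product of orbit mutations and keeps every intermediate matrix admissible in the sense of \eqref{eq:admissible-equivariance}--\eqref{eq:admissible-sign}. Nothing in the proposal constructs such a sequence: the layer-by-layer ``sweep'' is not specified, no $c$-vectors are computed, and you yourself flag admissibility as the unresolved obstacle. That obstacle is the whole difficulty. Grant the unfolded result of \cite{FeiKroneckerII}; its reduction still removes the diamond diagonals by sequences that mutate transpose-fixed vertices on the common base. For example, the last vertex $v_{q,q}=(i+q,0)$ of each anti-diagonal in $\mathbf p_i^+(r)$ is such a base vertex. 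Mutating a fixed base vertex composes paths from the left half through the base into the right half. After that the two halves are no longer separated, so later steps need not satisfy either non-adjacency within orbits or the sign condition. Invariance of a sequence does not by itself make it foldable, and your fallback (direct $c$-vector computation by induction on $\ell$) is only named, not carried out.

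The paper avoids folding a complete reddening sequence. For odd $m$ it shows $B^{\mathrm{uf}}_{\ell,m}\in\mathscr P'$ in three moves:
\begin{itemize}
\item Each exposed fixed half-diamond diagonal is removed \emph{directly in the folded matrix}, by the explicit row computation of Lemma~\ref{lem:folded-half-diamond-removal} and Appendix~\ref{app:triangular-mutation}, including the valued entries $2$.
\item Only the paired Fock--Goncharov flips are taken from the unfolded seed. Once the fixed diagonal is deleted, their supports give a decomposition $L\sqcup F\sqcup R$ with $\widetilde B_{L,R}=0$ and $F$ unmutated, so they fold by Lemma~\ref{lem:separated-pair}.
\item The reduction ends at a string-diagram matrix of type $A_{\ell-1}$, and reddening follows from Cao's theorem for $\mathscr P'$ \cite{CaoString}, with no $c$-vector tracking.
\end{itemize}
Even $m$ follows from the principal-submatrix embedding $B^{\mathrm{uf}}_{\ell,m}\subset B^{\mathrm{uf}}_{\ell,m+1}$ (Lemma~\ref{lem:successive-principal-submatrix}) and \cite[Lemma~4.9]{CaoLi}.

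Your final step, ``restrict to the subset of vertices defining $\Sigma_{\ell,m}$,'' misreads the construction. The restriction there is of functions to symmetric matrices, and $B^{\mathrm{uf}}_{\ell,m}$ is the orbit sum over \emph{all} unfolded mutable vertices, so no subquiver is taken. Subquiver inheritance is needed instead to pass from odd to even $m$, a parity issue your plan does not address.
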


Theorem~B is proved in Theorem~\ref{thm:reddening}.  For odd $m$, the mutable
exchange matrix is reduced, by mutations and source--sink deletions, to a
string-diagram matrix of type $A_{\ell-1}$.  The even case then follows
because $B_{\ell,m}^{\mathrm{uf}}$ is a principal submatrix of
$B_{\ell,m+1}^{\mathrm{uf}}$.

Let $M_{\ell,m}$ be the character lattice of the initial cluster torus, and
let $D_f$ be the prime divisor associated with a frozen variable $z_f$.
For integral $g$, the values $\operatorname{ord}_{D_f}(\vartheta_g)$ extend
to integral piecewise-linear functions $\nu_f$ on $M_{\ell,m,\mathbb R}$.
Put
\[ \mathscr C_{\ell,m} =\{g\in M_{\ell,m,\mathbb R}: \nu_f(g)\ge0 \text{ for every frozen index }f\}. \]
The boundary inequalities define a rational polyhedral cone.  Let
$\mathsf W_{\ell,m}$ denote the weight map of the initial seed and set
\[ \mathscr P_{\ell,m}(\lambda;\alpha) =\{g\in\mathscr C_{\ell,m}: \mathsf W_{\ell,m}g=(\lambda;\alpha)\}. \]

\begin{theoremC}
For every $\ell,m\ge2$, the family
\[ \{\vartheta_g:g\in\mathscr C_{\ell,m}\cap M_{\ell,m}\} \]
is a basis of $\cR_{\ell,m}$.  Moreover,
\[ b_{\alpha,(2)}^\lambda =\bigl|\mathscr P_{\ell,m}(\lambda;\alpha)\cap M_{\ell,m}\bigr|, \]
and hence
\[ a_{\mu,(2)}^\lambda =\sum_{\sigma\in\frakS_m}\operatorname{sgn}(\sigma) \bigl|\mathscr P_{\ell,m} (\lambda;\alpha^\sigma(\mu))\cap M_{\ell,m}\bigr|. \]
\end{theoremC}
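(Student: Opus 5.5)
Theorem~C follows from Theorems~A and~B once we combine them with the Gross--Hacking--Keel--Kontsevich theory of theta functions and its partial-compactification refinement for frozen variables. First, Theorem~B gives a reddening sequence for $\Sigma_{\ell,m}$. Together with the full-rank condition of Theorem~A, this implies that the full Fock--Goncharov conjecture holds for the associated cluster variety with principal part. Concretely, the theta functions $\{\vartheta_g : g\in M_{\ell,m}\}$ form a basis of the upper cluster algebra with \emph{Laurent} frozen coefficients, $\mathcal U(\Sigma_{\ell,m})[z_f^{-1}]$. A reddening sequence implies a maximal green or reddening sequence, hence the middle cone is everything and convergence holds. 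The theta functions are also pointed at $g$ and homogeneous for the weight grading, because the exchange matrix annihilates the weight map $\mathsf W_{\ell,m}$.

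Second, we pass to polynomial frozen coefficients. Theorem~A gives $\cR_{\ell,m}=\mathcal U(\Sigma_{\ell,m})$ with polynomial frozen coefficients, so $\cR_{\ell,m}$ is the subring of $\mathcal U[z_f^{-1}]$ of functions regular along every frozen divisor $D_f$. The key step is that the theta basis is \emph{adapted} to each valuation $\operatorname{ord}_{D_f}$: for a finite sum $\sum c_g\vartheta_g$ one has
\[ \operatorname{ord}_{D_f}\Bigl(\sum c_g\vartheta_g\Bigr)=\min_{c_g\neq0}\operatorname{ord}_{D_f}(\vartheta_g). \]
I would prove this by choosing, for each $f$, a seed in which $D_f$ becomes a coordinate hyperplane, or equivalently by tropicalizing via the Gross--Hacking--Keel--Kontsevich result that $\operatorname{ord}_{D_f}(\vartheta_g)=\langle d_f, g\rangle$ in the appropriate tropical coordinates. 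Here $\nu_f$ is the piecewise-linear function obtained by transporting $\operatorname{ord}_{D_f}$ through tropical mutations, and $\nu_f$ is integral because the mutations are. Granting adaptedness, $\cR_{\ell,m}$ is spanned by the $\vartheta_g$ with $\nu_f(g)\ge 0$ for all $f$, and these are linearly independent. This gives the basis statement. I expect the adaptedness step to be the main obstacle, and would try first to derive it from the general theorem on partial compactifications (the ``optimized seed'' criterion, or convexity of $\nu_f$ along broken lines) rather than by direct computation. The cone property of $\mathscr C_{\ell,m}$ follows because each $\nu_f$ is min-convex, $\nu_f(g+g')\ge\nu_f(g)+\nu_f(g')$ (from $\vartheta_g\vartheta_{g'}$ having leading term $\vartheta_{g+g'}$), and each $\nu_f$ is piecewise linear with rational domains of linearity.

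Finally, the weight statements. The $T_V\times T_W$-weight of $\vartheta_g$ equals $\mathsf W_{\ell,m}g$, since theta functions are homogeneous and the weight equals that of the leading monomial. Hence the basis elements of weight $(\lambda;\alpha)$ are exactly those indexed by $\mathscr P_{\ell,m}(\lambda;\alpha)\cap M_{\ell,m}$. Since $\cR_{\ell,m}=\Sym(\Sym^2V\otimes W)^{U_V}$, its $T_V$-weight-$\lambda$, $\alpha$-graded piece has dimension equal to the multiplicity of $\mathbf S_\lambda V$ in $\bigotimes_r\Sym^{\alpha_r}(\Sym^2V)$, which is $b^\lambda_{\alpha,(2)}$. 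This gives the count. The plethysm formula then follows by substituting into the Jacobi--Trudi identity \eqref{eq:intro-JT}.
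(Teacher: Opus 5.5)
Your outline follows the paper's architecture: the open theta basis from the reddening sequence and full rank via GHKK, restriction to the theta functions regular along the frozen divisors, homogeneity $\wt(\vartheta_g)=\mathsf W_{\ell,m}g$, the dimension count, and Jacobi--Trudi. The gap is the adaptedness step you flag yourself. The mechanisms you suggest for it do not work for all $(\ell,m)$.
\begin{itemize}
\item Since $z_f$ is frozen, $D_f$ is already the hyperplane $z_f=0$ in every chart. What makes $\operatorname{ord}_{D_f}(\vartheta_g)$ linear in the tropical coordinates of some seed is an \emph{optimized} seed, one with $b'_{uf}\ge0$ for every mutable $u$. GHKK's partial-compactification argument needs such a seed for each $f$. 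The paper constructs optimized seeds only when $\ell$ is odd and $m$ is even (Proposition~\ref{prop:optimized-boundary-seeds}), so this route gives Theorem~C only in that parity.
\item Transporting $\operatorname{ord}_{D_f}$ by tropical mutation determines $\nu_f$ only on the cluster complex. A reddening sequence makes that complex large, but in general not all of $M_{\ell,m,\mathbb R}$. Off it, $\vartheta_g$ is not a cluster monomial, so this does not produce the piecewise-linear $\nu_f$ or adaptedness.
\end{itemize}

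The missing input is the valuative independence theorem of Cheung--Magee--Mandel--Muller and its corollary for anticanonical partial compactifications; neither needs optimized seeds. To invoke them you must verify the hypotheses, as the paper does in Proposition~\ref{prop:frozen-partial-compactification}:
\begin{itemize}
\item the seed charts with polynomial frozen coordinates glue to a scheme whose global functions are $\cU_{\ell,m}$;
\item each $D_f$ is an irreducible Cartier divisor with $\operatorname{ord}_{D_f}(z_f)=1$;
\item the complement of $\mathcal A_{\ell,m}^{\circ}$ is exactly $\bigcup_f D_f$;
\item the glued volume form $\Omega$ has a simple pole along each $D_f$.
\end{itemize}
The piecewise-linear extension of $\nu_f$ then comes from theta reciprocity, $\nu_f(g)=\operatorname{val}_g(\vartheta^{\chi}_{n_f})$. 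This uses the chiral dual $(Q^{\bullet},P^{\bullet})$ rather than the Langlands dual, because $\mathbf B^{\bullet}$ need not be integral. With these in place, the rest of your argument goes through, including your superadditivity argument for convexity of the cone.
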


The basis statement is Theorem~\ref{thm:theta-basis}, and the two formulas are
Corollary~\ref{cor:polyhedral-b} and Theorem~\ref{thm:polyhedral-plethysm}.
Thus the coefficients $b_{\alpha,(2)}^\lambda$ have a lattice-point
interpretation, while the plethysm coefficients are obtained from these counts
by the Jacobi--Trudi formula \eqref{eq:intro-JT}.  We do not obtain a positive
rule for $a_{\mu,(2)}^\lambda$ itself.

When $\ell$ is odd and $m$ is even, every frozen boundary divisor has an
optimized seed.  Pulling the corresponding chiral-dual monomial back to the
initial chart gives a finite Laurent polynomial, and its tropicalization
gives the boundary inequality.  This produces an integral matrix
$H_{\ell,m}$ with
\[ \mathscr C_{\ell,m} =\{g:H_{\ell,m}g\ge0\}; \]
see Proposition~\ref{prop:optimized-boundary-seeds} and
Corollary~\ref{cor:boundary-cone-optimized-seeds}.  The cone and theta basis exist for all parities.  The construction of
optimized seeds requires $\ell$ odd and $m$ even.

Proposition~\ref{prop:successive-matrix-face} relates the models for different
numbers of matrices by taking faces.  For $2\le r<m$, the simultaneous
zero-degree locus for $C_{r+1},\ldots,C_m$ is an exposed face of $\mathscr
C_{\ell,m}$, and the theta functions indexed by its integral points form a
basis of $\cR_{\ell,r}$.  In particular, when $\ell$ is odd, the explicit
inequalities for $2n$ matrices restrict to a polyhedral model for $2n-1$
matrices.

\subsection{The folded determinantal seed}

The construction begins with the two-sided matrix invariant ring
\[ \cT_{\ell,m} =\kk[\Mat_\ell^m]^{U_\ell^{\mathrm L}\times U_\ell^{\mathrm R}}. \]
The cluster structure on the semi-invariant ring of the flagged Kronecker
quiver in \cite{FeiKroneckerII} transfers to $\cT_{\ell,m}$.  Its cluster
functions are determinants of matrices formed from alternating products of
the $A_r$ and $A_r^{-1}$; determinant factors clear the apparent
denominators, so these are polynomial invariants.  Matrix transposition
interchanges the two flag arms and acts on this seed.

Restriction from arbitrary matrices to symmetric matrices sends the left and
right chamber functions in each transpose pair to the same determinant. This
identification of functions does not by itself determine an exchange matrix.
One must prove that transposition is admissible and then sum the columns over
each transpose orbit.  Fixed chambers have orbit size one and the remaining
chambers have orbit size two; their interaction produces the valuations
$(1,2)$ and $(2,1)$ in the folded quiver.  Consecutive triangles are glued
along alternating sides: the triangles with indices $n-1$ and $n$ share the
side $i+j=\ell$ when $n$ is odd, and the side $i=0$ when $n$ is even.  The
orbit-sum construction gives a skew-symmetrizer equal to the orbit-size
function.

Algebraic independence is proved on a Gauss open set.  The fixed torus of the
two-sided cluster chart is a dense open subset of
$T_\ell\times\SymMat_\ell^{m-1}$, and its coordinate functions are precisely
the folded initial family.  The maximal-rank upper-bound theorem gives the
first inclusion.  The invariant ring is factorial; irreducibility of the
initial functions and coprimality with their one-step mutations imply
$\mathcal U(\Sigma_{\ell,m})\subseteq\cR_{\ell,m}$.  For the reverse
inclusion, the $LDL^T$ decomposition gives generators after localization at
the leading principal minors of $C_1$.  Restriction from the two-sided matrix
algebra is surjective on this open set, and the same argument applies to
$C_2$.  Intersecting the two localizations removes all denominators and yields
Theorem~A.  In particular, the central determinants are not inverted in the
resulting algebra.

The reddening argument uses the disk quivers and flip sequences of Fock and
Goncharov \cite{FockGoncharov}.  A transpose-fixed half-diamond diagonal is
removed directly in the folded matrix.  Between two such removals, the two
flip regions are disjoint and exchanged by transposition, so their mutation
sequences descend by orbit mutation.  This reduces the odd-$m$ case to a
string diagram; the result for even $m$ follows by passage to a principal
submatrix.

Finally, the reddening sequence and full row rank of the exchange matrix
imply the full Fock--Goncharov condition in the sense of Gross--Hacking--Keel--Kontsevich
\cite{GHKK}.  The open cluster variety therefore has a theta basis indexed by
all integral tropical points.  The polynomial frozen coefficients define a
partial compactification whose boundary consists of the frozen prime divisors.
Valuative independence and theta reciprocity \cite{CMMM} show that the theta
functions regular along this boundary form a basis.  This proves Theorem~C.

\subsection{Relation to earlier work}

Cluster algebras and upper cluster algebras were introduced in \cite{FZ1,BFZ};
the coefficient formalism used here is compatible with \cite{FZ4}.  Cluster
structures on semi-invariant rings of triple flags and flagged Kronecker
quivers were constructed in \cite{Fsemi,FeiKroneckerII}.  Sink--source
extensions and frozen amalgamations give another construction of the
simply-laced hive quivers for complete $m$-tuple flags
\cite{FeiWeymanExtension}. This construction describes the triangles of the
unfolded seed, but does not give its symmetric restriction or its folding by
transposition.  Admissible orbit folding is part of the standard theory of
non-simply-laced cluster algebras; see \cite{DupontFolding}.  Recent work of
Ye constructs folded cluster structures on $SL_n/SO_n$ and on the affine space
of a single symmetric matrix \cite{YeSymmetric}.  Those varieties and seeds
are distinct from the highest-weight invariant algebras of tuples of symmetric
matrices considered here.

The representation-theoretic identities in
Section~\ref{sec:plethysm-highest-weight} are standard consequences of the
Cauchy decomposition, highest-weight theory, and the Jacobi--Trudi identity;
proofs are included to fix the gradings and sign conventions.  The two-sided
transfer and the unfolded seed come from \cite{Grosshans,FeiKroneckerII}, the
upper-bound argument uses \cite{GSV}, the reddening argument uses
\cite{FockGoncharov,CaoString,CaoLi}, and the theta-basis argument uses
\cite{GHKK,CMMM}.

To the authors' knowledge, the following results are new: the symmetric
restriction and explicit orbit-sum folded seed for all $\ell,m$; the folded
exchange relations, full-rank statement, and equality with $\cR_{\ell,m}$; the
reddening construction for every $\ell,m\ge2$; the theta basis of the
invariant ring and the resulting plethysm formulas; and the optimized boundary
seeds when $\ell$ is odd and $m$ is even.  None of these conclusions is a
formal specialization of the flagged Kronecker construction: functions related
by transposition have the same restriction, fixed orbits give valued arrows,
and both the exchange matrix and the equality with the upper cluster algebra
require separate proofs.

\subsection{Organization}

Section~\ref{sec:plethysm-highest-weight} relates plethysm to the multigraded
highest-weight multiplicities of $\cR_{\ell,m}$.
Section~\ref{sec:gaussian-localization} describes the symmetric Gauss chart
and its polynomial coordinate lifts.  Section~\ref{sec:matrix-invariants}
transfers the flagged Kronecker seed to two-sided matrix invariants and
restricts the cluster functions to symmetric matrices.
Section~\ref{sec:folding} constructs the folded exchange matrix and proves the
exchange relations.  Section~\ref{sec:upper-cluster-equality} proves
Theorem~A, Section~\ref{sec:reddening} proves Theorem~B, and
Section~\ref{sec:polyhedral-plethysm} proves Theorem~C, the optimized boundary
results, and the degree-zero face construction.  The appendices identify the
matrix functions under transfer, prove the remaining coprimality cases, and
give the folded half-diamond calculation and the induction for boundary
optimization.

\section{Plethysm and highest-weight invariants}
\label{sec:plethysm-highest-weight}

The identities in this section are standard consequences of the Cauchy
formula, highest-weight theory, and the Jacobi--Trudi identity; see
\cite[Chapter~I]{Macdonald}.  We include the proofs in order to fix the
multigrading and sign conventions used later.

Throughout the paper, $\kk$ is an algebraically closed field of
characteristic zero.  If $E$ is a finite-dimensional $\kk$-vector space, we
write
\[ G_E=\GL(E). \]
After choosing an ordered basis of $E$, let $B_E=T_EU_E$ be the standard
upper-triangular Borel subgroup, with diagonal torus $T_E$ and upper
unitriangular subgroup $U_E$.

A partition is a weakly decreasing sequence of nonnegative integers with
finite support.  For a partition $\rho$, write $|\rho|$ for its size and
$\htp(\rho)$ for the number of its nonzero parts.  In plethysm, $\mu$ is the
outer partition, $\nu$ the inner partition, and $\lambda$ the resulting
$G_V$-highest weight.  The symbol $\alpha=(\alpha_1,\ldots,\alpha_m)\in\mathbb
Z_{\geq0}^m$ denotes a weak composition.  We write $\mathbf S_\rho$ for the
Schur functor associated with $\rho$; in particular, $\mathbf
S_{(d)}E=\Sym^d(E)$.

Let $V$ and $W$ be vector spaces of dimensions $\ell$ and $m$, respectively,
and let $\nu$ be a partition with $\htp(\nu)\leq \ell$.  We consider the
polynomial algebra
\begin{equation}
 \cP_\nu(V,W)
 :=\Sym\bigl(\mathbf S_\nu V\otimes W\bigr).
 \label{eq:ambient-polynomial-algebra}
\end{equation}
Equivalently,
\[ \cP_\nu(V,W) =\kk\bigl[\mathbf S_\nu(V^\vee)\otimes W^\vee\bigr], \]
so \eqref{eq:ambient-polynomial-algebra} may be viewed either as a symmetric
algebra or as the coordinate ring of the dual representation.  The group
$G_V\times G_W$ acts naturally on this algebra.  Its $U_V$-invariant
subalgebra
\[ \cR_\nu(V,W):=\cP_\nu(V,W)^{U_V} \]
will be called the \emph{highest-weight algebra} associated with
$\mathbf S_\nu V\otimes W$.

\subsection{Plethysm coefficients as double highest-weight multiplicities}

For partitions $\lambda$ and $\mu$, with
$\htp(\lambda)\leq\ell$ and $\htp(\mu)\leq m$, the plethysm coefficient
$a_{\mu,\nu}^{\lambda}$ is defined by
\[ \mathbf S_\mu(\mathbf S_\nu V) \cong \bigoplus_{\rho} a_{\mu,\nu}^{\rho}\,\mathbf S_\rho V. \]
Under these height assumptions, the multiplicity is independent of the
choice of $V$.

For a $T_V\times T_W$-module $M$ and dominant weights $\lambda$ of $G_V$ and
$\mu$ of $G_W$, let $M_{\lambda;\mu}$ denote the corresponding joint weight
space.

The Cauchy formula and highest-weight theory give
\[ a_{\mu,\nu}^{\lambda}=\dim_\kk\left(\cP_\nu(V,W)^{U_V\times U_W}\right)_{\lambda;\mu}; \]
see \cite[Chapter~I]{Macdonald}.

The double invariant algebra records plethysm coefficients directly.  We
instead retain the full $G_W$-action and study $\cR_\nu(V,W)$.

\subsection{Multigraded multiplicities}

Fix a basis $w_1,\ldots,w_m$ of $W$.  The resulting decomposition
$W=\bigoplus_{r=1}^m\kk w_r$ gives $\cP_\nu(V,W)$ and
$\cR_\nu(V,W)$ a $\mathbb Z_{\geq0}^m$-grading.  For
$\alpha=(\alpha_1,\ldots,\alpha_m)$, the corresponding homogeneous component
is naturally isomorphic to
\begin{equation}
 \cP_\nu(V,W)_\alpha
 \cong
 \bigotimes_{r=1}^m
 \Sym^{\alpha_r}(\mathbf S_\nu V).
 \label{eq:multidegree-piece}
\end{equation}
For a partition $\lambda$ with $\htp(\lambda)\leq\ell$, define
\begin{equation}
 b_{\alpha,\nu}^{\lambda}
 :=
 \dim_\kk
 \left(\cR_\nu(V,W)_\alpha\right)_\lambda.
 \label{eq:b-definition}
\end{equation}
We refer to these numbers as the \emph{multigraded highest-weight
multiplicities}.  By convention, $b_{\alpha,\nu}^{\lambda}=0$ if any component of
$\alpha$ is negative.

By \eqref{eq:multidegree-piece} and highest-weight theory,
\begin{equation}
 b_{\alpha,\nu}^{\lambda}
 =\dim_\kk\operatorname{Hom}_{G_V}\!\left(\mathbf S_\lambda V,
   \bigotimes_{r=1}^m\Sym^{\alpha_r}(\mathbf S_\nu V)\right).
 \label{eq:b-as-multiplicity}
\end{equation}

\begin{remark}
Although $\alpha$ is an ordered $T_W$-weight, the multiplicity
$b_{\alpha,\nu}^{\lambda}$ is unchanged when the entries of $\alpha$ are
permuted.
\end{remark}

\subsection{Recovering plethysm coefficients}

The multiplicities \eqref{eq:b-definition} determine the plethysm
coefficients by a finite alternating sum.
For a partition $\mu$ with $\htp(\mu)\leq m$, pad $\mu$ with
zeros and write
$\mu=(\mu_1,\ldots,\mu_m).$
For $\sigma\in\frakS_m$, define an integral $m$-tuple
\begin{equation}
 \alpha^\sigma(\mu)_i
 :=\mu_i-i+\sigma(i),
 \qquad 1\leq i\leq m.
 \label{eq:shifted-composition}
\end{equation}
The equality $|\alpha^\sigma(\mu)|=|\mu|$ holds, but
$\alpha^\sigma(\mu)$ need not be a weak composition.

\begin{theorem}
\label{thm:alternating-plethysm}
For partitions $\lambda,\mu,\nu$ satisfying
$\htp(\lambda)\leq\ell$, $\htp(\mu)\leq m$, and
$\htp(\nu)\leq\ell$, one has
\begin{equation}
 a_{\mu,\nu}^{\lambda}
 =
 \sum_{\sigma\in\frakS_m}
 \operatorname{sgn}(\sigma)\,
 b_{\alpha^\sigma(\mu),\nu}^{\lambda}.
 \label{eq:alternating-plethysm}
\end{equation}
Terms for which \eqref{eq:shifted-composition} has a negative component are
understood to be zero.
\end{theorem}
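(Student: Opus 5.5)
We compute the $T_W$-character of the $U_V$-highest-weight space of weight $\lambda$ in two ways and then extract the coefficient of a Schur polynomial by the alternant.

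By the Cauchy formula applied to $\mathbf S_\nu V\otimes W$, there is a $G_V\times G_W$-isomorphism
\[ \cP_\nu(V,W)\cong\bigoplus_{\mu,\ \htp(\mu)\le m}\mathbf S_\mu(\mathbf S_\nu V)\otimes \mathbf S_\mu W. \]
Taking $U_V$-invariants of $T_V$-weight $\lambda$ isolates the multiplicity space of $\mathbf S_\lambda V$, since each irreducible $G_V$-module has a one-dimensional space of $U_V$-invariants and that space has its highest weight. The result is a $G_W$-module
\[ M_\lambda:=\bigl(\cR_\nu(V,W)\bigr)_\lambda\cong\bigoplus_\mu a_{\mu,\nu}^\lambda\,\mathbf S_\mu W, \]
which is finite-dimensional in each $W$-degree. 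Its $T_W$-character is therefore $\operatorname{ch}M_\lambda=\sum_\mu a^\lambda_{\mu,\nu}s_\mu(x_1,\dots,x_m)$. Independently, the $\mathbb Z^m_{\ge0}$-grading by the basis of $W$ is exactly the $T_W$-weight grading, so by \eqref{eq:b-definition} we also have $\operatorname{ch}M_\lambda=\sum_\alpha b^\lambda_{\alpha,\nu}x^\alpha$.

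Multiply both expressions by the Vandermonde $a_\delta=\sum_{\sigma}\operatorname{sgn}(\sigma)x^{\sigma(\delta)}$ with $\delta=(m-1,\dots,0)$. Since $s_\mu a_\delta=a_{\mu+\delta}$, and the alternants $a_{\mu+\delta}$ for distinct $\mu$ have disjoint monomial supports, the coefficient of $x^{\mu+\delta}$ in $a_\delta\operatorname{ch}M_\lambda$ equals $a^\lambda_{\mu,\nu}$. This is because $\mu+\delta$ is strictly decreasing, so it occurs only in $a_{\mu+\delta}$, with sign $+1$.

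On the other side, the coefficient of $x^{\mu+\delta}$ in $\bigl(\sum_\alpha b_\alpha x^\alpha\bigr)\bigl(\sum_\sigma\operatorname{sgn}(\sigma)x^{\sigma\delta}\bigr)$ is $\sum_\sigma\operatorname{sgn}(\sigma)b_{\mu+\delta-\sigma\delta}$. We must match this index with \eqref{eq:shifted-composition}. With $\delta_i=m-i$, a term contributes $\mu_i+(m-i)-(m-\tau(i))=\mu_i-i+\tau(i)$ for the appropriate permutation $\tau$, which is either $\sigma$ or its inverse depending on the convention for $(\sigma\delta)_i$. Because $\operatorname{sgn}(\tau)=\operatorname{sgn}(\tau^{-1})$ and we sum over all of $\frakS_m$, the result is $\sum_\tau\operatorname{sgn}(\tau)b_{\alpha^\tau(\mu)}$. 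Terms with a negative component do not occur, because $b$ is supported on $\mathbb Z^m_{\ge0}$; this matches the stated convention.

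The main obstacle is bookkeeping rather than substance. One point is justifying the manipulation of formal power series: each multidegree is finite-dimensional and each coefficient involves only finitely many terms, so products are well defined. The other is the index and sign check above. The hypothesis $\htp(\nu)\le\ell$ ensures $\mathbf S_\nu V\ne0$, and $\htp(\lambda)\le\ell$, $\htp(\mu)\le m$ ensure the relevant Schur modules are nonzero and that the Cauchy sum over $\htp(\mu)\le m$ captures every $\mu$.
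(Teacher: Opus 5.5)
Your argument is correct, but it works on the $W$-side, whereas the paper works on the $V$-side.

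The paper applies the plethystic homomorphism $f\mapsto f[s_\nu]$ to the Jacobi--Trudi determinant $s_\mu=\det(h_{\mu_i-i+j})$. Each expanded term $\prod_i h_{\alpha^\sigma(\mu)_i}[s_\nu]$ is then the $G_V$-character of $\bigotimes_r\Sym^{\alpha_r}(\mathbf S_\nu V)$. The coefficient of $s_\lambda$ is read off via \eqref{eq:b-as-multiplicity}, so everything happens in symmetric functions of the $V$-variables.

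You instead use the Cauchy decomposition to show that $\bigl(\cR_\nu(V,W)\bigr)_\lambda$ is the $G_W$-module $\bigoplus_\mu a^\lambda_{\mu,\nu}\mathbf S_\mu W$. You identify the $b^\lambda_{\alpha,\nu}$ as its $T_W$-weight multiplicities, and recover the irreducible multiplicities from them with the bialternant formula $s_\mu a_\delta=a_{\mu+\delta}$ for $\GL_m$. Your index and sign check ($\tau=\sigma^{-1}$ with equal signs) is right.

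The two arguments are dual under the Hall inner product: both amount to inverting $b^\lambda_{\alpha,\nu}=\sum_\mu K_{\mu\alpha}a^\lambda_{\mu,\nu}$. Yours avoids the plethystic homomorphism and Jacobi--Trudi. It also shows why the paper keeps the full $G_W$-action on $\cR_{\ell,m}$: the alternating sum is just the character-theoretic extraction of $\GL(W)$-multiplicities from weight multiplicities. The paper's route is shorter once \eqref{eq:b-as-multiplicity} is in hand.

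One simplification for you: since $|\lambda|=|\nu|\,|\alpha|$ on this weight space, $\bigl(\cR_\nu(V,W)\bigr)_\lambda$ is finite-dimensional and its $T_W$-character is a polynomial. The formal power series bookkeeping is therefore unnecessary.
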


\begin{proof}
Apply the plethystic ring homomorphism $f\mapsto f[s_\nu]$ to the
Jacobi--Trudi identity
\[ s_\mu=\det\bigl(h_{\mu_i-i+j}\bigr)_{1\le i,j\le m} \]
and expand the determinant.  By \eqref{eq:b-as-multiplicity},
the coefficient of $s_\lambda$ in the term indexed by $\sigma$ is
$b_{\alpha^\sigma(\mu),\nu}^{\lambda}$.  Comparing coefficients proves
\eqref{eq:alternating-plethysm}.
\end{proof}

\begin{example}
When $m=2$ and $\mu=(\mu_1,\mu_2)$, Theorem
\ref{thm:alternating-plethysm} reads
\[ a_{\mu,\nu}^{\lambda} = b_{(\mu_1,\mu_2),\nu}^{\lambda} - b_{(\mu_1+1,\mu_2-1),\nu}^{\lambda}. \]
\end{example}

\subsection{The symmetric-square specialization}

We call plethysm with inner partition $(2)$, equivalently with the symmetric-square
functor $\Sym^2$, the \emph{bosonic} case.  The exterior-square analogue,
which gives fermionic plethysm, is treated in \cite{FFP}.  For the remainder
of the paper we specialize to $\nu=(2)$ and set
\[ V=\kk^\ell, \qquad W=\kk^m. \]
We abbreviate
\begin{equation}
 \cR_{\ell,m}
 :=\cR_{(2)}(V,W)
 =\Sym\bigl(\Sym^2V\otimes W\bigr)^{U_V}.
 \label{eq:main-ring-abstract}
\end{equation}
To obtain the matrix realization, let
\[ \SymMat_\ell :=\{C\in\operatorname{Mat}_{\ell\times\ell}(\kk):C^T=C\}, \qquad X_{\ell,m}:=\SymMat_\ell^m. \]
After choosing the standard bases of $V$ and $W$, the algebra in
\eqref{eq:main-ring-abstract} identifies with
\begin{equation}
 \cR_{\ell,m}=\kk[X_{\ell,m}]^{U_\ell},
 \label{eq:main-ring-matrices}
\end{equation}
where $U_\ell$ is the upper unitriangular subgroup of $\GL_\ell$ and
\[ u\cdot(C_1,\ldots,C_m) = \bigl(u^{-T}C_1u^{-1},\ldots,u^{-T}C_mu^{-1}\bigr), \qquad u^{-T}:=(u^{-1})^T. \]
For $0\le i\le\ell$, set
\begin{equation}
 h_i:=\det C_1[1,\ldots,i\mid1,\ldots,i],
 \qquad h_0:=1,
 \label{eq:hi-definition}
\end{equation}
and put
\[ H_1:=\prod_{i=1}^{\ell}h_i. \]
Theorem~\ref{thm:alternating-plethysm} recovers plethysm coefficients for
inner partition $(2)$ from the multigraded weight multiplicities of
\eqref{eq:main-ring-matrices}.

\section{Gauss-decomposition coordinates on the invariant ring}
\label{sec:gaussian-localization}

On the principal open set defined by the leading principal minors, the
$U_\ell$-action has a cross-section described by the $LDL^T$ decomposition.

\subsection{The Gauss chart}
\label{subsec:gaussian-slice}

Let
\[ X_{\ell,m}^{\mathrm G} :=\{(C_1,\ldots,C_m)\in X_{\ell,m}:H_1(C)\ne0\}. \]
Every $C_1$ occurring in this open set has a unique $LDL^T$
decomposition
\begin{equation}
 C_1=u^TDu,
 \qquad u\in U_\ell,
 \qquad D\in T_\ell.
 \label{eq:symmetric-gaussian-decomposition}
\end{equation}
For $C=(C_1,\ldots,C_m)\in X_{\ell,m}^{\mathrm G}$, set
\[ S_r:=u^{-T}C_ru^{-1}\in\SymMat_\ell, \qquad 2\le r\le m. \]

\begin{proposition}[Gauss chart]
\label{prop:gaussian-slice}
The morphism
\begin{align}
 \Psi:T_\ell\times\SymMat_\ell^{m-1}\times U_\ell
 &\longrightarrow X_{\ell,m}^{\mathrm G},
 \notag\\
 (D,S_2,\ldots,S_m,u)
 &\longmapsto
 (u^TDu,u^TS_2u,\ldots,u^TS_mu)
 \label{eq:gaussian-slice-map}
\end{align}
is an isomorphism.  Under this isomorphism the congruence action of
$w\in U_\ell$ is right translation on the last factor:
\begin{equation}
 w\cdot(D,S_2,\ldots,S_m,u)
 =(D,S_2,\ldots,S_m,uw^{-1}).
 \label{eq:gaussian-action}
\end{equation}
Consequently,
\begin{equation}
 \cR_{\ell,m}[H_1^{-1}]
 \cong\kk[T_\ell\times\SymMat_\ell^{m-1}].
 \label{eq:gaussian-quotient-ring}
\end{equation}
\end{proposition}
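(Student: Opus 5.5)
I will prove the proposition by writing down an explicit inverse morphism, checking equivariance, and then passing to invariants using the free right-translation action. First, $\Psi$ is well defined. For $D\in T_\ell$ and $u\in U_\ell$, the matrix $u^TDu$ is symmetric. Its leading principal minors are
\[
 h_i(u^TDu)=d_1\cdots d_i,
\]
because $u^T$ is lower unitriangular and $u$ is upper unitriangular. Concretely, the leading $i\times i$ block of $u^TDu$ equals $u_{[i]}^TD_{[i]}u_{[i]}$, where $u_{[i]}$ and $D_{[i]}$ are the leading $i\times i$ blocks. Hence $H_1\ne0$ on the image. The matrices $u^TS_ru$ are also symmetric, so $\Psi$ lands in $X^{\mathrm G}_{\ell,m}$.

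For the inverse, I use the classical $LDL^T$ (Gauss) decomposition. On the open set $H_1\ne0$, the factors of $C_1=u^TDu$ are given by regular functions:
\[
 d_i=\frac{h_i}{h_{i-1}},
\]
and the entries of $u$ (equivalently of $u^{-1}$) are ratios of minors of $C_1$ with denominators $h_i$. For example, $u_{ij}=\det C_1[1..i-1,i\mid 1..i-1,j]/h_i$ for $i<j$. Uniqueness follows from a standard argument. If $u^TDu=u'^TD'u'$, then
\[
 (u'^{-1})^{T}u^TD=D'u'u^{-1}.
\]
The left side is lower triangular and the right side is upper triangular, so both are diagonal, which forces $u=u'$ and $D=D'$. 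Thus $C\mapsto (D,S_2,\dots,S_m,u)$ with $S_r=u^{-T}C_ru^{-1}$ is a morphism $X^{\mathrm G}_{\ell,m}\to T_\ell\times\SymMat_\ell^{m-1}\times U_\ell$. It is inverse to $\Psi$ on both sides by direct substitution.

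The equivariance formula is a one-line computation. For $w\in U_\ell$,
\[
 w^{-T}(u^TDu)w^{-1}=(uw^{-1})^TD(uw^{-1}),
\]
and similarly for each $S_r$. By uniqueness of the decomposition, the coordinates of $w\cdot\Psi(D,S,u)$ are $(D,S,uw^{-1})$. Since $\Psi$ is an isomorphism, it transports the $U_\ell$-action on $X^{\mathrm G}_{\ell,m}$ to the action $w\cdot(D,S,u)=(D,S,uw^{-1})$ on the product.

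For the invariant ring, note that $H_1$ is $U_\ell$-invariant, since the $h_i$ are the functions $d_1\cdots d_i$, which are invariant by the formula above. Because $U_\ell$ is unipotent acting on an affine variety, localization at an invariant commutes with taking invariants:
\[
 \cR_{\ell,m}[H_1^{-1}]=\kk[X_{\ell,m}]^{U_\ell}[H_1^{-1}]=\kk[X^{\mathrm G}_{\ell,m}]^{U_\ell}.
\]
The inclusion $\subseteq$ is clear. For $\supseteq$, an invariant $f/H_1^N$ has $f=H_1^N\cdot(f/H_1^N)$ invariant, since $H_1$ is invariant. Via $\Psi$, this ring is
\[
 \bigl(\kk[T_\ell\times\SymMat_\ell^{m-1}]\otimes\kk[U_\ell]\bigr)^{U_\ell},
\]
where $U_\ell$ acts only on the second factor, by right translation. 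We have $\kk[U_\ell]^{U_\ell}=\kk$ under the regular action, and taking invariants commutes with tensoring by a trivial module. This gives $\kk[T_\ell\times\SymMat_\ell^{m-1}]$.

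The only step needing care is that the inverse is regular, i.e., that the $LDL^T$ factors are polynomial in the entries of $C_1$ after inverting $h_1,\dots,h_\ell$. I would handle this by induction on $\ell$ via the Schur complement, or by quoting the explicit minor formulas for the Gauss decomposition.
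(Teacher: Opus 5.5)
Your proposal is correct and follows essentially the same route as the paper: the inverse is the $LDL^T$ decomposition with pivots $h_a/h_{a-1}$ and regular Gaussian-elimination formulas for $u$ on $H_1\ne0$, the action becomes right translation by the direct congruence computation, and taking invariants removes the $U_\ell$-factor while commuting with localization at the invariant $H_1$. You give more detail than the paper (the minor formula for $u_{ij}$, the uniqueness argument, and $(A\otimes\kk[U_\ell])^{U_\ell}=A$), and your localization argument works for any group, so the appeal to unipotence at that step is unnecessary.
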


\begin{proof}
The decomposition \eqref{eq:symmetric-gaussian-decomposition} is the usual
$LDL^T$ decomposition, written with the upper unitriangular factor
$u=L^T$.  Its diagonal entries are
\begin{equation}
 D_{aa}=\frac{h_a}{h_{a-1}},
 \qquad 1\le a\le\ell.
 \label{eq:gaussian-diagonal-pivots}
\end{equation}
Gaussian elimination expresses $u$ regularly on the principal open
$H_1\ne0$, so the assignment
\[ C\longmapsto(D,S_2,\ldots,S_m,u) \]
is the inverse of \eqref{eq:gaussian-slice-map}.  Formula
\eqref{eq:gaussian-action} follows from the congruence action.  Taking
invariants removes the $U_\ell$-factor, and localization commutes with
invariants because $H_1$ is invariant.  This proves
\eqref{eq:gaussian-quotient-ring}.
\end{proof}

For $2\le r\le m$ and $1\le a\le b\le\ell$, let
\[ \xi_{r;a,b}(D,S_2,\ldots,S_m):=(S_r)_{ab}. \]
Since the $h_i$ are Laurent coordinates on $T_\ell$ by
\eqref{eq:gaussian-diagonal-pivots}, Proposition~\ref{prop:gaussian-slice}
gives
\begin{equation}
 \cR_{\ell,m}[H_1^{-1}]
 =\kk\bigl[
 h_1^{\pm1},\ldots,h_\ell^{\pm1},
 \xi_{r;a,b}:2\le r\le m,
 1\le a\le b\le\ell
 \bigr].
 \label{eq:localized-ring-xi}
\end{equation}

\subsection{Polynomial lifts of the Gauss coordinates}
\label{subsec:gaussian-coordinate-invariants}

The functions $\xi_{r;a,b}$ are regular on the quotient of
$X_{\ell,m}^{\mathrm G}$ but are generally rational on $X_{\ell,m}$.  We
define polynomial lifts of these coordinates.  Put $I_a=\{1,\ldots,a\}$ and, for
$1\le p\le a\le\ell$, define
\[ \gamma_{a,p}(C_1) :=(-1)^{a+p}\det C_1[I_{a-1},I_a\setminus\{p\}], \]
where the determinant of the empty matrix is $1$.  For
$2\le r\le m$ and $1\le a\le b\le\ell$, set
\begin{equation}
 \eta_{r;a,b}
 :=\sum_{p=1}^a\sum_{q=1}^b
 \gamma_{a,p}(C_1)\gamma_{b,q}(C_1)(C_r)_{pq}.
 \label{eq:eta-definition}
\end{equation}

\begin{proposition}
\label{prop:eta-gaussian-coordinate}
Each $\eta_{r;a,b}$ is a polynomial element of $\cR_{\ell,m}$.  On the
Gauss chart,
\begin{equation}
 \eta_{r;a,b}=h_{a-1}h_{b-1}\xi_{r;a,b}.
 \label{eq:eta-slice-formula}
\end{equation}
Consequently,
\begin{equation}
 \cR_{\ell,m}[H_1^{-1}]
 =\kk\bigl[
 h_1^{\pm1},\ldots,h_\ell^{\pm1},
 \eta_{r;a,b}:2\le r\le m,
 1\le a\le b\le\ell
 \bigr].
 \label{eq:localized-ring-eta}
\end{equation}
\end{proposition}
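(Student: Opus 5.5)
Polynomiality is immediate, since $\eta_{r;a,b}$ is a sum of products of minors of $C_1$ with entries of $C_r$. Everything else comes from rewriting \eqref{eq:eta-definition} in matrix form. Let $\gamma_a(C_1)\in\kk^\ell$ be the column vector with entries $\gamma_{a,p}(C_1)$ for $p\le a$ and $0$ for $p>a$. Then
\[ \eta_{r;a,b}=\gamma_a^T C_r\,\gamma_b. \]
By cofactor expansion along the last row, $\gamma_a$ is characterized as follows. For any vector $x\in\kk^\ell$, the number $\sum_p\gamma_{a,p}x_p$ equals the determinant of the $a\times a$ matrix whose first $a-1$ rows are those of $C_1[I_{a-1},I_a]$ and whose last row is $(x_1,\dots,x_a)$.

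\textbf{Step 1: behaviour of $\gamma_a$ under $U_\ell$.} Replacing $C_1$ by $u^{-T}C_1u^{-1}$ multiplies $C_1[I_{a-1},I_a]$ on the left by an upper unitriangular block of $u^{-T}$ restricted to $I_{a-1}$; this is the leading block, because $u^{-T}$ is lower triangular and its first $a-1$ rows involve only the first $a-1$ rows of $C_1$. It also multiplies on the right by the leading $a\times a$ block of $u^{-1}$. Since the left factor has determinant one, the determinant description above gives the transformation rule $\gamma_a(u\cdot C_1)=u\,\gamma_a(C_1)$, with the sign and transpose conventions to be checked. Then
\[ \eta(u\cdot C)=\gamma_a^Tu^T\,u^{-T}C_ru^{-1}\,u\gamma_b=\eta(C), \]
which gives invariance. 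I expect the main care to go into getting this equivariance with the correct transpose. It can be double-checked by noting that $\gamma_a$ spans the kernel of $C_1[I_{a-1},\cdot\,]$ inside $\kk^{I_a}$, a condition stable under the action.

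\textbf{Step 2: the formula on the Gauss chart.} By invariance and Proposition~\ref{prop:gaussian-slice}, it suffices to evaluate at $u=1$, that is, at $C_1=D$ and $C_r=S_r$. There $D[I_{a-1},I_a]$ is diagonal with a zero last column. Hence $\gamma_{a,p}=0$ for $p<a$, and $\gamma_{a,a}=\det D[I_{a-1}]=h_{a-1}$. It follows that $\gamma_a=h_{a-1}e_a$ and
\[ \eta_{r;a,b}=h_{a-1}h_{b-1}(S_r)_{ab}, \]
which is \eqref{eq:eta-slice-formula}. Both sides are invariant functions agreeing on the slice $u=1$, so they agree on all of $X^{\mathrm G}_{\ell,m}$.

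\textbf{Step 3: generation.} Finally, \eqref{eq:localized-ring-eta} follows from \eqref{eq:localized-ring-xi}. Each $h_{a-1}$ is a unit after localizing at $H_1$, with $h_0=1$, so each $\xi$ equals $\eta$ times a Laurent monomial in the $h_i$, and conversely. The two sets of generators therefore generate the same ring, and the $\eta$'s lie in $\cR_{\ell,m}$.
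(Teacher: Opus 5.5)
Your proof is correct, but it runs in the opposite direction from the paper's.

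\textbf{The paper's route.} It starts on the Gauss chart and writes $C_1=u^TDu$. Cramer's rule gives $(u^{-1})_{p,a}=\gamma_{a,p}(C_1)/h_{a-1}$. Substituting this into $\xi_{r;a,b}=\sum_{p,q}(u^{-1})_{p,a}(C_r)_{pq}(u^{-1})_{q,b}$ yields \eqref{eq:eta-slice-formula}. Only then does it deduce invariance of the polynomial $\eta_{r;a,b}$: it agrees with the invariant function $h_{a-1}h_{b-1}\xi_{r;a,b}$ on the dense open set $X^{\mathrm G}_{\ell,m}$.

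\textbf{Your route.} You prove the global equivariance $\gamma_a(u\cdot C_1)=u\,\gamma_a(C_1)$ on all of $\SymMat_\ell$. This makes invariance of $\gamma_a^TC_r\gamma_b$ immediate, with no density argument. The slice formula then reduces to the trivial evaluation $\gamma_a(D)=h_{a-1}e_a$.

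\textbf{Comparison.} The two key identities are really the same. Applying your equivariance to $C_1=u^TDu=u^{-1}\cdot D$ gives $\gamma_a(C_1)=h_{a-1}u^{-1}e_a$, which is exactly the paper's Cramer's-rule formula. Your route is more self-contained and explains why $\eta$ is invariant: it is a pairing of two equivariant vectors through $C_r$. The paper's route is shorter, given the $LDL^T$ chart already in hand.

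\textbf{The step you left ``to be checked''.} The equivariance holds as you stated it. Put $L=u^{-T}[I_{a-1},I_{a-1}]$ and $U_a=u^{-1}[I_a,I_a]$. Then $(u\cdot C_1)[I_{a-1},I_a]=L\,C_1[I_{a-1},I_a]\,U_a$. Your determinant description then gives
$\gamma_a(u\cdot C_1)|_{I_a}=U_a^{-1}\gamma_a(C_1)|_{I_a}=u[I_a,I_a]\,\gamma_a(C_1)|_{I_a}$.
This equals $u\gamma_a(C_1)$, because $u$ is upper triangular and $\gamma_a$ is supported on $I_a$.

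One small slip: $L$ is lower unitriangular, not upper. Only $\det L=1$ is used, so nothing breaks.
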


\begin{proof}
Let $C_1=u^TDu$ be \eqref{eq:symmetric-gaussian-decomposition}.  Cramer's
rule gives
\begin{equation}
 (u^{-1})_{p,a}=\frac{\gamma_{a,p}(C_1)}{h_{a-1}},
 \qquad 1\le p\le a.
 \label{eq:u-inverse-cofactor}
\end{equation}
Since $u^{-1}$ is upper triangular,
\[ \xi_{r;a,b} =\sum_{p=1}^a\sum_{q=1}^b (u^{-1})_{p,a}(C_r)_{pq}(u^{-1})_{q,b}. \]
Substituting \eqref{eq:u-inverse-cofactor} proves
\eqref{eq:eta-slice-formula}.  The expression
\eqref{eq:eta-definition} is polynomial and is invariant on the dense open
set $X_{\ell,m}^{\mathrm G}$; hence it is invariant on all of
$X_{\ell,m}$.  Equation \eqref{eq:localized-ring-eta} follows from
\eqref{eq:localized-ring-xi}.
\end{proof}

\section{Two-sided matrix invariants and symmetric restriction}
\label{sec:matrix-invariants}

Assume $\ell,m\ge2$.  The hive-quiver construction for complete triple flags
appears in \cite{Fsemi}; the flagged Kronecker construction used here is
developed in \cite{FeiKroneckerII}.  We formulate the latter in terms of
two-sided matrix invariants and then restrict it to symmetric matrices.  The
transfer is recalled in Appendix~\ref{app:matrix-transfer}.

\subsection{Matrix invariant algebras}
\label{subsec:two-matrix-invariant-algebras}

Retain $X_{\ell,m}=\SymMat_\ell^m$ from Section~\ref{sec:plethysm-highest-weight}, and put
\[ Y_{\ell,m}:=\Mat_\ell^m, \qquad X_{\ell,m}\subseteq Y_{\ell,m}. \]
We use two copies $U_\ell^{\mathrm L}$ and $U_\ell^{\mathrm R}$ of the upper
unitriangular group, acting on $Y_{\ell,m}$ by
\begin{equation}
 (u,v)\cdot(A_1,\ldots,A_m)
 =\bigl(u^{-T}A_1v^{-1},\ldots,u^{-T}A_mv^{-1}\bigr).
 \label{eq:two-sided-action}
\end{equation}
Define the \emph{two-sided matrix invariant algebra}
\[ \cT_{\ell,m} :=\kk[Y_{\ell,m}]^{U_\ell^{\mathrm L}\times U_\ell^{\mathrm R}}. \]
The diagonal embedding $u\mapsto(u,u)$ preserves $X_{\ell,m}$ and induces
the congruence action used in \eqref{eq:main-ring-matrices}.  Restriction
therefore gives a graded homomorphism
\[ \rho:\cT_{\ell,m}\longrightarrow\cR_{\ell,m}. \]
Transposition defines an involution
\[ \tau_Y(A_1,\ldots,A_m)=(A_1^T,\ldots,A_m^T) \]
of $Y_{\ell,m}$.  It exchanges the two unipotent factors in
\eqref{eq:two-sided-action}, hence induces an involution $\tau$ of
$\cT_{\ell,m}$.  Since $X_{\ell,m}$ is the fixed locus of $\tau_Y$,
one has $\rho\circ\tau=\rho.$
For $1\le r\le m$, put
\[ \Delta_r:=\det A_r\in\cT_{\ell,m}; \]
we use the same notation for its restriction $\det C_r$.

\begin{proposition}
\label{prop:basic-rings}
The algebras $\cT_{\ell,m}$ and $\cR_{\ell,m}$ are normal factorial domains,
and
\begin{align*}
 \dim\cT_{\ell,m}
 &=m\ell^2-\ell(\ell-1)=(m-1)\ell^2+\ell,\\
 \dim\cR_{\ell,m}
 &=m\binom{\ell+1}{2}-\binom{\ell}{2}
   =\frac{(m-1)\ell^2+(m+1)\ell}{2}.
\end{align*}
\end{proposition}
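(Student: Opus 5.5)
We prove the three assertions (normality, factoriality, dimension) by the standard route for invariants of a unipotent radical: pass to a suitable $G$-variety and use the known results on $U$-invariants. Normality comes from invariants of a normal domain. Factoriality comes from Popov's theorem: if $G$ is connected semisimple or reductive with trivial character-group obstructions, and $\kk[Z]$ is factorial, then $\kk[Z]^U$ is factorial. The dimension comes from the Gauss chart.

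\emph{Normality and domain property.} $\kk[Y_{\ell,m}]$ and $\kk[X_{\ell,m}]$ are polynomial rings. The invariant subring of a normal domain under any group action is normal, because it equals $\Frac(A)^G\cap A$ with $A$ integrally closed. So both rings are normal domains.

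\emph{Factoriality.} We write $\cR_{\ell,m}=\kk[X_{\ell,m}]^{U_\ell}$, where $U_\ell$ is the maximal unipotent subgroup of $\SL_\ell$ acting by congruence. A polynomial ring is factorial, and $\SL_\ell$ has no nontrivial characters. The theorem of Popov (see also Grosshans) then gives that $\kk[X_{\ell,m}]^{U}$ is factorial. The same argument applies to $\cT_{\ell,m}=\kk[Y_{\ell,m}]^{U^{\mathrm L}\times U^{\mathrm R}}$, a maximal unipotent subgroup of $\SL_\ell\times\SL_\ell$. An alternative, self-contained route for $\cR$ uses Proposition~\ref{prop:gaussian-slice}. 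Since $\cR_{\ell,m}[H_1^{-1}]$ is a Laurent-polynomial ring, it is factorial. By Nagata's criterion it then suffices that each $h_i$ is prime in $\cR_{\ell,m}$. We would check this via primality of $h_i$ in the polynomial ring on symmetric matrices, together with $U$-invariance of $h_i$: irreducible factors of a $U$-invariant are $U$-invariant, since $U$ is connected with no characters. This second route is the step I expect to be most delicate. So I would cite Popov's theorem as the primary argument.

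\emph{Dimensions.} By \eqref{eq:gaussian-quotient-ring}, $\dim\cR_{\ell,m}=\dim(T_\ell\times\SymMat_\ell^{m-1})=\ell+(m-1)\binom{\ell+1}{2}$, which equals both stated formulas. For $\cT_{\ell,m}$ we use the analogous two-sided Gauss chart. On the open set where the leading principal minors of $A_1$ are nonzero, the LDU decomposition $A_1=u^TDv$ gives $Y^{\mathrm G}\cong T_\ell\times\Mat_\ell^{m-1}\times U^{\mathrm L}\times U^{\mathrm R}$, with the group acting freely by translation on the last factors. Hence $\dim\cT_{\ell,m}=\ell+(m-1)\ell^2$. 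Equivalently, the generic stabilizer of the free action is trivial, giving $m\ell^2-\ell(\ell-1)$.
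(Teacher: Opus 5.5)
Your proposal is correct. For normality and factoriality it follows the paper: invariants of a normal domain are normal, and Popov's theorem gives factoriality.

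One refinement on factoriality: the characters that matter are those of the acting groups $U_\ell$ and $U_\ell^{\mathrm L}\times U_\ell^{\mathrm R}$ themselves, not those of $\SL_\ell$. A connected group can move an irreducible factor of an invariant only by a character, and unipotent groups have none. Your Nagata alternative also works and is not delicate. The minor $h_i$ is prime in $\kk[X_{\ell,m}]$, and if an invariant is divisible by $h_i$ there, the quotient is automatically invariant.

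The dimension count is where you genuinely differ from the paper. The paper exhibits the single point $(I_\ell,0,\ldots,0)$ with trivial stabilizer and applies Rosenlicht's theorem. This gives the first displayed forms, $\dim Y_{\ell,m}-\dim(U_\ell^{\mathrm L}\times U_\ell^{\mathrm R})$ and $\dim X_{\ell,m}-\dim U_\ell$. It needs only a pointwise computation, but it implicitly uses $\Frac(\kk[X]^{U})=\kk(X)^{U}$ for unipotent $U$.

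You instead read off the fraction fields from the Gauss charts of Propositions~\ref{prop:gaussian-slice} and \ref{prop:two-sided-gaussian-slice}. This gives the second forms, $\ell+(m-1)\binom{\ell+1}{2}$ and $\ell+(m-1)\ell^2$. Your route needs the full slice, but that is proved independently of this proposition, so there is no circularity, and it avoids Rosenlicht altogether.

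In either route, equating Krull dimension with the transcendence degree of the fraction field requires the invariant rings to be finitely generated. In your case this is the step $\dim\cR_{\ell,m}=\dim\cR_{\ell,m}[H_1^{-1}]$. The paper takes finite generation from Grosshans' theorem, and you should state it explicitly.
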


\begin{proof}
Invariant subrings of normal domains are normal.  These maximal-unipotent
invariant rings are finitely generated by Grosshans' theorem.  Factoriality
follows from Popov's theorem, because the acting unipotent groups are
connected and have no nontrivial algebraic characters; see
\cite[Theorem~3.17]{PopovVinberg} and \cite{Grosshans}.

For $\cT_{\ell,m}$, the point $(I_\ell,0,\ldots,0)$ has trivial stabilizer:
$u^{-T}v^{-1}=I_\ell$ forces the upper unitriangular matrix $v$ to equal the
lower unitriangular matrix $u^{-T}$, so $u=v=I_\ell$.  For $\cR_{\ell,m}$,
the same point has trivial stabilizer under congruence, since
$u^{-T}u^{-1}=I_\ell$ forces $u=I_\ell$.  The formulas now follow from Rosenlicht's dimension theorem
\cite{Rosenlicht}.
\end{proof}

The torus $T_\ell^{\mathrm L}\times T_\ell^{\mathrm R}\times T_W$ grades
$\cT_{\ell,m}$.  If a homogeneous function has degree
$(\lambda^{\mathrm L},\lambda^{\mathrm R};\alpha)$, then its symmetric
restriction has degree
\begin{equation}
 (\lambda^{\mathrm L}+\lambda^{\mathrm R};\alpha).
 \label{eq:degree-restriction}
\end{equation}

\subsection{The two-sided Gauss chart}
\label{subsec:two-sided-gaussian-slice}

The two-sided action has an analogous Gauss chart.  For the first
matrix in $Y_{\ell,m}$, set
\[ \widetilde h_i:=\det A_1[I_i,I_i], \qquad \widetilde H_1:=\prod_{i=1}^{\ell}\widetilde h_i, \]
and let $Y_{\ell,m}^{\mathrm G}$ be the principal open subset
$\widetilde H_1\ne0$.

\begin{proposition}[two-sided Gauss chart]
\label{prop:two-sided-gaussian-slice}
The morphism
\begin{align}
 \widetilde\Psi:
 T_\ell\times\Mat_\ell^{m-1}\times U_\ell\times U_\ell
 &\longrightarrow Y_{\ell,m}^{\mathrm G},
 \notag\\
 (D,B_2,\ldots,B_m,u,v)
 &\longmapsto
 (u^TDv,u^TB_2v,\ldots,u^TB_mv)
 \label{eq:two-sided-gaussian-map}
\end{align}
is an isomorphism.  The action of $(a,b)\in
U_\ell^{\mathrm L}\times U_\ell^{\mathrm R}$ is right translation on the
last two factors:
\begin{equation}
 (a,b)\cdot(D,B_2,\ldots,B_m,u,v)
 =(D,B_2,\ldots,B_m,ua^{-1},vb^{-1}).
 \label{eq:two-sided-gaussian-action}
\end{equation}
Consequently,
\begin{equation}
 \cT_{\ell,m}[\widetilde H_1^{-1}]
 \cong\kk[T_\ell\times\Mat_\ell^{m-1}].
 \label{eq:two-sided-gaussian-quotient}
\end{equation}
Under this identification, transposition acts by
\begin{equation}
 (D,B_2,\ldots,B_m)
 \longmapsto(D,B_2^T,\ldots,B_m^T).
 \label{eq:transpose-on-two-sided-slice}
\end{equation}
Its fixed locus is therefore
$T_\ell\times\SymMat_\ell^{m-1}$, which is the quotient in
Proposition~\ref{prop:gaussian-slice}.
\end{proposition}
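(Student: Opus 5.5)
The plan follows the proof of Proposition~\ref{prop:gaussian-slice}, with the symmetric $LDL^T$ decomposition replaced by the ordinary $LDU$ (Gauss) decomposition of $A_1$.

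\textbf{Step 1: the decomposition and its inverse.} On $Y_{\ell,m}^{\mathrm G}$ all leading principal minors $\widetilde h_i$ of $A_1$ are nonzero. Hence $A_1$ has a unique factorization $A_1=u^TDv$ with $u,v\in U_\ell$ and $D\in T_\ell$. Here $u^T$ is lower unitriangular and $v$ is upper unitriangular, and $D_{aa}=\widetilde h_a/\widetilde h_{a-1}$. Gaussian elimination, or Cramer's rule as in \eqref{eq:u-inverse-cofactor}, expresses the entries of $u^{-1}$ and $v^{-1}$ as minors of $A_1$ divided by the $\widetilde h_{a-1}$. So $u$ and $v$ are regular functions on $Y_{\ell,m}^{\mathrm G}$. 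Setting $B_r:=u^{-T}A_rv^{-1}$ for $r\ge2$ then gives a regular inverse to $\widetilde\Psi$. Conversely, every point in the image of $\widetilde\Psi$ has first component $u^TDv$ with leading minors $\det D[I_i,I_i]\neq0$, so the image lies in $Y_{\ell,m}^{\mathrm G}$. Uniqueness of the factorization shows the two maps are mutually inverse.

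\textbf{Step 2: the action.} By \eqref{eq:two-sided-action}, $(a,b)$ sends $u^TB_rv$ to $a^{-T}u^TB_rvb^{-1}=(ua^{-1})^TB_r(vb^{-1})$, and similarly for $A_1$. This is \eqref{eq:two-sided-gaussian-action}. The chart is therefore $U\times U$-equivariantly the trivial torsor $T_\ell\times\Mat_\ell^{m-1}\times U_\ell\times U_\ell$. Its invariant ring is $\kk[T_\ell\times\Mat_\ell^{m-1}]$, because $\kk[U\times U]^{U\times U}=\kk$ under free right translation. Since $\widetilde H_1$ is invariant, localization commutes with taking invariants, and \eqref{eq:two-sided-gaussian-quotient} follows.

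\textbf{Step 3: transposition.} Transposing $u^TDv$ gives $v^TDu$, and transposing $u^TB_rv$ gives $v^TB_r^Tu$. Transposition also preserves leading principal minors, so it preserves $Y_{\ell,m}^{\mathrm G}$. Thus $\tau_Y$ acts on the chart by $(D,B_r,u,v)\mapsto(D,B_r^T,v,u)$. After passing to the quotient by $U\times U$ this is \eqref{eq:transpose-on-two-sided-slice}. Its fixed locus is visibly $T_\ell\times\SymMat_\ell^{m-1}$.

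\textbf{Step 4: identification with the symmetric quotient.} On $X_{\ell,m}$ the uniqueness in Step 1 forces $v=u$. The chart then restricts to $\Psi$ of Proposition~\ref{prop:gaussian-slice}, so the restriction $\rho$ matches the two quotients.

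The only point needing care is the regularity of $u,v$ on the open set, together with uniqueness of the factorization. Both are standard, so no real obstacle is expected.
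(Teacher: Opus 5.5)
Your proposal is correct and follows the paper's proof: the unique $LDU$ factorization $A_1=u^TDv$ on $\widetilde H_1\ne0$ with a regular inverse from Gaussian elimination, right translation on the $U_\ell\times U_\ell$ factor, removal of that factor on taking invariants, and the identity $(u^TB_rv)^T=v^TB_r^Tu$ for transposition. Your Step~4, where uniqueness forces $v=u$ on symmetric tuples so that the chart restricts to $\Psi$, spells out the identification with Proposition~\ref{prop:gaussian-slice} that the paper leaves implicit.
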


\begin{proof}
The nonvanishing of the leading principal minors is equivalent to the
existence of a unique $LDU$ decomposition
\[ A_1=u^TDv, \qquad u,v\in U_\ell, \qquad D\in T_\ell. \]
The remaining normalized matrices are $B_r=u^{-T}A_rv^{-1}$.  Gaussian
elimination shows that these assignments and their inverses are regular on
$Y_{\ell,m}^{\mathrm G}$, proving
\eqref{eq:two-sided-gaussian-map}.  Formula
\eqref{eq:two-sided-gaussian-action} follows from the two-sided action.
Taking invariants removes the last two factors and proves
\eqref{eq:two-sided-gaussian-quotient}.  Finally,
\[ (u^TDv,u^TB_2v,\ldots,u^TB_mv)^T =(v^TDu,v^TB_2^Tu,\ldots,v^TB_m^Tu), \]
which gives \eqref{eq:transpose-on-two-sided-slice}.  The assertion about
the fixed locus follows.
\end{proof}

\begin{corollary}
\label{cor:localized-restriction-surjective}
Restriction to symmetric matrices induces a surjection
\[ \rho_{H_1}:\cT_{\ell,m}[\widetilde H_1^{-1}] \twoheadrightarrow\cR_{\ell,m}[H_1^{-1}]. \]
\end{corollary}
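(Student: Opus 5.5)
The idea is to compare the two Gauss charts directly and observe that, after localization, restriction is nothing more than restriction of regular functions to a closed subvariety. First, note that $\rho(\widetilde h_i)=h_i$, since restricting $A_1$ to a symmetric $C_1$ sends the leading principal minors of $A_1$ to those of $C_1$. Hence $\rho(\widetilde H_1)=H_1$, and the restriction homomorphism $\rho:\cT_{\ell,m}\to\cR_{\ell,m}$ extends uniquely to the localizations. This extension is $\rho_{H_1}$.

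Next, use Propositions~\ref{prop:gaussian-slice} and~\ref{prop:two-sided-gaussian-slice} to identify the two localized rings with coordinate rings:
\[ \cT_{\ell,m}[\widetilde H_1^{-1}]\cong\kk[T_\ell\times\Mat_\ell^{m-1}], \qquad \cR_{\ell,m}[H_1^{-1}]\cong\kk[T_\ell\times\SymMat_\ell^{m-1}]. \]
We must check that, under these identifications, $\rho_{H_1}$ is restriction of functions along the inclusion $T_\ell\times\SymMat_\ell^{m-1}\hookrightarrow T_\ell\times\Mat_\ell^{m-1}$. Take a symmetric tuple $C\in X_{\ell,m}^{\mathrm G}$ with $C_1=u^TDu$. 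Its $LDU$ decomposition as an element of $Y_{\ell,m}^{\mathrm G}$ is $A_1=u^TDv$ with $v=u$, by uniqueness of the $LDU$ factorization. The normalized matrices are then $B_r=u^{-T}C_ru^{-1}=S_r$. So $\widetilde\Psi$ restricted to $T_\ell\times\SymMat_\ell^{m-1}\times\{(u,u)\}$ agrees with $\Psi$, and the slice coordinate of the symmetric point is $(D,S_2,\ldots,S_m)$.

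An invariant on $Y^{\mathrm G}_{\ell,m}$ is determined by its values on the slice $u=v=I$, and the same holds for congruence invariants on $X^{\mathrm G}_{\ell,m}$. Therefore restricting a two-sided invariant to symmetric tuples amounts to restricting its slice function to the symmetric slice. This is the one point requiring care: the compatibility of the two charts, i.e. that the diagonal $U_\ell$ orbit of a symmetric tuple meets the two-sided slice exactly in the symmetric slice point. The uniqueness argument above handles it.

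Finally, restriction of functions to a closed subvariety of an affine variety is surjective. Here it is the quotient map
\[ \kk[T_\ell]\otimes\kk[(a_{r;ij})]\longrightarrow \kk[T_\ell]\otimes\kk[(a_{r;ij})]/(a_{r;ij}-a_{r;ji}). \]
Concretely, the slice coordinates $\xi_{r;a,b}$ are images of the corresponding two-sided coordinates $(B_r)_{ab}$, and $h_i^{\pm1}$ are images of $\widetilde h_i^{\pm1}$. Combined with \eqref{eq:localized-ring-xi}, this shows that the image contains a generating set of $\cR_{\ell,m}[H_1^{-1}]$, so $\rho_{H_1}$ is surjective.
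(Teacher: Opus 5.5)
Your proposal is correct and follows the paper's proof, which simply observes that under the two Gauss-chart identifications of Propositions~\ref{prop:gaussian-slice} and~\ref{prop:two-sided-gaussian-slice} the map $\rho_{H_1}$ becomes the surjective restriction $\kk[T_\ell\times\Mat_\ell^{m-1}]\to\kk[T_\ell\times\SymMat_\ell^{m-1}]$. You also spell out the chart compatibility that the paper leaves implicit: $\rho(\widetilde H_1)=H_1$, and a symmetric tuple has $v=u$ and $B_r=S_r$.
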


\begin{proof}
Under the two Gauss-chart identifications, this is the surjective
restriction
\[ \kk[T_\ell\times\Mat_\ell^{m-1}] \longrightarrow\kk[T_\ell\times\SymMat_\ell^{m-1}]. \]
\end{proof}

\subsection{Transfer from the flagged Kronecker model}
\label{subsec:matrix-transfer}

The construction in \cite{FeiKroneckerII} is stated for the semi-invariant
ring of a flagged Kronecker representation space.
Appendix~\ref{app:matrix-transfer} identifies that ring with $\cT_{\ell,m}$
and compares the corresponding cluster functions.

\begin{proposition}\label{prop:matrix-transfer}
Let $\mathsf K_{\ell,m}$ be the $\ell$-flagged $m$-arrow Kronecker quiver
with its standard dimension vector $\boldsymbol\beta_\ell$.  There is a
natural multigraded algebra isomorphism
\[ \SI_{\boldsymbol\beta_\ell}(\mathsf K_{\ell,m}) \xrightarrow{\sim}\cT_{\ell,m}. \]
It sends the determinant of the $r$-th central arrow to $\Delta_r$ and
intertwines reflection of the two flag arms with matrix transposition.
\end{proposition}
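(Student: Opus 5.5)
The plan is to realize both algebras as invariant rings of a reductive group acting on the same affine space, and to read off the isomorphism from this common model. I take the flagged Kronecker quiver $\mathsf K_{\ell,m}$ to have two flag arms $1\to2\to\cdots\to\ell$ and $1'\to\cdots\to\ell'$, with $m$ arrows joining the two terminal vertices. The dimension vector $\boldsymbol\beta_\ell$ assigns $i$ to each arm vertex, so the central vertices both have dimension $\ell$. Then
\[ \SI_{\boldsymbol\beta_\ell}(\mathsf K_{\ell,m})=\kk[\Rep_{\boldsymbol\beta_\ell}(\mathsf K_{\ell,m})]^{\prod_v\SL_{\beta_v}}. \]

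The first step is a reduction by stages. The representation space is a product of the arm spaces $\prod_{i<\ell}\Mat_{i+1,i}$ on each side and the central space $\Mat_\ell^m$. I first take invariants under the $\SL$ factors at the non-central arm vertices. The classical first fundamental theorem for complete flags (the $\SL_n$-invariants of full flag data, as in \cite{Grosshans}) identifies the result with $\kk[\SL_\ell/U_\ell]$ on each arm, tensored with $\kk[\Mat_\ell^m]$. Concretely, the open orbit of injective arm maps is a homogeneous space whose stabilizer structure yields $\SL_\ell/U_\ell$ after the central determinant factor is accounted for. The complement of the injective locus has codimension at least $2$, so regular functions extend across it.

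Next I take invariants of $\SL_\ell\times\SL_\ell$ at the two central vertices. By the transfer principle
\[ \bigl(\kk[G/U]\otimes \kk[Z]\bigr)^G\cong\kk[Z]^U \]
applied on both sides, this gives $\kk[\Mat_\ell^m]^{U^{\mathrm L}\times U^{\mathrm R}}=\cT_{\ell,m}$. The normalization of the two actions must be fixed so that it matches \eqref{eq:two-sided-action}, and the relevant $U$ sits inside $\SL_\ell$, so the $\SL$-versus-$\GL$ distinction is harmless.

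For the multigrading, I check that the weights of the vertex characters correspond to $(\lambda^{\mathrm L},\lambda^{\mathrm R};\alpha)$. The characters at arm vertices become fundamental weights of $T^{\mathrm L}$ and $T^{\mathrm R}$, and the arrow gradings become $T_W$. Under the identification, the determinant of the $r$-th central arrow (the semi-invariant $\det A_r$ for that arrow) maps to $\Delta_r$, because it involves no arm data.

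For transposition, reflecting the quiver swaps the two arms and reverses the central arrows. Composing with the duality $A\mapsto A^T$ on representations gives an automorphism of $\SI$. Under the transfer, this sends a pair (left $U$, right $U$) to the swapped pair, matching $\tau$.

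The main obstacle is fixing conventions. I must verify that the chosen Kronecker orientation and the $u^{-T}(\cdot)v^{-1}$ action agree, including the inverse-transposes that arise from dualizing one arm. I also must ensure that the transfer isomorphism is canonical enough to intertwine the reflection exactly rather than up to a torus twist. I would settle this by checking the generators: the arm minors and the central determinants on the Gauss chart of Proposition~\ref{prop:two-sided-gaussian-slice}.
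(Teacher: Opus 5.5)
Your proposal follows essentially the paper's route: the first fundamental theorem for the two flag arms reduces the semi-invariant ring to $\kk[\mathcal A^-\times\Mat_\ell^m\times\mathcal A^+]^{G\times G}$, and the two-sided Grosshans transfer identifies this with $\cT_{\ell,m}$; the paper realizes that transfer as evaluation at the standard flags, with stabilizers $U^-$ and $U$, followed by $u\mapsto u^{-T}$. One slip to fix: in $\mathsf K_{\ell,m}$ the second arm points away from the center ($\ell\to\ell-1\to\cdots\to1$), which is what produces the $U^-$ on one side and makes arm-swap plus duality an automorphism, and with that orientation the intertwining with $\tau$ is immediate because transposition exchanges the standard flags $J_i$ and $P_i=J_i^T$, so no check on generators is needed.
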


The proof is given in Appendix~\ref{app:matrix-transfer}.

\subsection{Alternating words and chamber determinants}
\label{subsec:matrix-chambers}

For $0\le a\le\ell$, let
\[ J_a:\kk^a\hookrightarrow\kk^\ell, \qquad P_a:=J_a^T:\kk^\ell\twoheadrightarrow\kk^a \]
be the standard inclusion and projection.  Empty row or column blocks
are omitted.  If
$I=(r_1,\ldots,r_{2q+1})$ has odd length, define on $(\GL_\ell)^m$
\begin{equation}
 A_I:=A_{r_1}A_{r_2}^{-1}A_{r_3}\cdots
       A_{r_{2q}}^{-1}A_{r_{2q+1}},
 \label{eq:alternating-word}
\end{equation}
and let $\overleftarrow I=(r_{2q+1},\ldots,r_1)$.  Put
\[
 [a]:=(1,2,\ldots,a),
 \qquad
 \mathbf q_n:=
 \begin{cases}
  [n-1],&n\text{ even},\\
  [n],&n\text{ odd},
 \end{cases}
 \qquad 2\le n\le m,
\]
and
\[ \Pi_n:=\prod_{\substack{2\le r<n\\r\text{ even}}}\Delta_r. \]

Let $i,j\ge0$ and $k=i+j$.  On $(\GL_\ell)^m$ set
\begin{equation}
 \phi_{i,j}^{(n),\mathrm L,\circ}(A)
 :=\det
 \begin{bmatrix}
  P_iA_nJ_k\\[1mm]
  P_jA_{\mathbf q_n}J_k
 \end{bmatrix}.
 \label{eq:left-chamber-determinant}
\end{equation}
For $1\le k\le\ell$, define recursively
\begin{equation}
 \phi_{i,j}^{(n),\mathrm L}:=
 \begin{cases}
  \phi_{i,0}^{(n),\mathrm L,\circ},&j=0,\\[1mm]
  \Pi_n\phi_{i,j}^{(n),\mathrm L,\circ},
    &j>0\text{ and }(k<\ell\text{ or }n\text{ even}),\\[1mm]
  \phi_{i,j}^{(n-1),\mathrm L},
    &j>0,\ k=\ell,\ n\text{ odd},
 \end{cases}
 \label{eq:regular-left}
\end{equation}
and set
\begin{equation}
 \phi_{i,j}^{(n),\mathrm R}:=\tau
 \bigl(\phi_{i,j}^{(n),\mathrm L}\bigr).
 \label{eq:regular-right}
\end{equation}
When the middle line of \eqref{eq:regular-left} applies,
\begin{equation}
 \phi_{i,j}^{(n),\mathrm R}(A)
 =\Pi_n\det\left[
  P_kA_nJ_i\ \middle|\ P_kA_{\overleftarrow{\mathbf q_n}}J_j
 \right].
 \label{eq:right-formula}
\end{equation}

\begin{lemma}\label{lem:denominator-clearing}
The functions $\phi_{i,j}^{(n),\mathrm L}$ and
$\phi_{i,j}^{(n),\mathrm R}$ extend to polynomial functions on
$Y_{\ell,m}$.
\end{lemma}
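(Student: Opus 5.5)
We show that the denominators of $\phi_{i,j}^{(n),\mathrm L,\circ}$ appearing in the second line of \eqref{eq:regular-left} are products of powers of the $\Delta_r$ with $r$ even, $r<n$, and that one factor of each such $\Delta_r$ already clears them. The right-hand functions then follow at once. Since $\tau$ is an algebra automorphism of $\kk[Y_{\ell,m}]$ preserving polynomiality, and $\phi^{(n),\mathrm R}_{i,j}=\tau(\phi^{(n),\mathrm L}_{i,j})$ by \eqref{eq:regular-right}, it suffices to treat the left functions. For $j=0$ there is nothing to prove, since $\phi_{i,0}^{(n),\mathrm L,\circ}=\det P_iA_nJ_i$ is a minor of $A_n$. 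The third line of \eqref{eq:regular-left} reduces to the case $n-1$, so an induction on $n$ handles it.

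For the middle case, the block rows $P_jA_{\mathbf q_n}J_k$ involve the alternating word $A_1A_2^{-1}A_3\cdots$, in which only the even-indexed matrices $A_2,A_4,\ldots$ with index $<n$ are inverted. By Cramer's rule, $A_r^{-1}=\Delta_r^{-1}\operatorname{adj}(A_r)$. Every entry of $A_{\mathbf q_n}$ is therefore a polynomial divided by $\Pi_n=\prod_{r\text{ even},\,r<n}\Delta_r$. The naive bound from multilinearity of the determinant in the last $j$ rows only gives $\Pi_n^{j}\phi^{\circ}\in\kk[Y]$. The claim is sharper: $\Pi_n\phi^{\circ}$ is already polynomial.

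To prove the sharper bound, I would realize $\phi^{\circ}$ as a Schur complement of a single large polynomial block matrix. For $\mathbf q_n=(1,\ldots,s)$ with $s$ odd, consider the block matrix
\[
 M=\begin{bmatrix}
 A_1 & 0 & \cdots & \\
 -A_2 & A_3 & & \\
 & \ddots & \ddots & \\
 & & -A_{s-1} & A_s
 \end{bmatrix}
\]
bordered by the projections $P_j$, $J_k$ and the extra rows $P_iA_nJ_k$. This bordered block-bidiagonal matrix $N$ has polynomial entries. Eliminating the blocks $A_2,A_4,\ldots$ by Schur complement gives the identity
\[
 \det N=\pm\Big(\prod_{r\text{ even},\,r<n}\Delta_r\Big)\,\phi^{(n),\mathrm L,\circ}_{i,j}.
\]
This is the standard linearization of a noncommutative alternating product: $A_1A_2^{-1}A_3\cdots$ appears as a corner of the inverse of a bidiagonal block matrix. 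Since $\det N$ is a polynomial, the identity shows $\Pi_n\phi^{\circ}\in\kk[Y_{\ell,m}]$ on the dense open set $(\GL_\ell)^m$, so $\phi^{(n),\mathrm L}_{i,j}$ extends to a polynomial function.

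The main obstacle is getting this linearization exactly right. We need the correct bordering so that only one copy of each $\Delta_r$ appears, independently of $j$ and $k$, and we need to track the signs. If the sizes misbehave, for example because the $j$ rows use the full $\ell$-dimensional intermediate spaces, I would check that the determinant identity holds with $\ell\times\ell$ blocks for each inverted matrix. In that setting each $A_r^{-1}$ contributes exactly $\det A_r$ through the Schur complement formula $\det\begin{bmatrix}X&Y\\Z&W\end{bmatrix}=\det W\det(X-YW^{-1}Z)$, applied iteratively from the right end of the word. As a fallback, polynomiality could instead be deduced from the transfer in Proposition~\ref{prop:matrix-transfer}. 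There, $\phi^{(n),\mathrm L}_{i,j}$ corresponds to a determinantal semi-invariant of the flagged Kronecker quiver from \cite{FeiKroneckerII}, which is polynomial by construction.
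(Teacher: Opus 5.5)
Your argument is correct but takes a different route from the paper. There is one slip: the displayed $M$ has the wrong parity. As drawn, its pivot blocks are $A_1,A_3,\ldots$, so $\det M=\prod_{r\ \mathrm{odd}}\det A_r$, and the corner of $M^{-1}$ is $A_s^{-1}A_{s-1}\cdots A_2A_1^{-1}=A_{\mathbf q_n}^{-1}$, not $A_{\mathbf q_n}$. The block you eliminate must carry the inverted matrices.

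Here is the corrected matrix. For $n\ge3$, write $\mathbf q_n=(1,\ldots,2p+1)$. Let $W$ be the $p\times p$ block upper-bidiagonal matrix with diagonal blocks $A_2,A_4,\ldots,A_{2p}$ and superdiagonal blocks $-A_3,\ldots,-A_{2p-1}$. Then $\det W=\Pi_n$, and the $(1,p)$ block of $W^{-1}$ is $A_2^{-1}A_3A_4^{-1}\cdots A_{2p-1}A_{2p}^{-1}$. Put
\[
 N=\begin{bmatrix}X&Y\\ Z&W\end{bmatrix},
 \qquad
 X=\begin{bmatrix}P_iA_nJ_k\\ 0\end{bmatrix}.
\]
Here $Y$ has $P_jA_1$ in the lower part of its first block column and zeros elsewhere. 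The block $Z$ has $A_{2p+1}J_k$ in its last block row and zeros elsewhere. Since $i+j=k$, the matrix $N$ is square and polynomial, and on $(\GL_\ell)^m$
\[
 \det N=\det W\,\det\bigl(X-YW^{-1}Z\bigr)
 =(-1)^j\,\Pi_n\,\phi^{(n),\mathrm L,\circ}_{i,j}.
\]
This is exactly your identity, with one factor of each $\Delta_r$ regardless of $i,j,k$. For $n=2$ there is nothing to do, since the word is $A_1$ and $\Pi_2=1$.

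The paper instead expands the determinant. It applies Laplace expansion along the top $i$ rows, Cauchy--Binet to the resulting minors of $A_{\mathbf q_n}$, and Jacobi's complementary-minor identity. Each term then contains exactly one minor of each $A_r^{-1}$ with $r$ even, and hence exactly one factor $\Delta_r^{-1}$.

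The two routes buy different things. The paper's expansion writes $\Pi_n\phi^{(n),\mathrm L,\circ}_{i,j}$ as an explicit signed sum of products of minors of the $A_r$, so degrees can be read off term by term. Your linearization avoids the bookkeeping over intermediate index sets and produces a single closed polynomial determinant. The pair $(u,v)$ acts on $N$ by block-diagonal unitriangular row and column operations, so your construction also gives Lemma~\ref{lem:chamber-invariance} immediately. It makes your fallback through the transfer unnecessary.
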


\begin{proof}
Apply Laplace expansion along the top $i$ rows in
$\phi_{i,j}^{(n),\mathrm L,\circ}$, then expand
$\det(A_{\mathbf q_n}[I,J])$ by the Cauchy--Binet formula and apply
Jacobi's complementary-minor identity
\[ \det(B)\det(B^{-1}[I,J]) =(-1)^{\sum I+\sum J}\det(B[J^c,I^c]). \]
Each term contains minors of $A_r^{-1}$ with $r$ even.  Multiplication
by $\Pi_n$ clears their denominators, so
$\Pi_n\phi_{i,j}^{(n),\mathrm L,\circ}$ is polynomial.  Transposition
gives the right chamber function.
\end{proof}

The chamber expressions therefore define polynomial functions on $Y_{\ell,m}$;
no determinant is inverted in the coefficient ring.

\begin{lemma}
\label{lem:chamber-invariance}
Every $\phi_{i,j}^{(n),\mathrm L}$ and
$\phi_{i,j}^{(n),\mathrm R}$ belongs to $\cT_{\ell,m}$.
\end{lemma}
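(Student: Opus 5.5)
We need to show each $\phi_{i,j}^{(n),\mathrm L}$ is invariant under $U_\ell^{\mathrm L}\times U_\ell^{\mathrm R}$. By Lemma~\ref{lem:denominator-clearing} these functions are polynomial on $Y_{\ell,m}$. It is therefore enough to check invariance on the dense open subset $(\GL_\ell)^m$, where the formula \eqref{eq:left-chamber-determinant} makes sense. The right functions then follow formally. Since $\tau$ intertwines the action of $(u,v)$ with that of $(v,u)$, applying $\tau$ to an invariant polynomial gives an invariant polynomial. The third line of \eqref{eq:regular-left} is a previously defined chamber function, so an induction on $n$ reduces everything to the $\circ$-functions. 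Finally, $\Pi_n$ is a product of determinants, which are invariant because $\det u=\det v=1$. So the whole lemma reduces to showing that $\phi_{i,j}^{(n),\mathrm L,\circ}$ is invariant on $(\GL_\ell)^m$.

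Fix $(u,v)$ and write $A_r'=u^{-T}A_rv^{-1}$. The first step is to compute the transformed words. Since $(A_r')^{-1}=vA_r^{-1}u^{T}$, the inner factors cancel in pairs in an odd-length alternating word:
\[
A'_I=u^{-T}A_Iv^{-1}.
\]
In particular $A'_n=u^{-T}A_nv^{-1}$ and $A'_{\mathbf q_n}=u^{-T}A_{\mathbf q_n}v^{-1}$. The key structural facts about the triangular factors are the following.
\begin{itemize}
\item $v^{-1}$ is upper unitriangular, so $v^{-1}J_k=J_k\,w$ for the unitriangular $k\times k$ block $w=P_kv^{-1}J_k$. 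This uses only that the first $k$ columns of an upper triangular matrix are supported in the first $k$ rows.
\item $u^{-T}$ is lower unitriangular, so $P_iu^{-T}=x\,P_i$ for a unitriangular $i\times i$ block $x$. Similarly $P_ju^{-T}=y\,P_j$ with $y$ unitriangular.
\end{itemize}

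Substituting these into \eqref{eq:left-chamber-determinant} gives
\[
\begin{bmatrix}P_iA'_nJ_k\\ P_jA'_{\mathbf q_n}J_k\end{bmatrix}
=\begin{bmatrix}x&0\\0&y\end{bmatrix}
\begin{bmatrix}P_iA_nJ_k\\ P_jA_{\mathbf q_n}J_k\end{bmatrix}w .
\]
The block-diagonal left factor and $w$ both have determinant $1$, so the determinant is unchanged. This proves invariance of $\phi^{(n),\mathrm L,\circ}_{i,j}$ on the dense set $(\GL_\ell)^m$. Invariance of the polynomial extension on all of $Y_{\ell,m}$ follows by density, since the action is by polynomial automorphisms.

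I expect the only delicate point to be bookkeeping rather than substance. One must check that the row blocks $P_i$ and $P_j$ both pick out \emph{initial} segments of rows, which is what makes the left factor block-triangular, and that the convention $u^{-T}$ versus $u^T$ in \eqref{eq:two-sided-action} makes the left factor lower triangular. If the orientation were reversed, one would need final segments instead, and the argument would have to be adjusted via Proposition~\ref{prop:matrix-transfer}. Alternatively, one can bypass this check by appealing directly to the transfer of the invariant cluster functions from \cite{FeiKroneckerII}.
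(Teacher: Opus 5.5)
Your proof is correct and is the paper's argument: the identity $((u,v)\cdot A)_I=u^{-T}A_Iv^{-1}$ for odd-length words shows that the chamber matrix changes only by unitriangular row and column operations, $\Pi_n$ is invariant, and $\tau$ gives the right functions. You spell out what the paper leaves implicit, namely the factorization $\operatorname{diag}(x,y)\cdot M\cdot w$, the density step, and the induction for the third recursive case; your orientation check is already correct as written, so no appeal to the transfer is needed.
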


\begin{proof}
For every odd-length word $I$,
\[ ((u,v)\cdot A)_I=u^{-T}A_Iv^{-1}. \]
The chamber matrix is therefore changed by unitriangular row and column
operations.  Its determinant and the factor $\Pi_n$ are invariant.
Transposition gives the right chamber functions.
\end{proof}

\subsection{The flagged Kronecker seed in matrix form}
\label{subsec:flagged-matrix-form}

The determinant formulas above determine the relevant functions up to nonzero
scalars.  We use the compatible normalization from \cite{FeiKroneckerII}.

\begin{theorem}\label{thm:flagged-matrix-form}
There exist nonzero scalars $c_{i,j}^{(n)}$ such that, after setting
\[ x_{i,j}^{(n),\mathrm L}:= c_{i,j}^{(n)}\phi_{i,j}^{(n),\mathrm L}, \qquad x_{i,j}^{(n),\mathrm R}:=\tau \bigl(x_{i,j}^{(n),\mathrm L}\bigr), \]
the determinant functions $\Delta_r$ and the nonredundant chamber functions
$x_{i,j}^{(n),\mathrm L},x_{i,j}^{(n),\mathrm R}$ form the extended cluster
of a seed
\[ \widetilde\Sigma_{\ell,m} =(\widetilde B_{\ell,m},\widetilde{\mathbf x}_{\ell,m}) \quad\text{in }\Frac(\cT_{\ell,m}), \]
whose exchange matrix has full row rank.  Moreover,
\begin{equation}
 \cT_{\ell,m}=\mathcal U(\widetilde\Sigma_{\ell,m}),
 \label{eq:flagged-UCA}
\end{equation}
with frozen variables treated as polynomial coefficients.  The normalization
may be chosen so that
\begin{align}
 x_{i,0}^{(n),\mathrm L}&=x_{i,0}^{(n),\mathrm R}
 =\det(P_iA_nJ_i),
 \notag\\
 x_{0,j}^{(2),\mathrm L}&=x_{0,j}^{(2),\mathrm R}
 =\det(P_jA_1J_j),
 \notag\\
 x_{0,j}^{(n),\epsilon}&=x_{0,j}^{(n-1),\epsilon}
 &&(n\ge4\text{ even}),
 \label{eq:even-identification}\\
 x_{i,\ell-i}^{(n),\epsilon}
 &=x_{i,\ell-i}^{(n-1),\epsilon}
 &&(n\ge3\text{ odd}),
 \label{eq:odd-identification}
\end{align}
where $\epsilon\in\{\mathrm L,\mathrm R\}$.  The redundant corners are
normalized as
\[ x_{\ell,0}^{(n),\epsilon}=\Delta_n, \qquad x_{0,\ell}^{(n),\epsilon} =\prod_{\substack{1\le r<n\\r\text{ odd}}}\Delta_r. \]
Transposition preserves the mutable and frozen subsets and satisfies
$\widetilde b_{\tau u,\tau v}=\widetilde b_{uv}.$
\end{theorem}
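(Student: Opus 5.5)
This theorem is essentially a transfer of the main theorem of \cite{FeiKroneckerII} through the isomorphism of Proposition~\ref{prop:matrix-transfer}. The plan has three steps: transport the seed, match its cluster variables with the chamber determinants, and check compatibility with transposition.

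\textbf{Step 1: transport the seed.} The source is the seed for $\SI_{\boldsymbol\beta_\ell}(\mathsf K_{\ell,m})$ constructed in \cite{FeiKroneckerII}. That paper already shows that this seed has full row rank and that the semi-invariant ring equals its upper cluster algebra with frozen variables as polynomial coefficients. Since the isomorphism of Proposition~\ref{prop:matrix-transfer} is an algebra isomorphism, all of this transfers verbatim. It is multigraded, so it extends to fraction fields and commutes with localization and intersection. This gives a seed $\widetilde\Sigma_{\ell,m}$ in $\Frac(\cT_{\ell,m})$ with $\cT_{\ell,m}=\mathcal U(\widetilde\Sigma_{\ell,m})$. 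The exchange matrix $\widetilde B_{\ell,m}$ is the hive-type matrix of \cite{FeiKroneckerII}, assembled from triangles indexed by $n$ and glued along the arms.

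\textbf{Step 2: identify the cluster variables.} In \cite{FeiKroneckerII} the cluster variables are determinantal semi-invariants $c^{Q}$ attached to explicit presentations, one for each chamber $(i,j)$ of the $n$-th triangle on each arm. The plan is to compute each image under the transfer map. Each presentation is a block matrix built from the central arrows and the flag maps $J_a,P_a$. Row-reducing it by Schur complements against the arrows indexed by even $r<n$ turns $c^{Q}$ into $\Pi_n\det\bigl[\begin{smallmatrix}P_iA_nJ_k\\ P_jA_{\mathbf q_n}J_k\end{smallmatrix}\bigr]$, up to a scalar. Two checks make this consistent. Weights: the $T^{\mathrm L}\times T^{\mathrm R}\times T_W$-degrees of both sides must agree, because the transfer is multigraded. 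Independence of choices: semi-invariants of a fixed weight from general presentations agree up to scalar, so the scalars $c_{i,j}^{(n)}$ are well defined and nonzero.

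The degenerate cases are then read off the recursion \eqref{eq:regular-left}.
\begin{itemize}
\item When $j=0$ the function is simply $\det(P_iA_nJ_i)$.
\item For $n=2$ and $i=0$ we have $\mathbf q_2=[1]$, which gives $\det(P_jA_1J_j)$.
\item Equation \eqref{eq:even-identification} holds because for $n$ even $\mathbf q_n=\mathbf q_{n-1}$, and $\Pi_n=\Pi_{n-1}$ for $n$ even.
\item Equation \eqref{eq:odd-identification} is built into the third line of \eqref{eq:regular-left}, reflecting the gluing of consecutive triangles along $i+j=\ell$.
\item The corners: $x_{\ell,0}^{(n)}=\Delta_n$ holds directly. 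For $x_{0,\ell}^{(n)}$, multiplicativity of the determinant gives $\det A_{\mathbf q_n}\cdot\Pi_n=\prod_{r\text{ odd}<n}\Delta_r$.
\end{itemize}
Finally, the normalizations for the two arms are made compatible, which is where the scalars are fixed.

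\textbf{Step 3: transposition.} By Proposition~\ref{prop:matrix-transfer}, $\tau$ corresponds to the reflection of the flagged Kronecker quiver that swaps the two arms. That reflection is a quiver automorphism, so it preserves the seed of \cite{FeiKroneckerII} up to relabeling. The relabeling sends each left chamber to the matching right chamber and fixes the central determinants. Therefore $\tau$ permutes the mutable and frozen sets and satisfies $\widetilde b_{\tau u,\tau v}=\widetilde b_{uv}$. Defining $x^{\mathrm R}=\tau(x^{\mathrm L})$ is then consistent, once we check that the reflected presentation produces $\tau$ of the left determinant, namely \eqref{eq:right-formula}.

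\textbf{Main obstacle.} The delicate step is the identification in Step 2, which Appendix~\ref{app:matrix-transfer} must carry out. It requires an explicit Schur-complement reduction of each presentation matrix to the alternating-word determinant. It also requires tracking the factors $\Pi_n$ and the signs so that a single choice of scalars simultaneously satisfies the reflection symmetry and all of the identifications listed above.
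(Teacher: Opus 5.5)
Your proposal is essentially the paper's argument: transfer the seed of \cite{FeiKroneckerII} through Proposition~\ref{prop:matrix-transfer}, quote that paper for \eqref{eq:flagged-UCA} and full row rank, identify the cluster functions with the chamber determinants, and match the arm reflection with $\tau$. Two differences in detail: Appendix~\ref{app:matrix-transfer} uses Schofield determinants with formally inverted central arrows and takes $\Pi_n$ from the minimal lift of \cite{FeiKroneckerII} rather than from a Schur-complement reduction; and the reflection reverses arrows, so it is duality composed with $\mathsf K_{\ell,m}\cong\mathsf K_{\ell,m}^{\mathrm{op}}$, not a quiver automorphism. One slip: for odd $n$ the corner is not $\Pi_n\det A_{\mathbf q_n}=\prod_{r\le n,\ r\text{ odd}}\Delta_r$, which has an extra factor $\Delta_n$; the stated value comes from the third line of \eqref{eq:regular-left}, i.e.\ $x_{0,\ell}^{(n)}=x_{0,\ell}^{(n-1)}$, and more generally for odd $n$, $j>0$, $i+j=\ell$ one has $\Pi_n\phi_{i,j}^{(n),\mathrm L,\circ}=\Delta_n\phi_{i,j}^{(n-1),\mathrm L}$, which is why the minimal lift, not your Schur-complement expression, is the cluster variable on that side.
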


\begin{proof}
Via Proposition~\ref{prop:matrix-transfer}, the functions above are the
minimally lifted cluster functions of
\cite[Sections~4--5]{FeiKroneckerII}.  The equality
\eqref{eq:flagged-UCA} is \cite[Theorem~0.1]{FeiKroneckerII}, and full row
rank is proved in \cite[Section~5]{FeiKroneckerII}.  Reflection of the two
flags becomes
matrix transposition, including the determinant coefficients introduced by
the minimal lift.
\end{proof}

We use these normalizations from now on.  The notation $f\doteq g$ means that
$f$ and $g$ differ by a nonzero scalar.  Formulas
\eqref{eq:left-chamber-determinant}--\eqref{eq:right-formula} determine the
normalized functions up to $\doteq$; the exchange relations are normalized
with coefficient $+1$.

\subsection{Symmetric restriction and the folded initial family}
\label{subsec:folded-family}

For a symmetric tuple $C$ and any odd-length sequence $I$,
\begin{equation}
 C_I^T=C_{\overleftarrow I}.
 \label{eq:symmetric-word}
\end{equation}
The two determinant matrices in \eqref{eq:left-chamber-determinant} and
\eqref{eq:right-formula} are therefore transposes.

\begin{proposition}\label{prop:transpose-comparison}
For every admissible $n,i,j$, we have
\begin{equation}
 \rho\bigl(x_{i,j}^{(n),\mathrm L}\bigr)
 =\rho\bigl(x_{i,j}^{(n),\mathrm R}\bigr).
 \label{eq:transpose-comparison}
\end{equation}
\end{proposition}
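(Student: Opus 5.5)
The plan is to use the identity $\rho\circ\tau=\rho$ already recorded in Section~\ref{subsec:two-matrix-invariant-algebras}. By Theorem~\ref{thm:flagged-matrix-form}, $x_{i,j}^{(n),\mathrm R}=\tau\bigl(x_{i,j}^{(n),\mathrm L}\bigr)$, and the two functions carry the same scalar $c_{i,j}^{(n)}$. Since $\tau$ is induced by the involution $\tau_Y$, and $X_{\ell,m}$ is contained in the fixed locus of $\tau_Y$, every $f\in\cT_{\ell,m}$ satisfies
\[ \rho(\tau f)(C)=f(\tau_Y C)=f(C) \qquad\text{for } C\in X_{\ell,m}. \]
Applying this with $f=x_{i,j}^{(n),\mathrm L}$ gives \eqref{eq:transpose-comparison} immediately. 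The only points to check are that $x^{\mathrm L}_{i,j}$ really lies in $\cT_{\ell,m}$, which is Lemmas~\ref{lem:denominator-clearing} and~\ref{lem:chamber-invariance}, and that the involution on $\cT_{\ell,m}$ is literally pullback along $\tau_Y$, which is how it was defined.

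As a consistency check, I will also verify the identity directly from the determinant formulas, since the text just before the statement points to \eqref{eq:symmetric-word}. Consider the middle case of \eqref{eq:regular-left} on the dense open set where all $C_r$ are invertible. By \eqref{eq:symmetric-word},
\[ \bigl(P_iC_nJ_k\bigr)^T=P_kC_nJ_i,\qquad \bigl(P_jC_{\mathbf q_n}J_k\bigr)^T=P_kC_{\overleftarrow{\mathbf q_n}}J_j. \]
So the matrix in \eqref{eq:left-chamber-determinant}, evaluated at $C$, is the transpose of the matrix in \eqref{eq:right-formula}. Their determinants agree, and the prefactor $\Pi_n$ is common to both. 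The case $j=0$ is similar, and the recursive case ($k=\ell$, $n$ odd) follows by induction on $n$. Polynomiality plus density then extends the equality to all of $X_{\ell,m}$.

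I expect no real obstacle here. The one subtlety is the normalization: one must ensure the scalars on the left and right functions coincide rather than merely agree up to $\doteq$. This holds because Theorem~\ref{thm:flagged-matrix-form} defines the right function as $\tau$ applied to the normalized left function. For the redundant corners and the identifications \eqref{eq:even-identification}--\eqref{eq:odd-identification}, the same $\tau$-argument applies, since each such function is again of the form $\tau$ of its left partner, or is $\tau$-fixed, as with the determinants $\Delta_r$.
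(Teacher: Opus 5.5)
Your proof is correct, but your main argument takes a more formal route than the paper. The paper argues concretely: by \eqref{eq:symmetric-word}, the matrices in \eqref{eq:left-chamber-determinant} and \eqref{eq:right-formula} are transposes of each other on the symmetric locus. That is exactly your ``consistency check.''

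Your primary argument never looks at the determinants. Since $x^{(n),\mathrm R}_{i,j}$ is \emph{defined} as $\tau(x^{(n),\mathrm L}_{i,j})$, and $\tau$ is pullback along $\tau_Y$, which fixes $X_{\ell,m}$ pointwise, the identity $\rho\circ\tau=\rho$ from Section~\ref{subsec:two-matrix-invariant-algebras} gives the claim at once.

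What your route buys is uniformity. It covers the base-edge case $j=0$, the recursive case $k=\ell$ with $n$ odd, and the normalizing scalars $c^{(n)}_{i,j}$ in one stroke. It needs no passage to the open set where the $C_r$ are invertible and no density argument.

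The paper's route shows explicitly that both determinantal expressions compute the same folded function $z^{(n)}_{i,j}$ on symmetric tuples. The paper uses the right-hand form in its displayed formulas for $z^{(2)}_{i,j}$ and $z^{(3)}_{i,j}$. This also follows from your argument, by restricting each formula separately.
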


\begin{proof}
By \eqref{eq:symmetric-word}, the two chamber matrices are transposes on the
symmetric locus.
\end{proof}

Denote the common value in \eqref{eq:transpose-comparison} by
\[ z_{i,j}^{(n)} :=\rho\bigl(x_{i,j}^{(n),\mathrm L}\bigr) =\rho\bigl(x_{i,j}^{(n),\mathrm R}\bigr). \]
Let
\[ \mathscr H_\ell :=\{(i,j)\in\mathbb Z_{\ge0}^2:1\le i+j\le\ell\} \setminus\{(\ell,0),(0,\ell)\}. \]
Define
\begin{equation}
 \mathscr V_{\ell,m}
 :=\left(\bigsqcup_{n=2}^m\{n\}\times\mathscr H_\ell\right)\big/\sim,
 \label{eq:V-index}
\end{equation}
where $\sim$ is generated by
\begin{align*}
 (n;0,j)&\sim(n-1;0,j)
 &&(n\ge4\text{ even}),\\
 (n;i,\ell-i)&\sim(n-1;i,\ell-i)
 &&(n\ge3\text{ odd}).
\end{align*}
For $v=[n;i,j]\in\mathscr V_{\ell,m}$, put $z_v=z_{i,j}^{(n)}$, and set
\[ \mathbf x_{\ell,m} :=\{\Delta_1,\ldots,\Delta_m\} \cup\{z_v:v\in\mathscr V_{\ell,m}\}. \]

The principal minors from \eqref{eq:hi-definition} occur in the folded initial
family as
\[ h_i=z_{0,i}^{(2)},\qquad h_\ell=\Delta_1. \]

We write
\[ \omega_a=(\underbrace{1,\ldots,1}_{a},0,\ldots,0) \quad(0\le a\le\ell), \qquad \omega_0=0, \]
for the fundamental polynomial weights of $\GL_\ell$.  We write
$\varepsilon_1,\ldots,\varepsilon_\ell$ for the standard characters of
$T_\ell$, and let $\delta_1,\ldots,\delta_m$ be the standard basis of
$\mathbb Z^m$.  In particular,
\[ \wt_V(\Delta_r)=2\omega_\ell, \qquad \deg_W(\Delta_r)=\ell\delta_r. \]

For $v\in\mathscr V_{\ell,m}$, let the least index of a representative be
\[ s(v):=\min\{n:(n;i,j)\in v\text{ for some }i,j\}. \]
For a representative $(n;i,j)$, this index is $n-1$ when
$n\ge4$ is even and $i=0$, or when $n\ge3$ is odd, $i+j=\ell$, and
$j>0$; otherwise it is $n$.  Put
$q(v)=\lfloor(s(v)-1)/2\rfloor$ when $j>0$, and $q(v)=0$ when $j=0$.

\begin{proposition}
\label{prop:weights-count}
Let $v=[n;i,j]\in\mathscr V_{\ell,m}$, put $s=s(v)$ and $k=i+j$.  Then
\begin{equation}
 \wt_V(z_v)=\omega_i+\omega_j+\omega_k+2q(v)\omega_\ell.
 \label{eq:V-weight}
\end{equation}
Its $T_W$-multidegree is
\begin{equation}
 \deg_W(z_v)=
 \begin{cases}
  i\delta_s,&j=0,\\[1mm]
  i\delta_s
  +j\displaystyle\sum_{\substack{r<s\\r\text{ odd}}}\delta_r
  +(\ell-j)\displaystyle\sum_{\substack{r<s\\r\text{ even}}}\delta_r,
  &j>0,\ s\text{ even},\\[4mm]
  (i+j)\delta_s
  +j\displaystyle\sum_{\substack{r<s\\r\text{ odd}}}\delta_r
  +(\ell-j)\displaystyle\sum_{\substack{r<s\\r\text{ even}}}\delta_r,
  &j>0,\ s\text{ odd}.
 \end{cases}
 \label{eq:W-degree}
\end{equation}
The cardinality is
\[ |\mathbf x_{\ell,m}| =\frac{(m-1)\ell^2+(m+1)\ell}{2} =\dim\cR_{\ell,m}. \]
\end{proposition}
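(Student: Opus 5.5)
I will prove the three assertions separately, each by working with the explicit chamber determinants and the restriction rule \eqref{eq:degree-restriction}.

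\emph{Weights.} I start from the left chamber function $\phi_{i,j}^{(n),\mathrm L,\circ}$ of \eqref{eq:left-chamber-determinant} and compute its bidegree under $T_\ell^{\mathrm L}\times T_\ell^{\mathrm R}\times T_W$. Under $(s,t)\in T_\ell\times T_\ell$, each $A_r$ transforms as $s^{-1}A_rt^{-1}$. For an odd word, $A_I$ transforms in the same way, because the inverses alternate the sides. The row blocks $P_i$ and $P_j$ therefore contribute left weight $\omega_i+\omega_j$, and the column block $J_k$ contributes right weight $\omega_k$. These hold up to the global sign convention for polynomial weights, which I fix to match $\wt_V(\Delta_r)=2\omega_\ell$. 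The factor $\Pi_n$ contributes $\omega_\ell$ on each side for each even $r<n$, and there are $\lfloor (n-1)/2\rfloor$ such $r$. For representatives with $s(v)=n$, the index $n$ enters the recursion \eqref{eq:regular-left} through its middle line, so this factor has $q(v)$ terms. In the identified cases I pass to the representative with least index $s(v)$. The identifications \eqref{eq:even-identification} and \eqref{eq:odd-identification} show that the function equals the one attached to that representative. In particular the $j>0$, $k=\ell$, $n$ odd line reduces to $n-1$ even. One subtlety needs checking here. For even $s$, the word $\mathbf q_s=[s-1]$ has inverses at even letters $<s$. For odd $s$, it has inverses at even letters $<s$ as well. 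In both cases $\Pi_s$ has exactly $\lfloor(s-1)/2\rfloor$ factors. Summing left and right weights by \eqref{eq:degree-restriction} gives \eqref{eq:V-weight}.

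\emph{Multidegrees.} I compute the $T_W$-degree by scaling each $A_r$ by $t_r$. The top block $P_iA_nJ_k$ contributes $i\delta_n$. The block $P_jA_{\mathbf q_n}J_k$ scales by $\prod_{r\in\mathbf q_n,\ r\text{ odd}}t_r\prod_{r\text{ even}}t_r^{-1}$ on $j$ rows. This gives $j\delta_r$ for odd $r$ in $\mathbf q_n$ and $-j\delta_r$ for even $r$ in $\mathbf q_n$. Multiplying by $\Pi_n$ adds $\ell\delta_r$ for each even $r<n$. The resulting coefficient $\ell-j$ at even $r$ matches \eqref{eq:W-degree}. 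When $s$ is odd, $\mathbf q_s=[s]$ contains $s$ itself as an odd letter. This adds $j\delta_s$, which produces the $(i+j)\delta_s$ term. When $s$ is even, $\mathbf q_s=[s-1]$ does not contain $s$, so the coefficient stays $i\delta_s$. For $j=0$ there is no second block and no $\Pi$, so the degree is just $i\delta_s$. For the identified classes I use the least representative, as before.

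\emph{Count.} Each $\mathscr H_\ell$ has $\binom{\ell+2}{2}-3=\frac{\ell(\ell+3)}{2}-2$ elements. Each relation glues $\ell-1$ points: for even $n\ge4$ these are $(0,j)$ with $1\le j\le\ell-1$, and for odd $n\ge3$ these are $(i,\ell-i)$ with $1\le i\le\ell-1$. There are $m-2$ gluing steps, one for each $n$ from $3$ to $m$. The relations along consecutive triangles are compatible and generate no further identifications. Thus
\[ |\mathscr V_{\ell,m}|=(m-1)\Bigl(\tfrac{\ell(\ell+3)}{2}-2\Bigr)-(m-2)(\ell-1). \]
Adding $m$ for the $\Delta_r$ and simplifying gives $\frac{(m-1)\ell^2+(m+1)\ell}{2}$. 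Comparison with Proposition~\ref{prop:basic-rings} then finishes the proof.

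The main obstacle is the bookkeeping in the recursive definition \eqref{eq:regular-left} together with the two identifications. One must verify that the weight read off the representative with least index $s(v)$ agrees with the normalized function, and that $q(v)$ correctly counts the $\Pi$-factors in every edge case. These edge cases are $i=0$, $k=\ell$, and $n=2,3$.
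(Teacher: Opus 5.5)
Your proposal is correct and takes essentially the same approach as the paper. Both arguments read off the left and right torus weights and the $T_W$-degree of the chamber determinant at the least representative $s(v)$, add the contribution of $\Pi_s$, sum the two sides via \eqref{eq:degree-restriction}, and obtain the cardinality as $m+(m-1)|\mathscr H_\ell|-(m-2)(\ell-1)$; your remarks about which line of the recursion applies at $s(v)$ only make explicit bookkeeping that the paper leaves implicit.
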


\begin{proof}
Before restriction, the determinant in \eqref{eq:left-chamber-determinant}
at the least representative $s=s(v)$ uses the first $i$ rows of $A_s$,
the first $j$ rows of $A_{\mathbf q_s}$, and the first $k$ columns of both
matrices.  Each factor in $\Pi_s$
contributes one full left and one full right fundamental weight.  Adding the
left and right weights under \eqref{eq:degree-restriction} gives
\eqref{eq:V-weight}.  Formula \eqref{eq:W-degree} follows by adding the degrees of the
alternating factors and of $\Pi_s$.

Finally,
\[ |\mathscr H_\ell|=\frac{\ell^2+3\ell-4}{2}. \]
There are $m-1$ copies of $\mathscr H_\ell$ and $m-2$ boundary
identifications, each identifying $\ell-1$ pairs.
After adjoining the $m$ determinant functions, one obtains
\[ m+(m-1)|\mathscr H_\ell|-(m-2)(\ell-1) =\frac{(m-1)\ell^2+(m+1)\ell}{2}. \]
The last equality with the dimension follows from
Proposition~\ref{prop:basic-rings}.
\end{proof}

Theorem~\ref{thm:folded-family-transcendence-basis} shows that
$\mathbf x_{\ell,m}$ is a transcendence basis.  For the first two triangles, the chamber functions are, up to
normalization,
\[ z_{i,j}^{(2)}\doteq \det\left[P_{i+j}C_2J_i\ \middle|\ P_{i+j}C_1J_j\right] \]
and, for $j>0$ and $i+j<\ell$,
\[ z_{i,j}^{(3)}\doteq \Delta_2\det\left[ P_{i+j}C_3J_i\ \middle|\ P_{i+j}(C_3C_2^{-1}C_1)J_j \right]. \]
The second formula is polynomial by Lemma~\ref{lem:denominator-clearing}.

\subsection{Algebraic independence of the folded initial family}
\label{subsec:folded-family-independence}

\begin{theorem}
\label{thm:folded-family-transcendence-basis}
The folded initial family $\mathbf x_{\ell,m}$ is a transcendence basis of
$\Frac(\cR_{\ell,m})$.  In particular,
\[ \Frac(\cR_{\ell,m})=\kk(\mathbf x_{\ell,m}). \]
\end{theorem}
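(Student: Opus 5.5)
Since $|\mathbf x_{\ell,m}|=\dim\cR_{\ell,m}=\operatorname{trdeg}\Frac(\cR_{\ell,m})$ by Proposition~\ref{prop:weights-count}, it suffices to prove the stronger statement $\Frac(\cR_{\ell,m})=\kk(\mathbf x_{\ell,m})$; independence then follows by counting.

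By Proposition~\ref{prop:eta-gaussian-coordinate}, $\Frac(\cR_{\ell,m})$ is generated by the $h_i$ together with the $\xi_{r;a,b}$, which are the coordinate functions on $T_\ell\times\SymMat_\ell^{m-1}$. Proposition~\ref{prop:two-sided-gaussian-slice} identifies this space with the transpose-fixed locus of $T_\ell\times\Mat_\ell^{m-1}$, and Corollary~\ref{cor:localized-restriction-surjective} shows that restriction $\rho$ is compatible with this identification. The plan is to use the unfolded seed of Theorem~\ref{thm:flagged-matrix-form}. Its extended cluster $\widetilde{\mathbf x}_{\ell,m}$ is a transcendence basis of $\Frac(\cT_{\ell,m})$, since $\cT_{\ell,m}$ is an upper cluster algebra of that seed. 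Hence every coordinate $B_r[a,b]$ and every $D_{aa}$ of the two-sided chart is a rational function $F$ of $\widetilde{\mathbf x}_{\ell,m}$.

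Next we restrict to the fixed locus. The point is to ensure that these rational expressions remain defined there. A priori, restriction of a rational function is not well defined: a denominator might vanish identically on $X_{\ell,m}$. To handle this, choose the expressions so that their denominators are monomials in the cluster variables, which is possible because $\Frac(\cT_{\ell,m})$ is generated by Laurent expressions once we localize the upper cluster algebra at the cluster. Each chart coordinate $g\in\cT_{\ell,m}[\widetilde H_1^{-1}]$ satisfies $g\cdot M\in\cT_{\ell,m}$ for some polynomial $M$ in $\widetilde{\mathbf x}$ and $\widetilde H_1$. Moreover, $\widetilde h_i=x^{(2)}_{0,i}$ and $\Delta_1$ are themselves cluster variables.

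The cleaner route is as follows. By the Laurent phenomenon, $\cT_{\ell,m}\subseteq\kk[\widetilde{\mathbf x}^{\pm1}]$. Since the cluster variables are nonzero on $X_{\ell,m}$ (the $z_v$ are nonzero polynomials, as their restrictions at generic diagonal-plus-symmetric points do not vanish), restriction extends to a homomorphism
\[
\kk[\widetilde{\mathbf x}^{\pm1}]\longrightarrow\kk[\mathbf x_{\ell,m}^{\pm1}]\subseteq\Frac(\cR_{\ell,m}),
\]
sending $x^{(n),\mathrm L}_{i,j}$ and $x^{(n),\mathrm R}_{i,j}$ both to $z^{(n)}_{i,j}$ by Proposition~\ref{prop:transpose-comparison}, and $\Delta_r$ to $\Delta_r$. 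This map agrees with $\rho$ on $\cT_{\ell,m}$. Consequently $\rho(\cT_{\ell,m})\subseteq\kk[\mathbf x_{\ell,m}^{\pm1}]$.

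It remains to invoke the surjectivity in Corollary~\ref{cor:localized-restriction-surjective}. Every element of $\cR_{\ell,m}[H_1^{-1}]$ is $\rho(f)/\rho(\widetilde H_1)^N$ with $f\in\cT_{\ell,m}$, and $\rho(\widetilde H_1)=H_1=\prod_i h_i$ is a monomial in $\mathbf x_{\ell,m}$. Therefore $\cR_{\ell,m}\subseteq\kk[\mathbf x_{\ell,m}^{\pm1}]$, so $\Frac(\cR_{\ell,m})=\kk(\mathbf x_{\ell,m})$, and the count of elements finishes the proof.

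The main obstacle is the nonvanishing of each $z_v$ on $X_{\ell,m}$, which is needed for restriction of Laurent polynomials to make sense. I would check this by evaluating at a tuple of generic diagonal matrices, or at a tuple $(I,C_2,\ldots)$ with $C_r$ chosen so that the words $C_{\mathbf q_n}$ are generic symmetric matrices. In that case the chamber determinant is a nonzero minor of a block matrix built from generic symmetric matrices, and the factor $\Pi_n$ is nonzero.
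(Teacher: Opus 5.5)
Your architecture is sound, and it differs from the paper's. The inclusion $\cT_{\ell,m}\subseteq\kk[\widetilde{\mathbf x}^{\pm1}]$, the extension of $\rho$ to this Laurent ring, and the surjectivity of $\rho_{H_1}$ (with $H_1=\Delta_1\prod_{i<\ell}z^{(2)}_{0,i}$ a monomial in $\mathbf x_{\ell,m}$) together give $\cR_{\ell,m}\subseteq\kk[\mathbf x_{\ell,m}^{\pm1}]$, which is slightly more than is needed. The paper argues geometrically instead. Transposition $\tau$ permutes the coordinates of the unfolded cluster torus, so its fixed part is a torus with one coordinate per orbit. Because that torus lies in the Gauss open set, the fixed torus is a nonempty open subset of $T_\ell\times\SymMat_\ell^{m-1}=\operatorname{Spec}\cR_{\ell,m}[H_1^{-1}]$, and its coordinates restrict to $\mathbf x_{\ell,m}$.

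The gap is the step you single out yourself. Extending $\rho$ to $\kk[\widetilde{\mathbf x}^{\pm1}]$ requires every $z_v$ to be nonzero in $\cR_{\ell,m}$, and your proposed check does not establish this.

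Your first check fails outright. On a tuple of diagonal matrices with $i,j>0$, the chamber matrix $\begin{bmatrix}P_iC_nJ_k\\ P_jC_{\mathbf q_n}J_k\end{bmatrix}$ is supported in the columns $1,\ldots,\max(i,j)$, and $\max(i,j)<k$. Hence every chamber function with $i,j>0$ vanishes identically on diagonal tuples.

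Your second suggestion is not correct as stated either:
\begin{enumerate}
\item The word $C_{\mathbf q_n}$ is not symmetric, since its transpose is $C_{\overleftarrow{\mathbf q_n}}$.
\item For odd $n$ the word contains $C_n$ itself, so it cannot be chosen independently of $C_n$.
\item Even for two independent symmetric matrices, the mixed block determinant is not a minor of a single generic matrix, so an explicit witness is still required.
\end{enumerate}

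The cheapest repair is exactly the paper's observation. The point of $\operatorname{Spec}\kk[\widetilde{\mathbf x}^{\pm1}]$ with all coordinates equal to $1$ is $\tau$-fixed and lies in the Gauss open set. By Proposition~\ref{prop:two-sided-gaussian-slice}, it corresponds to a tuple $(D,B_2,\ldots,B_m)$ with every $B_r$ symmetric, that is, to a point of $X_{\ell,m}$. At that point every $z_v$ and every $\Delta_r$ equals $1$.

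Alternatively, you could construct symmetric partial-permutation witnesses chamber by chamber, in the style of Appendix~\ref{app:exceptional-coprimality}. With either repair your argument goes through.
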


\begin{proof}
Let $\widetilde{\mathbb T}_{\ell,m}$ be the initial cluster torus of
\cite{FeiKroneckerII}.  It is
contained in the Gauss open set because the $n=2$ boundary contains
$\widetilde h_1,\ldots,\widetilde h_\ell$.  Transposition exchanges paired
chamber coordinates and fixes the remaining coordinates.  Its fixed
subscheme is therefore a torus with one coordinate for each transpose
orbit.

By Proposition~\ref{prop:two-sided-gaussian-slice}, the fixed locus of the
two-sided Gauss quotient is
\[ T_\ell\times\SymMat_\ell^{m-1} =\operatorname{Spec}\cR_{\ell,m}[H_1^{-1}]. \]
The fixed torus is a nonempty open subset of this irreducible variety, and
its coordinates restrict to $\mathbf x_{\ell,m}$ by
Proposition~\ref{prop:transpose-comparison}.  Hence
$\Frac(\cR_{\ell,m})=\kk(\mathbf x_{\ell,m}). $
Together with $|\mathbf x_{\ell,m}|=\dim\cR_{\ell,m}$, this proves
algebraic independence.
\end{proof}

\section{Folding the matrix seed}
\label{sec:folding}

We fold the matrix seed of Theorem~\ref{thm:flagged-matrix-form} by
transposition.  Theorem~\ref{thm:folded-family-transcendence-basis} has
already identified the folded cluster functions as a transcendence basis. The
exchange matrix and exchange relations are determined below.  We use the
exchange-matrix conventions of \cite{BFZ}, with frozen variables as polynomial
coefficients, and the standard admissible-orbit terminology of
\cite{DupontFolding}.

\subsection{Folding after restriction}
\label{subsec:restriction-folding}

In an extended exchange matrix, the rows are indexed by mutable vertices
and the columns by all vertices.  We call a row \emph{mutable} when its
index is mutable, and a column \emph{frozen} when its index is frozen.
Our exchange convention is
\[ x_ux_u' =\prod_vx_v^{[b_{uv}]_+} +\prod_vx_v^{[-b_{uv}]_+}. \]
Let $\widetilde I$ be the index set of an extended cluster, and let $\tau$ be
an involution preserving its mutable subset $\widetilde M$.  We say that $\tau$ is
\emph{admissible} for $\widetilde B$ if
\begin{align}
 \widetilde b_{\tau u,\tau v}&=\widetilde b_{uv},
 \label{eq:admissible-equivariance}\\
 \widetilde b_{u,\tau u}&=0
 &&(u\text{ mutable}),
 \label{eq:admissible-orbit}\\
 \widetilde b_{uv}\widetilde b_{u,\tau v}&\ge0
 &&(u\text{ mutable}).
 \label{eq:admissible-sign}
\end{align}
For orbits $\bar u,\bar v$, define
\begin{equation}
 b_{\bar u,\bar v}
 :=\sum_{v'\in\bar v}\widetilde b_{u,v'},
 \qquad u\in\bar u.
 \label{eq:orbit-sum}
\end{equation}

\begin{lemma}\label{lem:restriction-folding}
Let $R,S$ be domains and $\rho_0:R\to S$ a homomorphism.  Suppose that
$\widetilde{\mathbf x}$ satisfies the exchange relations of
$\widetilde B$ in $R$, that $\tau$ is admissible, and that
\[ \rho_0(\widetilde x_v)=\rho_0(\widetilde x_{\tau v})=:z_{\bar v}. \]
Then restriction of the relation at $u$ is
\[ z_{\bar u}z_{\bar u}' =\prod_{\bar v}z_{\bar v}^{[b_{\bar u,\bar v}]_+} +\prod_{\bar v}z_{\bar v}^{[-b_{\bar u,\bar v}]_+}, \]
where $z_{\bar u}'=\rho_0(\widetilde x_u')$.  This value is independent of
the representative $u\in\bar u$.
\end{lemma}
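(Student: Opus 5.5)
The plan is to apply $\rho_0$ to the exchange relation at $u$ and regroup each monomial by $\tau$-orbits. The relation reads
\[ \widetilde x_u\widetilde x_u' =\prod_{v}\widetilde x_v^{[\widetilde b_{uv}]_+} +\prod_{v}\widetilde x_v^{[-\widetilde b_{uv}]_+}. \]
Since $\rho_0$ is a ring homomorphism, the left side maps to $\rho_0(\widetilde x_u)\rho_0(\widetilde x_u')=z_{\bar u}z_{\bar u}'$. In the first monomial on the right, every factor $\widetilde x_{v'}$ with $v'\in\bar v$ maps to $z_{\bar v}$. The image of the first monomial is therefore
\[ \prod_{\bar v}z_{\bar v}^{e_{\bar v}}, \qquad e_{\bar v}:=\sum_{v'\in\bar v}[\widetilde b_{uv'}]_+ . \]
The second monomial is handled in the same way, with exponent $\sum_{v'\in\bar v}[-\widetilde b_{uv'}]_+$.

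The key step is to identify these exponents with $[b_{\bar u,\bar v}]_+$ and $[-b_{\bar u,\bar v}]_+$. This is where the sign condition \eqref{eq:admissible-sign} enters. An orbit $\bar v$ has one or two elements. If $\bar v=\{v\}$, the identity is trivial. If $\bar v=\{v,\tau v\}$, then $\widetilde b_{uv}\widetilde b_{u,\tau v}\ge0$, so the two entries are both nonnegative or both nonpositive. Hence $[\cdot]_+$ is additive on them, and
\[ [\widetilde b_{uv}]_++[\widetilde b_{u,\tau v}]_+=[\widetilde b_{uv}+\widetilde b_{u,\tau v}]_+=[b_{\bar u,\bar v}]_+ , \]
and similarly for the negative parts. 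This gives the displayed folded relation.

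Condition \eqref{eq:admissible-orbit} is used for the orbit $\bar v=\bar u$. It gives $b_{\bar u,\bar u}=\widetilde b_{uu}+\widetilde b_{u,\tau u}=0$, so $z_{\bar u}$ does not appear on the right-hand side, as the relation requires.

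For independence of the representative, replace $u$ by $\tau u$. By \eqref{eq:admissible-equivariance},
\[ \sum_{v'\in\bar v}\widetilde b_{\tau u,v'} =\sum_{v'\in\bar v}\widetilde b_{u,\tau v'} =\sum_{v'\in\bar v}\widetilde b_{u,v'}, \]
because $\tau$ permutes $\bar v$. So $b_{\bar u,\bar v}$ is well defined, and both restricted relations have the same right-hand side $P$. This yields $z_{\bar u}\rho_0(\widetilde x_u')=P=z_{\bar u}\rho_0(\widetilde x_{\tau u}')$. If $z_{\bar u}\ne0$, cancellation in the domain $S$ gives $\rho_0(\widetilde x_u')=\rho_0(\widetilde x_{\tau u}')$. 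In our application $z_{\bar u}$ is a nonzero element of a transcendence basis, by Theorem~\ref{thm:folded-family-transcendence-basis}, so this holds.

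The only delicate point is this last one: well-definedness of $z_{\bar u}'$ needs $z_{\bar u}\neq0$. Everything else is bookkeeping with the admissibility conditions, and the sign condition is the genuinely essential input.
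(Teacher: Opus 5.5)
Your proof is correct and follows the paper's argument: apply $\rho_0$ to the unfolded relation, use the sign condition \eqref{eq:admissible-sign} to pass $[\cdot]_\pm$ through the orbit sums, and use equivariance \eqref{eq:admissible-equivariance} for independence of the representative. Your observation that identifying $\rho_0(\widetilde x_u')$ with $\rho_0(\widetilde x_{\tau u}')$ requires $z_{\bar u}\neq0$, which holds in the application by Theorem~\ref{thm:folded-family-transcendence-basis}, is a sound refinement of a point the paper's proof leaves implicit.
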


\begin{proof}
For a fixed mutable index $u$, condition \eqref{eq:admissible-sign}
implies that the entries $\widetilde b_{u,v'}$, with $v'$ in one orbit,
have the same sign.  Hence
\[ \sum_{v'\in\bar v}[\widetilde b_{u,v'}]_+ =\left[\sum_{v'\in\bar v}\widetilde b_{u,v'}\right]_+, \]
and similarly for negative parts.  Apply $\rho_0$ to the unfolded exchange
relation.  Equivariance shows that the result is independent of the representative.
\end{proof}

Under the transfer isomorphism, the initial exchange relations of
$\widetilde\Sigma_{\ell,m}$ are those of
\cite[Corollaries~5.3--5.4]{FeiKroneckerII}; the variables obtained by one mutation are
regular by Theorem~\ref{thm:flagged-matrix-form}.  The restrictions of these
relations to symmetric matrices are written in
Theorem~\ref{thm:folded-relations}.

\subsection{Admissibility and the folded matrix}
\label{subsec:folded-matrix}

\begin{proposition}
\label{prop:transpose-admissible}
Transposition is admissible for $\widetilde B_{\ell,m}$.
\end{proposition}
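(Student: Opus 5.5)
Transposition is admissible once three conditions hold: equivariance \eqref{eq:admissible-equivariance}, vanishing on orbits \eqref{eq:admissible-orbit}, and sign coherence \eqref{eq:admissible-sign}. We check each in turn.

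Equivariance is already part of Theorem~\ref{thm:flagged-matrix-form}. There, $\widetilde b_{\tau u,\tau v}=\widetilde b_{uv}$ because reflecting the two flag arms of $\mathsf K_{\ell,m}$ is an automorphism of the quiver with potential (equivalently, of the hive-type quiver) underlying the seed of \cite{FeiKroneckerII}. Proposition~\ref{prop:matrix-transfer} turns that reflection into $\tau$. So only the two conditions involving orbits of size two remain.

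Both of these are local statements about the unfolded quiver $\widetilde Q_{\ell,m}$, and I would verify them from its explicit description. For each $n$, the quiver is made of a left triangle of chamber vertices $(n;i,j)^{\mathrm L}$ and a right triangle $(n;i,j)^{\mathrm R}$. They are glued along the $\tau$-fixed vertices: these are the $j=0$ side, where $x^{(n),\mathrm L}_{i,0}=x^{(n),\mathrm R}_{i,0}$, together with the identified sides \eqref{eq:even-identification} and \eqref{eq:odd-identification}. Arrows inside a single triangle are the standard hive arrows $(i,j)\to(i+1,j)$, $(i,j)\to(i-1,j+1)$, $(i,j)\to(i,j-1)$, up to orientation. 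They join vertices of the same arm, so they join $u$ to $v$ with $v\ne\tau u$.

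\emph{Condition \eqref{eq:admissible-orbit}.} A mutable $u$ with $\tau u\ne u$ has $j>0$ and lies in the interior of one arm. Its neighbours lie in the same arm or on the fixed seam, so $\widetilde b_{u,\tau u}=0$. The only possible exception is a vertex adjacent to the seam, where $u$ and $\tau u$ are at distance two through a fixed vertex. There the two-step path does not produce an arrow, because the quiver has no 2-cycles after reduction.

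\emph{Condition \eqref{eq:admissible-sign}.} For $v$ not fixed, $\tau v\ne v$. If $u$ is in the left arm, then $\widetilde b_{u,\tau v}$ is nonzero only when $u$ sits on the seam, so a problem can only arise when $u$ is fixed. In that case equivariance gives $\widetilde b_{u,\tau v}=\widetilde b_{\tau u,\tau v}=\widetilde b_{uv}$, and the product is a square, hence nonnegative. If $u$ is not fixed and $v$ is fixed, the orbit of $v$ is a singleton and the product is again a square. If neither is fixed, one of $v$ and $\tau v$ lies in the opposite arm, where the entry vanishes by the locality above.

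The main obstacle is making the locality claim rigorous: we need that no arrow of $\widetilde B_{\ell,m}$ joins the interiors of the left and right arms. This must include the frozen columns, the gluing between consecutive triangles $n-1,n$, and the corner vertices involving $\Delta_r$ and $x_{0,\ell}$. I would first read off the arrows from the exchange relations \cite[Corollaries~5.3--5.4]{FeiKroneckerII}. Each right-hand monomial involves only chamber functions of the same arm as $u$, fixed chamber functions, and the $\Delta_r$. I would check this directly from the determinant formulas \eqref{eq:left-chamber-determinant}. Those formulas use only left-type data $P_iA_n$, $P_jA_{\mathbf q_n}$, so a three-term Plücker/Desnanot--Jacobi relation for a left chamber function can involve only left chamber functions and the shared $j=0$ or identified-side minors. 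This would settle both \eqref{eq:admissible-orbit} and \eqref{eq:admissible-sign}.
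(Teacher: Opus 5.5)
Your skeleton matches the paper's proof. Equivariance is quoted from Theorem~\ref{thm:flagged-matrix-form}, the sign condition at a $\tau$-fixed row follows from $\widetilde b_{u,\tau v}=\widetilde b_{\tau u,\tau v}=\widetilde b_{uv}$, and the remaining cases reduce to a locality statement read off from the exchange relations of \cite[Corollaries~5.3--5.4]{FeiKroneckerII}.

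The problem is your list of fixed vertices, on which that locality statement depends. The identifications \eqref{eq:even-identification} and \eqref{eq:odd-identification} hold for each $\epsilon\in\{\mathrm L,\mathrm R\}$ separately. They glue triangle $n-1$ to triangle $n$ inside each arm, so identified-side vertices lie in two-element orbits. Conversely, the side $i=0$ of triangle $2$ is fixed, since $x_{0,j}^{(2),\mathrm L}=x_{0,j}^{(2),\mathrm R}=\det(P_jA_1J_j)$. The folded relations confirm this. Squares arise when both members of a transpose pair occur in one monomial; they appear in \eqref{eq:base-edge}--\eqref{eq:first-boundary} but not in \eqref{eq:even-gluing} or \eqref{eq:odd-gluing}.

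With your seam, the locality claim you intend to verify cannot hold. The vertex $[2;0,j]$ is a single fixed vertex adjacent to $[2;1,j]$ in both arms, which is where the squares in \eqref{eq:first-boundary} come from. Your step ``if $u$ is not fixed and $v$ is fixed, the orbit of $v$ is a singleton'' is also applied to identified-side vertices $v$, for which $\tau v\ne v$. For those, one must show that $\tau v$, on the identified side of the opposite arm, is not adjacent to $u$.

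The correct seam consists of the base edges, the side $[2;0,j]$, and the determinant vertices. Each arm contains its own identified sides and outer boundary. With this seam, locality holds, and your case analysis becomes the paper's argument. At an interior or identified-boundary row, at most one member of each nontrivial orbit occurs. At a common-base row, both members occur in the same monomial.

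Two smaller points. First, condition \eqref{eq:admissible-orbit} needs no locality. Since $\tau$ preserves the mutable set and the unfolded principal part is skew-symmetric, equivariance gives $\widetilde b_{u,\tau u}=\widetilde b_{\tau u,u}=-\widetilde b_{u,\tau u}$, hence $\widetilde b_{u,\tau u}=0$. Your discussion of $u$ and $\tau u$ at distance two is therefore unnecessary. Its justification is also a non sequitur: a path of length two never creates an arrow in a fixed quiver, and the absence of 2-cycles says nothing about whether $u$ and $\tau u$ are adjacent.

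Second, your proposed cross-check by Pl\"ucker or Desnanot--Jacobi identities cannot replace the rows from \cite{FeiKroneckerII}. A three-term identity among chamber determinants determines a row of $\widetilde B_{\ell,m}$ only once you know that its new factor is the mutated variable of this seed. That information is exactly what \cite{FeiKroneckerII} supplies. As phrased, the check is also built around ``shared'' identified-side minors, which are not shared.
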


\begin{proof}
Transposition exchanges the left and right relations at interior
vertices and identified boundaries, and fixes the relations on the common base.  The determinant coefficients are
fixed and occur in transpose-paired positions, proving
\eqref{eq:admissible-equivariance}.  Distinct members of a nontrivial transpose orbit are never adjacent.  In a relation at an interior vertex or an identified boundary, at most
one member of a nontrivial orbit occurs in either monomial.  On the
common base, both members occur in the same monomial.  Hence \eqref{eq:admissible-orbit} and
\eqref{eq:admissible-sign} hold.
\end{proof}

The proof of the upper cluster algebra equality uses admissibility only at the
initial seed. In Section~\ref{sec:reddening}, fixed diamond diagonals are
removed directly in the folded matrix.  The unfolded construction is used only
for pairs of transpose-related disk flips with disjoint supports;
admissibility is preserved along these sequences.

Introduce determinant indices $\partial_1,\ldots,\partial_m$ and put
$z_{\partial_r}=\Delta_r$.  Let
\[ \mathscr I_{\ell,m} :=\{\partial_1,\ldots,\partial_m\}\sqcup\mathscr V_{\ell,m}. \]
The outer boundary is
\begin{equation*}
\mathscr B_{\ell,m}:=
 \begin{cases}
  \{[m;0,j]:1\le j\le\ell-1\},&m\text{ odd},\\
  \{[m;i,\ell-i]:1\le i\le\ell-1\},&m\text{ even}.
 \end{cases}
\end{equation*}
Set
\[ \mathscr F_{\ell,m} :=\{\partial_1,\ldots,\partial_m\}\sqcup\mathscr B_{\ell,m}, \qquad \mathscr M_{\ell,m}:=\mathscr I_{\ell,m}\setminus\mathscr F_{\ell,m}. \]
Then
\[ |\mathscr F_{\ell,m}|=m+\ell-1, \qquad |\mathscr M_{\ell,m}| =\frac{(m-1)(\ell-1)(\ell+2)}2. \]

Figure~\ref{fig:seed-43} shows the quiver of $\Sigma_{4,3}$, including its
frozen vertices.

\begin{figure}[t]
\centering
\Symfourthree
\caption{The folded seed $\Sigma_{4,3}$.  The upper and lower
triangles have indices $n=2$ and $n=3$; the vertices $31,22,13$ form
their common boundary.  A label $ij$ in triangle $n$ denotes
$z_{i,j}^{(n)}$, while a label on the common boundary denotes
$z_{i,4-i}^{(2)}=z_{i,4-i}^{(3)}$.  Circles are mutable and squares are
frozen.  The ordered arrow valuation is read from source to target; for a
frozen endpoint, the valuation is determined by the orbit-size symmetrizer.  Red arrows have valuation $(1,2)$ and blue arrows have valuation $(2,1)$;
the two parallel black arrows at the top have valuation $(2,2)$.}
\label{fig:seed-43}
\end{figure}

For odd $n\ge3$, triangles $n$ and $n-1$ are glued along the side $i+j=\ell$;
for even $n\ge4$, they are glued along the side $i=0$. A chamber fixed by
transposition gives an orbit of size one; all other chamber labels occur in
transpose pairs.  The remaining boundary of triangle $m$, together with
$\Delta_1,\ldots,\Delta_m$, is frozen.

Define
\[ B_{\ell,m}=(b_{uv})_{ \substack{u\in\mathscr M_{\ell,m}\\v\in\mathscr I_{\ell,m}}} \]
by the orbit-sum formula \eqref{eq:orbit-sum}.

\begin{proposition}
\label{prop:folded-B}
The matrix $B_{\ell,m}$ is well defined and has full row rank.  Its mutable
principal part is skew-symmetrizable.  If $d_u$ is the size
of the transpose orbit above $u$, then
\begin{equation}
 d_ub_{uv}=-d_vb_{vu}
 \qquad(u,v\in\mathscr M_{\ell,m}).
 \label{eq:symmetrizer}
\end{equation}
\end{proposition}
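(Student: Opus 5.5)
Three claims must be checked: well-definedness, skew-symmetrizability with symmetrizer $d_u$, and full row rank. All three follow from the admissibility of transposition (Proposition~\ref{prop:transpose-admissible}) and the full row rank of $\widetilde B_{\ell,m}$ (Theorem~\ref{thm:flagged-matrix-form}).

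\emph{Well-definedness.} The orbit sum $\sum_{v'\in\bar v}\widetilde b_{u,v'}$ must not depend on the representative $u\in\bar u$. By \eqref{eq:admissible-equivariance},
\[
\sum_{v'\in\bar v}\widetilde b_{\tau u,v'}=\sum_{v'\in\bar v}\widetilde b_{u,\tau v'},
\]
and $\tau$ permutes $\bar v$, so this equals the sum for $u$. Since $\tau$ preserves the mutable and frozen subsets, the folded index sets are exactly $\mathscr M_{\ell,m}$ and $\mathscr F_{\ell,m}$. One should also check that the identifications $\sim$ in $\mathscr V_{\ell,m}$ and the frozen outer boundary $\mathscr B_{\ell,m}$ match the transpose orbits of the unfolded vertex set. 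This is bookkeeping from Theorem~\ref{thm:flagged-matrix-form} and Proposition~\ref{prop:transpose-comparison}.

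\emph{Skew-symmetrizability.} Take $u,v$ mutable. Then
\[
d_ub_{uv}=\sum_{u'\in\bar u}\sum_{v'\in\bar v}\widetilde b_{u',v'}\cdot\frac{d_u}{|\bar u|}.
\]
Averaging over representatives is legitimate by the independence just shown, and $d_u=|\bar u|$, so $d_ub_{uv}=\sum_{u'\in\bar u,\,v'\in\bar v}\widetilde b_{u'v'}$. The unfolded mutable part is skew-symmetric, since the Kronecker seed comes from a quiver. The double sum is therefore antisymmetric under swapping $\bar u$ and $\bar v$, which gives \eqref{eq:symmetrizer}. For $\bar u=\bar v$ the double sum vanishes by \eqref{eq:admissible-orbit} together with skew-symmetry, so the diagonal entries are $0$.

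\emph{Full row rank.} This is the main obstacle. Define the orbit-sum map $S\colon\mathbb Q^{\widetilde I}\to\mathbb Q^{\mathscr I_{\ell,m}}$ on columns. For each mutable orbit choose a representative $u$, and let $R$ be the submatrix of $\widetilde B$ on those rows. Then $B_{\ell,m}=R\,S^T$ (as column sums).

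Suppose $c$ is a row vector over mutable orbits with $cB_{\ell,m}=0$. Define a $\tau$-invariant row vector $\tilde c$ on unfolded mutable rows by $\tilde c_{u'}=c_{\bar u}/|\bar u|$. By equivariance, the vector $\tilde c\widetilde B$ is $\tau$-invariant. Its orbit sums are exactly $cB_{\ell,m}$ up to the factor bookkeeping: each orbit sum over $\bar v$ equals $\sum_{\bar u}c_{\bar u}b_{\bar u\bar v}$, since averaging over rows in $\bar u$ gives the same column sums. A $\tau$-invariant vector whose orbit sums all vanish is zero, because on each orbit its entries are equal. Hence $\tilde c\widetilde B=0$. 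Full row rank of $\widetilde B_{\ell,m}$ then forces $\tilde c=0$, so $c=0$.

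The delicate point is verifying, through the invariance argument, that the orbit sums of $\tilde c\widetilde B$ really coincide with $cB$. This requires the column-orbit sums over $\bar v$ of row $u'$ to be constant along $\bar u$, which is precisely the well-definedness established above.
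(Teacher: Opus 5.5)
Your proposal is correct and follows the paper's proof: well-definedness comes from equivariance, the symmetrizer identity from the double orbit sum $\sum_{u'\in\bar u}\sum_{v'\in\bar v}\widetilde b_{u'v'}$, and full row rank is inherited from $\widetilde B_{\ell,m}$ through $\tau$-invariance. The only difference is that you phrase the rank step dually, lifting a left-kernel vector $c$ of $B_{\ell,m}$ to the averaged $\tau$-invariant vector $\tilde c$, whereas the paper takes $\tau$-invariants of the surjection $\widetilde B_{\ell,m}\colon\mathbb Q^{\widetilde I}\to\mathbb Q^{\widetilde M}$ and uses exactness in characteristic zero; your averaging makes that exactness explicit.
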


\begin{proof}
Equivariance shows that the matrix is well defined.  For mutable $u,v$,
\[ d_ub_{uv} =\sum_{u'\in\bar u}\sum_{v'\in\bar v} \widetilde b_{u'v'} =-d_vb_{vu}, \]
which proves \eqref{eq:symmetrizer}.

View $\widetilde B_{\ell,m}$ as a surjective, $\tau$-equivariant map
$\mathbb Q^{\widetilde I}\to\mathbb Q^{\widetilde M}$, where
$\widetilde I$ and $\widetilde M$ index all vertices and the mutable
vertices, respectively.  Taking invariants under a group of order two is exact in characteristic zero.  In
orbit-sum bases, the induced map is represented by $B_{\ell,m}$ up to
invertible diagonal rescaling by orbit sizes.  Hence the map represented by $B_{\ell,m}$ is surjective, so
$B_{\ell,m}$ has full row rank.
\end{proof}

\subsection{The folded exchange relations}
\label{subsec:folded-relations}

We use the conventions
\[ z_{0,0}^{(n)}:=1, \qquad z_{\ell,0}^{(n)}:=\Delta_n, \qquad z_{0,\ell}^{(2)}:=\Delta_1. \]
All variables on identified boundaries are interpreted through the identifications in
\eqref{eq:V-index}.

\begin{theorem}[folded exchange relations]
\label{thm:folded-relations}
For every mutable index $u$, the variable $z_u'$ obtained by mutation
is a homogeneous regular element of $\cR_{\ell,m}$.  The relations are:

\smallskip
\noindent\emph{(a) Variables on the base edge.}  For $2\le n\le m$ and
$1\le i\le\ell-2$,
\begin{equation}
 z_{i,0}^{(n)}(z_{i,0}^{(n)})'
 =z_{i+1,0}^{(n)}(z_{i-1,1}^{(n)})^2
  +z_{i-1,0}^{(n)}(z_{i,1}^{(n)})^2.
 \label{eq:base-edge}
\end{equation}
At the corner $i=\ell-1$,
\begin{align}
 z_{\ell-1,0}^{(n)}(z_{\ell-1,0}^{(n)})'
 &=\Delta_n(z_{\ell-2,1}^{(n)})^2
   +z_{\ell-2,0}^{(n)}(z_{\ell-1,1}^{(n)})^2,
 &&n\text{ even},
 \label{eq:base-edge-even-corner}\\
 z_{\ell-1,0}^{(n)}(z_{\ell-1,0}^{(n)})'
 &=(z_{\ell-2,1}^{(n)})^2
   +\Delta_nz_{\ell-2,0}^{(n)}(z_{\ell-1,1}^{(n)})^2,
 &&n\text{ odd}.
 \label{eq:base-edge-odd-corner}
\end{align}

\smallskip
\noindent\emph{(b) The other boundary of triangle $n=2$.}  For
$1\le j\le\ell-1$,
\begin{equation}
 z_{0,j}^{(2)}(z_{0,j}^{(2)})'
 =z_{0,j-1}^{(2)}(z_{1,j}^{(2)})^2
  +z_{0,j+1}^{(2)}(z_{1,j-1}^{(2)})^2.
 \label{eq:first-boundary}
\end{equation}

\smallskip
\noindent\emph{(c) Interior vertices.}  For $2\le n\le m$, $i,j>0$, and
$i+j<\ell$,
\begin{align}
 z_{i,j}^{(n)}(z_{i,j}^{(n)})'
 & =z_{i,j-1}^{(n)}z_{i+1,j}^{(n)}z_{i-1,j+1}^{(n)}
+z_{i,j+1}^{(n)}z_{i-1,j}^{(n)}z_{i+1,j-1}^{(n)}.
 \label{eq:interior}
\end{align}

\smallskip
\noindent\emph{(d) Identified boundaries with even $n$.}  Let $2\le n<m$ be even and let
$i,j\ge1$ with $i+j=\ell$.  Then
\begin{align}
 z_{i,j}^{(n)}(z_{i,j}^{(n)})'
 &=c_{n;i,j}
   z_{i-1,j}^{(n)}z_{i,j-1}^{(n+1)}
+z_{i,j-1}^{(n)}z_{i-1,j}^{(n+1)},
 \label{eq:even-gluing}
\end{align}
where
\begin{equation}
 c_{n;i,j}:=
 \begin{cases}
  \Delta_n,&(i,j)=(\ell-1,1),\\
  1,&\text{otherwise}.
 \end{cases}
 \label{eq:even-corner-factor}
\end{equation}

\smallskip
\noindent\emph{(e) Identified boundaries with odd $n$.}  Let $3\le n<m$ be odd and
$1\le j\le\ell-1$.  Then
\begin{align}
 z_{0,j}^{(n)}(z_{0,j}^{(n)})'
 &=\kappa_{n;j}z_{1,j}^{(n)}z_{1,j-1}^{(n+1)}
+z_{1,j-1}^{(n)}z_{1,j}^{(n+1)},
 \label{eq:odd-gluing}
\end{align}
where
\begin{equation}
 \kappa_{n;j}:=
 \begin{cases}
  \Delta_n,&j=\ell-1,\\
  1,&j<\ell-1.
 \end{cases}
 \label{eq:kappa}
\end{equation}
\end{theorem}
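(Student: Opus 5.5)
The plan is to obtain every relation by restricting an unfolded exchange relation of $\widetilde\Sigma_{\ell,m}$ to the symmetric locus, using Lemma~\ref{lem:restriction-folding}. For a mutable orbit $\bar u$ I choose a representative $u$, say the left chamber $x^{(n),\mathrm L}_{i,j}$ or a $\tau$-fixed one on a base edge or the first boundary. I then write its exchange relation in $\cT_{\ell,m}$. These are the relations of \cite[Corollaries~5.3--5.4]{FeiKroneckerII}, transported by Proposition~\ref{prop:matrix-transfer}, and normalized with coefficient $+1$ via the scalars $c^{(n)}_{i,j}$ of Theorem~\ref{thm:flagged-matrix-form}. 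Next I apply $\rho$. By Proposition~\ref{prop:transpose-admissible} the transposition is admissible, so Lemma~\ref{lem:restriction-folding} applies with $R=\cT_{\ell,m}$ and $S=\cR_{\ell,m}$. It shows that the restricted relation is the orbit-sum relation of $B_{\ell,m}$, with $z_u'=\rho(\widetilde x_u')$. Regularity follows at once: $\widetilde x_u'\in\cT_{\ell,m}$ by \eqref{eq:flagged-UCA}, so its image lies in $\cR_{\ell,m}$. Homogeneity follows because $\rho$ respects the grading \eqref{eq:degree-restriction}.

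Identifying the individual relations is then a case-by-case reading of the unfolded hive-quiver relations.

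\emph{Interior vertices, case (c).} A left interior chamber has the usual hexagonal relation. All of its neighbours lie in the same left triangle, so no two members of one orbit occur and restriction gives \eqref{eq:interior} directly.

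\emph{Base edge and first boundary, cases (a) and (b).} Here the vertex is $\tau$-fixed. Its neighbours $x^{\mathrm L}_{i-1,1}$ and $x^{\mathrm R}_{i-1,1}$ (respectively $x^{\mathrm L}_{i,1}$, $x^{\mathrm R}_{i,1}$) form transpose pairs in the same monomial. Restriction squares them, which gives \eqref{eq:base-edge} and \eqref{eq:first-boundary}. At the corner $i=\ell-1$ the redundant vertex $x_{\ell,0}^{(n)}=\Delta_n$ enters. Which monomial it sits in depends on the orientation of triangle $n$, and that orientation alternates with the parity of $n$ because of the gluing. This yields \eqref{eq:base-edge-even-corner} and \eqref{eq:base-edge-odd-corner}. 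For $n$ odd, the placement of $\Delta_n$ next to $z_{\ell-2,0}$ comes from the $\Pi_n$ factor in \eqref{eq:regular-left} being absent on that side.

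\emph{Glued boundaries, cases (d) and (e).} A boundary vertex shared by triangles $n$ and $n+1$ has two neighbours in each of the two triangles, and restriction gives \eqref{eq:even-gluing} and \eqref{eq:odd-gluing}. The corner factors $c_{n;i,j}$ and $\kappa_{n;j}$ arise at the position adjacent to the redundant corner $x^{(n)}_{\ell,0}=\Delta_n$, or at the determinant coefficient introduced by the minimal lift.

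The main obstacle is bookkeeping of the determinant factors $\Pi_n$ and of the corner conventions. The normalized relations from \cite{FeiKroneckerII} carry degree-balancing factors of $\Delta_r$, and I must confirm exactly where they survive. I would first check this by the weight and degree computation of Proposition~\ref{prop:weights-count}. Both monomials in each relation must have equal $T_V$-weight and $W$-multidegree, and this pins down where a $\Delta_n$ must appear, for example in the corner cases and in $c_{n;\ell-1,1}$ and $\kappa_{n;\ell-1}$. If a relation from \cite{FeiKroneckerII} does not directly match, I would fall back on verifying it on the Gauss chart. Using \eqref{eq:localized-ring-eta}, I would check the identity after localization at $H_1$, where both sides become explicit minors in $D,S_2,\dots,S_m$. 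Finally, I would use the $n=2$ formula for $z^{(2)}_{i,j}$ as a base case and get the general $n$ by the alternating-word substitution $C_n\mapsto C_{\mathbf q_n}$.
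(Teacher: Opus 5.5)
Your proposal follows the paper's proof. You restrict the transferred relations of \cite[Corollaries~5.3--5.4]{FeiKroneckerII} by Lemma~\ref{lem:restriction-folding} and Proposition~\ref{prop:transpose-admissible}, turn transpose pairs on the common bases into squares, take the determinant corrections from the minimal lift, and obtain regularity from \eqref{eq:flagged-UCA} and homogeneity from \eqref{eq:degree-restriction}. The odd base corner is more precisely explained by the third line of \eqref{eq:regular-left}: for odd $n$, the identified variable $z_{\ell-1,1}^{(n)}=z_{\ell-1,1}^{(n-1)}$ equals, up to a scalar, the unidentified lift $\Pi_n\phi_{\ell-1,1}^{(n),\mathrm L,\circ}$ divided by $\Delta_n$ (not by $\Pi_n$), and this is what moves $\Delta_n$ into the opposite monomial.
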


\begin{proof}
Apply Lemma~\ref{lem:restriction-folding} to
$\widetilde\Sigma_{\ell,m}$ and $\rho$.  Proposition
\ref{prop:transpose-comparison} identifies the two members of every
transpose pair.  Products of left and right chamber functions on common
bases become squares, giving \eqref{eq:base-edge}--
\eqref{eq:base-edge-odd-corner} and \eqref{eq:first-boundary}.  Interior relations restrict to
\eqref{eq:interior}; the relations along the shared sides give
\eqref{eq:even-gluing} and \eqref{eq:odd-gluing}.

The minimal lift in \cite{FeiKroneckerII} introduces a determinant
factor in three families of corner relations.
At the corner of the common base edge it gives the parity-dependent relations
\eqref{eq:base-edge-even-corner} and
\eqref{eq:base-edge-odd-corner}.  On the other boundary of triangle $n=2$, the correction is
$z_{0,\ell}^{(2)}=\Delta_1$.  The corrections at the two kinds of
identified boundary are \eqref{eq:even-corner-factor} and \eqref{eq:kappa}.  The unfolded mutated
variables are regular in $\cT_{\ell,m}$ by
Theorem~\ref{thm:flagged-matrix-form}; their restrictions are therefore regular
in $\cR_{\ell,m}$.  Homogeneity follows by restricting the unfolded grading.
\end{proof}

The five families account for
\begin{align*}
 &(m-1)(\ell-1)+(\ell-1)
 +(m-1)\frac{(\ell-1)(\ell-2)}2\\
 &\hspace{30mm}+(m-2)(\ell-1)
 =\frac{(m-1)(\ell-1)(\ell+2)}2
\end{align*}
mutable indices.  Homogeneity also gives
\[ B_{\ell,m}\operatorname{Wt}(\mathbf x_{\ell,m})^T=0, \]
where $\operatorname{Wt}(\mathbf x_{\ell,m})$ is the weight matrix of the
folded family.

\begin{corollary}[the folded seed]
\label{cor:folded-seed}
The pair
$\Sigma_{\ell,m}:=(B_{\ell,m},\mathbf x_{\ell,m})$
is a seed whose exchange matrix has full row rank and a skew-symmetrizable
principal part.  Each variable obtained by one mutation is a regular
homogeneous invariant.
\end{corollary}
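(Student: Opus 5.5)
The corollary collects facts already established, so the plan is to verify the seed axioms in turn and cite the earlier results.

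\emph{Step 1: $\Sigma_{\ell,m}$ is a seed in $\Frac(\cR_{\ell,m})$.} By Theorem~\ref{thm:folded-family-transcendence-basis}, $\mathbf x_{\ell,m}$ is a transcendence basis with $\Frac(\cR_{\ell,m})=\kk(\mathbf x_{\ell,m})$, so it is a free generating set of the ambient field. The index set is $\mathscr I_{\ell,m}=\mathscr F_{\ell,m}\sqcup\mathscr M_{\ell,m}$. The rows of $B_{\ell,m}$ are indexed by $\mathscr M_{\ell,m}$ and its columns by $\mathscr I_{\ell,m}$, matching the variables $\Delta_r$ and $z_v$. I must also check that the folded boundary sets are the images of the unfolded frozen and mutable sets. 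This follows from Theorem~\ref{thm:flagged-matrix-form}, where transposition preserves both subsets, so orbits do not mix mutable and frozen vertices.

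\emph{Step 2: properties of the exchange matrix.} Well-definedness, full row rank, and the skew-symmetrizer $d_u$ given by orbit size are all contained in Proposition~\ref{prop:folded-B}. That proposition relies on the admissibility in Proposition~\ref{prop:transpose-admissible}. A skew-symmetrizable principal part also implies sign-skew-symmetry, so mutation of $\Sigma_{\ell,m}$ is defined in the usual sense of \cite{BFZ}.

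\emph{Step 3: one-step mutations.} For each $u\in\mathscr M_{\ell,m}$, Lemma~\ref{lem:restriction-folding} applied to $\rho$ shows that the seed mutation $z_u'$ defined by the exchange polynomial of $B_{\ell,m}$ coincides with $\rho(\widetilde x_u')$ for any lift $u$. Two inputs make this work:
\begin{itemize}
\item admissibility of $\tau$, so that the positive and negative parts of the orbit sums agree with the sums of the positive and negative parts;
\item Proposition~\ref{prop:transpose-comparison}, which identifies the restrictions of a transpose pair.
\end{itemize}
Because $\cR_{\ell,m}$ is a domain and $z_u\neq0$, the element $z_u'$ is the quotient of the restricted exchange binomial by $z_u$. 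Theorem~\ref{thm:folded-relations} shows that this quotient is a homogeneous regular element of $\cR_{\ell,m}$. The underlying reason is that $\widetilde x_u'\in\cT_{\ell,m}$ is homogeneous and $\rho$ is a graded homomorphism by \eqref{eq:degree-restriction}.

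\emph{Main obstacle.} The only delicate point is the compatibility in Step 3. The quantity defined intrinsically by mutating the folded seed must agree with the restriction of the unfolded mutated variable, with no stray scalars in the corner relations. This is handled by the $+1$ normalization fixed after Theorem~\ref{thm:flagged-matrix-form}, together with the orbit-sum sign argument of Lemma~\ref{lem:restriction-folding}. I would also check that the restricted exchange binomial is nonzero, which holds because both of its monomials are products of members of a transcendence basis.
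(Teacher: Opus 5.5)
Your proposal is correct and follows the paper's proof. The paper likewise cites Theorem~\ref{thm:folded-family-transcendence-basis} for algebraic independence and Proposition~\ref{prop:folded-B} for full row rank and skew-symmetrizability. For the one-step mutations it cites Theorem~\ref{thm:folded-relations}, which is proved exactly as you describe, via Lemma~\ref{lem:restriction-folding} and restriction of the regular homogeneous unfolded mutations. Your extra checks (that transpose orbits do not mix frozen and mutable indices, and that the exchange binomial is nonzero) only make explicit what the paper leaves implicit.
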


\begin{proof}
Algebraic independence is Theorem~\ref{thm:folded-family-transcendence-basis};
Theorem~\ref{thm:folded-relations} gives the exchange relations, and
Proposition~\ref{prop:folded-B} gives full row rank and
skew-symmetrizability.
\end{proof}

The localization at $H_1$ inverts $\Delta_1=h_\ell$ but no other central
determinant.  The proof of the main theorem will use the analogous
localization for $C_2$ and then intersect the two localized rings.

\section{The upper cluster algebra}
\label{sec:upper-cluster-equality}

Let
\[ \cU_{\ell,m}:=\mathcal U(\Sigma_{\ell,m}) \]
be the upper cluster algebra of the folded seed, with frozen variables as
polynomial coefficients.  We prove $\cU_{\ell,m}\subseteq\cR_{\ell,m}$ from the initial seed and
its adjacent seeds, then obtain the reverse inclusion
after Gauss localization and remove the localization by intersection.

\subsection{The initial upper bound}
\label{subsec:upper-bound-regularity}

Let
$\mathscr I=\mathscr M_{\ell,m}\sqcup\mathscr F_{\ell,m}$
be the index set of the folded seed.  Its initial Laurent ring is
\[ \mathcal L_0 :=\kk[z_f:f\in\mathscr F_{\ell,m}] [z_u^{\pm1}:u\in\mathscr M_{\ell,m}]. \]
For a mutable index $u$, let $z_u'$ be its one-step mutation and put
\[ \mathcal L_u :=\kk[z_f:f\in\mathscr F_{\ell,m}] [(z_u')^{\pm1},z_v^{\pm1}:v\in\mathscr M_{\ell,m}\setminus\{u\}]. \]
The corresponding upper bound is
\[ \overline{\cU}_{\ell,m}:=\mathcal L_0\cap\bigcap_{u\in\mathscr M_{\ell,m}}\mathcal L_u\subseteq\Frac(\cR_{\ell,m}). \]

By Proposition~\ref{prop:folded-B}, the extended exchange matrix has full
row rank.  The ordinary-cluster specialization of
\cite[Theorem~3.11]{GSV}, over the polynomial ring in the frozen variables,
therefore gives
\begin{equation}
 \cU_{\ell,m}=\overline{\cU}_{\ell,m}.
 \label{eq:upper-bound-equality}
\end{equation}

\subsection{Initial irreducibility}
\label{subsec:initial-irreducibility}

For a polynomial dominant weight
$\lambda=(\lambda_1,\ldots,\lambda_\ell)$, the first row of its Young
diagram has length $\lambda_1$.  We use
the classical decomposition
\begin{equation}
 \Sym^d(\Sym^2V)
 \cong\bigoplus_{\rho\vdash d}\mathbf S_{2\rho}V,
 \label{eq:sym-sym2-decomposition}
\end{equation}
with summands of height greater than $\ell$ omitted; see
\cite[Chapter~I, Section~5, Example~4]{Macdonald}.

\begin{lemma}\label{lem:first-row-bound}
Let $0\ne f\in\cR_{\ell,m}$ be homogeneous for $T_V\times T_W$.
Its irreducible factors are homogeneous.  If $f$ is nonconstant and has
$T_V$-weight $\lambda$, then $\lambda_1\ge2$.  Consequently, a nonconstant $f$ is irreducible
whenever $\lambda_1\le3$.
\end{lemma}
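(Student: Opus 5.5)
Three separate claims need proofs: homogeneity of factors, the bound $\lambda_1\ge2$, and irreducibility when $\lambda_1\le3$.

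\emph{Homogeneity of factors.} By Proposition~\ref{prop:basic-rings}, $\cR_{\ell,m}$ is a factorial domain. It carries an action of the connected torus $T_V\times T_W$, which is compatible with its multigrading. Take a homogeneous $f$ and an irreducible factor $p$ of $f$. For each $t$ in the torus, $t\cdot p$ is again an irreducible factor of $t\cdot f$, and $t\cdot f$ is a nonzero scalar multiple of $f$. Since $f$ has only finitely many irreducible factors up to associates, a connected-group argument shows that $t\cdot p$ is associate to $p$ for all $t$. Hence $p$ spans a torus-stable line up to units. The units of the positively graded ring $\cR_{\ell,m}\subseteq\Sym(\Sym^2V\otimes W)$ are the nonzero constants. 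So $t\cdot p=\chi(t)p$ for a character $\chi$, which means $p$ is homogeneous.

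An alternative route avoids the connectedness argument. Look at the leading and trailing homogeneous components of $p$ and $f/p$ with respect to a generic one-parameter subgroup. Since $\cR_{\ell,m}$ is a domain, the product of the extreme components is nonzero. This forces each factor to have a single component.

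\emph{The bound $\lambda_1\ge2$.} Let $f$ be nonconstant, homogeneous of multidegree $\alpha\neq0$ and $T_V$-weight $\lambda$. The weight space it lies in is nonzero, and by \eqref{eq:b-as-multiplicity} this means $\mathbf S_\lambda V$ occurs in $\bigotimes_r\Sym^{\alpha_r}(\Sym^2V)$. By \eqref{eq:sym-sym2-decomposition}, each factor is a sum of $\mathbf S_{2\rho_r}V$ with $\rho_r\vdash\alpha_r$. Every constituent of a tensor product $\mathbf S_{2\rho_1}\otimes\cdots\otimes\mathbf S_{2\rho_m}$ contains the diagram of each tensor factor, by the Littlewood--Richardson rule. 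Choose some $r$ with $\alpha_r>0$. Then $2\rho_r$ is nonzero, so its first row has length at least $2$, and therefore $\lambda_1\ge(2\rho_r)_1\ge2$.

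\emph{Irreducibility.} Suppose $f=gh$ with $g,h$ nonconstant. Both factors may be taken homogeneous by the first part. Weights add under multiplication, so $\lambda=\wt(g)+\wt(h)$. This gives $\lambda_1=\wt(g)_1+\wt(h)_1\ge4$, which contradicts $\lambda_1\le3$.

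The only step that needs care is the homogeneity of irreducible factors in a multigraded factorial domain. I would handle it with the extreme-component argument, which needs only that $\cR_{\ell,m}$ is a domain graded by a torsion-free group and that its units are constants. The other two parts are direct consequences of the earlier decomposition.
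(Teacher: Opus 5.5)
Your proof is correct and follows the paper's argument: constant units plus connectedness of $T_V\times T_W$ give homogeneous factors, the decomposition \eqref{eq:sym-sym2-decomposition} together with Littlewood--Richardson containment gives $\lambda_1\ge2$, and additivity of weights then forces $\lambda_1\ge4$ for any nontrivial factorization. Your extreme-component alternative is also valid; in fact it does not need the units to be constant, since it shows directly that any factorization of a homogeneous element in a domain graded by a torsion-free group has homogeneous factors.
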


\begin{proof}
Since $\cR_{\ell,m}$ is a UFD with constant units, the connected torus
$T_V\times T_W$ fixes each irreducible factor up to scalar; the factors are
therefore homogeneous.  The possible $T_V$-weights in multidegree $\alpha$
are the highest weights occurring in
$\bigotimes_{r=1}^m\Sym^{\alpha_r}(\Sym^2V). $
By \eqref{eq:sym-sym2-decomposition}, every nontrivial factor contributes a
partition whose first row has length at least two, and the same is true for
every Littlewood--Richardson constituent.  Thus the weight of a nonconstant homogeneous element has first row of
length at least two.  The weight of a product of two such elements has
first row of length at least four.
\end{proof}

\begin{corollary}
\label{cor:initial-irreducibility}
The initial functions below are irreducible in $\cR_{\ell,m}$:
\begin{enumerate}
 \item every central determinant $\Delta_r$;
 \item every base-edge function $z_{i,0}^{(n)}$;
 \item every chamber function with $s(v)=2$.
\end{enumerate}
\end{corollary}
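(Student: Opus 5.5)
The plan is to apply Lemma~\ref{lem:first-row-bound} directly. For each function in the list I will check three things: it is a nonzero homogeneous element of $\cR_{\ell,m}$, it is nonconstant, and the first row $\lambda_1$ of its $T_V$-weight satisfies $\lambda_1\le3$.

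\emph{Nonzero, homogeneous, nonconstant.} All the listed functions belong to the folded initial family $\mathbf x_{\ell,m}$. Theorem~\ref{thm:folded-family-transcendence-basis} shows this family is a transcendence basis, so each member is nonzero. Each is homogeneous for $T_V\times T_W$: it is the restriction of a homogeneous element of $\cT_{\ell,m}$, with degrees given by \eqref{eq:degree-restriction}. Nonconstancy follows from Proposition~\ref{prop:weights-count}, since every listed function has a nonzero weight.

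\emph{Reading off the weights.} Here I use \eqref{eq:V-weight} case by case.
\begin{enumerate}
 \item For $\Delta_r$ the weight is $2\omega_\ell$, so $\lambda_1=2$.
 \item For a base-edge function $z_{i,0}^{(n)}$ we have $j=0$, hence $q(v)=0$ and $k=i$. Since $\omega_0=0$, the weight is $2\omega_i$ with $i\ge1$, so $\lambda_1=2$.
 \item For a chamber function with $s(v)=2$ and $j>0$, we get $q(v)=\lfloor(2-1)/2\rfloor=0$. The weight is $\omega_i+\omega_j+\omega_k$ with $k=i+j\ge1$. Its first row is the number of nonzero indices among $i,j,k$, which is between $1$ and $3$. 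The case $j=0$ is already covered by the base-edge case.
\end{enumerate}

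In every case $\lambda_1\le3$, so Lemma~\ref{lem:first-row-bound} gives irreducibility. A factorization into two nonconstant factors would force $\lambda_1\ge4$.

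The only step needing care is the use of \eqref{eq:V-weight} for the identified labels with $s(v)=2$, such as the vertices $[3;i,\ell-i]=[2;i,\ell-i]$ on the common boundary. There I will use the least representative $s(v)=2$, as the proposition prescribes, so that no factor $\Pi_n$ and no $\omega_\ell$-contribution enters.
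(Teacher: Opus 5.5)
Your proposal is correct and is essentially the paper's argument: read off the weights $2\omega_\ell$, $2\omega_i$, and $\omega_i+\omega_j+\omega_{i+j}$ from Proposition~\ref{prop:weights-count}, note that their first rows have length at most three, and apply Lemma~\ref{lem:first-row-bound}. You also spell out nonconstancy and homogeneity, and you evaluate identified boundary labels at their least representative $s(v)=2$; the paper leaves these details implicit.
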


\begin{proof}
The corresponding weights are
$2\omega_\ell$, $2\omega_i$, and $\omega_i+\omega_j+\omega_{i+j},$
whose first rows have length at most three.
\end{proof}

The remaining cases reduce to determinants of symmetric pairings.

\begin{lemma}\label{lem:symmetric-pairing-determinant}
Let $E$ be a vector space over a field of characteristic different from
two, and let $A,B\subseteq E$ be subspaces of the same positive
dimension $r$.  For $C\in\Sym^2(E^\vee)$, let
\[ p_{A,B}(C):=\det\bigl(C(b_i,a_j)\bigr)_{1\le i,j\le r}, \]
where bases of $A$ and $B$ have been chosen.  Then
$p_{A,B}$ is absolutely irreducible, up to a nonzero scalar depending
on the bases.
\end{lemma}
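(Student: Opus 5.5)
The plan is to reduce $p_{A,B}$, by a linear change of coordinates, to the determinant of a partially symmetric generic matrix, and then prove irreducibility of that determinant directly. Put $K=A\cap B$ and $k=\dim K$, so $\dim(A+B)=2r-k$. Choose a basis $e_1,\ldots,e_k$ of $K$. Complete it to bases $e_1,\ldots,e_k,a_{k+1},\ldots,a_r$ of $A$ and $e_1,\ldots,e_k,b_{k+1},\ldots,b_r$ of $B$. The vectors $e_i,a_j,b_j$ are then linearly independent, and we extend them to a basis of $E$. Changing the bases of $A$ and $B$ multiplies $p_{A,B}$ by a nonzero scalar, so we may work with these bases.

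In these coordinates the pairing matrix has block form
\[ M(C)=\begin{bmatrix} S & X\\ Y & Z\end{bmatrix}, \qquad S=\bigl(C(e_i,e_j)\bigr),\quad X=\bigl(C(e_i,a_j)\bigr),\quad Y=\bigl(C(b_i,e_j)\bigr),\quad Z=\bigl(C(b_i,a_j)\bigr). \]
The block $S$ is symmetric. Every other entry is the value of $C$ on an unordered pair of distinct basis vectors, and each such pair occurs at most once in $M(C)$ outside $S$. Hence $C\mapsto M(C)$ is a coordinate projection of $\Sym^2(E^\vee)$ onto the space $\mathcal M_{k,r}$ of $r\times r$ matrices whose upper-left $k\times k$ block is symmetric, with all other entries independent variables. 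Pulling back along a surjective coordinate projection preserves irreducibility, since it is just adjoining variables. So it suffices to show that $\det$ is irreducible on $\mathcal M_{k,r}$ over $\overline{\kk}$. When $k=r$, this is the classical irreducibility of the generic symmetric determinant in characteristic $\ne2$. By induction on $k$ we may assume the case of generic symmetric determinants of every size $\le k$. When $k=0$, this is the generic determinant.

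Now let $0<k<r$ and suppose $\det=fg$. The determinant has degree at most one in each non-symmetric entry, so each entry of $X$, $Y$, $Z$ occurs in exactly one of $f$, $g$. No monomial of $\det$ contains two entries from the same row or the same column. If two such entries in one row or column lay in different factors, then $fg$ would contain their product, by comparing extreme terms. Every row and column of $M$ outside the $S$-block meets $Z$, so all free entries are linked and lie in one factor, say $f$. Then $g$ is a polynomial in the entries of $S$ alone. It therefore divides the coefficient of every monomial in the free variables.

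The main step is to exhibit two such coefficients that are coprime. The coefficient of $z_{k+1,k+1}\cdots z_{r,r}$ is $\det S$. The coefficient of $x_{k,k+1}y_{k+1,k}z_{k+2,k+2}\cdots z_{r,r}$ is $-\det S^{(k)}$, where $S^{(k)}$ is $S$ with row and column $k$ deleted. Both are generic symmetric determinants, of sizes $k$ and $k-1$, hence irreducible by the induction hypothesis. They have different degrees, so they are non-associate, hence coprime; for $k=1$ the second coefficient is the constant $-1$. Therefore $g$ is a unit, and $p_{A,B}$ is absolutely irreducible up to the scalar coming from the choice of bases.
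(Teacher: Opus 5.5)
Your proof is correct, but it takes a different route from the paper's.

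\textbf{The paper's route} is geometric. The incidence variety $\{([a],C)\in\mathbb P(A)\times\Sym^2(E^\vee):C(a,B)=0\}$ is a vector bundle over $\mathbb P(A)$, so it is irreducible, and its image is $V(p_{A,B})$. Restricting $p_{A,B}$ to a line of forms with pairing matrix $\operatorname{diag}(1,\ldots,1,t)$ shows that $p_{A,B}$ vanishes to order one there. Hence it is not a proper power, and so it is irreducible up to a scalar. This argument is uniform in the relative position of $A$ and $B$, and it is self-contained: the case $A=B$ proves the irreducibility of the generic symmetric determinant rather than assuming it.

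\textbf{Your route} is algebraic. You choose bases adapted to $A\cap B$ and observe that $p_{A,B}$ is the pullback, under a coordinate projection, of the determinant on $r\times r$ matrices with a generic symmetric $k\times k$ corner and independent remaining entries. You then split a putative factorization using three facts:
\begin{itemize}
\item the determinant is multilinear in the free entries;
\item the free positions are connected through the $Z$ block;
\item the coefficients $\det S$ and $-\det S^{(k)}$ are coprime.
\end{itemize}
One small point: the step ``each free entry lies in exactly one factor'' needs degree exactly one, not just at most one. This is immediate, since a transposition monomial such as $x_{ij}y_{ji}\prod_{t\ne i,j}m_{tt}$ occurs with coefficient $-1$.

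\textbf{What each approach buys.} Your version gives a concrete normal form: up to scalars and dummy variables, $p_{A,B}$ depends only on $\dim(A\cap B)$. It also avoids dimension theory and properness. On the other hand, it takes the irreducibility of the generic and generic symmetric determinants as external input. In particular, your ``induction on $k$'' is really a citation of the classical $k=r$ case in sizes $k$ and $k-1$.
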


\begin{proof}
Let $N=\dim\Sym^2(E^\vee)$ and consider
\[ \mathcal I_{A,B} =\{([a],C)\in\mathbb P(A)\times\Sym^2(E^\vee):C(a,B)=0\}. \]
For $a\ne0$, the map $C\mapsto C(a,-)|_B$ from $\Sym^2(E^\vee)$ to
$B^\vee$ is surjective.  Thus $\mathcal I_{A,B}$ is a vector bundle over
$\mathbb P(A)$ of dimension $N-1$, hence irreducible.  Its proper image in
$\Sym^2(E^\vee)$ is the hypersurface $V(p_{A,B})$.

The polynomial $p_{A,B}$ is nonzero: choose bases adapted to
$A\cap B$ and a symmetric form whose pairing between $A$ and $B$ is
nonsingular.  The same bases admit a one-parameter family with pairing
matrix $\operatorname{diag}(1,\ldots,1,t)$.  Hence $V(p_{A,B})$ contains a
smooth corank-one point and is reduced.  It is therefore an irreducible
hypersurface.  The argument is unchanged after field extension, so
$p_{A,B}$ is absolutely irreducible.
\end{proof}

To apply Gauss's lemma, we first need two rank estimates.  Fix $s\ge3$
and work on the open set where
$C_1,\ldots,C_{s-1}$ are invertible.  Write
\begin{equation*}
G_s:=
 \begin{cases}
  C_{\overleftarrow{\mathbf q_s}},&s\text{ even},\\
  C_{s-1}^{-1}C_{s-2}\cdots C_2^{-1}C_1,
    &s\text{ odd},
 \end{cases}
\end{equation*}
where the products alternate as in \eqref{eq:alternating-word}; for odd
$s$, one has $C_{\overleftarrow{\mathbf q_s}}=C_sG_s$.

\begin{lemma}\label{lem:rank-drop-codimension}
Let $k=i+j$.
\begin{enumerate}
 \item If $s$ is even and $i,j>0$, then
 $\operatorname{codim} \{\operatorname{rank}(P_kG_sJ_j)<j\}\ge i+1.$
 \item If $s$ is odd, $j>0$, and $k<\ell$, then
 $\operatorname{codim} \{\operatorname{rank}[J_i\mid G_sJ_j]<k\} \ge\ell-k+1.$
\end{enumerate}
The codimensions are taken in the open set where all preceding
symmetric matrices are invertible.
\end{lemma}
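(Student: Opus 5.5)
The idea is to reduce both estimates to the following fact: on the open set where $C_1,\ldots,C_{s-1}$ are invertible, the matrix $G_s$ can be treated as a generic point of $\GL_\ell$, and $C_s$ as a generic symmetric matrix independent of it. The rank conditions in (1) and (2) involve only fixed row and column blocks of $G_s$, so their codimensions can then be read off from the classical determinantal codimensions in a generic matrix.

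\emph{Step 1: the map to $G_s$ is dominant.} Consider
\[ \Gamma_s:(C_1,\ldots,C_{s-1})\mapsto G_s \]
on the open set of invertible symmetric tuples, where $s\ge3$ forces at least two factors. Write $G_s=C_{s-1}^{\pm1}\cdot G'$, with $G'$ built from the earlier matrices. Fix $C_2,\ldots,C_{s-1}$ generic and let $C_1$ range over invertible symmetric matrices. I would then show that the map is a smooth morphism onto an open subset of $\GL_\ell$, so that preimages of subvarieties keep their codimension.

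The concrete tool is the decomposition of any invertible matrix as a product $S_1S_2$ with $S_1,S_2$ symmetric and $S_1$ invertible. This is possible because every matrix is similar to its transpose via a symmetric conjugator. It shows that $(S_1,S_2)\mapsto S_1^{-1}S_2$ is surjective onto $\GL_\ell$. By generic smoothness in characteristic zero and equidimensionality of fibers under the free $\GL$-type symmetries, the map is flat over an open set. To avoid worrying about whether that open set meets the rank-deficient loci, I would instead fiber over the last factor $C_{s-1}$. The map $C_{s-1}\mapsto C_{s-1}^{\pm1}G'$ is an open immersion of $\SymMat_\ell\cap\GL_\ell$ times a fixed invertible matrix. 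Multiplying on the left by the fixed invertible matrix $G'$, or by its transpose-type partner, turns the family of $G_s$ into a translate of the variety of invertible symmetric matrices.

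\emph{Step 2: codimension counts in that translate.}

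For (1), $s$ is even and $G_s=C_{\overleftarrow{\mathbf q_s}}$, whose leftmost factor is $C_{s-1}$. With $G''$ fixed and generic, write $G_s=C_{s-1}G''$. The columns $G_sJ_j=C_{s-1}(G''J_j)$ are $C_{s-1}$ applied to a generic $j$-dimensional subspace $B$. The rank condition is then that the pairing of $C_{s-1}$ between $B$ and the span $A$ of the first $k$ coordinate vectors has rank less than $j$. This is a degeneracy locus of the form in Lemma~\ref{lem:symmetric-pairing-determinant}, now for a $k\times j$ pairing.

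The degeneracy locus of a $k\times j$ pairing has expected codimension $k-j+1=i+1$. The symmetric linear map $C\mapsto C|_{A\times B}$ to $\Hom(B,A^\vee)$ is surjective whenever $A\cap B$ is handled as in that lemma. Preimages under a surjective linear map keep codimension, and intersecting with the invertible locus keeps it too. Taking the generic-fiber codimension and using upper semicontinuity of fiber dimension then gives codimension at least $i+1$ in the whole open set.

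For (2), $s$ is odd and the condition is $\operatorname{rank}[J_i\mid G_sJ_j]<k$. This says that $G_sJ_j$ meets the span of $e_1,\ldots,e_i$ nontrivially modulo, which is equivalent to $P'G_sJ_j$ having rank less than $j$, where $P'$ projects onto the last $\ell-i$ coordinates. This is an $(\ell-i)\times j$ rank-deficiency condition, of codimension $\ell-i-j+1=\ell-k+1$. The same surjectivity-of-pairing argument as in (1) applies, with $G_s=C_{s-1}^{-1}(\cdots)$.

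\emph{Main obstacle.} The hard step is the surjectivity claim in the form where $C_{s-1}$ appears inverted. I would handle it by replacing $C_{s-1}$ with $C_{s-1}^{-1}$, which is an automorphism of the invertible symmetric locus. After that substitution the relevant columns become $C_{s-1}^{-1}$ applied to a fixed subspace, which reduces the condition to a rank condition on a symmetric bilinear pairing of the same shape as in (1).
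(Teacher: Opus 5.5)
Your overall strategy (freeze all but one symmetric factor of $G_s$ and reduce to a codimension count for a single symmetric matrix) matches the paper's. The mechanism you use for that count, however, does not work.

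\textbf{First gap: the surjectivity claim is false.} You claim that $C\mapsto C|_{A\times B}$ maps $\SymMat_\ell$ onto $\operatorname{Hom}(B,A^\vee)$. This fails as soon as $\dim(A\cap B)\ge2$. For $x,y\in A\cap B$ one has $C(x,y)=C(y,x)$, so the image is the proper subspace of forms that are symmetric on $(A\cap B)\times(A\cap B)$. For a generic $j$-dimensional $B$ this happens in (1) whenever $j+k\ge\ell+2$, for instance $\ell=5$, $(i,j)=(1,3)$. It happens in (2), where $A$ is the span of the last $\ell-i$ coordinates, whenever $j\ge i+2$. In those cases ``preimage of a determinantal variety under a surjective linear map'' gives nothing. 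You would have to compute the codimension of the rank-deficient forms inside the smaller image, which the proposal does not do. Lemma~\ref{lem:symmetric-pairing-determinant} does not provide this surjectivity: it only uses surjectivity of $C\mapsto C(a,-)|_B$ for a single vector $a$.

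\textbf{Second gap: the semicontinuity step runs the wrong way.} Passing from a generic $G''$ to the whole open set via upper semicontinuity is backwards. Upper semicontinuity says fiber dimension can \emph{jump up} on a closed set of special $G''$. That is exactly what could produce a non-dominant component of smaller codimension. You need the bound on every fiber, not just the generic one.

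\textbf{The fix.} Both problems disappear if you use only single-vector surjectivity: for every $x\ne0$, the map $C\mapsto Cx$ sends $\SymMat_\ell$ onto $\kk^\ell$. The paper proceeds as follows.
\begin{enumerate}
 \item Fix $C_2,\ldots,C_{s-1}$ and write $G_s=HC_1$ with $H$ invertible. In both parities $C_1$ is the rightmost, uninverted factor, so no inversion trick is needed.
 \item For each $[a]\in\mathbb P^{j-1}$, the condition $P_kHC_1J_ja=0$ is $k$ independent linear conditions on $C_1$. In case (2), $HC_1J_ja\in J_i\kk^i$ is $\ell-i$ independent conditions.
 \item Projecting the incidence variety from $\mathbb P^{j-1}\times\SymMat_\ell$ loses at most $j-1$ dimensions. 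This gives codimension at least $i+1$, respectively $\ell-k+1$.
\end{enumerate}
This bound holds for every invertible $H$, so it integrates over the fixed matrices without any genericity. The same incidence argument also rescues your choice of $C_{s-1}$, using $C_{s-1}\mapsto C_{s-1}^{-1}$ for odd $s$. Your Step~1 on dominance and flatness of $\Gamma_s$ can then be dropped.
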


\begin{proof}
Fix the preceding matrices other than $C_1$, and write $G_s=HC_1$
with $H$ invertible.  For every nonzero $x\in\kk^\ell$,
the evaluation map $C\mapsto Cx$ from $\SymMat_\ell$ to $\kk^\ell$ is
surjective.  In the even case, for each $[a]\in\mathbb P^{j-1}$ the
condition
$P_kHC_1J_ja=0$
imposes $k$ independent linear equations.  The incidence variety therefore
has codimension at least $k-(j-1)=i+1$.  In the odd case the condition
$HC_1J_ja\in J_i\kk^i$ imposes $\ell-i$ equations, giving codimension
$(\ell-i)-(j-1)=\ell-k+1$.
\end{proof}

\begin{lemma}
\label{lem:initial-functions-irreducible}
Every member of the folded initial family $\mathbf x_{\ell,m}$ is
irreducible in $\cR_{\ell,m}$.
\end{lemma}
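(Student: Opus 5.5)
By Corollary~\ref{cor:initial-irreducibility}, the determinants $\Delta_r$, the base-edge functions $z_{i,0}^{(n)}$ and all chamber functions with $s(v)=2$ are already irreducible. So I only need to treat $z_v$ with $j>0$ and $s=s(v)\ge3$. In this case
\[ z_v=\Pi_s\,\phi_{i,j}^{(s),\mathrm L,\circ}\big|_{X_{\ell,m}}, \]
and the plan is to show the following two things.
\begin{enumerate}
\item The ``reduced'' factor, namely $z_v$ with its apparent denominators cleared, is irreducible as a polynomial in the entries of $C_1$ over the field $K$ of rational functions in $C_2,\ldots,C_m$.
\item No irreducible factor of $z_v$ lies in $\kk[C_2,\ldots,C_m]$, so that $C_1$-content is trivial.
\end{enumerate}
Gauss's lemma then gives irreducibility in the polynomial ring $\kk[X_{\ell,m}]$. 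Irreducibility in $\cR_{\ell,m}$ follows, because $\cR_{\ell,m}$ is a factorial subring with constant units, and any factorization inside $\cR_{\ell,m}$ is a factorization in $\kk[X_{\ell,m}]$.

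For the first point I work over $K$, that is, generically in $C_2,\ldots,C_{s-1}$, with all of them invertible. By the Schur complement I rewrite the chamber determinant, up to units of $K$ and invertible row/column operations, as a pairing determinant that is linear in $C_1$.
\begin{itemize}
\item For even $s$, I expand $\det\bigl[P_kC_sJ_i\mid P_kG_sJ_j\bigr]$ with $G_s=HC_1$, where $H$ is invertible and depends only on the other matrices. The column block $J_i$ paired with $C_s$ is fixed. After Schur reduction the function becomes a determinant of the form $C_1(b,a)$, with $a\in J_j\kk^j$ and $b$ ranging over an $j$-dimensional subspace of $H^T$-translates cut out by $P_k$ and the complement of the $C_s$ block. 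This is exactly the shape $p_{A,B}(C_1)$ of Lemma~\ref{lem:symmetric-pairing-determinant}, with $A,B$ defined over $K$.
\item For odd $s$, I use $C_{\overleftarrow{\mathbf q_s}}=C_sG_s$ and factor out $\det$-type units to reduce to the condition $[J_i\mid G_sJ_j]$ of full rank. Again this is a symmetric pairing determinant of $C_1$ between two $K$-subspaces.
\end{itemize}
Lemma~\ref{lem:symmetric-pairing-determinant} is stated for absolutely irreducible polynomials over an arbitrary field of characteristic $\ne2$, so it yields irreducibility over $K$.

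For the second point, the content of $z_v$ with respect to $C_1$ must be shown to be exactly the factor $\Pi_s$ up to the Jacobi correction, and not more. This is where Lemma~\ref{lem:rank-drop-codimension} enters.
\begin{itemize}
\item The bad locus, where the reduced factor vanishes identically in $C_1$ or where the Schur reduction fails, has codimension at least $2$ in the open set where $C_1,\ldots,C_{s-1}$ are invertible. Its codimension is at least $i+1\ge2$ in the even case and at least $\ell-k+1\ge2$ in the odd case.
\item Hence the reduced factor cannot acquire a divisor depending only on $C_2,\ldots,C_m$, other than possibly a factor supported on $\{\Delta_r=0\}$.
\item I exclude the latter factors by comparing weights. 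Proposition~\ref{prop:weights-count} shows that the $\omega_\ell$-coefficient of $\wt_V(z_v)$ is exactly what the factors $\Delta_r$ in $\Pi_s$ account for. But $z_v$ itself is a single minimally lifted function, not $\Pi_s$ times a polynomial: by the Jacobi identity in Lemma~\ref{lem:denominator-clearing}, the $\Pi_s$ is absorbed by clearing denominators. So $\Delta_r\nmid z_v$ for $r$ even. I check this by exhibiting a point with $\Delta_r=0$ and $z_v\ne0$, for instance a block-diagonal tuple.
\end{itemize}

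The main obstacle is this last content step: proving that no $\Delta_r$ divides $z_v$ and that the Schur rewrite does not hide extra factors. I would first try evaluation at explicit specializations with $C_r$ singular of corank one, using the codimension estimates to control everything else.
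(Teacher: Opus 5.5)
Your Gauss's-lemma architecture is the same as the paper's, but you take $C_1$ as the polynomial variable over $K=\Frac\kk[C_2,\dots,C_m]$. The paper instead takes the newest matrix $C_s$ over $\Frac\kk[C_1,\dots,C_{s-1}]$. Your step (1) is sound. Writing $G_s=HC_1$ as in Lemma~\ref{lem:rank-drop-codimension}, one has $z_v\doteq\Pi_s\det[P_kC_sJ_i\mid P_kHC_1J_j]$ for even $s$ and $z_v\doteq\Pi_s\det[P_kC_sJ_i\mid P_kC_sHC_1J_j]$ for odd $s$. Splitting off the $C_1$-free block then gives a unit of $K$ times a pairing determinant $p_{A,B}(C_1)$.

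The gap is in step (2), in two places.

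First, Lemma~\ref{lem:rank-drop-codimension} does not control your content. Its loci are defined through $G_s=HC_1$, and their codimension is obtained by varying $C_1$, which in your setup is the variable, not a coefficient. What you need is different: the set of $(C_2,\dots,C_s)$, with the even-indexed $C_r$ invertible, on which $z_v$ vanishes identically in $C_1$ must have codimension at least two. That set is $\{\operatorname{rank}(P_kC_sJ_i)<i\}$, of codimension $j+1$. When $s$ is odd it also contains $\{\operatorname{rank}(P_kC_s)<k\}$, of codimension $\ell-k+1$. These bounds follow from the same incidence argument, but you must prove them; the bound $i+1$ you quote for even $s$ is not the relevant one.

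Second, and more seriously, your exclusion of the factors $\Delta_r$ does not work. The $T_V$-weight cannot detect divisibility by $\Delta_r$. The $\omega_\ell$-coefficient $2q(v)$ in \eqref{eq:V-weight} is exactly the weight of $\Pi_s$, so it is compatible with $\Pi_s\mid z_v$ rather than evidence against it. The remark that the Jacobi identity ``absorbs'' $\Pi_s$ is only a heuristic. The evaluation at a point with $\Delta_r=0$ is not carried out, and it would need the explicit polynomial extension across $\Delta_r=0$. Also, for Gauss's lemma the content must be shown to be a unit, not ``exactly $\Pi_s$''.

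The missing observation is the $T_W$-degree. By \eqref{eq:W-degree}, $\deg_{C_r}z_v=\ell-j<\ell=\deg_{C_r}\Delta_r$ for every even $r<s$, so no $\Delta_r$ divides $z_v$. This is exactly how the paper disposes of the determinant factors. With this degree count and the corrected codimension estimate, your route closes. The paper's choice of $C_s$ as the variable has the advantage that the codimension statement it needs is literally Lemma~\ref{lem:rank-drop-codimension}.
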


\begin{proof}
It remains to consider chamber functions with $s=s(v)\ge3$.  Such a function has
$j>0$; if $s$ is even then $i>0$, while if $s$ is odd then $i+j<\ell$.
Let $A_{<s}$ be the polynomial ring in the entries of
$C_1,\ldots,C_{s-1}$ and $K_{<s}=\Frac(A_{<s})$.

Over $K_{<s}$, the factor $\Pi_s$ is a unit.  If $s$ is even, the matrix
$P_kG_sJ_j$ has generic rank $j$, and taking the quotient by its column span
converts the chamber determinant into the determinant of $C_s$ on two
$i$-dimensional subspaces.  If $s$ is odd, the matrix $[J_i\mid G_sJ_j]$ has
generic rank $k=i+j$, and the chamber function is the determinant of $C_s$ on
two $k$-dimensional subspaces.  In either case,
Lemma~\ref{lem:symmetric-pairing-determinant} gives irreducibility in
$K_{<s}[\SymMat_\ell]$.

An irreducible common factor of the coefficients would define a divisor
in the locus where the rank drops on $\prod_{r<s}\Delta_r\ne0$, contradicting
Lemma~\ref{lem:rank-drop-codimension}.  It must therefore be associated to
some $\Delta_r$, $r<s$.  But \eqref{eq:W-degree} gives degree $j$ or
$\ell-j$ in $C_r$, both smaller than $\deg_{C_r}\Delta_r=\ell$.  Gauss's
lemma proves irreducibility after adjoining $C_s$; adjoining the later
matrix entries preserves primality.  Hence the chamber function is
irreducible in $\kk[X_{\ell,m}]$ and therefore in $\cR_{\ell,m}$.
\end{proof}

\subsection{Coprimality with one-step mutations}
\label{subsec:first-mutation-coprimality}

For a homogeneous element $f$, write
$\deg(f)=(\deg_W(f),\wt_V(f))$.  If
$z_uz_u'=M_{u,+}+M_{u,-} $
is one of the exchange relations of Theorem~\ref{thm:folded-relations},
put
\[ \Gamma_u:=\deg(M_{u,+})-2\deg(z_u). \]
Homogeneity makes the definition independent of the chosen exchange monomial.

\begin{lemma}\label{lem:quotient-degree-obstruction}
If $z_u\mid z_u'$ in $\cR_{\ell,m}$, then the homogeneous component of
$\cR_{\ell,m}$ of bidegree $\Gamma_u$ is nonzero.  In particular, the
$T_W$-part of $\Gamma_u$ is nonnegative and its $T_V$-part is a
polynomial dominant weight.
\end{lemma}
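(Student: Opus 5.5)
Suppose $z_u\mid z_u'$ in $\cR_{\ell,m}$, say $z_u'=z_ug$ with $g\in\cR_{\ell,m}$. The plan is to show that $g$ is a nonzero homogeneous element of bidegree $\Gamma_u$, and then read off the two constraints from the structure of $\cR_{\ell,m}$.

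First, $g\neq0$. By Theorem~\ref{thm:folded-relations}, $z_u'$ is a regular element satisfying $z_uz_u'=M_{u,+}+M_{u,-}$. The right-hand side is a sum of two distinct monomials in the algebraically independent family $\mathbf x_{\ell,m}$ (Theorem~\ref{thm:folded-family-transcendence-basis}), so it is nonzero. Since $\cR_{\ell,m}$ is a domain, $z_u'\ne0$, and hence $g\ne0$.

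Next, $g$ is homogeneous of bidegree $\Gamma_u$. The element $z_u'$ is homogeneous for $T_V\times T_W$ by Theorem~\ref{thm:folded-relations}. Comparing degrees in the exchange relation gives $\deg(z_u')=\deg(M_{u,+})-\deg(z_u)$. Because $\cR_{\ell,m}$ is a domain graded by a torus character group (a torsion-free group), a factor of a nonzero homogeneous element is itself homogeneous. To see this, take the components of $z_u$ and of $g$ that are extremal with respect to a total order compatible with addition: their product survives in $z_ug$, so $g$ can have only one component, since $z_u$ is already homogeneous. Thus $g$ is homogeneous of degree $\deg(z_u')-\deg(z_u)=\deg(M_{u,+})-2\deg(z_u)=\Gamma_u$, and so the component of bidegree $\Gamma_u$ is nonzero.

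Finally, we read off the constraints. The ring $\cR_{\ell,m}$ sits inside the polynomial ring $\Sym(\Sym^2V\otimes W)$, whose $T_W$-degrees lie in $\mathbb Z_{\ge0}^m$, so the $T_W$-part of $\Gamma_u$ is nonnegative. For the $T_V$-part, a $U_V$-invariant $T_V$-weight vector of multidegree $\alpha$ in \eqref{eq:multidegree-piece} is a highest-weight vector of a polynomial $G_V$-representation. Its weight is therefore a polynomial dominant weight, as in \eqref{eq:b-as-multiplicity}. None of these steps is a real obstacle; the only point needing care is the homogeneity of the cofactor $g$, which is handled by the extremal-component argument above (or by Lemma~\ref{lem:first-row-bound}'s observation that irreducible factors of homogeneous elements are homogeneous).
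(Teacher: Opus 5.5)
Your proposal is correct and follows the paper's one-line proof: the quotient $z_u'/z_u$ is a nonzero homogeneous element of $\cR_{\ell,m}$ of bidegree $\Gamma_u$. You supply the details the paper leaves implicit, namely nonvanishing of the cofactor, its homogeneity via the torsion-free grading, and the reading-off of $W$-nonnegativity and dominance of $U_V$-invariant weight vectors.
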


\begin{proof}
The quotient $z_u'/z_u$ would be a nonzero homogeneous element of degree
$\Gamma_u$.
\end{proof}

Substitution of \eqref{eq:V-weight}--\eqref{eq:W-degree} gives
\begin{equation*}
\begin{array}{c|c|c}
  \text{variable}&\text{remaining case}&\text{obstruction otherwise}\\
  \hline
  z_{i,0}^{(n)}&i=1&[\omega_i]\Gamma=-2\ (i\ge2)\\
  z_{0,j}^{(2)}&j=1&[\omega_j]\Gamma=-2\ (j\ge2)\\
  z_{i,j}^{(n)}\text{ interior}&(i,j)=(1,2),(2,1)
    &\text{$T_V$-weight not dominant}\\
  \text{variable on an identified boundary}&\text{none}&[\delta_1]\Gamma=-1
 \end{array}
\end{equation*}
where $[\omega_a]\Gamma$ and $[\delta_r]\Gamma$ denote coefficients in
the indicated bases.

\begin{lemma}\label{lem:degree-reduction}
The quotient-degree obstruction proves $z_u\nmid z_u'$ in every mutable
direction except possibly:
\begin{equation}
 z_{1,0}^{(n)},\qquad z_{0,1}^{(2)},\qquad
 z_{1,2}^{(n)},\qquad z_{2,1}^{(2)}.
 \label{eq:degree-exception-list}
\end{equation}
Here the last two families occur only when their indices are defined.
The two $n=2$ interior exceptions
$z_{1,2}^{(2)}$ and $z_{2,1}^{(2)}$ are also coprime to their respective mutated variables.
\end{lemma}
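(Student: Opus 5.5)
The plan is to compute, for each mutable index $u$ and for one chosen exchange monomial $M_{u,+}$ from Theorem~\ref{thm:folded-relations}, the bidegree $\Gamma_u=\deg(M_{u,+})-2\deg(z_u)$ using \eqref{eq:V-weight}--\eqref{eq:W-degree}. If $z_u\mid z_u'$, then Lemma~\ref{lem:quotient-degree-obstruction} requires the $T_W$-part of $\Gamma_u$ to be nonnegative and its $T_V$-part to be a polynomial dominant weight. So it suffices to exhibit, outside the exception list \eqref{eq:degree-exception-list}, a negative $\delta_r$-coefficient or a non-dominant weight. This is a case analysis following the five families (a)--(e).

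\emph{Family (a), base edge.} Take $M_{u,+}=z_{i+1,0}(z_{i-1,1})^2$. Using $\omega_{i-1}+\omega_1+\omega_i$ for $z_{i-1,1}$, the weight is
\[
\Gamma=\omega_{i+1}+2\omega_{i-1}+2\omega_1+2\omega_i-4\omega_i
=\omega_{i+1}+2\omega_{i-1}+2\omega_1-2\omega_i ,
\]
plus the $\Pi$-contributions coming from $q$. For $i\ge2$, the coefficient of $\omega_i$ is $-2$. In the fundamental-weight basis, this forces $\lambda_i-\lambda_{i+1}<0$, so the weight is not dominant. Only $i=1$ survives; the corner relations are handled the same way.

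\emph{Family (b).} The computation is symmetric with $j$ and gives coefficient $-2$ at $\omega_j$ for $j\ge2$.

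\emph{Family (c), interior.} The monomial $z_{i,j-1}z_{i+1,j}z_{i-1,j+1}$ minus $2(\omega_i+\omega_j+\omega_k)$ gives a combination with negative coefficients at $\omega_i$ or $\omega_j$. One checks that it fails to be dominant unless $\{i,j\}$ hits the small cases $(1,2)$ or $(2,1)$. The $q$-terms are multiples of $\omega_\ell$ and do not affect dominance below $\ell$.

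\emph{Families (d), (e).} These are identified boundaries. Here $z_u$ has $s(v)\ge2$, and its $W$-degree contains $j\delta_1$ or $(\ell-j)\delta_1$ by \eqref{eq:W-degree}. Comparing with the $\delta_1$-degree of the monomial $z_{i,j-1}^{(n)}z_{i-1,j}^{(n+1)}$, the net $\delta_1$-coefficient is $-1$. I would verify this parity by parity, including the $\Delta_n$ corner factors, which do not involve $\delta_1$ when $n\ge2$.

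\emph{Interior exceptions at $n=2$.} For $z_{1,2}^{(2)}$ and $z_{2,1}^{(2)}$ the degree test fails, so I will argue directly. Since $z_u$ is irreducible (Lemma~\ref{lem:initial-functions-irreducible}), $z_u\mid z_u'$ would force $z_u^2$ to divide $M_{u,+}+M_{u,-}$. I will specialize $C_1,C_2$ to a point of the hypersurface $z_u=0$ where the right-hand side is nonzero. Equivalently, I check that $M_{u,+}$ and $M_{u,-}$ do not both vanish to second order along $\{z_u=0\}$. Concretely, I will use a diagonal or near-diagonal choice of $C_1$ and a rank-deficient $C_2$ and evaluate the $3\times3$ or smaller determinants explicitly. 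Alternatively, a weight comparison for $z_u'/z_u$ combined with the first-row bound of Lemma~\ref{lem:first-row-bound} may already exclude the quotient, since $\Gamma$ would have $\lambda_1\le1$.

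The main obstacle is the bookkeeping of the $q(v)$ and $\Pi_n$ contributions in (d)--(e), which must be tracked to confirm that the $\delta_1$-coefficient is exactly $-1$. The $n=2$ exceptions are the only place where a genuine nondegeneracy check is needed.
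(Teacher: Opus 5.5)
Your handling of families (a)--(e) follows the paper's argument: coefficient $-2$ at $\omega_i$, resp.\ $\omega_j$, on the two boundaries of the first triangle; $(\Gamma_u)_V=F(i)+F(j)+F(i+j)+c_u\omega_\ell$ in the interior; and $[\delta_1]\Gamma_u=-1$ on identified boundaries. Two slips there are harmless. First, $z_{i+1,0}^{(n)}$ has weight $2\omega_{i+1}$, not $\omega_{i+1}$. Second, an interior negative coefficient can sit at $\omega_{i+j}$ rather than at $\omega_i$ or $\omega_j$; for example, $(i,j)=(2,3)$ gives $\omega_1+2\omega_4+\omega_6-\omega_5$.

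There are, however, two genuine gaps. First, dominance leaves $(1,2)$ and $(2,1)$ in \emph{every} triangle, but the statement lists $z_{2,1}^{(n)}$ only for $n=2$. You never eliminate $z_{2,1}^{(n)}$ for $n\ge3$. This needs the $T_W$-degree. By \eqref{eq:W-degree}, $\deg_W\bigl(z_{2,0}^{(n)}z_{3,1}^{(n)}z_{1,2}^{(n)}\bigr)-2\deg_W z_{2,1}^{(n)}$ has $\delta_2$-coefficient $(\ell-1)+(\ell-2)-2(\ell-1)=-1$ for every $n\ge3$. This includes the case $\ell=4$ with $n$ odd, where $z_{3,1}^{(n)}=z_{3,1}^{(n-1)}$.

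Second, your treatment of $z_{1,2}^{(2)}$ and $z_{2,1}^{(2)}$ does not work. There is no point of $\{z_u=0\}$ at which $M_{u,+}+M_{u,-}$ is nonzero, because this sum equals $z_uz_u'$ with $z_u'$ regular. The ``equivalent'' check that $M_{u,+}$ and $M_{u,-}$ do not both vanish to second order is vacuous, since each is a product of primes not associate to $z_u$. It also says nothing about the order of vanishing of their \emph{sum}, which is what matters. For that you would need an explicit transverse curve as in Lemma~\ref{lem:transverse-curve-criterion}. Your fallback via Lemma~\ref{lem:first-row-bound} fails too: $(\Gamma_u)_V=\omega_2+\omega_4=(2,2,1,1)$ has $\lambda_1=2$.

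The missing idea is that Lemma~\ref{lem:quotient-degree-obstruction} gives more than dominance and nonnegativity. The quotient $z_u'/z_u$ would be a nonzero vector in the component of bidegree $(2\delta_1+\delta_2;(2,2,1,1))$, resp.\ $(\delta_1+2\delta_2;(2,2,1,1))$. The dimension of that component is the multiplicity of $\mathbf S_{(2,2,1,1)}V$ in $\Sym^2(\Sym^2V)\otimes\Sym^2V=(\mathbf S_{(4)}V\oplus\mathbf S_{(2,2)}V)\otimes\mathbf S_{(2)}V$. By Pieri's rule, $(2,2,1,1)$ is not obtained from $(4)$ or $(2,2)$ by adding a horizontal two-strip. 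So the component is zero, and the degree obstruction does settle these two cases, contrary to your claim that it fails.
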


\begin{proof}
For a mutable vertex $u=[n;i,0]$ on the base edge with $i\ge2$, the coefficient of $\omega_i$ in
the $T_V$-part of $\Gamma_u$ is $-2$.  The same argument on the other
$n=2$ boundary gives coefficient $-2$ at $\omega_j$ for
$j\ge2$.  Only $i=1$ and $j=1$, respectively, remain.

For an interior mutable vertex $u=[n;i,j]$, put
\[ F(a):=\omega_{a-1}+\omega_{a+1}-\omega_a, \qquad \omega_0:=0. \]
A direct substitution in the weight formula gives
\[ (\Gamma_u)_V=F(i)+F(j)+F(i+j)+c_u\omega_\ell, \]
where $c_u\in\mathbb Z$ accounts for the identification of a
neighboring boundary variable with one from the preceding triangle.  Since an interior mutable index satisfies
$i+j<\ell$, this correction changes only the coefficient of
$\omega_\ell$ and cannot alter any of the negative coefficients below
${\omega_\ell}$.  The lower coefficients are nonnegative only for $\,(i,j)=(1,2)$ or $(2,1)$.  Assume $i\le j$.  If $i\ge3$, the coefficient at $\omega_i$ is negative
unless $j=i+1$, and in that remaining case the coefficient at
$\omega_{i+j}$ is negative.  If $i=2$, the coefficient at
$\omega_{i+j}$ is negative.  If $i=1$, the coefficient at
$\omega_1$ is nonnegative only when $j=2$.  Interchanging $i$ and
$j$ gives the stated pair.  A direct substitution of the
$T_W$-degrees shows that the $(2,1)$-case has a negative coordinate for
every $n>2$.  For every mutable vertex on an identified boundary, for either parity, the coefficient of
$\delta_1$ in $\Gamma_u$ is $-1$.  This proves the reduction to
\eqref{eq:degree-exception-list}.

For $(i,j)=(1,2)$ or $(2,1)$ in triangle $n=2$, the quotient would have
$T_V$-weight
$\omega_2+\omega_4=(2,2,1,1)$
and $T_W$-multidegree $2\delta_1+\delta_2$ or
$\delta_1+2\delta_2$.  Its multiplicity is therefore that of
$\mathbf S_{(2,2,1,1)}V$ in
$\Sym^2(\Sym^2V)\otimes\Sym^2V.$
Since
\[ \Sym^2(\Sym^2V)=\mathbf S_{(4)}V\oplus\mathbf S_{(2,2)}V, \]
the Pieri rule \cite[Chapter~I]{Macdonald} shows that this multiplicity is zero: the partition
$(2,2,1,1)$ is not obtained from either $(4)$ or $(2,2)$ by adding a
horizontal two-strip.  Lemma~\ref{lem:quotient-degree-obstruction} settles
these two cases.
\end{proof}

The degree argument leaves only the three families
\begin{equation}
 z_{1,0}^{(n)},\qquad z_{0,1}^{(2)},\qquad z_{1,2}^{(n)}\quad(n\ge3),
 \label{eq:exceptional-coprimality-families}
\end{equation}
whenever the indicated indices are mutable.

\begin{lemma}\label{lem:transverse-curve-criterion}
Suppose that $z_uz_u'=N_u$ and that a curve
$\gamma:\mathbb A^1\to X_{\ell,m}$ satisfies
\[ \operatorname{ord}_0(z_u\circ\gamma) =\operatorname{ord}_0(N_u\circ\gamma)=1. \]
Then $z_u\nmid z_u'$ in $\cR_{\ell,m}$.
\end{lemma}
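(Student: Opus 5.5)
Argue by contradiction, reducing to orders of vanishing along $\gamma$.
Suppose $z_u\mid z_u'$ in $\cR_{\ell,m}$, and write $z_u'=z_u g$ with $g\in\cR_{\ell,m}\subseteq\kk[X_{\ell,m}]$. Then
\[ N_u=z_uz_u'=z_u^2g \]
as polynomial functions on $X_{\ell,m}$.

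Pull this identity back along the curve $\gamma$. This gives an identity in $\kk[t]$:
\[ N_u\circ\gamma=(z_u\circ\gamma)^2\,(g\circ\gamma). \]
Since $g$ is a polynomial function on $X_{\ell,m}$ and $\gamma$ is a morphism from $\mathbb A^1$, the composite $g\circ\gamma$ is a polynomial in $t$. It is nonzero, because $N_u\circ\gamma$ has finite order $1$ at $0$ and so is not identically zero. Hence $\operatorname{ord}_0(g\circ\gamma)\ge0$.

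Taking orders at $t=0$ gives
\[ 1=\operatorname{ord}_0(N_u\circ\gamma)=2\operatorname{ord}_0(z_u\circ\gamma)+\operatorname{ord}_0(g\circ\gamma)\ge 2, \]
which is a contradiction. Therefore $z_u\nmid z_u'$.

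The only point needing care is that divisibility in $\cR_{\ell,m}$ yields a quotient that is \emph{regular} on all of $X_{\ell,m}$. This holds because $\cR_{\ell,m}$ is, by definition, a subring of $\kk[X_{\ell,m}]$, so $g$ pulls back to a polynomial rather than a rational function in $t$.

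The substantive work comes later, when the lemma is applied to the families in \eqref{eq:exceptional-coprimality-families}. For each of them I would construct an explicit $\gamma$ (for instance, an affine line through a point where exactly one entry governing $z_u$ vanishes to first order). I would then check the two order conditions using the explicit relations of Theorem~\ref{thm:folded-relations}. That verification is the main obstacle, but it lies outside this lemma itself.
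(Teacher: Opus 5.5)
Your argument is correct and is essentially the paper's: both pull the identity back along $\gamma$ and compare orders of vanishing at $t=0$. The paper phrases it as $z_u'\circ\gamma=q/\eta$ being nonzero at the origin, whereas divisibility would force it to vanish there; you instead obtain $\operatorname{ord}_0(N_u\circ\gamma)\ge2$ directly from $N_u=z_u^2g$.
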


\begin{proof}
Write $z_u\circ\gamma=t\eta(t)$ and
$N_u\circ\gamma=tq(t)$ with $\eta(0)q(0)\ne0$.  Then
$z_u'\circ\gamma=q/\eta$ is nonzero at the origin, whereas
$z_u\circ\gamma$ vanishes there.
\end{proof}

\begin{lemma}\label{lem:exceptional-directions}
For every mutable variable in \eqref{eq:exceptional-coprimality-families}, we have
\[ z_u\nmid z_u'. \]
\end{lemma}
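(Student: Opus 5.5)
The plan is to apply Lemma~\ref{lem:transverse-curve-criterion} in each of the three exceptional families \eqref{eq:exceptional-coprimality-families}. For each one we write down an explicit polynomial curve $\gamma(t)=\gamma_0+t\gamma_1$ in $X_{\ell,m}$. The curve is built from diagonal and elementary symmetric matrices, so all chamber determinants can be evaluated directly. It must satisfy two conditions:
\[
\operatorname{ord}_0(z_u\circ\gamma)=1,\qquad \operatorname{ord}_0(N_u\circ\gamma)=1,
\]
where $N_u=M_{u,+}+M_{u,-}$ is the right-hand side of the relevant relation in Theorem~\ref{thm:folded-relations}. The first condition is usually easy: we choose $\gamma_0$ so that exactly one leading minor entering $z_u$ degenerates linearly in $t$. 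The real requirement is to arrange, at the same time, that precisely one of the two exchange monomials vanishes to order one while the other vanishes to order at least two. Alternatively, both can vanish to order one as long as their linear coefficients do not cancel.

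For $z_{1,0}^{(n)}=(C_n)_{11}$, with the relation \eqref{eq:base-edge} at $i=1$ (or the corner relations when $\ell=2$), we proceed as follows. Take all $C_r$ with $r\ne n$ equal to $I_\ell$, and set
\[
C_n=tE_{11}+E_{12}+E_{21}+\sum_{a\ge3}E_{aa}.
\]
Then $z_{1,0}^{(n)}=t$, while $z_{2,0}^{(n)}$ is a $2\times2$ leading minor of $C_n$ equal to $t\cdot0-1=-1$, a unit. The monomial $z_{0,0}(z_{1,1}^{(n)})^2$ involves the mixed chamber determinant $\det[P_2C_nJ_1\mid P_2C_{\mathbf q_n}J_1]$, which is $\det\begin{bmatrix}t&1\\1&0\end{bmatrix}$ up to sign, or a similar unit at $t=0$. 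We therefore perturb by adding $tE_{22}$ so that one monomial becomes a unit and the other is $O(t)$. Then $N_u$ is a unit at $0$, and $z_u'$ has a pole. That contradicts regularity, so the stated order conditions are in fact consistent only after re-choosing. The cleaner route is to choose $\gamma_0$ with $N_u(\gamma_0)=0$ and to adjust $\gamma_1$ so that $N_u$ and $z_u$ both vanish simply. Concretely, we parametrize the relevant entries $(C_n)_{11},(C_n)_{12},(C_n)_{22}$ and the corresponding entries of $C_{\mathbf q_n}$ as free variables, compute $z_u$ and $N_u$ as explicit polynomials in them, pick a point on $\{z_u=N_u=0\}$, and take a direction along which both differentials are nonzero. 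The case $z_{0,1}^{(2)}=(C_1)_{11}$ with relation \eqref{eq:first-boundary} at $j=1$ is identical with the roles of $C_1$ and $C_2$ swapped, by the symmetry $i\leftrightarrow j$ in triangle $2$.

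The family $z_{1,2}^{(n)}$ with $n\ge3$, using the interior relation \eqref{eq:interior}, is the main obstacle. The functions involved contain $\Pi_n$ and the alternating word $C_{\mathbf q_n}$, so the simplification is to set all $C_r$ with $r<n$ to $I_\ell$. Then $G_n$ and $C_{\mathbf q_n}$ become $I_\ell$ or $C_n$, and $\Pi_n=1$. However, this specialization may make $z_{1,2}^{(n)}$ identically degenerate, since $[J_1\mid J_2]$ has rank $2<3$. So instead we take the earlier matrices to be $I_\ell$ except for one symmetric permutation-type matrix $P$, for example the reversal on the first three coordinates, which makes $C_{\mathbf q_n}$ a known invertible symmetric matrix. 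The resulting chamber determinants then reduce to $3\times3$ minors of $C_n$ paired against $PJ_2$. We compute the six neighbouring functions in \eqref{eq:interior} on a two-parameter family in $C_n$ supported in the leading $3\times3$ block, and locate a point where $z_{1,2}^{(n)}$ vanishes simply and exactly one of the two cubic monomials vanishes simply. If this explicit search is awkward in the paper's normalization, the fallback is to defer the computation to the appendix on remaining coprimality cases, as the organization section indicates.
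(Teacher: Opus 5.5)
Your strategy is the paper's, namely Lemma~\ref{lem:transverse-curve-criterion} applied to explicit curves. But the lemma has no content beyond the curves themselves, and you do not exhibit a single valid one.

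\textbf{Your concrete base-edge curves fail.}
\begin{itemize}
\item \emph{Even $n$, your first curve.} Take $C_n=tE_{11}+E_{12}+E_{21}+\sum_{a\ge3}E_{aa}$ with the other $C_r=I_\ell$, so $C_{\mathbf q_n}=I_\ell$. None of $z_{2,0}^{(n)}$, $z_{1,1}^{(n)}$, $z_{0,1}^{(n)}$ depends on $t$, so $N_u$ is constant along the curve. Since $N_u=z_uz_u'$ with $z_u'$ regular, $N_u$ is identically zero there. It is not a unit, as you claim.
\item \emph{Even $n$, your perturbation.} Adding $tE_{22}$ gives $N_u=a(t^2-1)+b$. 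Regularity forces $a=b$, so $N_u=at^2$, of order $2$.
\item \emph{Odd $n$.} Setting every $C_r=I_\ell$ with $r\ne n$ gives $C_{\mathbf q_n}=C_n$. Hence $z_{1,1}^{(n)}\equiv0$ (its matrix has a repeated row) and $z_{0,1}^{(n)}\doteq z_{1,0}^{(n)}$. So $\operatorname{ord}_0(N_u\circ\gamma)\ge2$ for every choice of $C_n$. Already on the base edge, a nontrivial involution must be inserted into the earlier word.
\end{itemize}

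\textbf{Your fallback is circular.} You propose to pick a point of $\{z_u=N_u=0\}$ and a direction on which both differentials are nonzero. But $\{z_u=N_u=0\}=V(z_u)$, and along $V(z_u)$ one has $dN_u=z_u'\,dz_u$. So you are asking for a point of $V(z_u)$ where $z_u'\neq0$. Since $z_u$ is irreducible, that is exactly the assertion $z_u\nmid z_u'$.

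\textbf{The interior family is left as an unperformed search, and your suggested example fails.} For odd $n$ the word $C_{\mathbf q_n}$ ends in $C_n$, so the most you can arrange is $C_{\mathbf q_n}=GC_n$. Take $G$ the transposition $e_1\leftrightarrow e_3$ and $\ell\ge5$. Then the matrix of $z_{2,2}^{(n)}$ uses rows $1,2,3,2$ of $C_n$, and that of $z_{1,3}^{(n)}$ uses rows $1,3,2,1$. Both monomials of \eqref{eq:interior} therefore vanish identically, and $N_u\equiv0$ on the whole slice. For $\ell=4$ and odd $n$, moreover, $z_{2,2}^{(n)}$ and $z_{1,3}^{(n)}$ lie on the identified side $i+j=\ell$ and are functions from triangle $n-1$; your plan does not account for this. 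Your alternative of ``two order-one monomials with noncancelling linear terms'' cannot be checked either, because the normalizing scalars $c_{i,j}^{(n)}$ are unknown.

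\textbf{What the paper does} (Appendix~\ref{app:exceptional-coprimality}) is designed around exactly these issues.
\begin{itemize}
\item For even $n$: $C_n=\operatorname{diag}(t,1)\oplus I_{\ell-2}$. This kills $z_{1,1}^{(n)}$ and gives $N_u\doteq t$.
\item For odd $n$: a swap $G$ of $e_1,e_2$ with $C_{\mathbf q_n}=GC_n$ and $C_n=\left(\begin{smallmatrix}t&1\\1&1\end{smallmatrix}\right)\oplus I_{\ell-2}$. Then $N_u=aD+bD^2$ with $D=t-1$, and the divisibility $t\mid N_u$ forces $a=b$, so $N_u=aDt$ without knowing the scalars.
\item For $z_{0,1}^{(2)}$: interchange $C_1$ and $C_2$ in the even $n=2$ curve.
\item For $z_{1,2}^{(n)}$: explicit $4\times4$ and $5\times5$ blocks. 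For odd $n$ the paper uses the double transpositions $e_1\leftrightarrow e_3$, $e_2\leftrightarrow e_4$ (with $C_{n-1}=I_4$) when $\ell=4$, and $e_1\leftrightarrow e_4$, $e_2\leftrightarrow e_5$ when $\ell\ge5$. These are chosen so that $z_{1,1}^{(n)}z_{2,2}^{(n)}z_{0,3}^{(n)}$ has order one while $z_{1,3}^{(n)}z_{0,2}^{(n)}z_{2,1}^{(n)}$ vanishes identically along the curve.
\end{itemize}
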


\begin{proof}
Appendix~\ref{app:exceptional-coprimality} gives a transverse curve for
each family.  Apply Lemma~\ref{lem:transverse-curve-criterion}.
\end{proof}

\begin{lemma}\label{lem:initial-coprimality}
The initial family consists of pairwise nonassociate prime elements of
$\cR_{\ell,m}$, and for every mutable index $u$,
\begin{equation}
 \gcd(z_u,z_u')=1.
 \label{eq:initial-coprime}
\end{equation}
Consequently,
\begin{equation}
 \cU_{\ell,m}\subseteq\cR_{\ell,m}.
 \label{eq:upper-bound-inclusion}
\end{equation}
\end{lemma}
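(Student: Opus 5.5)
The plan has three parts: show the initial family consists of pairwise nonassociate primes, deduce the coprimality statement \eqref{eq:initial-coprime}, and then pass from the upper bound $\overline{\cU}_{\ell,m}$ into $\cR_{\ell,m}$ by the standard factorial-ring argument.

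\emph{Primes.} By Proposition~\ref{prop:basic-rings}, $\cR_{\ell,m}$ is factorial, so irreducible elements are prime. Irreducibility of every $z_v$ and $\Delta_r$ is Lemma~\ref{lem:initial-functions-irreducible}. For nonassociateness, recall that the units of $\cR_{\ell,m}$ are the nonzero constants, so associate elements are scalar multiples of one another. Two such elements must therefore have the same bidegree. I will compare the bidegrees given by \eqref{eq:V-weight}--\eqref{eq:W-degree}. Distinct indices should give distinct pairs $(\deg_W,\wt_V)$: the $T_W$-degree determines $s(v)$ and $j$, and then the $T_V$-weight determines $i$. The only subtle collisions are between $\Delta_s$ and $z_{\ell,0}$-type corners, but those corners are not in the family. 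If some coincidence did survive, I would fall back on algebraic independence (Theorem~\ref{thm:folded-family-transcendence-basis}), which forbids $z_u\doteq z_v$ for $u\neq v$ outright. That fallback is actually the cleaner argument, so I will use it directly.

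\emph{Coprimality.} Since $z_u$ is prime, \eqref{eq:initial-coprime} is equivalent to $z_u\nmid z_u'$. Lemma~\ref{lem:degree-reduction} establishes this for all mutable directions except the families in \eqref{eq:exceptional-coprimality-families}, and Lemma~\ref{lem:exceptional-directions} settles those remaining families. The regularity of $z_u'$ comes from Theorem~\ref{thm:folded-relations}.

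\emph{Inclusion.} By \eqref{eq:upper-bound-equality}, it suffices to show $\overline{\cU}_{\ell,m}\subseteq\cR_{\ell,m}$. Take $f\in\overline{\cU}_{\ell,m}$. Since $f\in\mathcal L_0$, we can write $f=P/\prod_u z_u^{a_u}$ with $P\in\kk[\mathbf x_{\ell,m}]\subseteq\cR_{\ell,m}$; frozen variables appear only polynomially, and $a_u\ge0$ is taken minimal.

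Suppose some $a_u>0$. From $f\in\mathcal L_u$ we get $f=Q/\bigl((z_u')^{b}\prod_{v\ne u}z_v^{c_v}\bigr)$ with $Q\in\cR_{\ell,m}$, since $z_u'$ is regular. Cross-multiplying gives
\[
P\,(z_u')^{b}\prod_{v\neq u} z_v^{c_v}=Q\,z_u^{a_u}\prod_{v\neq u} z_v^{a_v}.
\]
The prime $z_u$ does not divide $z_u'$ (coprimality) and does not divide $z_v$ for $v\ne u$ (nonassociate primes). Hence $z_u^{a_u}\mid P$ in $\cR_{\ell,m}$.

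Repeating this over all $u$, and using that the $z_u$ are pairwise coprime primes in a UFD, we get $\prod_u z_u^{a_u}\mid P$. Therefore $f\in\cR_{\ell,m}$. In this argument $\cR_{\ell,m}$ plays the role of the ambient factorial ring, so divisibility is meant in $\cR_{\ell,m}$ rather than in $\kk[\mathbf x_{\ell,m}]$, which is enough for the conclusion.

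The substantive inputs, namely irreducibility and the exceptional coprimality cases, are already established in the preceding lemmas. The only point needing care is nonassociateness, and algebraic independence handles it.
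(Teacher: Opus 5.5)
Your proposal is correct and follows the paper's proof. Primality comes from Lemma~\ref{lem:initial-functions-irreducible} plus factoriality, nonassociateness from algebraic independence, coprimality from Lemmas~\ref{lem:degree-reduction} and~\ref{lem:exceptional-directions}, and the inclusion goes through \eqref{eq:upper-bound-equality}; your cross-multiplication and divisibility step is the elementwise form of the paper's observation that every height-one valuation of the factorial ring $\cR_{\ell,m}$ is nonnegative on $\overline{\cU}_{\ell,m}$.
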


\begin{proof}
Lemma~\ref{lem:initial-functions-irreducible} and factoriality make every
initial function prime, and algebraic independence makes distinct initial
functions nonassociate.  Lemmas~\ref{lem:degree-reduction} and
\ref{lem:exceptional-directions} prove
\eqref{eq:initial-coprime}.

Let $\nu_u$ be the valuation along $V(z_u)$.  All generators inverted in
$\mathcal L_u$ have $\nu_u$-value zero, so every element of
$\mathcal L_u$ has nonnegative $\nu_u$-valuation.  An element of the upper
bound lies in $\mathcal L_0$, so its poles can occur only along the
divisors of mutable initial variables,
and membership in the corresponding adjacent Laurent rings excludes
these poles.  Frozen variables are not inverted.  Thus every height-one valuation of $\cR_{\ell,m}$ is nonnegative on
the upper bound.  Since a UFD is the intersection
of its height-one valuation rings, the upper bound lies in
$\cR_{\ell,m}$.  Together with \eqref{eq:upper-bound-equality}, this proves the claim.
\end{proof}

\subsection{Restriction and Gauss localizations}
\label{subsec:restriction-upper-bound}

For a mutable folded index $u$, choose a representative in the unfolded seed
$\widetilde u$.  If its transpose orbit has two members, define the
\emph{orbit mutation}
\[ \mu_{\bar u}:=\mu_{\widetilde u}\mu_{\tau\widetilde u}; \]
if the orbit has one member, use the single mutation $\mu_{\widetilde u}$.  The
two mutations in a nontrivial orbit commute because the two orbit members
are not adjacent in the unfolded quiver.

\begin{proposition}
\label{prop:source-restriction-in-U}
Restriction to symmetric matrices satisfies
\[ \rho(\cT_{\ell,m})\subseteq\cU_{\ell,m}. \]
\end{proposition}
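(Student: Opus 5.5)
The plan is to use the characterization \eqref{eq:upper-bound-equality}, $\cU_{\ell,m}=\overline{\cU}_{\ell,m}=\mathcal L_0\cap\bigcap_u\mathcal L_u$. Since $\cT_{\ell,m}=\mathcal U(\widetilde\Sigma_{\ell,m})$ by Theorem~\ref{thm:flagged-matrix-form}, every $f\in\cT_{\ell,m}$ is a Laurent polynomial in the initial unfolded cluster, with polynomial dependence on the frozen variables. It is likewise a Laurent polynomial in every adjacent unfolded cluster. For a nontrivial orbit $\bar u$ we also need the cluster obtained by the orbit mutation $\mu_{\bar u}=\mu_{\widetilde u}\mu_{\tau\widetilde u}$. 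So the work is to push these Laurent expansions through $\rho$ and to check that they land in $\mathcal L_0$ and in each $\mathcal L_u$.

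The first step is to apply $\rho$ to the initial expansion. By Proposition~\ref{prop:transpose-comparison}, $\rho$ sends $\widetilde x_v$ and $\widetilde x_{\tau v}$ to $z_{\bar v}$ and sends $\Delta_r$ to $\Delta_r$. It maps unfolded mutable variables to folded mutable ones and unfolded frozen variables to folded frozen ones, since transposition preserves the frozen set. Hence $\rho(f)\in\mathcal L_0$. One point needs care here: $\rho$ must extend to the unfolded Laurent ring. This holds because each $z_{\bar v}$ is a nonzero element of the domain $\cR_{\ell,m}$, which follows from Theorem~\ref{thm:folded-family-transcendence-basis}. The extension is therefore a ring map into $\Frac(\cR_{\ell,m})$.

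The second step treats a mutable folded index $u$. I expand $f$ in the cluster $\mu_{\bar u}(\widetilde{\mathbf x})$. This is still a cluster of $\widetilde\Sigma_{\ell,m}$, so $f$ is a Laurent polynomial there, again with polynomial frozen part. Its variables are the $\widetilde x_v$ for $v\notin\bar u$, together with $\widetilde x_{\widetilde u}'$ and, when the orbit is nontrivial, $\widetilde x_{\tau\widetilde u}'$. The two orbit members are not adjacent, so the two mutations commute, and each new variable is given by the one-step exchange relation from the initial seed. Lemma~\ref{lem:restriction-folding} then shows that $\rho(\widetilde x'_{\widetilde u})=\rho(\widetilde x'_{\tau\widetilde u})=z_u'$, the folded one-step mutation. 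So $\rho(f)$ is a Laurent polynomial in $z_u'$ and $z_v$ ($v\ne u$), polynomial in the frozen variables. That is, $\rho(f)\in\mathcal L_u$. Here I again need $z_u'\neq0$. This is clear because $z_u'$ is a sum of two monomials with positive coefficients in the algebraically independent variables $z_v$.

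Intersecting over all $u$ gives $\rho(f)\in\overline{\cU}_{\ell,m}=\cU_{\ell,m}$. The main obstacle is the middle step: checking that restricting the exchange relation at $\tau\widetilde u$ does not involve $\widetilde x'_{\widetilde u}$. In other words, after the first mutation in the orbit, the exchange relation at $\tau\widetilde u$ must be the original one. This follows from the admissibility condition \eqref{eq:admissible-orbit}, which gives $\widetilde b_{\widetilde u,\tau\widetilde u}=0$. Because of this, $\mu_{\widetilde u}$ leaves row $\tau\widetilde u$ unchanged. The consequence is that only one-step relations are needed, and those are exactly the ones folded in Lemma~\ref{lem:restriction-folding}.
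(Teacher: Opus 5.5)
Your proposal is correct and follows the paper's proof. Like the paper, you expand $F\in\mathcal U(\widetilde\Sigma_{\ell,m})$ in the unfolded initial seed and in each orbit-mutated seed, restrict via Proposition~\ref{prop:transpose-comparison} and Lemma~\ref{lem:restriction-folding} to land in $\mathcal L_0$ and $\mathcal L_u$, and conclude by \eqref{eq:upper-bound-equality}. Your additional checks fill in details the paper leaves implicit: extending $\rho$ to the Laurent rings using the nonvanishing of the $z_{\bar v}$ and $z_u'$, and using $\widetilde b_{\tau\widetilde u,\widetilde u}=0$ (condition \eqref{eq:admissible-orbit} at $\tau\widetilde u$) so that the second orbit mutation uses the initial exchange relation.
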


\begin{proof}
Every $F\in\cT_{\ell,m}=\mathcal U(\widetilde\Sigma_{\ell,m})$ is Laurent
in the unfolded initial seed and in each orbit-mutated seed.  Symmetric
restriction sends the initial Laurent ring to $\mathcal L_0$, and
Lemma~\ref{lem:restriction-folding} sends the orbit-mutated Laurent ring in
direction $u$ to $\mathcal L_u$.  Therefore
\[ \rho(F)\in\mathcal L_0\cap \bigcap_{u\in\mathscr M_{\ell,m}}\mathcal L_u =\cU_{\ell,m}. \]
\end{proof}

For $1\le s\le m$ and $0\le i\le\ell$, set
\[ h_i^{[s]}:=\det C_s[I_i,I_i], \qquad h_0^{[s]}:=1, \qquad H_s:=\prod_{i=1}^{\ell}h_i^{[s]}. \]
Thus $h_i^{[1]}=h_i$ and $h_\ell^{[s]}=\Delta_s$.  Define
$\widetilde h_i^{[s]}$ and $\widetilde H_s$ on $Y_{\ell,m}$ by the
same formulas.

\begin{proposition}
\label{prop:all-localized-restrictions}
For every $s$, restriction induces a surjection
\[ \rho_{H_s}:\cT_{\ell,m}[\widetilde H_s^{-1}] \twoheadrightarrow\cR_{\ell,m}[H_s^{-1}]. \]
\end{proposition}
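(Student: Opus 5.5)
The plan is to repeat the proof of Corollary~\ref{cor:localized-restriction-surjective} with $C_s$ in place of $C_1$. The Gauss charts in Propositions~\ref{prop:gaussian-slice} and~\ref{prop:two-sided-gaussian-slice} never use the fact that the first matrix is singled out. So I will rebuild them with the $s$-th matrix as the pivot and then identify the restriction map with restriction of functions from a product of matrix spaces to its symmetric part.

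The first step is the symmetric chart. On the principal open set $H_s\ne0$, the matrix $C_s$ has a unique decomposition $C_s=u^TDu$ with $u\in U_\ell$ and $D\in T_\ell$, and $u$ depends regularly on $C_s$ by Gaussian elimination. Put $S_r:=u^{-T}C_ru^{-1}$ for $r\ne s$. This gives an isomorphism
\[ T_\ell\times\SymMat_\ell^{m-1}\times U_\ell\xrightarrow{\sim}\{H_s\ne0\}, \]
and under it the congruence action becomes right translation on the $U_\ell$ factor. Since $H_s$ is $U_\ell$-invariant, localization commutes with taking invariants, and we obtain
\[ \cR_{\ell,m}[H_s^{-1}]\cong\kk[T_\ell\times\SymMat_\ell^{m-1}]. \]

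The second step is the two-sided chart. On $\widetilde H_s\ne0$ we have the unique $LDU$ decomposition $A_s=u^TDv$, and we put $B_r=u^{-T}A_rv^{-1}$ for $r\ne s$. The same reasoning gives
\[ \cT_{\ell,m}[\widetilde H_s^{-1}]\cong\kk[T_\ell\times\Mat_\ell^{m-1}]. \]

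The third step checks compatibility with restriction. The inclusion $X_{\ell,m}\subseteq Y_{\ell,m}$ maps $\{H_s\ne0\}$ into $\{\widetilde H_s\ne0\}$, because $\rho(\widetilde H_s)=H_s$. On a symmetric tuple, uniqueness of the $LDU$ decomposition of the symmetric matrix $C_s$ forces $v=u$, so every $B_r$ is symmetric. Hence, under the two charts, the inclusion corresponds to the closed embedding
\[ T_\ell\times\SymMat_\ell^{m-1}\hookrightarrow T_\ell\times\Mat_\ell^{m-1}, \qquad (D,S)\mapsto(D,S), \]
taken modulo the unipotent factors. Equivalently, the transpose-fixed locus is computed exactly as in \eqref{eq:transpose-on-two-sided-slice}. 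The induced map on coordinate rings is restriction to a linear subspace, which is surjective: each coordinate $(S_r)_{ab}$ with $a\le b$ lifts to $(B_r)_{ab}$, and the $h_i^{[s]}$ lift to the $\widetilde h_i^{[s]}$. Therefore $\rho_{H_s}$ is surjective.

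The main point to check is the identification of the induced map with restriction of functions on the charts. Concretely, one must confirm that the map $\cT_{\ell,m}[\widetilde H_s^{-1}]\to\cR_{\ell,m}[H_s^{-1}]$ obtained by localizing $\rho$ agrees with pulling back along the embedding of quotients. This holds because both charts are built from the same decomposition of the $s$-th matrix, and the symmetric decomposition is the specialization $v=u$ of the two-sided one. No further work is needed, since the argument is otherwise identical to the case $s=1$.
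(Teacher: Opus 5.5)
Your proposal is correct and matches the paper's proof. The paper relabels $C_s$ as the pivot matrix, applies the two Gauss-chart propositions, and notes that restriction becomes the surjective map $\kk[T_\ell\times\Mat_\ell^{m-1}]\to\kk[T_\ell\times\SymMat_\ell^{m-1}]$. Your explicit check that uniqueness of the $LDU$ decomposition forces $v=u$ on symmetric tuples supplies a step the paper leaves implicit.
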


\begin{proof}
Relabel $C_s$ as the first matrix and apply the two Gauss-chart
propositions.  On the quotient slices the map is the surjective restriction
\[ \kk[T_\ell\times\Mat_\ell^{m-1}] \longrightarrow \kk[T_\ell\times\SymMat_\ell^{m-1}]. \]
\end{proof}

\begin{proposition}
\label{prop:localized-equality}
For every $1\le s\le m$, we have
\[ \cR_{\ell,m}[H_s^{-1}] =\cU_{\ell,m}[H_s^{-1}]. \]
\end{proposition}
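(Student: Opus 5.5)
We prove the two inclusions separately; the key point is that $H_s$ lies in both $\cU_{\ell,m}$ and $\cR_{\ell,m}$.

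\emph{The inclusion $\cU_{\ell,m}[H_s^{-1}]\subseteq\cR_{\ell,m}[H_s^{-1}]$.} This is immediate from Lemma~\ref{lem:initial-coprimality}. We localize $\cU_{\ell,m}\subseteq\cR_{\ell,m}$ at $H_s$, which is meaningful once we know that $H_s\in\cU_{\ell,m}$. Each $h_i^{[s]}$ is the symmetric restriction of $\widetilde h_i^{[s]}\in\cT_{\ell,m}$. By Proposition~\ref{prop:source-restriction-in-U}, these restrictions, and hence $H_s$, lie in $\cU_{\ell,m}$.

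\emph{The reverse inclusion.} Take $f\in\cR_{\ell,m}[H_s^{-1}]$. By Proposition~\ref{prop:all-localized-restrictions}, we can write $f=\rho_{H_s}(F\,\widetilde H_s^{-N})$ with $F\in\cT_{\ell,m}$. Since $\rho$ is a ring homomorphism sending $\widetilde H_s$ to $H_s$, this gives $f=\rho(F)H_s^{-N}$. Proposition~\ref{prop:source-restriction-in-U} then puts $\rho(F)$ in $\cU_{\ell,m}$, so $f\in\cU_{\ell,m}[H_s^{-1}]$. The localization $\rho_{H_s}$ is compatible with $\rho$ because it is the map induced on localizations; this holds by construction, since on the Gauss charts it is restriction of functions.

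\emph{Where the care is needed.} Both localizations are taken inside $\Frac(\cR_{\ell,m})=\kk(\mathbf x_{\ell,m})$ (Theorem~\ref{thm:folded-family-transcendence-basis}), so the two rings are compared in the same ambient field. The step most likely to need care is that Proposition~\ref{prop:source-restriction-in-U} applies to arbitrary elements of $\cT_{\ell,m}$, not only to cluster variables. This is exactly its content, since $\cT_{\ell,m}=\mathcal U(\widetilde\Sigma_{\ell,m})$ by Theorem~\ref{thm:flagged-matrix-form}. So the whole argument is formal once the surjectivity of Proposition~\ref{prop:all-localized-restrictions} is granted, and that surjectivity is where the real geometric input, the Gauss chart for $C_s$, lies.
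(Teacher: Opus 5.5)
Your proof is correct and is essentially the paper's argument. One inclusion comes from localizing $\cU_{\ell,m}\subseteq\cR_{\ell,m}$, and the other from combining the surjectivity in Proposition~\ref{prop:all-localized-restrictions} with $\rho(\cT_{\ell,m})\subseteq\cU_{\ell,m}$ from Proposition~\ref{prop:source-restriction-in-U}. The only difference is that the paper gets $H_s\in\cU_{\ell,m}$ directly, since each $h_i^{[s]}$ is an initial cluster variable ($z_{0,i}^{(2)}$ or $z_{i,0}^{(s)}$, with $h_\ell^{[s]}=\Delta_s$), whereas you route this through Proposition~\ref{prop:source-restriction-in-U}, which also works.
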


\begin{proof}
The functions $h_i^{[s]}$ belong to the initial seed, so $H_s\in\cU_{\ell,m}$.
The inclusion \eqref{eq:upper-bound-inclusion} gives one containment after
localization.  For the other, Proposition~\ref{prop:source-restriction-in-U}
gives
\[ \rho\bigl(\cT_{\ell,m}[\widetilde H_s^{-1}]\bigr) \subseteq\cU_{\ell,m}[H_s^{-1}], \]
and Proposition~\ref{prop:all-localized-restrictions} identifies the
left-hand side with $\cR_{\ell,m}[H_s^{-1}]$.
\end{proof}

\subsection{Removing the Gauss localization}
\label{subsec:remove-gaussian-localization}

We use the first two matrices.  In the first folded triangle,
\[ h_i^{[1]}=z_{0,i}^{(2)}, \qquad h_i^{[2]}=z_{i,0}^{(2)}. \]
For $1\le k\le\ell-1$, denote their exchange polynomials by
\begin{align*}
 E_k^{[1]}
 &:=h_{k-1}^{[1]}(z_{1,k}^{(2)})^2
   +h_{k+1}^{[1]}(z_{1,k-1}^{(2)})^2,\\
 E_k^{[2]}
 &:=h_{k+1}^{[2]}(z_{k-1,1}^{(2)})^2
   +h_{k-1}^{[2]}(z_{k,1}^{(2)})^2.
\end{align*}
Thus
\[ h_k^{[1]}(h_k^{[1]})'=E_k^{[1]}, \qquad h_k^{[2]}(h_k^{[2]})'=E_k^{[2]}. \]

\begin{lemma}
\label{lem:R-localization-intersection}
Inside $\Frac(\cR_{\ell,m})$, we have
\[ \cR_{\ell,m}[H_1^{-1}] \cap\cR_{\ell,m}[H_2^{-1}] =\cR_{\ell,m}. \]
\end{lemma}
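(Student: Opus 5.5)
Since $\cR_{\ell,m}$ is a UFD (Proposition~\ref{prop:basic-rings}), an element of $\Frac(\cR_{\ell,m})$ lies in $\cR_{\ell,m}$ exactly when it has nonnegative valuation along every prime divisor. An element of $\cR_{\ell,m}[H_1^{-1}]$ can only have poles along the prime factors of $H_1=\prod_i h_i^{[1]}$, and an element of $\cR_{\ell,m}[H_2^{-1}]$ only along the prime factors of $H_2$. Hence the intersection lies in $\cR_{\ell,m}$ provided $H_1$ and $H_2$ have no common prime factor; the reverse inclusion is trivial. The proof therefore reduces to showing that no prime of $\cR_{\ell,m}$ divides both $H_1$ and $H_2$.

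By Lemma~\ref{lem:initial-functions-irreducible} (or directly Corollary~\ref{cor:initial-irreducibility}), each $h_i^{[1]}=z_{0,i}^{(2)}$ (with $h_\ell^{[1]}=\Delta_1$) and each $h_i^{[2]}=z_{i,0}^{(2)}$ (with $h_\ell^{[2]}=\Delta_2$) is prime in $\cR_{\ell,m}$. The prime factors of $H_1$ and $H_2$ are thus exactly these elements. By Lemma~\ref{lem:initial-coprimality}, distinct members of the initial family are pairwise nonassociate. The $h_i^{[1]}$ and $h_j^{[2]}$ correspond to distinct indices of $\mathscr I_{\ell,m}$: they are $[2;0,i]$ versus $[2;j,0]$ for $i,j<\ell$, together with $\partial_1$ and $\partial_2$. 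So no prime factor is shared.

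Alternatively, a degree check confirms the same conclusion without invoking independence: $h_i^{[1]}$ has $T_W$-degree $i\delta_1$ and $h_j^{[2]}$ has $T_W$-degree $j\delta_2$. Two associate homogeneous elements must have equal degrees, which is impossible since both $i,j\ge1$. This is the step to state carefully, but it is immediate.

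Finally, I would spell out the valuation argument. Let $f$ lie in the intersection, and let $p$ be a prime of $\cR_{\ell,m}$. If $p\nmid H_1$, then $v_p(f)\ge0$ because $f\in\cR_{\ell,m}[H_1^{-1}]$. Otherwise $p$ is associate to some $h_i^{[1]}$, hence $p\nmid H_2$, and then $v_p(f)\ge0$ because $f\in\cR_{\ell,m}[H_2^{-1}]$. A UFD is the intersection of its localizations at height-one primes, so $f\in\cR_{\ell,m}$. I expect no real obstacle here beyond confirming that the primality results already established cover both $h^{[1]}$ and $h^{[2]}$.
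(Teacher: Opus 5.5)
Your proof is correct and follows essentially the same argument as the paper. Both use the irreducibility lemma to make the factors of $H_1$ and $H_2$ prime in the UFD, show they are nonassociate because their $T_W$-degrees are $i\delta_1$ versus $j\delta_2$, and then apply the coprime-localization identity, which you write out via height-one valuations.
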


\begin{proof}
The factors of $H_1$ and $H_2$ are irreducible by
Lemma~\ref{lem:initial-functions-irreducible}; they are pairwise
nonassociate because their $T_W$-multidegrees are positive multiples of
$\delta_1$ and $\delta_2$, respectively.  Thus $H_1$ and $H_2$
are coprime in the UFD $\cR_{\ell,m}$, and the assertion is the standard
coprime-localization identity.
\end{proof}

\begin{lemma}
\label{lem:U-localization-intersection}
One has
\[ \cU_{\ell,m}[H_1^{-1}] \cap\cU_{\ell,m}[H_2^{-1}] =\cU_{\ell,m}. \]
\end{lemma}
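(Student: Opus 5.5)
The inclusion $\cU_{\ell,m}\subseteq\cU_{\ell,m}[H_1^{-1}]\cap\cU_{\ell,m}[H_2^{-1}]$ is immediate, so I only need the reverse. The quickest route is to transport the question to $\cR_{\ell,m}$. By Proposition~\ref{prop:localized-equality}, $\cU_{\ell,m}[H_s^{-1}]=\cR_{\ell,m}[H_s^{-1}]$ for $s=1,2$. Lemma~\ref{lem:R-localization-intersection} then shows that the intersection equals $\cR_{\ell,m}$. Hence the lemma is equivalent to $\cR_{\ell,m}\subseteq\cU_{\ell,m}$, which is exactly what we still owe. This reduction is therefore circular, and I need an argument that works directly with the cluster structure.

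Take $F$ in the intersection. I would first show $F\in\mathcal L_0$. The factors of $H_1$ and $H_2$ are the initial variables $h_i^{[1]}=z_{0,i}^{(2)}$ and $h_i^{[2]}=z_{i,0}^{(2)}$ for $1\le i\le\ell-1$, together with the frozen variables $\Delta_1$ and $\Delta_2$. Write $F=G_1/H_1^{a}=G_2/H_2^{b}$ with $G_1,G_2\in\cU_{\ell,m}\subseteq\mathcal L_0$. The initial variables form a transcendence basis (Theorem~\ref{thm:folded-family-transcendence-basis}), so $\mathcal L_0$ is a Laurent polynomial ring over a polynomial ring in the frozen variables, and in particular a UFD. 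In this ring the two monomials $H_1$ and $H_2$ share no variable. From $G_1H_2^{b}=G_2H_1^{a}$, the factor $H_1^{a}$ must divide $G_1$ in $\mathcal L_0$, so $F\in\mathcal L_0$. The same argument works for every adjacent ring $\mathcal L_u$, which is also a Laurent–polynomial UFD. The only place where care is needed is when $u$ is one of the indices $z_{0,k}^{(2)}$ or $z_{k,0}^{(2)}$. In that case $h_k$ is no longer a variable of $\mathcal L_u$; it becomes $E_k/(h_k)'$, where $E_k=E_k^{[1]}$ or $E_k^{[2]}$ is the exchange polynomial.

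This adjacent case is the main obstacle. Suppose $u=[2;0,k]$. Inside $\mathcal L_u$ the element $H_1$ is, up to monomials, $E_k^{[1]}/(h_k^{[1]})'$. I need that no prime factor of $E_k^{[1]}$ in $\mathcal L_u$ divides $H_2$. Since $\mathcal L_u$ is a Laurent ring, its units are exactly the monomials, so it suffices to show that $E_k^{[1]}$ is coprime to every $h_i^{[2]}$ and to $\Delta_2$.

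Expanding $E_k^{[1]}$ in the $\mathcal L_u$ variables, it is a binomial in $h_{k\pm1}^{[1]}$, $z_{1,k}^{(2)}$ and $z_{1,k-1}^{(2)}$. For $k\ge2$, neither of its two monomials involves $h_i^{[2]}=z_{i,0}^{(2)}$. For $k=1$, the monomial $z_{1,0}^{(2)}=h_1^{[2]}$ appears, but only in one of the two terms. In both cases, setting a single variable to zero leaves a nonzero monomial, so the binomial is not divisible by that variable. $\Delta_2$ is frozen and is likewise absent from one of the terms.

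With these coprimality facts, I finish the adjacent case by the same division argument used for $\mathcal L_0$. The direction $u=[2;k,0]$ is symmetric, using $E_k^{[2]}$. This gives $F\in\mathcal L_0\cap\bigcap_u\mathcal L_u=\overline{\cU}_{\ell,m}$, and \eqref{eq:upper-bound-equality} then gives $F\in\cU_{\ell,m}$.
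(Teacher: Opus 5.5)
Your proof is correct and is essentially the paper's argument: both reduce, via $\cU_{\ell,m}=\overline{\cU}_{\ell,m}$, to coprimality of $H_1$ and $H_2$ in each Laurent ring $\mathcal L_v$. The only nonmonomial cases are the mutations at $h_k^{[1]}$ and $h_k^{[2]}$, where $H_1\sim\Delta_1E_k^{[1]}$ and $H_2\sim\Delta_2$ (or symmetrically).

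Your elementwise division argument needs only the trivial inclusion $\cU_{\ell,m}[H_s^{-1}]\subseteq\mathcal L_v[H_s^{-1}]$, so you avoid the flatness step the paper uses. One correction: the units of $\mathcal L_v$ are the Laurent monomials in the mutable variables only, since frozen variables are not inverted. So the $h_i^{[2]}$ are already units in $\mathcal L_{[2;0,k]}$, and the real check is just $\Delta_2\nmid E_k^{[1]}$.
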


\begin{proof}
Equation~\eqref{eq:upper-bound-equality} expresses $\cU_{\ell,m}$ as a
finite intersection of the initial and adjacent Laurent rings $\mathcal L_v$
(with $v=0$ for the initial ring).  Since a finite intersection is the kernel
of the
difference map from the direct sum to the ambient field, flatness of
localization gives
\begin{equation}
 \cU_{\ell,m}[H_s^{-1}]
 =\bigcap_v\mathcal L_v[H_s^{-1}]
 \qquad(s=1,2).
 \label{eq:localized-upper-bound-star}
\end{equation}
It is enough to prove that $H_1$ and $H_2$ are coprime in every
$\mathcal L_v$.

In the initial ring, and after mutation away from the two boundary
families, $H_1$ and $H_2$ are associated to the polynomial variables
$\Delta_1$ and $\Delta_2$.  After mutation at $h_k^{[1]}$,
\[ H_1\sim\Delta_1E_k^{[1]},\qquad H_2\sim\Delta_2. \]
The polynomial $E_k^{[1]}$ is a sum of two distinct Laurent monomials and
is not divisible by $\Delta_2$.  The analogous mutation at $h_k^{[2]}$
gives
\[ H_1\sim\Delta_1,\qquad H_2\sim\Delta_2E_k^{[2]}, \]
with $E_k^{[2]}$ not divisible by $\Delta_1$.  Each $\mathcal L_v$ is a
UFD, so the coprime-localization identity applies in every Laurent ring in this
intersection.
Intersecting by \eqref{eq:localized-upper-bound-star} proves the claim.
\end{proof}

\begin{theorem}[main theorem]
\label{thm:main-equality}
For all $\ell,m\ge2$, the folded seed $\Sigma_{\ell,m}$ defines the
highest-weight invariant ring as an upper cluster algebra:
\[ \cR_{\ell,m}=\mathcal U(\Sigma_{\ell,m}). \]
\end{theorem}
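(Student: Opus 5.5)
The plan is to assemble the main theorem from the results already established in Section~\ref{sec:upper-cluster-equality}. Everything happens inside $\Frac(\cR_{\ell,m})$. By Theorem~\ref{thm:folded-family-transcendence-basis}, this field equals $\kk(\mathbf x_{\ell,m})$, so $\cU_{\ell,m}$ and $\cR_{\ell,m}$ live in the same ambient field, and the two localizations may be compared there.

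The first step is the inclusion $\cU_{\ell,m}\subseteq\cR_{\ell,m}$. This is \eqref{eq:upper-bound-inclusion} of Lemma~\ref{lem:initial-coprimality}. That lemma combines the Gekhtman--Shapiro--Vainshtein upper bound equality \eqref{eq:upper-bound-equality}, which uses the full row rank from Proposition~\ref{prop:folded-B}, with the primality of the initial functions and the coprimality $\gcd(z_u,z_u')=1$.

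For the reverse inclusion, I take $f\in\cR_{\ell,m}$ and apply Proposition~\ref{prop:localized-equality} with $s=1$ and $s=2$. This gives
\[ f\in\cR_{\ell,m}[H_1^{-1}]\cap\cR_{\ell,m}[H_2^{-1}] =\cU_{\ell,m}[H_1^{-1}]\cap\cU_{\ell,m}[H_2^{-1}]. \]
Lemma~\ref{lem:U-localization-intersection} then identifies the right-hand side with $\cU_{\ell,m}$, so $f\in\cU_{\ell,m}$. The equality on the $\cR$ side, Lemma~\ref{lem:R-localization-intersection}, is not strictly needed for this direction. It is only a consistency check that the two localizations recover $\cR_{\ell,m}$. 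Combining the two inclusions proves the theorem.

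Almost all of the work sits in the cited ingredients. The step I expect to need the most care is Lemma~\ref{lem:U-localization-intersection}, which requires $H_1$ and $H_2$ to be coprime in each Laurent ring $\mathcal L_v$, including the adjacent ones. There the mutated boundary variables $h_k^{[1]}$ or $h_k^{[2]}$ are replaced by $E_k^{[s]}/h_k^{[s]}$, so I would check directly that the binomials $E_k^{[1]}$ and $E_k^{[2]}$ share no factor with the other $H$ in the corresponding Laurent polynomial ring.

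I would also note that the argument uses $m\ge2$, which is needed to have a second matrix $C_2$ to localize at. Finally, I would confirm that the frozen variables, including all $\Delta_r$, are never inverted in any $\mathcal L_v$. This ensures that the intersection is genuinely the upper cluster algebra with polynomial frozen coefficients.
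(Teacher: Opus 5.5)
Your proposal is correct and follows the paper's proof, which writes the single chain $\cR_{\ell,m}=\cR_{\ell,m}[H_1^{-1}]\cap\cR_{\ell,m}[H_2^{-1}]=\cU_{\ell,m}[H_1^{-1}]\cap\cU_{\ell,m}[H_2^{-1}]=\cU_{\ell,m}$ using Proposition~\ref{prop:localized-equality} and Lemmas~\ref{lem:R-localization-intersection} and~\ref{lem:U-localization-intersection}. Splitting this chain into two inclusions lets you replace Lemma~\ref{lem:R-localization-intersection} by the inclusion \eqref{eq:upper-bound-inclusion}, which Proposition~\ref{prop:localized-equality} already uses, so the difference is only bookkeeping.
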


\begin{proof}
By Proposition~\ref{prop:localized-equality}, the localizations at $H_1$ and
$H_2$ agree.  Lemmas~\ref{lem:R-localization-intersection} and
\ref{lem:U-localization-intersection} therefore give
\[ \cR_{\ell,m} =\cR_{\ell,m}[H_1^{-1}]\cap\cR_{\ell,m}[H_2^{-1}] =\cU_{\ell,m}[H_1^{-1}]\cap\cU_{\ell,m}[H_2^{-1}] =\cU_{\ell,m}. \]
\end{proof}

\section{Reddening sequences}
\label{sec:reddening}

Let $B_{\ell,m}^{\mathrm{uf}} :=(b_{uv})_{u,v\in\mathscr M_{\ell,m}} $ be the
mutable principal part of the folded exchange matrix.  A reddening sequence
for $\Sigma_{\ell,m}$ is a green-to-red sequence for the principal-coefficient
framing of $B_{\ell,m}^{\mathrm{uf}}$; frozen coefficients do not enter the
argument.

We use the class $\mathscr P'$ of skew-symmetrizable matrices generated from
the $1\times1$ zero matrix by mutation and source--sink extension.
Equivalently, a matrix lies in $\mathscr P'$ if and only if its quiver can be
reduced to one vertex by mutations and deletions of sources or sinks.
String-diagram exchange matrices lie in $\mathscr P'$, and every matrix in
$\mathscr P'$ admits a reddening sequence \cite[Theorem~4.8 and
Corollary~4.9]{CaoString}.

To reduce the folded matrix, we remove each transpose-fixed diamond diagonal
directly.  Between two such removals, the required disk flips occur in
disjoint transpose-exchanged regions and descend by orbit mutation.

\subsection{Orbit mutation on nonadjacent sets}

Let $\widetilde B$ be a mutable principal matrix carrying an admissible
involution $\tau$, in the sense of
\eqref{eq:admissible-equivariance}--\eqref{eq:admissible-sign}, and let
$B=\operatorname{Fold}_{\tau}(\widetilde B)$ be its orbit-sum fold.

\begin{lemma}\label{lem:orbit-mutation-folding}
For a mutable orbit $\bar k$, the mutations at the members of $\bar k$
commute and
\[ \operatorname{Fold}_{\tau} \left(\prod_{k'\in\bar k}\mu_{k'}(\widetilde B)\right) =\mu_{\bar k}(B). \]
If the members of $\bar k$ are all sources, or all sinks, then $\bar k$ is
a source, respectively a sink, of the folded quiver, and deletion
commutes with folding.
\end{lemma}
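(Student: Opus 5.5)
The plan is to verify the matrix identity entrywise, using the standard mutation formula
\[ \mu_k(B)_{ij}=\begin{cases}-b_{ij},& k\in\{i,j\},\\ b_{ij}+\operatorname{sgn}(b_{ik})[b_{ik}b_{kj}]_+,&\text{otherwise},\end{cases} \]
applied once on the unfolded side and once on the folded side. Commutativity comes first. Condition \eqref{eq:admissible-orbit}, together with \eqref{eq:admissible-equivariance}, gives $\widetilde b_{k',k''}=0$ for distinct members of $\bar k$. Mutations at two vertices with $\widetilde b_{k'k''}=0$ commute, and each leaves the entry between them equal to zero. So $\widetilde B':=\prod_{k'\in\bar k}\mu_{k'}(\widetilde B)$ is well defined.

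Next I compute $\widetilde B'$ explicitly. Mutation at $k'$ changes only entries $\widetilde b_{ij}$ with $i,j\notin\{k'\}$, and it does not alter any row or column indexed by another member of $\bar k$. Therefore
\[ \widetilde b'_{ij}=\widetilde b_{ij}+\sum_{k'\in\bar k}\operatorname{sgn}(\widetilde b_{ik'})[\widetilde b_{ik'}\widetilde b_{k'j}]_+ \qquad (i,j\notin\bar k), \]
and entries with an index in $\bar k$ are simply negated.

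Before folding $\widetilde B'$ I must check that $\tau$ is still admissible for it, since otherwise the orbit sum is not meaningful. Equivariance \eqref{eq:admissible-equivariance} is clear because $\bar k$ is $\tau$-stable. Condition \eqref{eq:admissible-orbit} follows from the formula above: take $j=\tau i$, use $\widetilde b_{i,\tau i}=0$, and note that by \eqref{eq:admissible-sign} and equivariance each term $\operatorname{sgn}(\widetilde b_{ik'})[\widetilde b_{ik'}\widetilde b_{k',\tau i}]_+$ cancels against its $\tau$-image, using skew-symmetrizability. The sign condition \eqref{eq:admissible-sign} is the main obstacle. In the setting of this paper I would handle it by the hypothesis under which the lemma is applied: the flips are supported in disjoint transpose-exchanged regions, so for fixed $i$ at most one of $j,\tau j$ meets the support of a given correction term. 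Otherwise I would treat the lemma as asserting only the identity of folds, in which case admissibility for the new matrix is not needed.

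The folded identity is then a direct sum. For $i\in\bar i$ with $\bar i\ne\bar k$, sum the formula for $\widetilde b'_{ij'}$ over $j'\in\bar j$. The first term gives $b_{\bar i\bar j}$. For the correction, \eqref{eq:admissible-sign} says the $\widetilde b_{ik'}$ with $k'\in\bar k$ share one sign, and the $\widetilde b_{k'j'}$ share one sign as $j'$ ranges over $\bar j$ for fixed $k'$. Equivariance transfers this to all $k'$, because $\widetilde b_{\tau k',j'}=\widetilde b_{k',\tau j'}$. Hence the positive parts add, and we get
\[ \sum_{k',j'}\operatorname{sgn}(\widetilde b_{ik'})[\widetilde b_{ik'}\widetilde b_{k'j'}]_+=\operatorname{sgn}(b_{\bar i\bar k})\Bigl[b_{\bar i\bar k}\cdot\sum_{j'}\widetilde b_{k'j'}\Bigr]_+ . \]
Here $k'\in\bar k$ is any fixed representative, and the inner sum is independent of it and equals $b_{\bar k\bar j}$. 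This gives $\mu_{\bar k}(B)_{\bar i\bar j}$. Entries in which $\bar k$ appears are negated on both sides.

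For the last assertion, if every $k'\in\bar k$ is a source, then $\widetilde b_{k'v}\ge0$ for all $v$. The orbit sums $b_{\bar k\bar v}$ are then nonnegative, and by \eqref{eq:symmetrizer} so are $-b_{\bar v\bar k}$. Hence $\bar k$ is a source of $B$. Deleting the rows and columns of $\bar k$ upstairs and deleting the row and column of $\bar k$ downstairs give matrices related by the same orbit-sum formula, so deletion commutes with folding. The sink case is symmetric.
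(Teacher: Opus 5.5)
Your proof is correct and follows the paper's argument. Like the paper, you commute the nonadjacent orbit mutations, then sum the simultaneous mutation formula over the mutation orbit $\bar k$ and the column orbit $\bar j$, using \eqref{eq:admissible-sign} for the original $\widetilde B$ together with equivariance to pass positive and negative parts through both sums, negate the entries involving $\bar k$ on both sides, and sum entries of one sign for the source--sink claim. Your detour on admissibility of the mutated matrix is not needed for the stated identity: only equivariance is needed for the fold to be defined, and the paper handles preservation of the sign condition separately, under extra hypotheses, in Lemma~\ref{lem:separated-pair}. Incidentally, the correction terms to $\widetilde b'_{i,\tau i}$ do not cancel against their $\tau$-images; each one vanishes individually.
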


\begin{proof}
The members of the orbit are not adjacent, so their mutations commute.  For
$\bar i,\bar j\ne\bar k$, sum the simultaneous mutation formula over the
column orbit $\bar j$ and the mutation orbit $\bar k$.  Condition~\eqref{eq:admissible-sign} allows positive and negative
parts to pass through both sums and gives
\[ b'_{\bar i\bar j} =b_{\bar i\bar j} +[b_{\bar i\bar k}]_+[b_{\bar k\bar j}]_+ -[-b_{\bar i\bar k}]_+[-b_{\bar k\bar j}]_+. \]
Entries whose row or column index is $\bar k$ change sign on both sides.  The source--sink
statement follows by summing entries of one common sign.
\end{proof}

\begin{lemma}\label{lem:separated-pair}
Suppose the remaining vertices of a $\tau$-equivariant matrix decompose as
\[ \widetilde I=L\sqcup F\sqcup R, \qquad \tau(L)=R, \qquad \tau|_F=\mathrm{id}, \qquad \widetilde B_{L,R}=0. \]
If every vertex in $\mathbf w=(k_1,\ldots,k_s)$ belongs to $L$, then the paired sequence
$(k_1,\tau k_1),\ldots,(k_s,\tau k_s)$
remains admissible and folds to $(\bar k_1,\ldots,\bar k_s)$.
\end{lemma}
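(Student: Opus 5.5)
Throughout, the matrix is $\tau$-equivariant, so $\widetilde B_{R,L}=\widetilde B_{\tau L,\tau R}=\widetilde B_{L,R}=0$ as well.

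The plan is to prove, by induction on $s$, a slightly stronger invariant. After the first $t$ paired mutations the matrix $\widetilde B^{(t)}$ is $\tau$-equivariant, has $\widetilde B^{(t)}_{L,R}=0$, and folds to $\mu_{\bar k_t}\cdots\mu_{\bar k_1}(B)$. Both blocks $L\times R$ and $R\times L$ vanish.

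The first step is admissibility at each stage. Given the invariant, equivariance \eqref{eq:admissible-equivariance} holds by hypothesis. For \eqref{eq:admissible-orbit}, a vertex $u\in F$ is its own orbit, while for $u\in L$ we have $\tau u\in R$, so $\widetilde b_{u,\tau u}=0$ because the $L\times R$ block vanishes; the case $u\in R$ is symmetric. For the sign condition \eqref{eq:admissible-sign}, take $v\in F$, so that $\tau v=v$ and the product is a square. If instead $v\in L\sqcup R$, then $v$ and $\tau v$ lie on opposite sides. A row $u\in L$ then meets at most one of them nontrivially, since $\widetilde B_{L,R}=0$, and the same holds for $u\in R$. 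The remaining case is a row $u\in F$. There $\widetilde b_{u,\tau v}=\widetilde b_{\tau u,v}=\widetilde b_{u,v}$, so the product is again a square.

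The second step propagates the invariant. Let $k=k_{t+1}\in L$. Since $b_{k,\tau k}=0$, the mutations $\mu_k$ and $\mu_{\tau k}$ commute, and Lemma~\ref{lem:orbit-mutation-folding} shows that the pair folds to $\mu_{\bar k}$. Applying $\tau$ to $\mu_{\tau k}\mu_k\widetilde B$ gives $\mu_{k}\mu_{\tau k}(\tau\widetilde B)=\mu_k\mu_{\tau k}\widetilde B$, so equivariance is preserved.

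The main point is that the $L\times R$ block stays zero. I would compute $\mu_k$ entrywise. For $i\in L$ and $j\in R$, the correction term is $[b_{ik}]_+[b_{kj}]_+-[-b_{ik}]_+[-b_{kj}]_+$, and it vanishes because $b_{kj}=0$ (here $k\in L$, $j\in R$). Mutating next at $\tau k\in R$, the correction involves $b_{i,\tau k}$ with $i\in L$, and this entry is still zero after $\mu_k$ by the computation just done. So the block remains zero.

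I do not need the entries between $F$ and the two sides to behave in any particular way; only the $L$–$R$ separation matters, and the $F$ rows were already handled by equivariance. Iterating the two steps gives admissibility along the whole paired sequence, and the folded result is $\mu_{\bar k_s}\cdots\mu_{\bar k_1}(B)$. The only delicate step is checking the vanishing of the $L\times R$ block after both mutations, which is the two-line computation above.
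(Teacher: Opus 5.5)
Your proposal is correct and follows essentially the same argument as the paper. You keep the $L$--$R$ separation and equivariance as an inductive invariant under each paired mutation, and you get the sign condition from the separation on $L$- and $R$-rows and from equivariance on $F$-rows. Lemma~\ref{lem:orbit-mutation-folding} then applies at every step. Your write-up also makes explicit the orbit condition \eqref{eq:admissible-orbit} and the entrywise mutation check, which the paper's proof leaves implicit.
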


\begin{proof}
A mutation in $L$ cannot create an arrow between $L$ and $R$, because
the mutated vertex has no neighbor in $R$; the same argument applies in $R$.
Thus each pair of mutations preserves the absence of arrows between
$L$ and $R$, as well as equivariance.  For each two-element orbit, a vertex in $L$ or $R$ can be adjacent only
to its member on the same side.  For $f\in F$ and $k\in L$, equivariance
gives $\widetilde b_{f,k}=\widetilde b_{f,\tau k}$.
The sign condition for admissibility is therefore preserved, and
Lemma~\ref{lem:orbit-mutation-folding} applies at every step.
\end{proof}

\subsection{Removing a folded half-diamond diagonal}

Put $N=\ell-1$ and write
\[ d_i^{(r)}=[r;i,0] \qquad(1\le i\le N) \]
for the transpose-fixed diagonal vertices in triangle $r$.  For even $r$
set
\begin{equation}
 \mathbf p_i^+(r)
 :=\bigl([r;i+u,q-u]\bigr)_{
 {\substack{q=N-i,N-i-1,\ldots,1\\u=0,1,\ldots,q}}},
 \label{eq:positive-removal-sequence}
\end{equation}
and for odd $r$ set
\[ \mathbf p_i^-(r) :=\bigl([r;i-q,j]\bigr)_{ {\substack{q=i-1,i-2,\ldots,1\\j=q,q-1,\ldots,0}}}. \]
The indices are read in the stated order; the corresponding mutations are
performed one anti-diagonal at a time.  Figure~\ref{fig:half-diamond-mutation-order}
shows the order, and Appendix~\ref{app:triangular-mutation} proves the row
formula used below.

\begin{lemma}[removal of a folded half-diamond diagonal]
\label{lem:folded-half-diamond-removal}
Assume that the diagonal labelled $r$ is exposed in the disk model of \cite{FeiKroneckerII}.

If $r$ is even, then, successively for $i=1,\ldots,N$, mutation by
$\mathbf p_i^+(r)$ makes $d_i^{(r)}$ a source or sink, with
\begin{equation}
 \operatorname{supp}b_{d_i^{(r)},\bullet}=
 \begin{cases}
  \{d_{i+1}^{(r)}\},&i<N,\\
  \{[r;N-1,1]\},&i=N,
 \end{cases}
 \qquad
 \begin{cases}
  |b_{d_i^{(r)},d_{i+1}^{(r)}}|=1,&i<N,\\
  |b_{d_N^{(r)},[r;N-1,1]}|=2.&
 \end{cases}
 \label{eq:positive-removal-row}
\end{equation}
Delete $d_i^{(r)}$ and apply the mutations in reverse order.  The matrix on
the surviving vertices is then the preceding matrix with $d_i^{(r)}$ removed.

If $r$ is odd, the same statement holds for $i=N,N-1,\ldots,1$ with
$\mathbf p_i^-(r)$ and
\begin{equation}
 \operatorname{supp}b_{d_i^{(r)},\bullet}=
 \begin{cases}
  \{d_{i-1}^{(r)}\},&i>1,\\
  \{[r;1,1]\},&i=1,
 \end{cases}
 \qquad
 \begin{cases}
  |b_{d_i^{(r)},d_{i-1}^{(r)}}|=1,&i>1,\\
  |b_{d_1^{(r)},[r;1,1]}|=2.&
 \end{cases}
 \label{eq:negative-removal-row}
\end{equation}
\end{lemma}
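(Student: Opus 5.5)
We reduce the statement to an explicit computation in the unfolded triangle together with the orbit-sum formula \eqref{eq:orbit-sum}. We treat the case $r$ even; the odd case is the mirror image under the reflection $(i,j)\mapsto$ reversed diagonal order, and uses $\mathbf p_i^-(r)$ in the same way.

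\emph{Step 1: local structure near the diagonal.} Since the diagonal labelled $r$ is exposed, only vertices of triangle $r$ and the frozen or boundary neighbours of the base edge $\{[r;i,0]\}$ carry arrows into the region affected by $\mathbf p_i^+(r)$. The vertices $[r;i+u,q-u]$ with $q\ge1$ form a triangular patch above $d_i^{(r)}$. In the unfolded seed this patch consists of two transpose-exchanged copies, one in each flag arm, glued along the transpose-fixed base. Condition \eqref{eq:admissible-orbit} shows that the two copies are nonadjacent. We therefore regard the patch as $L\sqcup R$ with $F$ the base vertices. The patch vertices are mutated before any base vertex is touched, so Lemma~\ref{lem:separated-pair} applies: the paired unfolded sequence remains admissible and folds to $\mathbf p_i^+(r)$.

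\emph{Step 2: unfolded row computation.} In each arm, the sequence mutating one anti-diagonal at a time is the standard Fock--Goncharov flip sequence that transports the base-edge vertex across a triangle of the $A_{\ell-1}$ hive. We prove by induction on $q$ (descending from $N-i$ to $1$) that after processing anti-diagonal $q$, the row of $\widetilde d_i^{(r)}$ is supported on its base neighbours and on the outermost vertex of the next anti-diagonal in each arm, with entries $\pm1$. This is the row formula of Appendix~\ref{app:triangular-mutation}. At the end of the sequence, the unfolded row of $d_i^{(r)}$ is supported on $d_{i+1}^{(r)}$ alone for $i<N$, with entry $\pm1$. For $i=N$ the row is supported on the two transpose-paired copies of $[r;N-1,1]$, with the same sign, and each entry is $\pm1$. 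The earlier diagonal vertices $d_{i'}$, $i'<i$, have already been deleted, which removes their contribution; this is why the order $i=1,\dots,N$ is used.

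\emph{Step 3: folding and restoring.} Summing over orbits via Lemma~\ref{lem:orbit-mutation-folding} gives \eqref{eq:positive-removal-row}. The fixed vertex $d_{i+1}^{(r)}$ has orbit size one, giving $1$. The pair over $[r;N-1,1]$ sums to $2$; the two entries have the same sign by \eqref{eq:admissible-sign}. A row with a single nonzero entry makes $d_i^{(r)}$ a source or sink. Deleting a source or sink commutes with mutations at the other vertices, because the mutation rule at $k\ne d_i$ never involves the row or column of a vertex whose only neighbour is untouched in a way that creates new arrows among the survivors. Applying the reverse sequence then returns the survivors to the preceding matrix, since mutation is involutive.

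The main obstacle is Step 2, namely tracking the row of $d_i^{(r)}$ through the flips while the adjacent frozen and identified boundary vertices are present. We first verify the induction on the $\ell=3,4$ hive explicitly, and then argue locally, using the fact that each flip is a quadrilateral move in the disk triangulation.
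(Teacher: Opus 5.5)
There is a genuine gap in Step~1, and Step~2 inherits it. The word $\mathbf p_i^+(r)$ is not confined to the interior. Its terms with $u=q$ are $[r;i+q,0]=d_{i+q}^{(r)}$, so each anti-diagonal ends with a mutation at a transpose-fixed base vertex. The word therefore mutates $d_N^{(r)},d_{N-1}^{(r)},\ldots,d_{i+1}^{(r)}$ interleaved with the interior orbits; these are the vertices numbered $4,7,9$ in Figure~\ref{fig:half-diamond-mutation-order} and the $u=q$ cases of Lemma~\ref{lem:anti-diagonal-row-formula}.

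So it is false that the patch is mutated before any base vertex is touched. Lemma~\ref{lem:separated-pair} needs every mutated vertex to lie in $L$ and $F$ to be left alone, so it does not apply. This is not a technicality: a base vertex is adjacent to both arms, and mutating it links them. When $y=d_N^{(r)}$ is mutated, its folded row is $2e_c+e_{N-1,0}-2e_{N-1,1}$, and $x=(N-1,1)$ has just been mutated. Hence in the unfolded matrix $x^{\mathrm L}\to y\to c^{\mathrm R}$, and $\mu_y$ creates an arrow from $x^{\mathrm L}\in L$ to $c^{\mathrm R}\in R$, although your separation hypothesis says none existed. From then on $\widetilde B_{L,R}\ne0$, and admissibility of $\tau$ would have to be re-verified after every mutation before Lemma~\ref{lem:orbit-mutation-folding} could be used again. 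Your proposal contains no such check. Separately, \eqref{eq:admissible-orbit} only excludes arrows between $u$ and $\tau u$; the initial separation is true, but not for that reason.

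The paper avoids unfolding for this lemma. The folded matrix is skew-symmetrizable, so mutation at a fixed vertex needs no admissibility hypothesis. The proof works directly with the folded rows \eqref{eq:appendix-base-row}--\eqref{eq:appendix-interior-row}, whose base rows carry entries $\pm2$ at interior columns.
\begin{itemize}
\item Lemma~\ref{lem:anti-diagonal-row-formula} determines, by induction along the anti-diagonals, the row of each vertex immediately before it is mutated. The columns of the adjacent unmutated side are retained throughout.
\item Proposition~\ref{prop:triangular-mutation-sequence} then notes that the row $2e_{i-1,1}-2e_{i,1}+e_{i+1,0}$ of $d_i$ does not change before the last anti-diagonal. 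There $\mu_x$ and $\mu_y$, with $x=(i,1)$ and $y=(i+1,0)$, send it to $2e_x-e_y$ and then to $e_y$.
\end{itemize}
This is exactly the computation you postpone in Step~2 (``verify for $\ell=3,4$ and argue locally''). The paper does not obtain it from Fock--Goncharov flips. Those enter only in Lemma~\ref{lem:flip-support-separation}, between removals, where the two supports really are disjoint.

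Your Step~3 is fine in substance. Restriction to a principal submatrix commutes with mutation at retained vertices, so the reversed word restores the surviving vertices.
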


\begin{proof}
For even $r$, after deleting
$d_1^{(r)},\ldots,d_{i-1}^{(r)}$, the mutable rows are those computed in
Appendix~\ref{app:triangular-mutation}, up to simultaneous reversal of
all arrows.  Proposition~\ref{prop:triangular-mutation-sequence} gives
\eqref{eq:positive-removal-row}.  By the exchange relations within each triangle and along the identified
boundaries, every additional neighbor of a vertex in the mutation sequence lies on
the adjacent unmutated side.  The corresponding columns are retained in
Appendix~\ref{app:triangular-mutation}.  Thus every
entry coming from an identified boundary is included in the calculation.
After the source or sink is deleted, passage to the principal submatrix
on the surviving vertices commutes with every remaining mutation; the reverse sequence is therefore the inverse of the
forward sequence on the restricted matrix.

The odd half-diamond is obtained from the even one by
\[ (i,j)\longmapsto(\ell-i-j,j) \]
and global arrow reversal.  This sends the order $1,\ldots,N$ to
$N,\ldots,1$ and $\mathbf p_{N+1-i}^+(r)$ to $\mathbf p_i^-(r)$, proving
\eqref{eq:negative-removal-row}.
\end{proof}

\subsection{The diagonal reduction}

A flip of a disk diagonal in an $\ell$-triangulated quadrilateral is the
standard Fock--Goncharov mutation sequence \cite[Section~10.3 and
Proposition~10.1]{FockGoncharov}.  In \cite[Step~2 in the proof of
Theorem~6.22]{FeiKroneckerII}, this sequence is applied at the
transpose-paired diagonals $[k]$ and $\overleftarrow{[k]}$ before the next
diamond diagonal is removed.

\begin{lemma}[folding paired disk flips]
\label{lem:flip-support-separation}
After the exposed transpose-fixed diagonal has been deleted, the interiors
of the two quadrilaterals supporting the paired flip of \cite{FeiKroneckerII} have no arrows
between them.  Their standard flip sequences are transpose images and mutate no fixed
boundary vertex.  Hence the paired flip descends to the folded
matrix.
\end{lemma}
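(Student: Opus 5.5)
The plan is to reduce the statement to Lemma~\ref{lem:separated-pair}. That means exhibiting, at the moment after the exposed transpose-fixed diagonal has been deleted, a decomposition $\widetilde I=L\sqcup F\sqcup R$ of the remaining unfolded mutable vertices, with three properties:
\begin{enumerate}
 \item $\tau(L)=R$ and $\tau|_F=\mathrm{id}$;
 \item $\widetilde B_{L,R}=0$;
 \item the flip sequence at $[k]$ lies in $L$, and its transpose lies in $R$.
\end{enumerate}
I will take $L$ to be the interior of the quadrilateral containing the left diagonal $[k]$, and $R=\tau(L)$ the interior of the quadrilateral containing $\overleftarrow{[k]}$. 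The set $F$ will consist of everything else, including the surviving fixed vertices together with all remaining paired vertices outside the two quadrilaterals; formally I would let $F$ contain $\tau$-stable pairs as well and check that Lemma~\ref{lem:separated-pair} only uses that its vertices are not mutated.

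\textbf{Step 1: the two quadrilaterals.} In the disk model of \cite{FeiKroneckerII}, the two quadrilaterals share only boundary pieces, and any shared piece passes through the transpose-fixed diagonal. Once the half-diamond diagonal has been removed by Lemma~\ref{lem:folded-half-diamond-removal}, the vertices on it are gone. Any arrow between an interior vertex of one quadrilateral and an interior vertex of the other would have to come from a triangle containing vertices of both. In the Fock--Goncharov $\ell$-triangulation, arrows only join vertices of a common small triangle, and each small triangle lies inside a single large triangle. Hence it is enough to check that no large triangle meets both interiors. This holds because the only large triangles adjacent to both are those along the deleted diagonal. The conclusion is $\widetilde B_{L,R}=0$.

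\textbf{Step 2: the flip sequences.} The standard flip sequence of \cite[Section~10.3]{FockGoncharov} mutates only vertices lying in the interior of the quadrilateral, namely the points on the flipped diagonal and in the nested diamonds. It never mutates the quadrilateral's boundary sides. Transposition maps the triangulated disk to itself, exchanging the two flag arms and carrying $[k]$ to $\overleftarrow{[k]}$. Therefore it carries the flip sequence at $[k]$ termwise onto the one at $\overleftarrow{[k]}$. In particular, no fixed vertex is mutated, which gives property (3).

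\textbf{Step 3: conclusion and main obstacle.} Applying Lemma~\ref{lem:separated-pair} to these data, the paired sequence stays admissible and folds to the orbit sequence $(\bar k_1,\ldots,\bar k_s)$. By Lemma~\ref{lem:orbit-mutation-folding}, the folded matrix then undergoes the corresponding orbit mutations, so the paired flip descends.

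I expect Step~1 to be the main obstacle. The glued triangles carry extra arrows from the identified boundaries and from the determinant corner factors $c_{n;i,j}$ and $\kappa_{n;j}$. One must therefore check, from the relations of Theorem~\ref{thm:folded-relations} (in their unfolded form), that none of these extra arrows joins $L$ to $R$ after the deletion. I would first verify this on the explicit neighbor lists of vertices adjacent to the deleted diagonal, using that their only cross-neighbors were the deleted $d_i^{(r)}$ by \eqref{eq:positive-removal-row} and \eqref{eq:negative-removal-row}.
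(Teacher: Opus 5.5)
Your route is the paper's: the Fock--Goncharov quiver is a sum of small-triangle quivers, so the interiors $L$ and $R=\tau L$ of the two quadrilaterals have no arrows between them. The two flip words are transpose images supported in $L$ and $R$, and Lemma~\ref{lem:separated-pair} gives the folding.

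The step that would not go through as written is your choice of $F$. You put the surviving paired vertices outside the quadrilaterals into $F$, and propose to check that Lemma~\ref{lem:separated-pair} uses only that $F$ is unmutated. Its proof uses more. Admissibility of each intermediate matrix comes from $\tau|_F=\mathrm{id}$, via $\widetilde b_{f,k}=\widetilde b_{f,\tau k}$ for $f\in F$ and $k\in L$. For a pair $g\ne\tau g$ inside $F$, equivariance gives only $\widetilde b_{g,k}=\widetilde b_{\tau g,\tau k}$. Two things could then go wrong:
\begin{itemize}
\item if $g$ were adjacent to $k\in L$ and to $\tau k\in R$ with opposite signs, \eqref{eq:admissible-sign} would fail;
\item if $g$ and $\tau g$ were both adjacent to $k$, mutation at $k$ could create an arrow between them, violating \eqref{eq:admissible-orbit}.
\end{itemize}
Lemma~\ref{lem:orbit-mutation-folding} needs admissibility at every step, so a one-sidedness statement is genuinely required.

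The cleanest repair is to take $L$ to be \emph{all} surviving paired vertices on the left side. Then $R=\tau L$ is the whole right side and $F$ is exactly the set of fixed vertices. The condition $\widetilde B_{L,R}=0$ follows from the same small-triangle argument, because left and right chambers meet only through fixed vertices. The flip words lie in $L$ and $R$, and Lemma~\ref{lem:separated-pair} applies verbatim. Alternatively, keep your small $L$ and $R$, but verify that each paired vertex outside the quadrilaterals is adjacent to at most one of them. Mutations inside $L\cup R$ preserve this property, since a mutation at $k\in L$ only joins neighbors of $k$, none of which lies in $R$.

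Your anticipated obstacle is also partly misdirected. The corner factors $c_{n;i,j}$ and $\kappa_{n;j}$ equal $1$ or the frozen variable $\Delta_n$. They therefore never enter $B_{\ell,m}^{\mathrm{uf}}$, the only matrix used in Section~\ref{sec:reddening}. Likewise, \eqref{eq:positive-removal-row} and \eqref{eq:negative-removal-row} give the row of $d_i^{(r)}$ in the temporarily mutated seed, not the neighbor lists of surviving vertices. After deletion and reversal, Lemma~\ref{lem:folded-half-diamond-removal} returns the principal submatrix. Adjacency should therefore be read from the unfolded forms of the relations in Theorem~\ref{thm:folded-relations}.
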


\begin{proof}
The Fock--Goncharov quiver is the sum of the oriented quivers of the small
triangles.  Once the common fixed diagonal is removed, no small triangle
contains an interior vertex from each quadrilateral.  Their interiors therefore have vertex sets $L$ and $R=\tau L$ with
no arrows between them; the shared boundary is fixed and
unmutated.  Apply Lemma~\ref{lem:separated-pair}.
\end{proof}

\begin{proposition}[folded diagonal reduction]
\label{prop:direct-folded-diagonal-reduction}
If $m\ge3$ is odd, then $B_{\ell,m}^{\mathrm{uf}}$ can be reduced by
mutations and source--sink deletions to the string-diagram exchange matrix
for the Cartan matrix $A_{\ell-1}$ associated with a triangulation of an
$(m+1)$-gon.
\end{proposition}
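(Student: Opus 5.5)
We argue by induction along the chain of triangles $2,3,\ldots,m$, removing transpose-fixed diagonals one at a time and transporting the remaining fixed data with paired disk flips, as in Step~2 of the proof of \cite[Theorem~6.22]{FeiKroneckerII}.

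The mutable part of the unfolded seed is a Fock--Goncharov disk quiver: the triangles are glued along the identified sides, forming a polygon triangulated by the $\ell$-triangulated triangles. Transposition reflects this picture. Its fixed vertices are the base-edge chambers $d_i^{(r)}=[r;i,0]$, which lie on the transpose-fixed diamond diagonals. The transpose pairs correspond to the diagonals $[k]$ and $\overleftarrow{[k]}$. Folding sends the unfolded string-diagram reduction of \cite{FeiKroneckerII} (valid for $\mathsf K_{\ell,m}$) to a reduction in which every fixed diagonal has disappeared. What survives is a single chain of $\ell-1$ vertices for each remaining triangulation diagonal. This should give the $A_{\ell-1}$ string-diagram matrix of a triangulated $(m+1)$-gon.

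The steps are as follows.

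\emph{Step 1: first removal.} At the start the fixed diagonal of triangle $r=2$ (even) is exposed. Lemma~\ref{lem:folded-half-diamond-removal} then removes $d_1^{(2)},\ldots,d_N^{(2)}$ in turn. After each removal we mutate back, so the matrix on the surviving vertices is simply the principal submatrix.

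\emph{Step 2: paired flips.} Next we perform the paired flips that expose the fixed diagonal of triangle $3$. By Lemma~\ref{lem:flip-support-separation} and Lemma~\ref{lem:separated-pair}, these paired flips descend to orbit mutations of the folded matrix. Lemma~\ref{lem:orbit-mutation-folding} shows that they commute with folding.

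\emph{Step 3: iteration.} We now remove the diagonal of triangle $3$ using the odd case of Lemma~\ref{lem:folded-half-diamond-removal} (order $i=N,\ldots,1$), and then continue alternately through triangles $r=4,\ldots,m$. At each stage we keep the invariant that the current folded matrix is the fold of an unfolded matrix in which the next fixed diagonal is exposed. This invariant is supplied by the unfolded argument of \cite{FeiKroneckerII}, and the folding lemmas preserve it.

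\emph{Step 4: identifying the endpoint.} Once all fixed diagonals are deleted, the remaining vertices are the transpose orbits of the paired diagonals. Each orbit is represented by a single $\ell-1$ chain. Orbit size two on both endpoints gives symmetrizer $2$ throughout, so all valuations are $(1,1)$ after rescaling, and we get a simply-laced $A_{\ell-1}$ chain per diagonal. The odd parity of $m$ is used here: it makes the last triangle end with a fixed diagonal already removed, so that every surviving orbit is a genuine pair. The diagonals form a triangulation of the $(m+1)$-gon whose sides are the $m$ central arrows together with one outer side, and the arrows between chains follow the string-diagram rule.

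The main obstacle is Step 3. We must check that after each deletion the unfolded quiver really has the configuration required by the next flip: the right diagonal exposed, and no stray arrows to the deleted fixed vertices left behind by the reversed mutation sequence. The case $i=N$ needs particular care, since there the entry $|b|=2$ touches $[r;N-1,1]$. I would first verify this directly for $\Sigma_{\ell,3}$ and $\Sigma_{\ell,5}$ by comparing with the row formulas of Appendix~\ref{app:triangular-mutation}, and then argue inductively that the local picture near each glued side is independent of $r$ up to the parity reflection $(i,j)\mapsto(\ell-i-j,j)$.
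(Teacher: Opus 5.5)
\textbf{There is a genuine gap: the order of your induction cannot start, and one family of fixed vertices is never removed.}

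\emph{Step~1 fails: for $m\ge3$ diagonal~$2$ is not exposed in the initial seed.} The even case of Lemma~\ref{lem:folded-half-diamond-removal} rests on the rows of Appendix~\ref{app:triangular-mutation}. Their column set $\mathsf T_N$ ($N=\ell-1$) retains only the adjacent side $i=0$. The side $i+j=\ell$ must be absent; this is exactly how the $u=0$ case of \eqref{eq:top-mutation-rows} is obtained. In $\Sigma_{\ell,m}$ with $m\ge3$, however, $[2;i,\ell-i]$ is the mutable identified boundary with triangle~$3$ (relation \eqref{eq:even-gluing}). The interior relations \eqref{eq:interior} at $[2;a,b]$ with $a+b=\ell-1$ involve these vertices, so the row computation breaks down and $d_i^{(2)}$ need not become a source or sink of the mutable matrix.

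The only diagonal exposed at the start is the outer one, $m$, whose outer side $[m;0,j]$ is frozen. The paper therefore runs the induction in the opposite direction. It removes diagonal $m$ by the odd case of the lemma. Then, for each smaller $k$, it applies the folded paired flip at $[k],\overleftarrow{[k]}$ (Lemma~\ref{lem:flip-support-separation}), which is what exposes diagonal $k$, deletes that diagonal, and undoes the flip. This is the order of \cite[Step~2 of Theorem~6.22]{FeiKroneckerII}. That unfolded argument therefore does not supply the invariant your inner-to-outer order needs.

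\emph{Your list of fixed vertices is incomplete.} The vertices $[2;0,j]$, $1\le j\le\ell-1$, are also transpose-fixed; these are the functions $z_{0,j}^{(2)}=h_j$ built from $C_1$. For $m\ge3$ they are mutable (relation \eqref{eq:first-boundary}, the mirror image of \eqref{eq:base-edge}). Your procedure never deletes them. Its endpoint still contains $\ell-1$ orbits of size one, so the surviving unfolded quiver cannot split into two disjoint transpose-exchanged copies.

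A count makes this precise. The size-two orbits number $\tfrac12(\ell-1)\bigl((m-1)\ell-2\bigr)$, which is exactly the number of mutable vertices of an $\ell$-triangulated $(m+1)$-gon. Deleting only the $d_i^{(r)}$ therefore leaves $\ell-1$ vertices too many. These vertices have to be removed as one more fixed diagonal, the one attached to $C_1$. Only after all $m$ fixed diagonals are gone does \cite[Step~3 of Theorem~6.22]{FeiKroneckerII} yield the two disconnected copies.

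\emph{Your Step~4 endpoint is also misdescribed.} It is not one $(\ell-1)$-chain per diagonal. The surviving copy is the full Fock--Goncharov $\ell$-triangulation of the $(m+1)$-gon, including the interiors of its $m-1$ triangles. Your explanation of where the oddness of $m$ enters is not an argument.
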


\begin{proof}
Remove the outer diagonal $m$ by
Lemma~\ref{lem:folded-half-diamond-removal}.  Inductively, suppose that the
matrix obtained at this point is the fold of the disk matrix in \cite{FeiKroneckerII}
after deleting diagonals
$m,m-1,\ldots,k+1$.  Apply the folded paired flip of
Lemma~\ref{lem:flip-support-separation}, remove the now-exposed diagonal $k$ by
Lemma~\ref{lem:folded-half-diamond-removal}, and undo the flip.  Reversing the preparatory sequences restores all entries between surviving
vertices, so the
induction hypothesis is preserved with diagonal $k$ deleted.

After all fixed diamond diagonals are removed, the disk decomposition in
\cite{FeiKroneckerII}
leaves two disconnected transpose-exchanged copies of the
$\ell$-triangulation of an $(m+1)$-gon
\cite[Step~3 in the proof of Theorem~6.22]{FeiKroneckerII}.  Folding leaves
one copy, whose mutable matrix is the stated string-diagram matrix.
\end{proof}

\begin{lemma}[compatibility as $m$ increases]
\label{lem:successive-principal-submatrix}
Let $\iota_m$ send every orbit label represented in a triangle
$n\le m$ to the same orbit label in $\mathscr V_{\ell,m+1}$.  Its restriction to
$\mathscr M_{\ell,m}$ is injective, and
\begin{equation}
 b^{(\ell,m+1)}_{\iota_m(u),\iota_m(v)}
 =b^{(\ell,m)}_{u,v}
 \qquad(u,v\in\mathscr M_{\ell,m}).
 \label{eq:successive-m-entry-equality}
\end{equation}
Consequently $B_{\ell,m}^{\mathrm{uf}}$ is the principal submatrix of
$B_{\ell,m+1}^{\mathrm{uf}}$ on $\iota_m(\mathscr M_{\ell,m})$.
\end{lemma}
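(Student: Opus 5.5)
The plan is to compare the combinatorial descriptions of $\mathscr V_{\ell,m}$ and $\mathscr V_{\ell,m+1}$ directly, and then check that no exchange-matrix entry between surviving mutable vertices changes when triangle $m+1$ is glued on. First, well-definedness and injectivity of $\iota_m$. The equivalence relation defining $\mathscr V_{\ell,m}$ in \eqref{eq:V-index} is generated by identifications between triangles $n-1$ and $n$ with $n\le m$. In $\mathscr V_{\ell,m+1}$ the only new generators involve triangle $m+1$. Hence two labels represented in triangles $\le m$ are identified in $\mathscr V_{\ell,m+1}$ exactly when they are identified in $\mathscr V_{\ell,m}$, so $\iota_m$ is well defined and injective on all of $\mathscr V_{\ell,m}$. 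It sends $\partial_r\mapsto\partial_r$ for $r\le m$. The mutable set $\mathscr M_{\ell,m}$ is everything except the determinants and the outer boundary $\mathscr B_{\ell,m}$. Under $\iota_m$, the vertices of $\mathscr B_{\ell,m}$ become the side of triangle $m$ that is glued to triangle $m+1$, and they become mutable. Thus $\iota_m(\mathscr M_{\ell,m})\subseteq\mathscr M_{\ell,m+1}$.

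Second, the entry equality \eqref{eq:successive-m-entry-equality}. I would read the mutable rows off Theorem~\ref{thm:folded-relations} instead of recomputing the orbit sums. For $u\in\mathscr M_{\ell,m}$, the row $b_{u,\bullet}$ records the exponents in the two monomials of the exchange relation at $z_u$. These relations are indexed by families (a)--(e), and each depends only on the triangle $n$ containing $u$ and, on identified sides, on triangle $n+1\le m$. The one subtlety is at the corners. The corner factors $\Delta_n$, $c_{n;i,j}$ and $\kappa_{n;j}$ involve only determinant indices $\partial_n$ with $n\le m$, and frozen columns do not enter \eqref{eq:successive-m-entry-equality} anyway. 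So for a mutable $u$ of $\Sigma_{\ell,m}$, its exchange relation in $\Sigma_{\ell,m+1}$ is literally the same relation. The reason is that $u$ is not on the side glued to triangle $m+1$, so its family is the same for both values of $m$. Reading off exponents at mutable $v\in\mathscr M_{\ell,m}$ then gives equal entries. Equivalently, one can argue at the unfolded level: the flagged Kronecker seed for $m$ sits inside the seed for $m+1$, with triangle $m$'s outer side unfrozen, and folding by the orbit sum \eqref{eq:orbit-sum} respects this.

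The main obstacle is to confirm that adding triangle $m+1$ creates no new arrows between two vertices already mutable in $\Sigma_{\ell,m}$. A vertex of $\mathscr B_{\ell,m}$ changes family (it acquires relation (d) or (e)), but those are rows indexed by formerly frozen vertices, which do not lie in $\iota_m(\mathscr M_{\ell,m})$. The entries $b_{u,w}$ with $u$ formerly mutable and $w\in\mathscr B_{\ell,m}$ are among the columns we do not compare. So the only remaining point is that triangle $m+1$'s interior vertices are adjacent only to the glued side, never to interior vertices of triangle $m$. I would check this from the explicit exchange relations (c)--(e), which only involve neighbours in the same or the next triangle across the glued side. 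The principal-submatrix conclusion then follows immediately.
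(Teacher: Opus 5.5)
Your proposal is correct and follows essentially the paper's argument: injectivity holds because the only new identification glues the old frozen boundary $\mathscr B_{\ell,m}$ to triangle $m+1$, and the entry equality holds because the exchange relations of Theorem~\ref{thm:folded-relations} at old mutable vertices involve only triangles $\le m$ and are unchanged when triangle $m+1$ is attached. Your unfolded-level remark is a useful supplement, since an exchange relation alone determines a row only up to an overall sign.
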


\begin{proof}
The map is the identity on representatives from triangles $2,\ldots,m$.
The only new equivalence relation identifies the common boundary of
triangles $m$ and $m+1$, as follows.

If $m$ is even, then $m+1$ is odd and
\begin{equation}
 (m+1;i,\ell-i)\sim(m;i,\ell-i)
 \qquad(1\le i\le\ell-1).
 \label{eq:successive-even-old-boundary}
\end{equation}
The labels on the right form the frozen outer boundary of
$\Sigma_{\ell,m}$; after the new triangle is attached they form an internal
boundary.  If $m$ is odd, then $m+1$ is even and
\begin{equation}
 (m+1;0,j)\sim(m;0,j)
 \qquad(1\le j\le\ell-1),
 \label{eq:successive-odd-old-boundary}
\end{equation}
again turning the old frozen outer boundary into an internal boundary.  Thus no two labels that were mutable in $\Sigma_{\ell,m}$ are newly
identified, which proves injectivity on $\mathscr M_{\ell,m}$.

Every exchange relation involving only variables from triangles
$2,\ldots,m-1$ is unchanged.
In triangle $m$, the base-edge and interior relations
\eqref{eq:base-edge}--\eqref{eq:interior} are also unchanged.
The old frozen outer boundary becomes mutable when the new triangle is
attached.  Its exchange relations are \eqref{eq:even-gluing} when $m$ is even and
\eqref{eq:odd-gluing} when $m$ is odd.  In either formula, every new factor is a variable from the new
triangle or one of the old outer-boundary labels in
\eqref{eq:successive-even-old-boundary} or
\eqref{eq:successive-odd-old-boundary}.  Thus the exchange relations at old mutable vertices retain the same
exponents of all old mutable variables.  This proves \eqref{eq:successive-m-entry-equality}.
Deleting the former frozen boundary labels and the labels belonging
only to triangle $m+1$ therefore leaves $B_{\ell,m}^{\mathrm{uf}}$.
\end{proof}

\begin{theorem}\label{thm:reddening}
For every $\ell,m\ge2$, the seed $\Sigma_{\ell,m}$ admits a reddening
sequence.
\end{theorem}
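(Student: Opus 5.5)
The plan is to split by the parity of $m$ and reduce everything to the class $\mathscr P'$ of \cite{CaoString}. By definition, a reddening sequence for $\Sigma_{\ell,m}$ depends only on the mutable principal part $B_{\ell,m}^{\mathrm{uf}}$. The frozen rows and columns therefore play no role.

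First take $m\ge3$ odd. Proposition~\ref{prop:direct-folded-diagonal-reduction} reduces $B_{\ell,m}^{\mathrm{uf}}$, by mutations and source--sink deletions, to the string-diagram exchange matrix of type $A_{\ell-1}$ attached to a triangulation of an $(m+1)$-gon. String-diagram matrices lie in $\mathscr P'$ \cite[Theorem~4.8]{CaoString}. The class $\mathscr P'$ is closed under mutation and source--sink extension by its very definition. Reading the reduction backwards, each deletion becomes a source--sink extension and each mutation is inverted by the same mutation. Hence $B_{\ell,m}^{\mathrm{uf}}\in\mathscr P'$, and \cite[Corollary~4.9]{CaoString} gives a reddening sequence.

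Next take $m\ge2$ even. Then $m+1$ is odd and at least $3$, so the previous paragraph gives $B_{\ell,m+1}^{\mathrm{uf}}\in\mathscr P'$ with a reddening sequence. By Lemma~\ref{lem:successive-principal-submatrix}, $B_{\ell,m}^{\mathrm{uf}}$ is the principal submatrix of $B_{\ell,m+1}^{\mathrm{uf}}$ on $\iota_m(\mathscr M_{\ell,m})$. I would then use the known heredity statement: if a skew-symmetrizable matrix admits a reddening sequence, so does every principal submatrix (freezing/restriction, Muller's theorem, extended to the skew-symmetrizable case as in \cite{CaoLi}). This gives the claim for even $m$, including $m=2$.

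The main obstacle is making sure the heredity result is available in the skew-symmetrizable setting rather than only for quivers. That setting is required here because the folded matrix carries valuations $(1,2)$ and $(2,1)$. I would first cite \cite{CaoLi} for this. If it were insufficient, the fallback is to unfold: $B_{\ell,m+1}^{\mathrm{uf}}$ comes from the admissible skew-symmetric $\widetilde B_{\ell,m+1}$ via Lemma~\ref{lem:orbit-mutation-folding}, and one would apply Muller's quiver theorem upstairs and descend. Descending is delicate, so the direct citation is preferred.

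A secondary point to check is that Proposition~\ref{prop:direct-folded-diagonal-reduction} really terminates at a matrix of the string-diagram form covered by \cite{CaoString}. The proposition is stated as reaching exactly this matrix, so this is taken as given.
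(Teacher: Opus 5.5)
Your proposal is correct and follows the paper's proof essentially verbatim. For odd $m$ you read the reduction of Proposition~\ref{prop:direct-folded-diagonal-reduction} backwards to place $B_{\ell,m}^{\mathrm{uf}}$ in $\mathscr P'$ and invoke \cite[Theorem~4.8 and Corollary~4.9]{CaoString}; for even $m$ you pass to a principal submatrix via Lemma~\ref{lem:successive-principal-submatrix} and the skew-symmetrizable heredity result \cite[Lemma~4.9]{CaoLi}, which is exactly what the paper cites, so your unfolding fallback is not needed.
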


\begin{proof}
Suppose first that $m$ is odd.  Proposition
\ref{prop:direct-folded-diagonal-reduction} reduces
$B_{\ell,m}^{\mathrm{uf}}$ to a string-diagram matrix.  The latter lies in
$\mathscr P'$; reading the reduction backwards shows that
$B_{\ell,m}^{\mathrm{uf}}\in\mathscr P'$.  It therefore has a reddening
sequence \cite[Theorem~4.8 and Corollary~4.9]{CaoString}.

If $m$ is even, then $m+1$ is odd and
$B_{\ell,m+1}^{\mathrm{uf}}$ has a reddening sequence.  By
Lemma~\ref{lem:successive-principal-submatrix},
$B_{\ell,m}^{\mathrm{uf}}$ is a principal submatrix of it.  Green-to-red
sequences pass to principal submatrices of skew-symmetrizable matrices
\cite[Lemma~4.9]{CaoLi}.
\end{proof}

\section{Theta bases and polyhedral formulas for symmetric-square plethysm}
\label{sec:polyhedral-plethysm}

We work over $\mathbb C$ in the theta-function arguments.
Lemma~\ref{lem:theta-characteristic-zero-descent} descends the resulting bases
to the ground field fixed in Section~\ref{sec:plethysm-highest-weight}.
Gross--Hacking--Keel--Kontsevich construct theta functions from cluster
scattering diagrams and prove a corrected form of the Fock--Goncharov
dual-basis conjecture \cite[Introduction, Sections~7--8]{GHKK}.
Cheung--Magee--Mandel--Muller prove valuative independence and theta
reciprocity for seed data and obtain theta bases for anticanonical partial
compactifications \cite{CMMM}.

\subsection{The folded seed datum and the open theta basis}

Invert the frozen variables and set
\[ \cU_{\ell,m}^{\circ} :=\cU_{\ell,m}[z_f^{-1}:f\in\mathscr F_{\ell,m}]. \]
Let $\mathcal A_{\ell,m}^{\circ}$ be the open cluster $A$-variety obtained
by inverting every frozen coordinate.  For each seed $t$ in the mutation
class, put
\[ \mathcal A_t :=\operatorname{Spec} \kk[z_{u;t}^{\pm1}:u\in\mathscr M_{\ell,m}] [z_f:f\in\mathscr F_{\ell,m}], \]
and glue these charts by the usual mutable cluster transformations.  The
resulting scheme is denoted by $\mathcal A_{\ell,m}$.

The initial chart identifies the theta lattice with
\[ M_{\ell,m}:=\mathbb Z^{\mathscr I_{\ell,m}}, \qquad M_{\ell,m,\mathbb R}:=M_{\ell,m}\otimes_{\mathbb Z}\mathbb R, \qquad N_{\ell,m}:=\operatorname{Hom}(M_{\ell,m},\mathbb Z). \]
Write $e_v$ for the standard basis of $M_{\ell,m}$ and $e_v^\vee$ for the
dual basis of $N_{\ell,m}$.  Let
\[ L_{\ell,m}:=\mathbb Z^{\mathscr M_{\ell,m}} \]
with basis $\epsilon_u$ indexed by mutable vertices.

\begin{proposition}[the folded seed datum]
\label{prop:theta-seed-datum}
Define homomorphisms
\begin{equation*}
 \begin{aligned}
 P:L_{\ell,m}&\longrightarrow M_{\ell,m},
 &P(\epsilon_u)&=\sum_{v\in\mathscr I_{\ell,m}}b_{uv}e_v,\\
 Q:L_{\ell,m}&\longrightarrow N_{\ell,m},
 &Q(\epsilon_u)&=e_u^\vee.
 \end{aligned}
\end{equation*}
Let $D=\operatorname{diag}(d_u)$, where $d_u$ is the transpose-orbit size
from Proposition~\ref{prop:folded-B}, and put
\begin{equation}
 \begin{aligned}
 N_{\ell,m}^{\bullet}
 &:=N_{\ell,m}
   +\sum_{u\in\mathscr M_{\ell,m}}
     \mathbb Z\frac{e_u^\vee}{d_u},\\
 M_{\ell,m}^{\bullet}
 &:=\bigoplus_{u\in\mathscr M_{\ell,m}}\mathbb Z\,d_ue_u
   \ \oplus\!
   \bigoplus_{f\in\mathscr F_{\ell,m}}\mathbb Z\,e_f,\\
 Q^{\bullet}&:=QD^{-1},
 &P^{\bullet}&:=PD.
 \end{aligned}
 \label{eq:bullet-lattices-maps}
\end{equation}
Then $(P,Q)$ is a skew-symmetrizable seed datum in the sense of
\cite[Section~4.1]{CMMM}.  Its $A$-type scattering diagram is the cluster
scattering diagram of $\Sigma_{\ell,m}$, and its chiral dual, in the terminology of \cite[Section~4.5]{CMMM}, is the seed
datum
\[ (Q^{\bullet},P^{\bullet}) \quad\text{on the dual lattices}\quad (N_{\ell,m}^{\bullet},M_{\ell,m}^{\bullet}). \]
If
\[ \mathbf B:=Q^{\mathsf T}P=(B_{\ell,m}^{\mathrm{uf}})^{\mathsf T}, \qquad \mathbf B^{\bullet}:=(Q^{\bullet})^{\mathsf T}P, \]
then
\begin{equation}
 \mathbf B=D\mathbf B^{\bullet},
 \qquad
 \mathbf B^{\bullet}_{uv}=\frac{b_{vu}}{d_u}
 =-\frac{b_{uv}}{d_v}.
 \label{eq:CMMM-skew-matrix}
\end{equation}
In particular, $\mathbf B^{\bullet}$ is skew-symmetric over $\mathbb Q$.
\end{proposition}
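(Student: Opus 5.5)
The verification is a direct check against the axioms of a seed datum in \cite[Section~4.1]{CMMM}, using Proposition~\ref{prop:folded-B} for the only non-formal input.

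\textbf{Step 1: the skew-symmetrizability condition.} I compute the pairing matrix $\mathbf B=Q^{\mathsf T}P$ on $L_{\ell,m}$. Its $(u,v)$ entry is
\[ \langle Q(\epsilon_u),P(\epsilon_v)\rangle=\langle e_u^\vee,\textstyle\sum_w b_{vw}e_w\rangle=b_{vu}. \]
Hence $\mathbf B=(B_{\ell,m}^{\mathrm{uf}})^{\mathsf T}$. The relation \eqref{eq:symmetrizer} reads $d_ub_{uv}=-d_vb_{vu}$. Therefore $D^{-1}\mathbf B$, with entries $b_{vu}/d_u=-b_{uv}/d_v$, satisfies
\[ \frac{b_{vu}}{d_u}=-\frac{b_{uv}}{d_v}=-\frac{b_{uv}}{d_v}\Big|_{u\leftrightarrow v\ \text{swapped}}. \]
In other words, $(D^{-1}\mathbf B)_{uv}=-(D^{-1}\mathbf B)_{vu}$, so $D^{-1}\mathbf B$ is skew-symmetric over $\mathbb Q$. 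This is exactly the compatibility CMMM require between $P$, $Q$ and the multipliers $d_u$.

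\textbf{Step 2: the remaining axioms.} First, $Q$ is injective onto a saturated sublattice, since it sends the $\epsilon_u$ to dual basis vectors. Second, $P$ takes values in $M_{\ell,m}$ because the $b_{uv}$ are integers. Third, the $A$-type scattering diagram is by construction the one whose initial walls are $(e_u^\vee)^\perp$ with functions $1+z^{P(\epsilon_u)}$, possibly with exponent data from $d_u$. This is the cluster scattering diagram of $\Sigma_{\ell,m}$ in the GHKK convention, with the extended exchange matrix supplying $P$.

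\textbf{Step 3: the chiral dual.} I apply the definition in \cite[Section~4.5]{CMMM}: the roles of $M$ and $N$ are swapped, and $d_u$ is transferred from $Q$ to $P$. This produces $Q^\bullet=QD^{-1}$ valued in $N^\bullet_{\ell,m}$ and $P^\bullet=PD$ valued in $M^\bullet_{\ell,m}$. I need two lattice checks. First, $P(\epsilon_u)d_u$ lies in $M^\bullet$, which requires $d_v\mid d_ub_{uv}$ for mutable $v$. This holds because $d_ub_{uv}=-d_vb_{vu}$. Second, $M^\bullet$ and $N^\bullet$ are dual lattices, which is immediate from the diagonal form. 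Finally,
\[ \mathbf B^\bullet=(Q^\bullet)^{\mathsf T}P=D^{-1}\mathbf B, \]
which gives \eqref{eq:CMMM-skew-matrix} and its skew-symmetry from Step 1.

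The main obstacle is matching conventions: transpose versus non-transpose, and whether $d_u$ or $1/d_u$ multiplies. Everything else is formal bookkeeping once the symmetrizer relation is available.
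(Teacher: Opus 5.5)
Your proposal is correct and follows the paper's proof essentially step for step. Both arguments use the same ingredients: skew-symmetry of $\mathbf B^\bullet=D^{-1}\mathbf B$ from $d_ub_{uv}=-d_vb_{vu}$, the divisibility $d_v\mid d_ub_{uv}$ placing $P^\bullet$ in $M^\bullet_{\ell,m}$, duality of the bullet lattices, and the CMMM definitions for the scattering diagram and chiral dual. The paper firms up your Step~2 by citing \cite[Example~4.2]{CMMM}, with $P$ the transposed extended exchange matrix and $Q$ the coordinate inclusion.

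There is one slip, in the last equality of your Step~1 display: read literally, it equates the entry with its own negative. What you intend, and then state correctly in words, is $b_{vu}/d_u=-b_{uv}/d_v=-(D^{-1}\mathbf B)_{vu}$.
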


\begin{proof}
The lattices in \eqref{eq:bullet-lattices-maps} are mutually dual.  The map
$Q^{\bullet}$ is integral by construction.  For a mutable coordinate $v$,
the coefficient of $e_v$ in $P^{\bullet}(\epsilon_u)$ is
\[ d_ub_{uv}=-d_vb_{vu}, \]
which is divisible by $d_v$; frozen coordinates impose no further
divisibility condition.  Hence $P^{\bullet}$ takes values in
$M_{\ell,m}^{\bullet}$.  Equation \eqref{eq:CMMM-skew-matrix} follows from
\eqref{eq:symmetrizer}, and proves the seed-datum axioms.  With the mutable
indices ordered first, $P$ is the transpose of the extended exchange matrix
and $Q$ is the coordinate inclusion $\binom{I}{0}$.  Thus
\cite[Example~4.2]{CMMM} identifies $\mathcal D(P,Q)$ with the $A$-type
cluster scattering diagram for $\Sigma_{\ell,m}$.  The description of the
chiral dual is \cite[Section~4.5]{CMMM}.
\end{proof}

\begin{proposition}\label{prop:open-theta-basis}
For every $\ell,m\ge2$,
\[ \{\vartheta_g:g\in M_{\ell,m}\} \]
is a basis of $\cU_{\ell,m}^{\circ}$.
\end{proposition}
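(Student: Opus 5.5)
The plan is to reduce the statement to the Gross--Hacking--Keel--Kontsevich theorem that the full Fock--Goncharov conjecture holds whenever the seed admits a reddening sequence (equivalently, a maximal green or green-to-red sequence) and the exchange matrix has full rank. The relevant inputs are already in place: Theorem~\ref{thm:reddening} supplies a reddening sequence for the mutable principal part $B_{\ell,m}^{\mathrm{uf}}$, and Proposition~\ref{prop:folded-B} gives full row rank of the extended matrix $B_{\ell,m}$. Proposition~\ref{prop:theta-seed-datum} places $\Sigma_{\ell,m}$ in the skew-symmetrizable seed-datum framework. Together these make the scattering-diagram machinery of \cite{GHKK} (in the form of \cite[Sections~4--5]{CMMM}) available.

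First, I will check that a reddening sequence for the principal-coefficient framing implies that the cluster complex is all of $N$ (or at least that the chamber structure covers everything). This is exactly the GHKK criterion for the global monomials to be parametrized by all of $M$. Concretely, a green-to-red sequence shows that the negative chamber is a cluster chamber, which gives the ``enough global monomials'' condition, \cite[Proposition~8.28]{GHKK}. Full row rank of $B_{\ell,m}$ makes the map $p^*$ injective on $N$, hence gives the convexity and injectivity hypotheses of \cite[Theorem~0.3 and Proposition~8.25]{GHKK}. Together these yield that the middle ring $\operatorname{mid}(\mathcal A^\circ)$ equals $\operatorname{up}(\mathcal A^\circ)=\Gamma(\mathcal A^\circ,\mathcal O)$, with theta basis $\{\vartheta_g:g\in M\}$.

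Second, I will identify $\Gamma(\mathcal A^\circ_{\ell,m},\mathcal O)$ with $\cU_{\ell,m}^{\circ}$. Once frozen variables are inverted, the regular functions on the glued scheme are exactly the elements that are Laurent in every seed with inverted frozens. That is the upper cluster algebra localized at the frozen variables. The point to check is that localizing $\cU_{\ell,m}$ at the frozens agrees with the upper cluster algebra with invertible coefficients. I would derive this from \eqref{eq:upper-bound-equality} together with flatness of localization, exactly as in \eqref{eq:localized-upper-bound-star}: each $\mathcal L_v[z_f^{-1}]$ is the corresponding Laurent ring with invertible frozens, and the GSV upper bound also holds with invertible coefficients since full rank persists.

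The main obstacle is the skew-symmetrizable (non-skew-symmetric) setting. GHKK is written for skew-symmetrizable data via the lattice $N^\circ$, and the folded datum uses $N^\bullet$, $M^\bullet$ with orbit-size symmetrizer. I would therefore verify carefully, using \eqref{eq:CMMM-skew-matrix}, that the theta functions are indexed by the lattice $M_{\ell,m}$ of the $A$-side, and not by $M^\bullet$. I would also check that the reddening criterion from \cite{CaoString,CaoLi}, stated for skew-symmetrizable matrices, matches the GHKK notion for this datum, using the Langlands/chiral duality of \cite[Section~4.5]{CMMM} if a transpose is needed. Finally, the descent from $\mathbb C$ to $\kk$ is deferred to Lemma~\ref{lem:theta-characteristic-zero-descent}: the structure constants are integers, so the basis statement is field-independent in characteristic zero.
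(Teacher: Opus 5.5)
Your proposal is correct and follows the paper's route: the reddening sequence of Theorem~\ref{thm:reddening} places the negative chamber in the cluster complex, which gives a large cluster complex. Full row rank from Proposition~\ref{prop:folded-B} supplies the convexity condition, and \cite{GHKK} then identifies the middle, upper, and canonical algebras, with theta basis indexed by $M_{\ell,m}$. Your explicit identification $\Gamma(\mathcal A_{\ell,m}^{\circ},\mathcal O)=\cU_{\ell,m}^{\circ}$ via the finite upper bound and flatness is a detail the paper leaves implicit.

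One correction: a reddening sequence does not make the cluster complex fill the whole ambient space. It only shows that the complex contains the negative chamber, and so is not contained in a half-space, which is exactly what your ``concretely'' sentence uses.
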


\begin{proof}
The terminal $c$-vectors of the reddening sequence in
Theorem~\ref{thm:reddening} are nonpositive.  By
\cite[Lemma~5.12]{GHKK}, they are the inward normals to the terminal cluster
chamber.  That chamber therefore meets the interior of the initial negative
chamber, so \cite[Lemma~8.29]{GHKK} gives a large cluster complex.  This is
the argument for $(2)\Rightarrow(3)$ in \cite[Corollary~8.30]{GHKK}; it uses
only the terminal sign condition.  Proposition~\ref{prop:folded-B} verifies
the convexity condition in \cite[Theorem~0.3(7)]{GHKK}.  Hence
\cite[Proposition~8.28 and Definition~0.6]{GHKK} identifies the middle,
upper, and canonical algebras.  The canonical basis is indexed by all
integral tropical points of the dual, which the initial seed identifies
with $M_{\ell,m}$.
\end{proof}

\begin{remark}[relation with Fock--Goncharov]
\label{rem:Fock-Goncharov-relation}
Two parts of the construction come from the work of Fock and Goncharov.  Their
local-system construction supplies the $\ell$-triangulated disk quivers and
the flip sequences used in Section~\ref{sec:reddening}
\cite[Section~10]{FockGoncharov}.  Their cluster ensembles consist of positive spaces of $A$- and
$X$-type, related by a map $p$; the duality conjectures use integral
tropical points
\cite[Sections~1--4]{FockGoncharovEnsembles}.  We use the disk quivers and flip sequences before folding and the
$A/X$ duality after folding; this does not require the invariant variety
to be a moduli space of local systems.  The orbit-size symmetrizer makes the chiral dual
of Proposition~\ref{prop:theta-seed-datum}, rather than the matrix transpose alone, the appropriate dual seed datum.  The cone $\mathscr C_{\ell,m}$ below is
the set of tropical parameters whose theta functions extend across the
frozen boundary.
\end{remark}

\subsection{The frozen boundary and its theta basis}

\begin{proposition}\label{prop:frozen-partial-compactification}
The scheme $\mathcal A_{\ell,m}$ is an integral partial compactification of
$\mathcal A_{\ell,m}^{\circ}$ with the following properties.
\begin{enumerate}
 \item Its global functions are the upper cluster algebra with polynomial
       frozen coefficients, while localization at all frozen variables
       gives the open upper cluster algebra:
       \begin{equation}
       \Gamma(\mathcal A_{\ell,m},\mathcal O)=\cU_{\ell,m},
       \qquad
       \Gamma(\mathcal A_{\ell,m}^{\circ},\mathcal O)
       =\cU_{\ell,m}^{\circ}.
       \label{eq:cluster-global-sections}
       \end{equation}
 \item For each frozen index $f$, the equations $z_f=0$ in the seed charts
       glue to an irreducible effective Cartier divisor $D_f$, with
       $\operatorname{ord}_{D_f}(z_f)=1$.  The complement of the open
       cluster variety is
       \begin{equation}
       \mathcal A_{\ell,m}\setminus\mathcal A_{\ell,m}^{\circ}
       =\bigcup_{f\in\mathscr F_{\ell,m}}D_f,
       \label{eq:frozen-boundary-union}
       \end{equation}
       and it has no other codimension-one components.
 \item The divisor $D=\sum_fD_f$ is anticanonical.  More precisely, the
       logarithmic volume forms
       \begin{equation}
       \Omega_t
       =\bigwedge_{u\in\mathscr M_{\ell,m}}d\log z_{u;t}
        \wedge
        \bigwedge_{f\in\mathscr F_{\ell,m}}d\log z_f
       \label{eq:cluster-volume-form}
       \end{equation}
       glue up to sign, and
       $\operatorname{div}(\Omega)=-\sum_fD_f$.
\end{enumerate}
Hence $(\mathcal A_{\ell,m},D)$ satisfies the hypotheses on
anticanonical partial compactifications in
\cite[Corollary~1.2]{CMMM}.
\end{proposition}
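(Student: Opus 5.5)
The plan is to construct $\mathcal A_{\ell,m}$ as a union of seed charts and to deduce the three properties from the chart structure, from Theorem~\ref{thm:main-equality}, and from the primality of the frozen variables established in Lemma~\ref{lem:initial-coprimality}.

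\emph{Integrality and separatedness.} Each chart $\mathcal A_t$ is isomorphic to $\mathbb G_m^{\mathscr M}\times\mathbb A^{\mathscr F}$, so it is integral. The gluing maps are the mutable cluster transformations. Since frozen variables are never mutated and are not inverted, these maps are birational and preserve the function field $\kk(\mathbf x_{\ell,m})$. Therefore $\mathcal A_{\ell,m}$ is integral. I would handle separatedness in the standard way: restrict to charts where the cluster transformation is an isomorphism on the locus where the exchange binomials do not vanish.

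\emph{Global sections.} The global functions are the intersection over all seeds $t$ of the rings $\kk[z_{u;t}^{\pm1}][z_f]$. By definition this intersection is $\cU_{\ell,m}$ with polynomial frozen coefficients. Inverting the frozen coordinates commutes with this intersection. The argument is the flatness/kernel argument already used in Lemma~\ref{lem:U-localization-intersection}, applied via the upper bound equality \eqref{eq:upper-bound-equality}, which reduces everything to a finite intersection. This gives $\cU_{\ell,m}^\circ$ on $\mathcal A^\circ_{\ell,m}$.

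\emph{Boundary divisors.} In each chart, $\{z_f=0\}$ is the coordinate hyperplane $\mathbb G_m^{\mathscr M}\times\mathbb A^{\mathscr F\setminus f}$. It is integral, and $z_f$ vanishes on it to order one. Because $z_f$ is a global function with no mutation, these hyperplanes glue to a Cartier divisor $D_f$. The key point is that $D_f$ is irreducible, meaning the hyperplanes in different charts define the same prime divisor. All of them contain the open torus part of a given chart meeting the others, and they are identified through the birational gluing, which restricts generically isomorphically to $\{z_f=0\}$. This holds because the exchange binomials in the relations of Theorem~\ref{thm:folded-relations} are not divisible by $z_f$: each is a sum of two coprime monomials, and $z_f$ occurs in at most one of them. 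This verification is the main obstacle. I would check it using the full row rank of $B_{\ell,m}$ from Proposition~\ref{prop:folded-B}, which guarantees that the exchange binomials restricted to $z_f=0$ remain nonzero, so that the gluing is defined generically on each hyperplane. The complement of $\mathcal A^\circ$ in each chart is exactly $\bigcup_f\{z_f=0\}$, which gives \eqref{eq:frozen-boundary-union}.

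\emph{Anticanonicity.} Under a mutation $z_uz_u'=M_++M_-$, one has $d\log z_u'=-d\log z_u+(\text{terms in the other } d\log z_v)$. Hence $\Omega_t$ changes only by sign, and the forms glue. In each chart $\Omega$ is $\prod z_v^{-1}$ times a nowhere vanishing form. Since the $z_u$ are units there, $\operatorname{div}\Omega=-\sum_f D_f$. Together these give the CMMM hypotheses.
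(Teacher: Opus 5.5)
Most of your proposal follows the paper's proof:
\begin{itemize}
\item global sections as the intersection of the chart rings;
\item the finite upper bound \eqref{eq:upper-bound-equality}, used to commute that intersection with inverting the frozen variables;
\item the single sign of $b_{uf}$, which shows that the hyperplanes $\{z_f=0\}$ in adjacent charts share a dense open subset;
\item the $d\log$ computation giving $\Omega_{\mu_u(t)}=-\Omega_t$.
\end{itemize}
Your chartwise computation of $\operatorname{div}\Omega$ is, if anything, slightly cleaner than a generic-point argument, because it also rules out zeros. One small correction: full row rank plays no role in keeping the exchange binomials nonzero on $\{z_f=0\}$. The sign argument you already gave does that, so this step is not an obstacle.

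The genuine gap is separatedness, which you set out to prove and which the paper's proof establishes. Gluing along the loci where the cluster transformations are isomorphisms is only the definition of the glued scheme. It does not make the result separated: cluster $\mathcal X$-varieties are glued in exactly this way and are typically not separated. So the ``standard way'' you describe is not an argument.

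The missing idea is the place where Theorem~\ref{thm:main-equality} is actually needed. You announced that you would use it, but you never did. The argument runs as follows:
\begin{itemize}
\item Every cluster variable lies in $\cU_{\ell,m}=\cR_{\ell,m}$, by the Laurent phenomenon with polynomial frozen coefficients, and $\cR_{\ell,m}$ is contained in every chart ring.
\item Hence, with $p_t=\prod_{u}z_{u;t}$, one has $\cR_{\ell,m}[p_t^{-1}]=\kk[z_{u;t}^{\pm1}:u\in\mathscr M_{\ell,m}][z_f:f\in\mathscr F_{\ell,m}]$.
\item So each chart is the principal open $D(p_t)$ of $\operatorname{Spec}\cR_{\ell,m}$, and the gluing between charts $t$ and $t'$ is the identity on $D(p_tp_{t'})$.
\item Therefore $\mathcal A_{\ell,m}$ is an open subscheme of an integral affine scheme, which gives integrality and separatedness at once.
\end{itemize}
Your use of the primality results from Lemma~\ref{lem:initial-coprimality} is likewise announced but never needed; the chart description makes it unnecessary here.
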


\begin{proof}
Put $p_t=\prod_{u\in\mathscr M_{\ell,m}}z_{u;t}$.  The cluster-algebra
identity of Theorem~\ref{thm:main-equality} gives
\[
 \cR_{\ell,m}[p_t^{-1}]
 =\kk[z_{u;t}^{\pm1}:u\in\mathscr M_{\ell,m}]
       [z_f:f\in\mathscr F_{\ell,m}].
\]
Indeed, all cluster coordinates lie in $\cR_{\ell,m}$, which is contained
in every chart ring.  Thus the charts are principal opens of
$\operatorname{Spec}\cR_{\ell,m}$ and their gluing is separated.  A global
regular function is exactly an element regular in every chart.  Their coordinate rings intersect in the upper cluster algebra with
polynomial frozen coefficients.  Inverting
all $z_f$ gives the corresponding open intersection, proving (1).

In any chart, $(z_f)$ is a height-one prime ideal and $z_f$ is a local
parameter.  Frozen coordinates are unchanged by mutable mutation.  On the
overlap for a mutation at $u$, exactly one exchange monomial can contain
$z_f$, because the single matrix entry $b_{uf}$ has one sign; after setting
$z_f=0$, the other monomial is generically nonzero.  Hence the two chart
pieces of $(z_f=0)$ have a common dense open subset and glue to the closure of
the irreducible initial-chart divisor $(z_f=0)$.  This proves that $D_f$ is an
irreducible prime Cartier divisor, while distinct frozen indices give distinct
generic points in the initial chart.  Localizing by $\prod_fz_f$ removes
exactly their union.  Thus every codimension-one component of the complement
is one of the $D_f$, proving (2).

A direct Jacobian calculation for one cluster mutation gives
$\Omega_{\mu_u(t)}=-\Omega_t$; hence the forms in
\eqref{eq:cluster-volume-form} glue up to sign.  At the generic point of
$D_f$, the factor $d\log z_f=dz_f/z_f$ has a simple pole and every other
coordinate in this chart is a unit.  Therefore
$\operatorname{ord}_{D_f}(\Omega)=-1$.  By
\eqref{eq:frozen-boundary-union} there is no other boundary divisor on
which a pole can occur, which proves (3).
\end{proof}

\begin{proposition}\label{prop:primitive-frozen-valuations}
For $f\in\mathscr F_{\ell,m}$, the divisorial valuation of $D_f$ is
represented in the initial seed coordinates by the primitive vector
\[ n_f=e_f^\vee\in N_{\ell,m}\subset N_{\ell,m}^{\bullet}. \]
In every seed obtained by mutable mutations, the valuation has value
$1$ on $z_f$ and value $0$ on every other frozen or mutable cluster
coordinate.
\end{proposition}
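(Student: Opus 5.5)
The plan is to compute $\operatorname{ord}_{D_f}$ directly in seed charts, using Proposition~\ref{prop:frozen-partial-compactification}(2). That proposition shows $D_f$ meets the initial chart $\mathcal A_{t_0}$ in the prime divisor $\{z_f=0\}$ of the affine scheme
\[ \operatorname{Spec}\kk[z_u^{\pm1}:u\in\mathscr M_{\ell,m}][z_{f'}:f'\in\mathscr F_{\ell,m}], \]
and that $z_f$ is a local parameter there. The local ring at the generic point of this divisor is the discrete valuation ring obtained by localizing the polynomial ring at the prime $(z_f)$. So $\operatorname{ord}_{D_f}$ restricted to Laurent monomials $z^{a}$ is $a_f$, the exponent of $z_f$. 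It is therefore the linear functional $e_f^\vee$ on $M_{\ell,m}=\mathbb Z^{\mathscr I_{\ell,m}}$. This vector is primitive in $N_{\ell,m}$ because it is a standard basis vector. It remains primitive in $N_{\ell,m}^{\bullet}$, since that lattice only enlarges the mutable coordinates by $e_u^\vee/d_u$ and leaves the frozen coordinate $e_f^\vee$ untouched.

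For an arbitrary seed $t$ obtained by mutable mutations, I would argue chart by chart. The same reasoning applies in $\mathcal A_t$: $D_f\cap\mathcal A_t$ is the prime divisor $\{z_f=0\}$, with $z_f$ a local parameter. Since $D_f$ is irreducible, its generic point lies in every chart. Hence the valuation is $z_f$-adic order in every chart. It gives value $1$ to $z_f$ and value $0$ to every other frozen or mutable cluster coordinate, because those are units or polynomial variables not divisible by $z_f$ at that generic point.

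The main obstacle is justifying that $D_f$ meets every chart $\mathcal A_t$ rather than only the charts adjacent to the initial one. The proof of Proposition~\ref{prop:frozen-partial-compactification} gave the one-step argument: at a mutation at $u$, at most one exchange monomial contains $z_f$, because $b_{uf}$ has a single sign, and the other monomial is generically nonzero on $\{z_f=0\}$. I would iterate this along any mutation path. At each step, the new cluster variable is a ratio of a binomial by a unit at the generic point, so it stays a unit there. Inductively, every cluster coordinate of $t$ except $z_f$ is a unit in the DVR of $D_f$. To make the induction work, one needs the binomial's surviving monomial to be a unit at the generic point, and this follows from the inductive hypothesis because that monomial involves only coordinates of the previous seed other than $z_f$. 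This yields the claimed values in every seed, and the primitive vector $n_f=e_f^\vee$ represents the valuation.
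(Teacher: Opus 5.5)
Your proposal is correct and follows the paper's proof. The paper also reads off $\operatorname{val}_{D_f}(z^g)=g_f$ in the initial chart, gets primitivity from the value $1$ on $z_f$ (equivalent to your remark that $N^{\bullet}_{\ell,m}$ leaves the frozen coordinate integral), and then inducts along mutations using that $b_{uf}$ has a single sign. One point should be made explicit, and the paper leaves it implicit too: when $b_{uf}=0$ both exchange monomials are units, and a sum of units need not be a unit, so you need the residues of the other coordinates of the current seed to be algebraically independent on $D_f$; this holds once the generic point of $D_f$ lies in the current chart (as in your chart-by-chart paragraph), since its local ring is then the localization of that chart ring at $(z_f)$.
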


\begin{proof}
In the initial seed chart,
$\operatorname{val}_{D_f}(z^g)=g_f=\langle g,e_f^\vee\rangle$,
so the valuation vector is exactly $e_f^\vee$, not a nontrivial multiple.
It is primitive because $\operatorname{val}_{D_f}(z_f)=1$.

Assume inductively that all mutable cluster coordinates obtained so far
have $D_f$-valuation zero.  Mutation at $u$
has exchange relation
\[ z_uz_u' =\prod_vz_v^{[b_{uv}]_+} +\prod_vz_v^{[-b_{uv}]_+}. \]
The two monomials have $D_f$-valuations $[b_{uf}]_+$ and
$[-b_{uf}]_+$, so at least one has valuation zero.  Their sum is nonzero at
the generic point of $D_f$, and hence has valuation zero.  Since
$\operatorname{val}_{D_f}(z_u)=0$, also
$\operatorname{val}_{D_f}(z_u')=0$.  Induction proves the claim.
\end{proof}

For $f\in\mathscr F_{\ell,m}$, first define the boundary valuation function
on integral theta parameters by
\[ \nu_f:M_{\ell,m}\longrightarrow\mathbb R, \qquad \nu_f(g):=\operatorname{val}_{D_f}(\vartheta_g). \]
By valuative independence and
\cite[Theorem~1.1 and Corollary~1.2]{CMMM}, these functions extend to
integral piecewise-linear functions on $M_{\ell,m,\mathbb R}$.  Denote
the extensions by the same symbols and set
\begin{equation}
 \mathscr C_{\ell,m}
 :=\left\{g\in M_{\ell,m,\mathbb R}:
   \nu_f(g)\ge0\text{ for every }f\in\mathscr F_{\ell,m}\right\}.
 \label{eq:theta-cone}
\end{equation}
The same results show that \eqref{eq:theta-cone} is a polyhedron.
Each $\nu_f$ is an infimum of integral linear forms, so it is positively
homogeneous and concave in the ordinary sense; its superlevel sets are
convex.  Since the right-hand sides in
\eqref{eq:theta-cone} are zero, $\mathscr C_{\ell,m}$ is a rational
polyhedral cone.  After clearing denominators, choose an integral matrix
$H_{\ell,m}$ such that
\[ \mathscr C_{\ell,m} =\{g\in M_{\ell,m,\mathbb R}:H_{\ell,m}g\ge0\}. \]

For $n\in N_{\ell,m}^{\bullet}$ write
\[ \vartheta_n^{\chi}:=\Theta_{n,+}^{(Q^{\bullet},P^{\bullet})}. \]
Theta reciprocity \cite[Claim~4.16 and Theorem~5.19]{CMMM}, together with
Proposition~\ref{prop:primitive-frozen-valuations}, gives
\[ \nu_f(g)=\operatorname{val}_{g}(\vartheta_{n_f}^{\chi}) \]
whenever both sides are finite.  Thus, if $\vartheta_{n_f}^{\chi}=\sum_{a\in A_f}c_{f,a}X^a \ (c_{f,a}>0)$,
is a finite Laurent expansion in the initial chiral-dual chart, then
\[ \nu_f(g)=\min_{a\in A_f}\langle a,g\rangle. \]

\begin{remark}[choice of the chiral dual]
\label{rem:chiral-versus-Langlands}
The standard Langlands-dual pair is $(Q,-P)$.  The corresponding integral
seed-datum version of theta reciprocity requires both $D$ and
$\mathbf B^{\bullet}$ to be integral, and it uses the theta function
expanded from the \emph{negative} chamber:
\begin{equation}
 \operatorname{val}_{n}(\Theta_{g,+}^{(P,Q)})
 =\operatorname{val}_{g}(\Theta_{n,-}^{(Q,-P)}).
 \label{eq:Langlands-negative-chamber}
\end{equation}
By \cite[Corollary~4.20]{CMMM}, in the folded seed $D$ is integral,
but integrality of $\mathbf B^{\bullet}$ does not follow from
skew-symmetrizability and is not assumed.  Consequently all reciprocity and
formulas for the boundary theta functions in this paper use the chiral dual
$(Q^{\bullet},P^{\bullet})$.  Formula
\eqref{eq:Langlands-negative-chamber} is available in any individual case
where $\mathbf B^{\bullet}$ is integral, but the negative-chamber
convention must then be retained.
\end{remark}

\begin{theorem}[theta basis of the partial compactification]
\label{thm:theta-basis}
For every $\ell,m\ge2$,
\[ \left\{\vartheta_g: g\in\mathscr C_{\ell,m}\cap M_{\ell,m}\right\} \]
is a basis of $\cR_{\ell,m}$.
\end{theorem}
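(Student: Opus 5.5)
The proof combines the open theta basis of Proposition~\ref{prop:open-theta-basis} with the boundary valuations along the frozen divisors. By Theorem~\ref{thm:main-equality} and Proposition~\ref{prop:frozen-partial-compactification}(1), $\cR_{\ell,m}=\cU_{\ell,m}=\Gamma(\mathcal A_{\ell,m},\mathcal O)$. By \eqref{eq:frozen-boundary-union}, an element of $\cU_{\ell,m}^{\circ}=\Gamma(\mathcal A_{\ell,m}^{\circ},\mathcal O)$ extends to $\mathcal A_{\ell,m}$ exactly when it has nonnegative order along every $D_f$. The extension argument is Hartogs-type: $\mathcal A_{\ell,m}$ is covered by charts that are principal opens of the normal scheme $\operatorname{Spec}\cR_{\ell,m}$, so a function with no poles in codimension one is regular. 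Hence
\[ \cR_{\ell,m}=\{F\in\cU_{\ell,m}^{\circ}:\operatorname{val}_{D_f}(F)\ge0\ \text{for all } f\in\mathscr F_{\ell,m}\}. \]

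Next I would write an arbitrary $F\in\cU_{\ell,m}^{\circ}$ as a finite sum $\sum_g c_g\vartheta_g$ over $g\in M_{\ell,m}$, using Proposition~\ref{prop:open-theta-basis}. The key input is valuative independence \cite[Theorem~1.1]{CMMM}: for each boundary divisorial valuation, in particular $\operatorname{val}_{D_f}$ with vector $n_f$ from Proposition~\ref{prop:primitive-frozen-valuations}, one has
\[ \operatorname{val}_{D_f}\Bigl(\sum_g c_g\vartheta_g\Bigr)=\min_{c_g\ne0}\operatorname{val}_{D_f}(\vartheta_g)=\min_{c_g\ne0}\nu_f(g). \]
Combining this with the description of $\cR_{\ell,m}$ above, $F\in\cR_{\ell,m}$ holds if and only if every $g$ with $c_g\ne0$ satisfies $\nu_f(g)\ge0$ for all $f$, that is, $g\in\mathscr C_{\ell,m}\cap M_{\ell,m}$. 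The theta functions indexed by the cone are therefore linearly independent, since they are a subset of a basis, and they span $\cR_{\ell,m}$. This is essentially \cite[Corollary~1.2]{CMMM}, whose hypotheses are verified in Proposition~\ref{prop:frozen-partial-compactification}. I would cite that corollary directly and include the short argument above for transparency.

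The main obstacle is checking that the CMMM framework applies in exactly our setting. Three points need care. First, valuative independence requires the theta functions to be well defined as regular functions on all of $\mathcal A^{\circ}_{\ell,m}$ with positive structure constants; this comes from the full Fock--Goncharov condition established in Proposition~\ref{prop:open-theta-basis}. Second, the skew-symmetrizable (rather than skew-symmetric) seed datum of Proposition~\ref{prop:theta-seed-datum} must be admissible in \cite{CMMM}. Third, everything is first done over $\mathbb C$. For the third point, the theta functions have integer Laurent coefficients in the initial chart, so the basis statement descends to $\kk$ by flat base change of the $\mathbb Q$-form of $\cR_{\ell,m}$, using Lemma~\ref{lem:theta-characteristic-zero-descent}. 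I would check the anticanonical hypothesis most carefully: the volume form must have a simple pole along each $D_f$ and no zeros elsewhere, which Proposition~\ref{prop:frozen-partial-compactification}(3) provides. Given that, the remaining steps are formal.
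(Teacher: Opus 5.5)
Your proposal is correct and follows essentially the paper's route: the open theta basis from Proposition~\ref{prop:open-theta-basis}, then valuative independence and \cite[Corollary~1.2]{CMMM} along the frozen divisors $D_f$, then the identification $\Gamma(\mathcal A_{\ell,m},\mathcal O)=\cU_{\ell,m}=\cR_{\ell,m}$ via \eqref{eq:cluster-global-sections} and Theorem~\ref{thm:main-equality}. Your Hartogs step and min-of-valuations computation only unpack that corollary. One small correction on descent: the normalizing scalars $c_{i,j}^{(n)}$ need not be rational, so the theta functions lie in a $\overline{\mathbb Q}$-form rather than in the $\mathbb Q$-form of $\cR_{\ell,m}$, which is how Lemma~\ref{lem:theta-characteristic-zero-descent} carries out the descent.
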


\begin{proof}
Proposition~\ref{prop:open-theta-basis} gives a theta basis on the open
cluster variety.  Corollary~1.2 of \cite{CMMM} and valuative independence show that
the members regular along every $D_f$ form a basis of the ring of global
functions on the partial compactification.  By definition, regularity is exactly the condition
$g\in\mathscr C_{\ell,m}$.  Finally,
\eqref{eq:cluster-global-sections} and Theorem~\ref{thm:main-equality}
identify the resulting algebra with $\cR_{\ell,m}$.
\end{proof}

\subsection{Weight fibers and plethysm}

Define the initial weight map, using the weights of
Proposition~\ref{prop:weights-count}, by
\[ \mathsf W_{\ell,m}:M_{\ell,m} \longrightarrow X^*(T_V)\oplus\mathbb Z^m, \qquad e_v\longmapsto\bigl(\wt_V(z_v),\deg_W(z_v)\bigr). \]
Exchange homogeneity implies
\[ \mathsf W_{\ell,m}(b_{u\bullet})=0 \qquad(u\in\mathscr M_{\ell,m}). \]
Thus $\widehat y_u=\prod_vz_v^{b_{uv}}$ has weight zero, and a pointed
theta expansion yields
\begin{equation}
 \wt(\vartheta_g)=\mathsf W_{\ell,m}g.
 \label{eq:theta-weight}
\end{equation}

For a dominant polynomial $\GL_\ell$-weight $\lambda$ and a weak composition
$\alpha\in\mathbb Z_{\ge0}^m$, set
\[ \mathscr P_{\ell,m}(\lambda;\alpha) :=\left\{g\in M_{\ell,m,\mathbb R}: H_{\ell,m}g\ge0, \quad \mathsf W_{\ell,m}g=(\lambda;\alpha) \right\}. \]

\begin{corollary}\label{cor:polyhedral-b}
For every $\ell,m\ge2$, we have
\begin{equation}
 b_{\alpha,(2)}^\lambda
 =\left|\mathscr P_{\ell,m}(\lambda;\alpha)\cap M_{\ell,m}\right|.
 \label{eq:polyhedral-b}
\end{equation}
\end{corollary}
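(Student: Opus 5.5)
The plan is to combine the theta basis of Theorem~\ref{thm:theta-basis} with the weight formula \eqref{eq:theta-weight}, and then read off dimensions of weight spaces. By definition \eqref{eq:b-definition}, $b_{\alpha,(2)}^\lambda$ is the dimension of the $T_V\times T_W$-weight space of $\cR_{\ell,m}$ of weight $(\lambda;\alpha)$. So it suffices to show two things: each basis element is homogeneous, and exactly the lattice points of $\mathscr P_{\ell,m}(\lambda;\alpha)$ index the basis elements of that weight.

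\textbf{Step 1: homogeneity of theta functions.} Each $\vartheta_g$ is homogeneous for $T_V\times T_W$, of weight $\mathsf W_{\ell,m}g$.

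Consider the pointed expansion in the initial chart,
\[ \vartheta_g=z^g\Bigl(1+\sum_{0\ne c\in\mathbb Z_{\ge0}^{\mathscr M_{\ell,m}}}a_c\,\widehat y^{\,c}\Bigr), \qquad \widehat y_u=\prod_v z_v^{b_{uv}} . \]
Exchange homogeneity gives $\mathsf W_{\ell,m}(b_{u\bullet})=0$, so every $\widehat y_u$ has weight zero. Hence every monomial in the expansion has the weight of $z^g$, which is $\mathsf W_{\ell,m}g$.

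The torus action on $\cR_{\ell,m}$ extends to the Laurent ring $\mathcal L_0$ and is compatible with the embedding $\cR_{\ell,m}\subseteq\Frac(\cR_{\ell,m})=\kk(\mathbf x_{\ell,m})$ from Theorem~\ref{thm:folded-family-transcendence-basis}. Weight spaces are therefore detected monomialwise, which gives \eqref{eq:theta-weight}.

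\textbf{Step 2: counting.} Since the basis $\{\vartheta_g:g\in\mathscr C_{\ell,m}\cap M_{\ell,m}\}$ consists of weight vectors, the weight-$(\lambda;\alpha)$ component of $\cR_{\ell,m}$ has as a basis the $\vartheta_g$ with $g\in\mathscr C_{\ell,m}\cap M_{\ell,m}$ and $\mathsf W_{\ell,m}g=(\lambda;\alpha)$. Indeed, a linear relation among weight vectors splits by weight, and any element of a given weight is a combination of basis vectors of that weight.

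Because $\mathscr C_{\ell,m}=\{H_{\ell,m}g\ge0\}$, this indexing set is exactly $\mathscr P_{\ell,m}(\lambda;\alpha)\cap M_{\ell,m}$. Its cardinality is therefore the dimension of the component, namely $b_{\alpha,(2)}^\lambda$.

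In particular, the fiber is finite. This is also independently clear from the finite-dimensionality of the graded pieces $\cP_{(2)}(V,W)_\alpha$.

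\textbf{Step 3: base field.} The theta basis was built over $\mathbb C$. I would invoke the descent Lemma~\ref{lem:theta-characteristic-zero-descent}: the theta functions have integer (indeed nonnegative) Laurent coefficients and form a basis after any extension to an algebraically closed field of characteristic zero.

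The weight multiplicities $b_{\alpha,(2)}^\lambda$ are given by \eqref{eq:b-as-multiplicity}, which involves only characteristic-zero representation theory. They are therefore independent of the field, so the count over $\mathbb C$ suffices.

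\textbf{Main obstacle.} The only delicate point is Step 1: justifying that theta functions, defined via scattering diagrams, have pointed expansions whose exponent shifts lie in the span of the rows $b_{u\bullet}$.

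I would take this from the GHKK construction. Wall-crossing functions are power series in $z^{P(\epsilon)}$, $\epsilon\in L_{\ell,m}^{+}$, so all broken-line exponents are of the form $g+P(\epsilon)$. Since $\mathsf W_{\ell,m}\circ P=0$, this gives weight homogeneity directly, without needing the expansion to be finite.
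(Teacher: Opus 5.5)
Your proposal is correct and follows the paper's argument. The paper likewise derives \eqref{eq:theta-weight} from the pointed expansion and the weight-zero property of $\widehat y_u$, reads off a basis of the $(\lambda;\alpha)$-weight space from Theorem~\ref{thm:theta-basis}, and gets finiteness from finite-dimensionality of the multihomogeneous component; your Step~3 corresponds to the paper's standing convention of working over $\mathbb C$ and descending via Lemma~\ref{lem:theta-characteristic-zero-descent}.
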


\begin{proof}
By Theorem~\ref{thm:theta-basis} and \eqref{eq:theta-weight}, these
lattice points index a basis of the $(\lambda;\alpha)$-weight space of
$\cR_{\ell,m}$.  Its dimension is $b_{\alpha,(2)}^\lambda$ by
\eqref{eq:b-as-multiplicity}.  The set is finite because this
multihomogeneous component of the polynomial invariant ring is
finite-dimensional.
\end{proof}

\begin{theorem}[polyhedral formula for symmetric-square plethysm]
\label{thm:polyhedral-plethysm}
Let $\ell,m\ge2$, and let $\lambda,\mu$ be partitions with
$\htp(\lambda)\le\ell$ and $\htp(\mu)\le m$.  Then
\[  a_{\mu,(2)}^\lambda =\sum_{\sigma\in\frakS_m}\operatorname{sgn}(\sigma) \left| \mathscr P_{\ell,m} \bigl(\lambda;\alpha^\sigma(\mu)\bigr) \cap M_{\ell,m} \right|. \]
Here $\alpha^\sigma(\mu)_i=\mu_i-i+\sigma(i) \ (1\le i\le m)$,
and a summand is zero if $\alpha^\sigma(\mu)$ has a negative component.
\end{theorem}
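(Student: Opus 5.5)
The theorem follows by combining Theorem~\ref{thm:alternating-plethysm}, specialized to $\nu=(2)$, with Corollary~\ref{cor:polyhedral-b}. The first step is to check that the hypotheses of Theorem~\ref{thm:alternating-plethysm} hold. We need $\htp(\nu)=\htp((2))=1\le\ell$, which is automatic since $\ell\ge2$, and the height bounds $\htp(\lambda)\le\ell$, $\htp(\mu)\le m$ are assumed in the statement. Specializing gives
\[ a_{\mu,(2)}^\lambda=\sum_{\sigma\in\frakS_m}\operatorname{sgn}(\sigma)\,b_{\alpha^\sigma(\mu),(2)}^\lambda, \]
where, by the convention fixed after \eqref{eq:b-definition}, a term with a negative component of $\alpha^\sigma(\mu)$ is zero.

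The second step substitutes each surviving term. Whenever $\alpha^\sigma(\mu)$ is a weak composition, Corollary~\ref{cor:polyhedral-b} applies with $\alpha=\alpha^\sigma(\mu)$ and $\lambda$ a dominant polynomial weight. It gives
\[ b_{\alpha^\sigma(\mu),(2)}^\lambda=\bigl|\mathscr P_{\ell,m}(\lambda;\alpha^\sigma(\mu))\cap M_{\ell,m}\bigr|. \]
Here $\lambda$, viewed as a partition of height at most $\ell$, is exactly a dominant polynomial $\GL_\ell$-weight, so the corollary's hypotheses match. The terms with a negative component are zero on both sides by the stated conventions, so summing with signs yields the formula.

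I expect no real obstacle, since all the content sits in Theorem~\ref{thm:theta-basis} and Corollary~\ref{cor:polyhedral-b}. Two bookkeeping points need care.

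First, the $T_W$-grading used in $\mathsf W_{\ell,m}$ must be the same multigrading as in \eqref{eq:b-definition}. This holds because both come from the fixed standard basis of $W$, and $\deg_W$ in Proposition~\ref{prop:weights-count} is computed with respect to that basis.

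Second, the $T_V$-weight $\lambda$ of a $U_V$-invariant must correspond to the highest weight $\mathbf S_\lambda V$ in \eqref{eq:b-as-multiplicity} under the congruence action $u\mapsto u^{-T}Cu^{-1}$ of \eqref{eq:main-ring-matrices}. To confirm the identification I would check it on the generator $\Delta_r$: it has weight $2\omega_\ell$, and it spans the copy of $\det^2=\mathbf S_{(2^\ell)}V$ inside $\Sym^\ell(\Sym^2V)$, so the conventions agree.
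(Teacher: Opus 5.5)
Your proposal is correct and is essentially the paper's own proof: substitute Corollary~\ref{cor:polyhedral-b} into Theorem~\ref{thm:alternating-plethysm} with $\nu=(2)$. The identification of the cluster algebra with $\cR_{\ell,m}$ via Theorem~\ref{thm:main-equality} is already built into that corollary. Your grading and weight-convention checks are consistent with Section~\ref{sec:plethysm-highest-weight}.
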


\begin{proof}
Substitute \eqref{eq:polyhedral-b} into the alternating formula of
Theorem~\ref{thm:alternating-plethysm}; Theorem~\ref{thm:main-equality}
identifies the cluster algebra used in the count with the invariant ring.
\end{proof}

\subsection{Optimized boundary seeds}

The valuation-theoretic statements above hold for all $(\ell,m)$.  When $\ell$
is odd and $m$ is even, each boundary valuation is optimized in a seed
obtained by mutation.

Let $\Sigma'$ be a seed in the mutation class, with seed map $P'$.  For a
frozen index $f$, Proposition~\ref{prop:primitive-frozen-valuations} and
$P'^{\bullet}=P'D$ give
\[ \langle n_f,P'(\epsilon_u)\rangle=b'_{uf}. \]
Hence $\Sigma'$ is optimized for $n_f$ precisely when $b'_{uf}\ge0$ for
every mutable $u$.  In this case $\vartheta_{n_f}^{\chi}$ restricts to
$X^{n_f}$ on the corresponding chiral-dual torus by
\cite[Definition~9.1 and Lemma~9.3]{GHKK}.

\begin{proposition}\label{prop:optimized-boundary-seeds}
Assume that $\ell$ is odd and $m$ is even.  Every frozen boundary divisor
admits an optimized seed.  More precisely, for each frozen index $f$,
some sequence of mutations at mutable vertices gives $B_{\bullet f}=e_u$ for a mutable
index $u$.
\end{proposition}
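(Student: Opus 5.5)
We will construct, for each frozen index $f\in\mathscr F_{\ell,m}$, an explicit mutation sequence after which the column $B_{\bullet f}$ becomes a single standard basis vector $e_u$. A column of that form automatically satisfies $b'_{uf}\ge0$ for all mutable $u$, so the seed is optimized for $n_f$. The frozen set splits into two kinds: the determinants $\partial_1,\ldots,\partial_m$, and the outer boundary $\mathscr B_{\ell,m}=\{[m;i,\ell-i]\}$, which is the relevant one since $m$ is even. We treat each kind separately.

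\emph{Initial columns.} First we read off the initial frozen columns from the exchange relations of Theorem~\ref{thm:folded-relations}.

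For a boundary label $f=[m;i,\ell-i]$, the variable $z_f$ appears only in the interior and base relations of triangle $m$ adjacent to the side $i+j=\ell$. This gives a short column supported near that side.

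The determinant $\Delta_n$ appears only in the corner relations:
\begin{itemize}
\item the base-edge corner \eqref{eq:base-edge-even-corner} or \eqref{eq:base-edge-odd-corner} at $z^{(n)}_{\ell-1,0}$;
\item the identified-boundary corner factors $c_{n;\ell-1,1}$ and $\kappa_{n;\ell-1}$;
\item for $\Delta_1$, the relation \eqref{eq:first-boundary} at $j=\ell-1$.
\end{itemize}
So each determinant column has at most two or three nonzero entries, located at corner vertices.

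\emph{Sweeping.} For columns with more than one nonzero entry, we push the support to a single vertex. The mechanism is standard: if $b_{uf}$ and $b_{vf}$ have opposite signs and $b_{uv}$ is suitable, mutating at $u$ changes $b_{vf}$ by $[b_{vu}]_+[b_{uf}]_+$-type terms, which can cancel entries.

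The natural sequences to use are the anti-diagonal sequences $\mathbf p_i^{\pm}(r)$ of Lemma~\ref{lem:folded-half-diamond-removal} and the paired flips of Lemma~\ref{lem:flip-support-separation}. These transport corner data along a triangle toward a single vertex.

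We would proceed by induction on $m$ in steps of two, using the principal-submatrix compatibility of Lemma~\ref{lem:successive-principal-submatrix}. The base case is $m=2$ with $\ell$ odd, where we compute directly. For the inductive step, passing from $m$ to $m+2$ attaches an odd triangle and an even triangle. A frozen column of $\Sigma_{\ell,m}$ that was optimized by some sequence is then re-optimized by appending a sweep across the two new triangles. The new boundary columns of triangle $m+2$ are handled by a sweep inside that triangle alone.

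\emph{Role of parity.} The parity hypotheses enter through the valued arrows.

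Folding creates valuations $(1,2)$ and $(2,1)$ along the transpose-fixed diagonals $d_i^{(r)}$, and Lemma~\ref{lem:folded-half-diamond-removal} produces entries of absolute value $2$ at the ends. When a column is swept across a fixed diagonal, a coefficient can double. To obtain a single unit entry at the end, the sweep must terminate at a vertex of orbit size one, or cancel in pairs.

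With $\ell$ odd, $N=\ell-1$ is even. We expect the half-diamond sweeps to traverse an even number of such doublings, so the entries recombine to $1$. With $m$ even, the outer boundary is the side $i+j=\ell$, and transposition acts on it by $i\leftrightarrow \ell-i$ with no fixed point. This is again a consequence of $\ell$ odd, and it lets boundary columns be paired symmetrically.

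\emph{Main obstacle.} The main difficulty is the bookkeeping of this sweep. One must verify that each mutation in the chosen sequence hits the frozen column with the right sign, so that entries cancel rather than accumulate. One must also check that no valued arrow leaves a residual entry of $2$.

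We would first compute the cases $(\ell,m)=(3,2)$ and $(3,4)$ by hand to fix the sequences. We would then formalize the sweep as an induction on the anti-diagonal index $q$, carrying an invariant of the form ``after the $q$-th anti-diagonal the column is $\pm e_{w_q}$ for a specified vertex $w_q$.'' The row formulas of Appendix~\ref{app:triangular-mutation} would be used to verify each step. This is the induction deferred to the appendix on boundary optimization.
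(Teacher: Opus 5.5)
\textbf{Genuine gaps.} Your proposal is a plan rather than a proof. No mutation sequence is specified, and the two places where the content lies are left as expectations: the sign bookkeeping of the sweep, and the parity mechanism.

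\emph{The induction.} Several structural ingredients do not fit. Lemma~\ref{lem:successive-principal-submatrix} controls only the mutable principal part. When two triangles are attached, the old outer boundary becomes mutable, so there is no old boundary column to re-optimize. Even the determinant columns change: for instance, $\partial_m$ acquires the corner factor $c_{m;\ell-1,1}$.

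Nothing supports the claim that the new boundary columns can be isolated inside the last triangle alone. In the paper, a hive twist is first applied on the odd triangles so that all seams are glued alike. After that, the local word in triangle $m$ still leaves $B_{\bullet F}$ (with $F=[m;i,\ell-i]$) supported partly in triangle $m-1$. The support must be pushed through triangles $m$ and $m-1$ to the two seam vertices shared with triangle $m-2$. The entire sequence for $m-2$ is then rerun, and two final mutations give $u=[m-2;\ell-i,i]$.

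\emph{Parity.} Your parity heuristic is also misplaced. Folded labels are already transpose orbits, so transposition does not act by $i\leftrightarrow\ell-i$. The paper treats the pair $F=[m;i,\ell-i]$, $H=[m;\ell-i,i]$ by one sequence. Oddness of $\ell$ makes $i\neq\ell-i$, so these pairs exhaust $\mathscr B_{\ell,m}$. It also lets the words be built by the recursion $\ell\to\ell-2$ ending at $\ell=2i+1$. Nothing depends on counting doublings.

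\textbf{The missing idea is the sign.} Explicit sequences naturally end at $-e_u$ for half of the frozen indices:
\begin{itemize}
\item the paired sequence yields $B_{\bullet F}=e_u$ and $B_{\bullet H}=-e_u$ simultaneously;
\item already in the initial seed, $B_{\bullet\partial_1}=-e_{[2;0,\ell-1]}$, by \eqref{eq:first-boundary} at $j=\ell-1$;
\item the intermediate determinant columns end at $-e_{[r;1,0]}$.
\end{itemize}
A column $-e_u$ is not optimized, and mutating at $u$ does not repair it. The $u$-entry flips, but each mutable $x$ with $b_{xu}<0$ acquires the entry $b_{xu}$. For $\partial_1$ this happens at $[2;0,\ell-2]$.

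The paper fixes signs by a global argument. The mutable part of the current seed still admits a reddening sequence, by Theorem~\ref{thm:reddening} and mutation invariance. So by \cite{BHIT} there is a mutation path from $[B\mid -I]$ to a matrix whose coefficient block is a permutation matrix. Each frozen column mutates independently of the others, so that path carries the single column $-e_u$ to some $e_v$.

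Your sweep contains no mechanism forcing a positive final sign. The requirement that entries ``cancel rather than accumulate'' is exactly what needs this argument or a substitute.
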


\begin{proof}
Proposition~\ref{prop:boundary-singleton-isolation} gives explicit mutation
sequences that make each frozen column $e_u$ or $-e_u$.  For an outer
pair $f_i=[m;i,\ell-i]$, $f_{\ell-i}$ with $i<\ell/2$, the induction adds two
triangles at each step and gives the columns $e_u$ and $-e_u$ simultaneously.  Oddness of
$\ell$ ensures that the pairs exhaust the outer boundary.

It remains to change a column $-e_u$ to a standard basis column.  The resulting mutable principal matrix $B$ admits a reddening sequence by
Theorem~\ref{thm:reddening} and mutation invariance.  A reddening path ends
at a relabelled coframed seed; reversing the path and relabelling gives,
with frozen indices labelling columns, a path from $[B\mid-I]$ to a matrix
whose coefficient block is a permutation matrix
\cite[Lemma~2.2.1 and Theorem~2.2.4]{BHIT}.
The mutation rule for one frozen column is independent of all other frozen
columns.  The same path therefore sends the chosen $-e_u$ to $e_v$, regardless
of the other coefficients.  This gives an optimizing seed for each
frozen divisor; these seeds need not coincide.
\end{proof}

For odd $\ell$ and even $m$, put
\[ W_{\partial}^{\chi} :=\sum_{f\in\mathscr F_{\ell,m}}\vartheta_{n_f}^{\chi}. \]

\begin{corollary}\label{cor:boundary-cone-optimized-seeds}
Assume that $\ell$ is odd and $m$ is even.  For each frozen index $f$,
choose an optimizing mutation sequence and write the pullback of $X^{n_f}$ to
the initial chiral-dual chart as
\begin{equation}
 \vartheta_{n_f}^{\chi}=\sum_{a\in A_f}c_{f,a}X^a,
 \qquad c_{f,a}>0.
 \label{eq:boundary-theta-Laurent}
\end{equation}
Then
\begin{equation}
 \mathscr C_{\ell,m}
 =\left\{g:\langle a,g\rangle\ge0
   \text{ for every }f\in\mathscr F_{\ell,m},\ a\in A_f\right\}.
 \label{eq:optimized-boundary-cone}
\end{equation}
Clearing denominators and removing redundant inequalities gives an integral
inequality matrix $H_{\ell,m}$.
\end{corollary}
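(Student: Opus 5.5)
The plan is to combine the optimized seeds of Proposition~\ref{prop:optimized-boundary-seeds} with theta reciprocity, so that each frozen valuation $\nu_f$ becomes an explicit tropical minimum. Fix a frozen index $f$ and an optimizing mutation sequence producing a seed $\Sigma'$ with $b'_{uf}\ge0$ for all mutable $u$. As recorded before Proposition~\ref{prop:optimized-boundary-seeds}, $\Sigma'$ is then optimized for $n_f$ in the chiral-dual seed datum $(Q^\bullet,P^\bullet)$. Hence, by \cite[Definition~9.1 and Lemma~9.3]{GHKK}, $\vartheta^{\chi}_{n_f}$ restricts to the monomial $X^{n_f}$ on the chiral-dual torus of $\Sigma'$.

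The first step is to transport this monomial back to the initial chiral-dual chart. I would apply the inverse of the chiral-dual cluster transformations along the chosen sequence. The theta function is a global function on the chiral-dual cluster variety, so its expression in the initial chart is the pullback of $X^{n_f}$. By the Laurent phenomenon and positivity for theta functions \cite{GHKK}, this pullback is a Laurent polynomial with positive coefficients. It is finite because optimized theta functions are cluster monomials in $\Sigma'$, which are Laurent in every chart. This gives \eqref{eq:boundary-theta-Laurent} with $c_{f,a}>0$. The main point to check here is that positivity holds in the chiral-dual, skew-symmetrizable setting. I would cite GHKK positivity for skew-symmetrizable data applied to $(Q^\bullet,P^\bullet)$, using Proposition~\ref{prop:theta-seed-datum} to confirm that this is a legitimate seed datum.

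The second step is to tropicalize. Theta reciprocity gives $\nu_f(g)=\operatorname{val}_g(\vartheta^\chi_{n_f})$ for integral $g$. For a Laurent polynomial with positive coefficients there is no cancellation, so the valuation along the tropical point $g$ equals $\min_{a\in A_f}\langle a,g\rangle$. Both sides are piecewise linear and positively homogeneous, and they agree on $M_{\ell,m}$, so they agree on $M_{\ell,m,\mathbb R}$. Therefore $\nu_f(g)\ge0$ holds exactly when $\langle a,g\rangle\ge0$ for all $a\in A_f$. Intersecting over $f\in\mathscr F_{\ell,m}$ and comparing with \eqref{eq:theta-cone} yields \eqref{eq:optimized-boundary-cone}.

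Finally, the family of all $A_f$ is finite and rational, since the $a$ lie in $M^\bullet$-dual lattice coordinates with bounded denominators $d_u\in\{1,2\}$. Multiplying each inequality by a positive integer and discarding those implied by the others gives the integral matrix $H_{\ell,m}$. The main obstacle I expect is the identification step, namely that the optimized chiral-dual theta function is exactly the pulled-back monomial. This needs care because the orbit-size rescaling $P^\bullet=PD$ changes the mutation formulas relative to $\Sigma'$. I would first verify that the mutation sequence optimizing $B_{\bullet f}$ for $P$ also optimizes for $P^\bullet$; this holds because $\langle n_f,P'^\bullet(\epsilon_u)\rangle=d_u b'_{uf}$ has the same sign as $b'_{uf}$.
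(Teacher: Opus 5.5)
Your proposal is correct and follows the paper's proof: the optimized seed for $n_f$ makes $\vartheta^{\chi}_{n_f}$ the monomial $X^{n_f}$; pulling this back along the inverse chiral-dual mutation sequence gives the positive Laurent expansion; theta reciprocity turns $\nu_f$ into $\min_{a\in A_f}\langle a,g\rangle$; and intersecting over $f$ gives the cone. Your extra remarks are also fine: the sign comparison between $d_ub'_{uf}$ and $b'_{uf}$ is recorded in the paper just before Proposition~\ref{prop:optimized-boundary-seeds}, and the paper leaves implicit the passage from $M_{\ell,m}$ to $M_{\ell,m,\mathbb R}$ by homogeneity and continuity, which you supply. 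One correction of wording: in the chiral dual the optimized monomial is a global monomial of an $\mathcal X$-type variety rather than an $\mathcal A$-cluster monomial, so its Laurentness in the initial chart comes from regularity on every chart, not from the Laurent phenomenon.
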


\begin{proof}
By the optimized-seed criterion above, the theta function is a monomial in
the optimized seed.  Pulling it back through the inverse chiral-dual
mutation sequence gives \eqref{eq:boundary-theta-Laurent}.  Theta reciprocity yields
\[ \nu_f(g)=\min_{a\in A_f}\langle a,g\rangle, \]
so regularity along every frozen divisor is equivalent to
\eqref{eq:optimized-boundary-cone}.  Equivalently,
\[ \mathscr C_{\ell,m} =\{g:(W_{\partial}^{\chi})^{\operatorname{trop}}(g)\ge0\}. \]
\end{proof}

\subsection{Degree-zero faces and fewer matrices}
\label{subsec:successive-matrix-faces}

Let $m\ge3$.  For $1\le s\le m$, let
\[ d_s:=\operatorname{pr}_s\circ\mathsf W_{\ell,m}:M_{\ell,m,\mathbb R}\longrightarrow\mathbb R, \]
where $\operatorname{pr}_s$ is projection to the $s$th $T_W$-degree.  For
an integral theta parameter $g$, equation~\eqref{eq:theta-weight}
identifies $d_s(g)$ with the degree of $\vartheta_g$ in the entries of
$C_s$.  If $2\le r<m$, put
\[ \mathscr C_{\ell,r}^{[m]}:=\mathscr C_{\ell,m}\cap\bigcap_{s=r+1}^m\ker(d_s), \qquad \delta_{r,m}:=\sum_{s=r+1}^m d_s. \]
The superscript $[m]$ records the ambient cone; set $\mathscr C_{\ell,m}^{[m]}:=\mathscr C_{\ell,m}$.

\begin{proposition}\label{prop:successive-matrix-face}
For every $\ell\ge2$, $m\ge3$, and $2\le r<m$, each $d_s$ is nonnegative
on $\mathscr C_{\ell,m}$, and
\[ \mathscr C_{\ell,r}^{[m]}=\mathscr C_{\ell,m}\cap\ker(\delta_{r,m}) \]
is a proper exposed rational polyhedral face of $\mathscr C_{\ell,m}$.  Under the
pullback $\cR_{\ell,r}\hookrightarrow\cR_{\ell,m}$ induced by the
projection $(C_1,\ldots,C_m)\mapsto(C_1,\ldots,C_r)$, the family
\[ \{\vartheta_g:g\in\mathscr C_{\ell,r}^{[m]}\cap M_{\ell,m}\} \]
is a basis of $\cR_{\ell,r}$.  Thus, for
$\alpha\in\mathbb Z_{\ge0}^r$ and a dominant polynomial
$\GL_\ell$-weight $\lambda$,
\[
 b_{\alpha,(2)}^\lambda
 =\left|\left\{g\in\mathscr C_{\ell,r}^{[m]}\cap M_{\ell,m}:
 \mathsf W_{\ell,m}g=
 (\lambda;\alpha,\underbrace{0,\ldots,0}_{m-r})\right\}\right|.
\]
If $H_{\ell,m}$ is an inequality matrix for $\mathscr C_{\ell,m}$, this
model is obtained by imposing
\[ H_{\ell,m}g\ge0,\qquad d_{r+1}(g)=\cdots=d_m(g)=0, \]
and restricting the weight map to the first $r$ matrix degrees.
\end{proposition}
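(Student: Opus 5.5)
The plan is to work entirely through the theta basis of Theorem~\ref{thm:theta-basis} together with the weight identity \eqref{eq:theta-weight}, and to reduce both the face statement and the basis statement to the $T_W$-grading of $\cR_{\ell,m}$.

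\emph{Nonnegativity of $d_s$.} For an integral $g\in\mathscr C_{\ell,m}\cap M_{\ell,m}$, the function $\vartheta_g$ is a nonzero homogeneous element of the polynomial ring $\cR_{\ell,m}$. Its degree in the entries of $C_s$ is $d_s(g)\ge0$. Since $\mathscr C_{\ell,m}$ is a rational polyhedral cone, its rational points are positive rational multiples of integral points. The function $d_s$ is linear, so $d_s\ge0$ extends to all of $\mathscr C_{\ell,m}$ by density and continuity.

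\emph{The face.} Nonnegativity has two consequences. First, $\delta_{r,m}$ is a linear form that is nonnegative on the cone. Second, $\delta_{r,m}(g)=0$ holds if and only if $d_s(g)=0$ for every $s>r$, since each summand is nonnegative. This gives the displayed equality, and it shows that $\mathscr C_{\ell,r}^{[m]}$ is an exposed face cut out by a rational supporting hyperplane, so the face is rational polyhedral. For properness, one needs a point of the cone with $\delta_{r,m}>0$. Take $g=e_{\partial_m}$: it lies in $\mathscr C_{\ell,m}$ because $\vartheta_{e_{\partial_m}}=\Delta_m$ is a frozen cluster monomial and regular, and it satisfies $d_m(g)=\ell>0$.

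\emph{The basis statement.} The image of $\cR_{\ell,r}$ in $\cR_{\ell,m}$ under the pullback is exactly the sum of the multihomogeneous components of $\cR_{\ell,m}$ whose degrees in $C_{r+1},\ldots,C_m$ vanish. Indeed, $\kk[X_{\ell,m}]$ is graded and the $U_\ell$-action preserves the grading, so the invariants of degree zero in the later matrices are precisely the $U_\ell$-invariant polynomials in $C_1,\ldots,C_r$. The theta basis of $\cR_{\ell,m}$ consists of $T_W$-homogeneous elements, by \eqref{eq:theta-weight}. Any basis of a graded vector space consisting of homogeneous elements restricts to a basis of every sum of graded pieces. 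Hence the $\vartheta_g$ with $d_{s}(g)=0$ for all $s>r$, that is, with $g$ in the face, form a basis of $\cR_{\ell,r}$.

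\emph{Counting and the inequality model.} The count formula follows by taking weight spaces, exactly as in Corollary~\ref{cor:polyhedral-b}, with weight $(\lambda;\alpha,0,\ldots,0)$. The description by $H_{\ell,m}g\ge0$ together with $d_{r+1}=\cdots=d_m=0$ is just the definition of the face.

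The main point needing care is the identification of the image of $\cR_{\ell,r}$ with the degree-zero part. Everything else is formal once homogeneity of theta functions is granted.
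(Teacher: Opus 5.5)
Your proposal is correct and follows the paper's proof essentially step for step. You get nonnegativity of $d_s$ from homogeneity of $\vartheta_g$ via \eqref{eq:theta-weight}, the face as the zero locus of the nonnegative form $\delta_{r,m}$, and the basis statement by identifying $\cR_{\ell,r}$ with the degree-zero part of $\cR_{\ell,m}$ in $C_{r+1},\ldots,C_m$. The only cosmetic difference is properness: you use $\vartheta_{e_{\partial_m}}=\Delta_m$ directly, while the paper notes that some homogeneous theta basis element in the expansion of $\Delta_m$ lies off the face; both arguments are valid.
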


\begin{proof}
Let $g\in\mathscr C_{\ell,m}\cap M_{\ell,m}$.  By
Theorem~\ref{thm:theta-basis}, $\vartheta_g$ is a nonzero polynomial
invariant in $\cR_{\ell,m}$, and
\eqref{eq:theta-weight} makes it homogeneous of multidegree
$(d_1(g),\ldots,d_m(g))$.  Hence $d_s(g)\ge0$ for every $s$.  A rational
polyhedral cone is generated by lattice vectors, so each $d_s$ is
nonnegative on all of $\mathscr C_{\ell,m}$.  It follows that
$\delta_{r,m}\ge0$ on the cone and that its zero locus there is precisely
$\mathscr C_{\ell,r}^{[m]}$.  This proves the face assertion.

The simultaneous degree-zero part of the polynomial ring on $m$ symmetric
matrices, with respect to $C_{r+1},\ldots,C_m$, is the polynomial ring on
the first $r$ matrices.  The congruence action preserves the full
multigrading, and therefore
\[ (\cR_{\ell,m})_{d_{r+1}=\cdots=d_m=0}=\cR_{\ell,r}. \]
The theta basis is homogeneous, so the members with these degrees zero form
a basis of this subalgebra.  Since $\Delta_m$ has positive
$\delta_{r,m}$-degree, some homogeneous theta basis element lies outside the
face; hence the face is proper.  The counting formula and the description by
restricted inequalities and weights follow.
\end{proof}

For fixed $m$, these faces form the chain
\[ \mathscr C_{\ell,2}^{[m]}\subset\mathscr C_{\ell,3}^{[m]}\subset\cdots\subset\mathscr C_{\ell,m-1}^{[m]}\subset\mathscr C_{\ell,m}. \]
For $2\le r<m$,
\[ \mathscr C_{\ell,r}^{[m]}=\mathscr C_{\ell,r+1}^{[m]}\cap\ker(d_{r+1}), \]
so each inclusion is an exposed face.  Taking $m=2n$ and $r=2n-1$ gives a
polyhedral theta model for $\cR_{\ell,2n-1}$ as a face of
$\mathscr C_{\ell,2n}$.  If $\ell$ is odd,
Corollary~\ref{cor:boundary-cone-optimized-seeds} supplies the explicit
matrix $H_{\ell,2n}$, which may then be restricted by $d_{2n}=0$.

\subsection{Descent to the ground field}

\begin{lemma}\label{lem:theta-characteristic-zero-descent}
Let $A_{\mathbb Q}$ be the polynomial ring in the matrix entries defining
$X_{\ell,m}$, and put
\[ R_{\mathbb Q}:=A_{\mathbb Q}^{U_{\ell,\mathbb Q}}. \]
For every algebraically closed field $K$ of characteristic zero, scalar
extension induces
\begin{equation}
 R_{\mathbb Q}\otimes_{\mathbb Q}K
 \simeq K[X_{\ell,m}]^{U_{\ell,K}}.
 \label{eq:invariant-base-change}
\end{equation}
Fix the normalizations of Theorem~\ref{thm:flagged-matrix-form} over
$k_0=\overline{\mathbb Q}$.  The finite theta functions then belong to this
common $k_0$-form of the cluster fraction field.  If they form a theta
basis over $\mathbb C$, the same indexed family is a basis over every
such $K$, after choosing compatible embeddings of $k_0$.
\end{lemma}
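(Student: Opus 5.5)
The argument has two parts: first the invariant base change \eqref{eq:invariant-base-change}, then the descent of the theta basis, proved by comparing dimensions of weight spaces.

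For \eqref{eq:invariant-base-change}, $U_\ell$ is a split unipotent group over $\mathbb Q$, and we identify invariants with the kernel of a linear map. Since $U_\ell$ is connected and $\operatorname{char}=0$, $U_\ell$-invariants coincide with the simultaneous kernel of the finitely many root derivations $E_{a,a+1}$ of $\mathfrak u_\ell$, acting on $A_{\mathbb Q}$. These are $\mathbb Q$-linear, homogeneous operators preserving each finite-dimensional multigraded piece $(A_{\mathbb Q})_{\alpha}$. Kernels of $\mathbb Q$-linear maps between finite-dimensional spaces commute with the flat base change $-\otimes_{\mathbb Q}K$. Summing over multidegrees gives \eqref{eq:invariant-base-change}. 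In particular, for every $(\lambda;\alpha)$,
\[ \dim_K\bigl(K[X_{\ell,m}]^{U_{\ell,K}}\bigr)_{\lambda;\alpha}=\dim_{\mathbb Q}(R_{\mathbb Q})_{\lambda;\alpha}=b^\lambda_{\alpha,(2)}. \]

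For the theta functions, the $k_0$-rationality comes from the construction. The initial cluster variables are determinants with coefficients in $k_0$ after the normalizations of Theorem~\ref{thm:flagged-matrix-form}. The exchange matrix is integral, so every mutated cluster variable lies in $k_0(\mathbf x_{\ell,m})$. By Corollary~\ref{cor:polyhedral-b}, $\vartheta_g$ for $g\in\mathscr C_{\ell,m}$ is a polynomial. By \eqref{eq:theta-weight} it is homogeneous of weight $\mathsf W_{\ell,m}g$, with a Laurent expansion in the initial cluster whose coefficients are integers (positivity of GHKK structure constants). Hence $\vartheta_g$ lies in $R_{k_0}=R_{\mathbb Q}\otimes k_0$, and it is defined independently of the field.

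Now fix a weight $(\lambda;\alpha)$ and let $G$ be the finite set of lattice points $\mathscr P_{\ell,m}(\lambda;\alpha)\cap M_{\ell,m}$. Over $\mathbb C$, $\{\vartheta_g\}_{g\in G}$ is a basis of the $\mathbb C$-form of this weight space. Linear independence over $\mathbb C$ of elements of the $k_0$-space $(R_{k_0})_{\lambda;\alpha}$ implies independence over $k_0$. Since $|G|=b^\lambda_{\alpha,(2)}=\dim_{k_0}(R_{k_0})_{\lambda;\alpha}$, the family $\{\vartheta_g\}_{g\in G}$ is a $k_0$-basis. Tensoring with $K$ along a chosen embedding $k_0\hookrightarrow K$ and applying \eqref{eq:invariant-base-change} then gives a $K$-basis of each weight space, hence of $K[X_{\ell,m}]^{U_{\ell,K}}$.

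The main obstacle is justifying that the theta functions, which are built over $\mathbb C$ from scattering diagrams, really lie in the $k_0$-form. I would handle this using the $\mathbb Z$-integrality of broken-line coefficients. These coefficients give $\vartheta_g\in\mathbb Z[\mathbf x^{\pm1}]$-type expansions in the initial chart. The remaining point is to check that the chosen embeddings $k_0\hookrightarrow\mathbb C$ and $k_0\hookrightarrow K$ send the same normalized cluster variables to the corresponding determinants. This is automatic because the normalizing scalars $c^{(n)}_{i,j}$ are fixed in $k_0$.
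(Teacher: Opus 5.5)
Your proposal is correct and follows essentially the paper's route. You prove the base change as a kernel of rational derivations on finite-dimensional multidegree pieces, get $k_0$-rationality of the theta functions from the $k_0$-normalized seed and integral broken-line coefficients, and then descend independence and spanning; your dimension count $|G|=b^\lambda_{\alpha,(2)}=\dim_{k_0}(R_{k_0})_{\lambda;\alpha}$ plays the role of the paper's faithful-flatness remark.

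The one step you leave implicit behind ``Hence $\vartheta_g$ lies in $R_{k_0}$'' is the identity $k_0[X_{\ell,m}]=\mathbb C[X_{\ell,m}]\cap k_0(X_{\ell,m})$. The paper states this explicitly to pass from a $\mathbb C$-regular element of $k_0(X_{\ell,m})$ to a polynomial over $k_0$, and then to an invariant over $k_0$ via the same kernel argument. Two citations are also off: the integrality you need comes from broken-line coefficients, as you say at the end, not from positivity of structure constants; and regularity of $\vartheta_g$ comes from Theorem~\ref{thm:theta-basis}, not Corollary~\ref{cor:polyhedral-b}.
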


\begin{proof}
The group $U_{\ell,\mathbb Q}$ is generated by its simple-root
one-parameter subgroups.  On each polynomial multidegree, invariants are
the common kernel of finitely many rational linear derivations between
finite-dimensional spaces.  Flat scalar extension preserves this kernel,
proving \eqref{eq:invariant-base-change}.

The algebraic seed construction and its normalizations may be carried out
over the algebraically closed field $k_0$.  The scattering and broken-line
rules have integral coefficients, so every finite theta Laurent expansion
in these normalized cluster coordinates belongs to $k_0(X_{\ell,m})$.
If it is regular over $\mathbb C$, it belongs already to $k_0[X_{\ell,m}]$:
\[
 k_0[X_{\ell,m}]
 =\mathbb C[X_{\ell,m}]\cap k_0(X_{\ell,m})
 \quad\text{inside }\mathbb C(X_{\ell,m}).
\]
The same kernel argument gives invariance over $k_0$.  In each
multidegree, independence and spanning of the theta family descend from
$\mathbb C$ by faithful flatness and then extend to $K$.  Boundary
regularity is the same field-independent system of valuation inequalities.
This argument does not require the normalizing scalars to be rational.
\end{proof}

\medskip
\noindent\emph{Computational verification.}
The accompanying code reconstructs the extended exchange matrix from
Theorem~\ref{thm:folded-relations} and checks the row identities for the
mutation sequences in Appendix~\ref{app:boundary-optimization}, using
exact integer arithmetic.  Mutable variables precede frozen variables.
The code stores the weights as rows of
$W_{\rm code}=\mathsf W_{\ell,m}^{\mathsf T}$, so
\[
 B_{\ell,m}W_{\rm code}=0,
 \qquad \mathsf W_{\ell,m}g=(\lambda;\alpha).
\]
The code does not construct general reddening sequences or inequality
matrices.  To obtain the inequalities, one must first choose a reddening
sequence for the sign change in
Proposition~\ref{prop:optimized-boundary-seeds}, then apply
Corollary~\ref{cor:boundary-cone-optimized-seeds}.  The face models of
Proposition~\ref{prop:successive-matrix-face} use the ambient lattice and
the defining face equations; they are not identified here with the
initial coordinates of the smaller seed.

\appendix

\section{Transfer from the flagged Kronecker model}
\label{app:matrix-transfer}

We prove Proposition~\ref{prop:matrix-transfer} and identify the lifted
semi-invariants of \cite{FeiKroneckerII} with the matrix chamber functions of
Sections~\ref{sec:matrix-invariants} and \ref{sec:folding}.  We first take the
quotient by the groups acting on the flag arms, then evaluate the Schofield
presentations and their minimal lifts.

\subsection{The flagged representation space}

Let $\mathsf K_{\ell,m}$ be the quiver with vertex set
\[ \{-\ell,\ldots,-1,1,\ldots,\ell\}. \]
It consists of two oppositely oriented type-$A$ arms joined by $m$ arrows
from $-\ell$ to $\ell$, as shown in Figure~\ref{fig:flagged-Klm}.

\begin{figure}[ht]
\centering
\begin{tikzpicture}[
  >=Latex,
  qvertex/.style={circle,draw,fill=white,minimum size=2.7mm,inner sep=0pt},
  qlabel/.style={font=\small},
  qdim/.style={font=\scriptsize},
  every path/.style={line width=.45pt}
]
\node[qvertex] (n1) at (0,0) {};
\node[qvertex] (n2) at (1.15,0) {};
\node (ndots) at (2.25,0) {$\cdots$};
\node[qvertex] (nlm) at (3.55,0) {};
\node[qvertex] (nl) at (5.05,0) {};
\node[qvertex] (pl) at (7.55,0) {};
\node[qvertex] (plm) at (9.05,0) {};
\node (pdots) at (10.35,0) {$\cdots$};
\node[qvertex] (p2) at (11.45,0) {};
\node[qvertex] (p1) at (12.60,0) {};

\node[qlabel] at (0,.38) {$-1$};
\node[qlabel] at (1.15,.38) {$-2$};
\node[qlabel] at (3.55,.38) {$-(\ell-1)$};
\node[qlabel] at (5.05,.38) {$-\ell$};
\node[qlabel] at (7.55,.38) {$\ell$};
\node[qlabel] at (9.05,.38) {$\ell-1$};
\node[qlabel] at (11.45,.38) {$2$};
\node[qlabel] at (12.60,.38) {$1$};

\draw[->] (n1) -- (n2);
\draw[->] (n2) -- (ndots);
\draw[->] (ndots) -- (nlm);
\draw[->] (nlm) -- (nl);
\draw[->] (pl) -- (plm);
\draw[->] (plm) -- (pdots);
\draw[->] (pdots) -- (p2);
\draw[->] (p2) -- (p1);

\draw[->] (nl) to[bend left=27] node[above] {$a_1$} (pl);
\draw[->] (nl) -- node[above] {$\cdots$} (pl);
\draw[->] (nl) to[bend right=27] node[below] {$a_m$} (pl);

\node[qdim,anchor=east] at (-.35,-.52) {$\boldsymbol\beta_\ell:$};
\node[qdim] at (0,-.52) {$1$};
\node[qdim] at (1.15,-.52) {$2$};
\node[qdim] at (2.25,-.52) {$\cdots$};
\node[qdim] at (3.55,-.52) {$\ell-1$};
\node[qdim] at (5.05,-.52) {$\ell$};
\node[qdim] at (7.55,-.52) {$\ell$};
\node[qdim] at (9.05,-.52) {$\ell-1$};
\node[qdim] at (10.35,-.52) {$\cdots$};
\node[qdim] at (11.45,-.52) {$2$};
\node[qdim] at (12.60,-.52) {$1$};
\end{tikzpicture}
\caption{The flagged $m$-arrow Kronecker quiver $\mathsf K_{\ell,m}$ and
its standard dimension vector.  The central arrows are
$a_r:-\ell\to\ell$ for $1\le r\le m$.}
\label{fig:flagged-Klm}
\end{figure}

Thus $\boldsymbol\beta_\ell(\pm i)=i$.  Write $V_{\pm i}=\kk^i$.  The
representation space decomposes as
\[ \Rep_{\boldsymbol\beta_\ell}(\mathsf K_{\ell,m}) =\Rep_{\boldsymbol\beta_\ell}(A_\ell) \times \Mat_\ell^m \times \Rep_{\boldsymbol\beta_\ell}(A_\ell^\vee), \]
where the middle factor records the central maps $A_r=M(a_r)$.  The
semi-invariant ring is
\[ \SI_{\boldsymbol\beta_\ell}(\mathsf K_{\ell,m}) :=\kk[\Rep_{\boldsymbol\beta_\ell}(\mathsf K_{\ell,m})] ^{\prod_v\SL(V_v)}. \]

Let $G=\SL_\ell$.  Taking the quotient by the base-change groups at the internal vertices
of the negative and positive arms gives the affine closures
\[ \mathcal A^-:=\operatorname{Spec}\kk[G]^{U^-}, \qquad \mathcal A^+:=\operatorname{Spec}\kk[G]^U \]
of $U^-\backslash G$ and $G/U$, respectively.  The first space carries a
right $G$-action and the second a left $G$-action.

The first fundamental theorem for the two type-$A$ flag arms identifies
the quotient by the internal base-change groups with
$\mathcal A^-\times\Mat_\ell^m\times\mathcal A^+$; see
\cite{Grosshans,DerksenWeyman}.  The two terminal copies of $G$ act by
\[ (F^-,A_1,\ldots,A_m,F^+) \longmapsto (F^-g^{-1},\,gA_1h^{-1},\ldots,gA_mh^{-1},\,hF^+). \]

Taking invariants under the two terminal groups gives
\[ \SI_{\boldsymbol\beta_\ell}(\mathsf K_{\ell,m}) \cong \kk[\mathcal A^-\times\Mat_\ell^m\times\mathcal A^+]^{G\times G}. \]

\subsection{The transfer isomorphism}

Choose the standard points $F^-\in\mathcal A^-$ and
$F^+\in\mathcal A^+$ represented by the standard complete flags.  Their
stabilizers are $U^-$ and $U$.  Evaluation induces
\begin{equation}
 \operatorname{ev}_{F^-,F^+}:
 \kk[\mathcal A^-\times\Mat_\ell^m\times\mathcal A^+]^{G\times G}
 \longrightarrow
 \kk[\Mat_\ell^m]^{U^-\times U}.
 \label{eq:app-evaluation}
\end{equation}

\begin{proposition}\label{prop:app-transfer}
The map \eqref{eq:app-evaluation} is a multigraded algebra isomorphism.
Consequently,
\begin{equation}
 \SI_{\boldsymbol\beta_\ell}(\mathsf K_{\ell,m})
 \xrightarrow{\sim} \kk[\Mat_\ell^m]^{U^-\times U}
 \xrightarrow{\sim} \cT_{\ell,m}.
 \label{eq:app-transfer-final}
\end{equation}
The second arrow is induced by
$U_\ell\xrightarrow{\sim}U^-$, $u\mapsto u^{-T}$.
\end{proposition}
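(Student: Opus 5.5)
The approach is the standard slice method for quotients of homogeneous spaces. Write $\mathcal Z:=\mathcal A^-\times\Mat_\ell^m\times\mathcal A^+$. The open $G$-orbit $U^-\backslash G\subseteq\mathcal A^-$ is the orbit of $F^-$ and has stabilizer $U^-$; likewise $G/U\subseteq\mathcal A^+$ is the orbit of $F^+$ with stabilizer $U$. Both complements have codimension at least two, since the affine closures of $U^-\backslash G$ and $G/U$ are normal. The steps are as follows.

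First, check that $\operatorname{ev}_{F^-,F^+}$ is well defined. If $f$ is $G\times G$-invariant and $(g,h)\in U^-\times U$, then $(g,h)$ fixes $(F^-,F^+)$. Hence $A\mapsto f(F^-,A,F^+)$ is invariant under $A\mapsto gAh^{-1}$. This is the action appearing in $\kk[\Mat_\ell^m]^{U^-\times U}$, with the sign conventions read off from the displayed action. The map is clearly an algebra homomorphism. It preserves the multigrading, because the torus $T\times T\times T_W$ normalizes the stabilizers and the grading on each side comes from that torus.

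Second, prove injectivity. The subset $\{(F^-,A,F^+)\}$ meets every $G\times G$-orbit in the dense open set $(U^-\backslash G)\times\Mat_\ell^m\times(G/U)$. Indeed, $(F^-g^{-1},gAh^{-1},hF^+)$ is translated back to $(F^-,A',F^+)$. So an invariant vanishing on the slice vanishes on a dense set, hence identically.

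Third, prove surjectivity; this is the main step. Given $\varphi\in\kk[\Mat_\ell^m]^{U^-\times U}$, define
\[
 f(F^-g,A,h F^+):=\varphi(gAh).
\]
Here I would verify the matching of $g^{-1}$ and $h^{-1}$ against the displayed action. This is well defined exactly because $\varphi$ is $U^-\times U$-invariant. It is $G\times G$-invariant, and it is a regular function on the open set $(U^-\backslash G)\times\Mat_\ell^m\times(G/U)$. The latter is the quotient of $G\times\Mat_\ell^m\times G$ by a free unipotent action, and $\varphi(gAh)$ descends along it. Since $\mathcal Z$ is normal and the complement has codimension at least two, $f$ extends to $\kk[\mathcal Z]$ by Hartogs. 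The obstacle is confirming the codimension-two claim for $\mathcal A^\pm$. I would argue this via Grosshans' description of $\kk[G]^U$ as $\bigoplus_\lambda V(\lambda)^*$: the boundary of $G/U$ in its affine closure consists of vectors with some vanishing fundamental component, and this has codimension at least two. Alternatively, one can use the standard fact that $G/U$ is quasi-affine with $\kk[G/U]=\kk[G]^U$, which gives the extension directly.

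Finally, compose with the isomorphism $\kk[\Mat_\ell^m]^{U^-\times U}\cong\cT_{\ell,m}$. This is induced by $u\mapsto u^{-T}$, which identifies $U_\ell$ with $U^-$ so that left multiplication by $U^-$ becomes the action $A\mapsto u^{-T}A$ of \eqref{eq:two-sided-action}. Combining this with the preceding identification $\SI_{\boldsymbol\beta_\ell}(\mathsf K_{\ell,m})\cong\kk[\mathcal Z]^{G\times G}$ gives \eqref{eq:app-transfer-final}.
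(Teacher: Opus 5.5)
Your proposal is correct and follows the same route as the paper, which simply invokes the Grosshans transfer principle for evaluation at the standard flags $(F^-,F^+)$; you reprove that principle by hand, using density of the slice for injectivity and extension from the open set $U^-\backslash G\times\Mat_\ell^m\times G/U$ for surjectivity. One small correction: the codimension-two property of the boundaries of $\mathcal A^\pm$ does not follow from normality alone, but from normality together with $\kk[G/U]=\kk[G]^U$ (your second justification). Alternatively, you can bypass the extension step entirely with the algebraic identity $\kk[G\times\Mat_\ell^m\times G]^{G\times G}\cong\kk[\Mat_\ell^m]$, followed by taking $U^-\times U$-invariants.
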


\begin{proof}
The first isomorphism is the two-sided Grosshans transfer for the two basic
affine spaces, evaluated at the standard flags; see \cite{Grosshans}.
Substituting $a=u^{-T}$ identifies the resulting $U^-\times U$-action with
\eqref{eq:two-sided-action}.  The central scaling torus and the two terminal
diagonal tori commute with these maps, so the isomorphism preserves the full
multigrading.
\end{proof}

In particular, the determinant of the $r$-th central arrow maps to
$\Delta_r=\det A_r$.

\subsection{Evaluation of the Schofield presentations}

We use Schofield's determinantal construction of quiver semi-invariants
\cite{SchofieldSemi}; the particular presentations and paths below are those
of \cite{FeiKroneckerII}.  On the open set where the central determinants are nonzero, that
construction adjoins formal inverses of the central arrows.  If $I=(r_1,\ldots,r_{2q+1})$ has odd length, the
corresponding path evaluates as
\[ A_I=A_{r_1}A_{r_2}^{-1}A_{r_3}\cdots A_{r_{2q}}^{-1}A_{r_{2q+1}}. \]
For $i+j=k$, the presentation used for the left triangle is
\[ f_{i,j}^{I,J}:P_k\longrightarrow P_{-i}\oplus P_{-j}, \]
and the reflected presentation reverses the paths and interchanges source
and target.

\begin{lemma}[matrix form of the Schofield semi-invariants]
\label{lem:app-Schofield-matrix}
After restricting both flag arms to the standard flags, the Schofield
semi-invariant of $f_{i,j}^{I,J}$ is, up to a nonzero scalar,
\begin{equation}
 \det
 \begin{bmatrix}
  P_iA_IJ_k\\[1mm]
  P_jA_JJ_k
 \end{bmatrix}.
 \label{eq:app-left-block}
\end{equation}
The reflected semi-invariant is, up to the corresponding scalar,
\begin{equation}
 \det\left[
  P_kA_{\overleftarrow I}J_i
  \ \middle|\
  P_kA_{\overleftarrow J}J_j
 \right].
 \label{eq:app-right-block}
\end{equation}
Reflection of the flagged quiver exchanges these two determinants and
becomes matrix transposition under \eqref{eq:app-transfer-final}.
\end{lemma}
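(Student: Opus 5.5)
The argument has two parts: identify the Schofield semi-invariant concretely as a block determinant, then read off the behaviour under reflection. First I will write out the Schofield semi-invariant $c^{f}(M)=\det\operatorname{Hom}(f,M)$ for the presentation $f_{i,j}^{I,J}:P_k\to P_{-i}\oplus P_{-j}$. For a representation $M$ of $\mathsf K_{\ell,m}$, $\operatorname{Hom}(P_v,M)=M_v$, so $\operatorname{Hom}(f,M)$ is the block map
\[ M_{-i}\oplus M_{-j}\longrightarrow M_k, \]
whose two blocks are the evaluations of the paths $I$ and $J$. These paths run from $-i$ (resp.\ $-j$) up the negative arm to $-\ell$, through the alternating word of central arrows (with formal inverses on the open set $\prod\Delta_r\ne0$), and then down the positive arm to $k$. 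Taken with the sign convention that produces a square $k\times k$ matrix, the composite has the shape
\[ (\text{arm map } \ell\to k)\circ A_I\circ(\text{arm map } i\to\ell). \]

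Second, I specialise both arms to the standard flags $F^\pm$ via the evaluation isomorphism of Proposition~\ref{prop:app-transfer}. On the standard flags the arm compositions become the standard inclusions $J_a$ and projections $P_a$, up to the transpose convention fixed by $u\mapsto u^{-T}$. Each block is then $P_iA_IJ_k$ (resp.\ $P_jA_JJ_k$), possibly after transposing the whole $\operatorname{Hom}$ matrix, which does not change the determinant. Stacking the blocks gives \eqref{eq:app-left-block}. The overall scalar comes from the choice of bases in the $\operatorname{Hom}$ spaces and from the orientation of the block arrangement.

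Third, the reflected presentation reverses all paths and swaps source and target: it runs from $P_{-k}$-type data to $P_i\oplus P_j$ on the opposite arm. The same computation yields column blocks $P_kA_{\overleftarrow I}J_i$ and $P_kA_{\overleftarrow J}J_j$, which gives \eqref{eq:app-right-block}. Reflection of the quiver exchanges the two arms and reverses the central arrows. Under the identification, this sends $(F^-,A,F^+)$ to $(F^{+T},A^T,F^{-T})$, which on $\cT_{\ell,m}$ is $\tau$. For the intertwining with transposition I will use $(A_I)^T=A^T_{\overleftarrow I}$ together with $\det X=\det X^T$. These show that $\tau$ applied to \eqref{eq:app-left-block} is exactly \eqref{eq:app-right-block}.

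The main obstacle is the bookkeeping of conventions: checking that the arm maps at the standard flags give $P_a$ on one side and $J_a$ on the other, rather than their transposes or lower-triangular variants. This needs the substitution $u\mapsto u^{-T}$ to match the two-sided action \eqref{eq:two-sided-action}. I would fix this first on the smallest case $I=(r)$, $J$ empty ($k=i$), where the semi-invariant must be the leading minor $\det(P_iA_rJ_i)$. I would then propagate the same convention to the general case.
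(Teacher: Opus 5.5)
Your argument is essentially the paper's: evaluate the Schofield matrix at the standard flags, where the arm paths become $J_a$ and $P_a$, take the determinant, and obtain the reflected formula by reversing paths. Your extra check that $\tau$ carries \eqref{eq:app-left-block} to \eqref{eq:app-right-block} via $(A_I)^T=(A^T)_{\overleftarrow I}$ and $\det X=\det X^T$ is correct, but the convention step needs repair: transposing the whole $\operatorname{Hom}$ matrix turns a block $P_kA_IJ_i$ into $P_iA_I^TJ_k$, not $P_iA_IJ_k$, and $u\mapsto u^{-T}$ does not transpose the central maps, so the stacked-row form must come from the row-vector reading of the central maps built into the transfer, where $F^-$ composes with $A_r$ on the left (this is what the paper's ``transpose of the presentation matrix'' encodes); moreover your calibration case with $J$ empty cannot detect this, since for $j=0$ both forms reduce to $\det(P_iA_rJ_i)$, so calibrate instead on a case with $j>0$ such as $I=(2)$, $J=(1)$.
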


\begin{proof}
On the standard negative arm, the unique path from the $i$-dimensional
vertex to the central vertex evaluates as $J_i$; on the standard positive
arm, the unique path from the central vertex to the $i$-dimensional vertex
evaluates as $P_i$.  By definition, the Schofield matrix is obtained from
the transpose of the presentation matrix by evaluating each path.  Taking its determinant gives \eqref{eq:app-left-block}.  Reflection
reverses all paths and interchanges source and target, giving
\eqref{eq:app-right-block}.
\end{proof}

Table~\ref{tab:flagged-matrix-dictionary} summarizes these identifications.
We use the notation of \cite{FeiKroneckerII}: $({}_{i,j}^{n})$ for the left
triangle and $({}_{i,j}^{n\vee})$ for its reflection.

\begin{table}[H]
\centering
\small
\renewcommand{\arraystretch}{1.35}
\begin{tabular}{@{}L{.20\textwidth}|L{.72\textwidth}@{}}
\textbf{Index in \cite{FeiKroneckerII}} & \textbf{presentation and image after standard-flag evaluation}\\ \hline
$({}_{i,j}^{n})$, $k=i+j$
 & $f_{i,j}^{n,\mathbf q_n}:P_k\to P_{-i}\oplus P_{-j}$, and
   \(\displaystyle
   s(f_{i,j}^{n,\mathbf q_n})\longmapsto
   \det\!\begin{bmatrix}P_iA_nJ_k\\ P_jA_{\mathbf q_n}J_k\end{bmatrix}
   =\phi_{i,j}^{(n),\mathrm L,\circ}.\)\\[2mm]
$({}_{i,j}^{n\vee})$, $k=i+j$
 & The reflected presentation $\check f_{i,j}^{n,\mathbf q_n}$ maps to
   \(\displaystyle
   \det\!\left[P_kA_nJ_i\ \middle|\
                    P_kA_{\overleftarrow{\mathbf q_n}}J_j\right]
   =\phi_{i,j}^{(n),\mathrm R,\circ}.\)\\[2mm]
$n$
 & The coefficient $\det M(a_n)$ maps to $\Delta_n=\det A_n$.
\end{tabular}
\caption{Labels from \cite{FeiKroneckerII}, presentations, and matrix determinants.}
\label{tab:flagged-matrix-dictionary}
\end{table}

\subsection{Minimal lift, boundary identifications, and corner corrections}

Definition~2.2 and Lemma~2.5 of \cite{FeiKroneckerII} specify the
minimal lift as a function, not just its divisor.  Under the
identifications in Table~\ref{tab:flagged-matrix-dictionary}, its image is
\begin{equation}
 \widetilde s_{i,j}^{n}\longmapsto
 \begin{cases}
  \phi_{i,0}^{(n),\mathrm L,\circ},&j=0,\\[1mm]
  \Pi_n\phi_{i,j}^{(n),\mathrm L,\circ},
    &j>0\text{ and }(i+j<\ell\text{ or }n\text{ even}),\\[1mm]
  \phi_{i,j}^{(n-1),\mathrm L},
    &j>0,\ i+j=\ell,\ n\text{ odd},
 \end{cases}
 \label{eq:app-minimal-lift-dictionary}
\end{equation}
with the transpose formula for $\widetilde{\check s}_{i,j}^{n}$.  Thus
\eqref{eq:app-minimal-lift-dictionary} is precisely
\eqref{eq:regular-left}--\eqref{eq:regular-right}.  The identifications of labels on the triangle boundaries are:
\begin{equation}
\begin{array}{c|c|c}
 \text{edge in \cite{FeiKroneckerII}}&\text{identification in \cite{FeiKroneckerII}}&\text{matrix notation}\\ \hline
 \text{common base}&({}_{i,0}^{n})=({}_{i,0}^{n\vee})
    &\det(P_iA_nJ_i)\\
 n\ge4\text{ even}&({}_{0,j}^{n})=({}_{0,j}^{n-1})
    &x_{0,j}^{(n)}=x_{0,j}^{(n-1)}\\
 n\ge3\text{ odd},\ i+j=\ell
    &({}_{i,j}^{n})=({}_{i,j}^{n-1})
    &x_{i,j}^{(n)}=x_{i,j}^{(n-1)}
\end{array}
\label{eq:app-edge-identification-table}
\end{equation}
These are exactly the boundary identifications in
\eqref{eq:even-identification}--\eqref{eq:odd-identification}.

The only lifted exchange relations requiring a new determinant arrow are
the three corner families in \cite[Corollaries~5.3--5.4]{FeiKroneckerII};
they are listed below.
\begin{equation}
\begin{array}{c|c|c}
 \text{corner in \cite{FeiKroneckerII}}&\text{coefficient arrow}&\text{matrix relation}\\ \hline
 ({}_{\ell-1,0}^{n})&({}_{\ell-1,0}^{n})\longrightarrow n
   &\Delta_n\text{ at the base corner}\\
 n\text{ even}:({}_{\ell-1,1}^{n}),({}_{\ell-1,1}^{n\vee})
   &n\longrightarrow\text{both}
   &c_{n;\ell-1,1}=\Delta_n\\
 n\ge3\text{ odd}:({}_{0,\ell-1}^{n}),({}_{0,\ell-1}^{n\vee})
   &n\longrightarrow\text{both}
   &\kappa_{n;\ell-1}=\Delta_n
\end{array}
\label{eq:app-corner-correction-table}
\end{equation}
For the base corner, substituting
the cases of \eqref{eq:app-minimal-lift-dictionary} explains the
change with parity: for odd $n$, the boundary variable is identified with
one in triangle $n-1$, so the determinant coefficient occurs in the
opposite exchange monomial.  No other determinant correction is present.

Lemma~\ref{lem:app-Schofield-matrix} and
Table~\ref{tab:flagged-matrix-dictionary} identify the unlifted functions.
Equation~\eqref{eq:app-minimal-lift-dictionary} gives the minimal lifts, while
\eqref{eq:app-edge-identification-table} and
\eqref{eq:app-corner-correction-table} give the boundary identifications and
arrows involving determinant vertices.  Thus the transfer isomorphism carries the full lifted
seed of \cite{FeiKroneckerII}, including all exchange relations, to the matrix
seed of Sections~\ref{sec:matrix-invariants} and \ref{sec:folding}; reflection
of the flagged presentation becomes matrix transposition.

\section{Transverse curves for the remaining coprimality cases}
\label{app:exceptional-coprimality}

The three families in \eqref{eq:exceptional-coprimality-families} admit curves
transverse to the zero loci of both $z_u$ and the exchange polynomial $N_u$. Matrices
not specified below are identities.  Smaller matrices are extended by an
identity block, and $\doteq$ denotes equality up to a nonzero scalar.

For a mutable index $u$, write
\[ N_u=M_{u,+}+M_{u,-}=z_uz_u'. \]

\begin{proposition}\label{prop:exceptional-specializations}
Every mutable variable in
\eqref{eq:exceptional-coprimality-families} admits a curve $\gamma:\mathbb A^1\to X_{\ell,m}$ satisfying
\[ z_u(t)=t\eta(t),\qquad N_u(t)=tq(t), \qquad \eta(0)q(0)\ne0. \]
\end{proposition}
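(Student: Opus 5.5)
For each of the three families in \eqref{eq:exceptional-coprimality-families} we will write down an explicit one-parameter family of symmetric tuples and compute the orders of vanishing directly. Every matrix not mentioned is the identity, so the chamber functions collapse to small determinants. Concretely, we look for curves on which all neighbours of $u$ in the exchange relation stay nonzero at $t=0$, one exchange monomial vanishes identically in $t$, and the other vanishes to exact order $1$. At the same time $z_u$ must vanish to exact order $1$. Then $N_u=tq(t)$ with $q(0)\ne0$, and Lemma~\ref{lem:transverse-curve-criterion} applies.

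For $z_{1,0}^{(n)}=(C_n)_{11}$, relation \eqref{eq:base-edge} (or its corner version when $\ell=2$) reads $z_{1,0}z_{1,0}'=z_{2,0}^{(n)}(z_{0,1}^{(n)})^2+(z_{1,1}^{(n)})^2$, using $z_{0,0}=1$. We take $C_n=\begin{bmatrix}t&1\\1&0\end{bmatrix}\oplus I$ and all other matrices equal to the identity, possibly with a sign adjustment. Then $z_{1,0}=t$, $z_{2,0}^{(n)}=-1$, and $z_{0,1}^{(n)}$ is the $(1,1)$ entry of $C_{\mathbf q_n}$ (times $\Pi_n$), which equals $1$. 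The chamber $z_{1,1}^{(n)}$ is the $2\times2$ determinant with rows $(C_n)_{1,\{1,2\}}=(t,1)$ and $(C_{\mathbf q_n})_{1,\{1,2\}}=(1,0)$, which equals $-1$. This gives $N_u=-1+1=0$ identically, which is too degenerate. We therefore perturb one more entry by $t$, for instance setting $(C_n)_{22}=t$ or replacing $C_1$ by $\operatorname{diag}(1+t,1,\ldots)$. The aim is that $N_u$ becomes a nonzero multiple of $t$ while $z_{1,0}$ remains exactly $t$. Since each term is a polynomial in $t$, this reduces to checking one linear coefficient. The case $z_{0,1}^{(2)}=(C_1)_{11}$ is the same computation with the roles of $C_1$ and $C_2$ exchanged, using \eqref{eq:first-boundary}.

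For the interior family $z_{1,2}^{(n)}$ with $n\ge3$, the relation \eqref{eq:interior} involves $z_{1,1},z_{2,2},z_{0,3},z_{1,3},z_{0,2},z_{2,1}$ in triangle $n$. Some of these may be identified boundary variables or carry the factor $\Pi_n$, and $\Pi_n=1$ on our curves. With all $C_r$ equal to the identity for $r\ne n$, the alternating word $C_{\mathbf q_n}$ is $I$. Each chamber function then becomes a determinant built from the first $i$ rows of $C_n$ and the first $j$ rows of $I$, restricted to the first $k$ columns. Consequently $z_{i,j}^{(n)}=\pm\det C_n[\{j+1,\ldots,k\},\{j+1,\ldots,k\}]$ in suitable normalization. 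We will therefore choose $C_n$ to be block diagonal on indices $2,3,4$ with a $t$-dependent $2\times2$ block on $\{2,3\}$ whose determinant is $t$. The relevant neighbours reduce to entries or minors of this block and of neighbouring positions, and we then read off the orders.

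The main obstacle is this last family. Each monomial is a product of three minors, and we must check that they do not cancel to order $\ge2$. If the first choice produces cancellation, we will add an independent parameter in $C_{n-1}$ so that the two monomials have different leading terms. Signs and the normalizing constants $c_{i,j}^{(n)}$ only affect the computation through nonzero scalars, so exact order $1$ is insensitive to them.
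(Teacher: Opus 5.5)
There is a genuine gap. The failing step is your assertion that when $C_r=I$ for every $r\ne n$, the alternating word $C_{\mathbf q_n}$ is the identity. This holds only for even $n$. For odd $n$ one has $\mathbf q_n=[n]$, so the word ends in $C_n$ and equals $C_n$ on all of your curves.

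In the base-edge case this gives $z_{0,1}^{(n)}=(C_n)_{11}=z_{1,0}^{(n)}$, and $z_{1,1}^{(n)}$ becomes a determinant with two equal rows. The numerator therefore collapses to $z_{2,0}^{(n)}\bigl(z_{1,0}^{(n)}\bigr)^2$, which has order at least $2$. For $\ell=2$, the odd corner relation puts $\Delta_n$ on the other monomial, and $N_u$ again has order $2$. In the interior case, every chamber determinant with $i,j>0$ and $i+j<\ell$ has repeated rows. So $z_{1,2}^{(n)}\equiv0$ and cannot vanish to order $1$. Diagonal changes of earlier matrices, such as your $C_1=\operatorname{diag}(1+t,1,\ldots)$, only rescale rows of $C_n$ and do not help.

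The missing idea is to take one earlier factor of the word to be a non-diagonal symmetric involution $G$, a permutation matrix, so that $C_{\mathbf q_n}=GC_n$ genuinely mixes rows. This is how the paper treats odd $n$. It needs a separate curve for $\ell=4$, where $z_{1,3}^{(n)}$ and $z_{2,2}^{(n)}$ lie on the identified side $i+j=\ell$ and depend only on earlier matrices. In the paper's odd base-edge curve both exchange monomials survive. Their relative constant is fixed using $t\mid N_u$, which follows from regularity of $z_u'$, and this yields $N_u=aDt$. So your claim that normalizing constants never matter holds only when one monomial vanishes identically. Your stated strategy is also inconsistent: no monomial can vanish if all neighbours of $u$ stay nonzero at $t=0$.

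The explicit choices for even $n$ fail as well. For $n=2$, write the leading $2\times2$ blocks of $C_2$ and $C_1$ as $\left(\begin{smallmatrix}a&b\\b&c\end{smallmatrix}\right)$ and $\left(\begin{smallmatrix}p&q\\q&r\end{smallmatrix}\right)$. Then one finds $N_u=a\,(cp^2-2bpq+aq^2)$.
\begin{itemize}
\item Your curve $a=t$, $b=1$, $c=0$, $p=1$, $q=0$ gives $N_u\equiv0$.
\item The perturbation $c=t$ gives $N_u=t^2$, which has order $2$.
\item The perturbation $p=1+t$ keeps $N_u\equiv0$.
\end{itemize}
The paper instead takes $C_n=\operatorname{diag}(t,1)\oplus I_{\ell-2}$, so that $z_{1,1}^{(n)}=0$ and $N_u\doteq z_{2,0}^{(n)}=t$.

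For the interior family your reduction is wrong. For even $n$ with identity elsewhere, Laplace expansion along the rows coming from $I$ gives $z_{i,j}^{(n)}=\pm\det C_n[\{1,\ldots,i\},\{j+1,\ldots,k\}]$, which is not a principal minor. Hence $z_{1,2}^{(n)}=\pm(C_n)_{13}$, and the parameter must sit in the first row rather than in a block on $\{2,3\}$. The paper's $4\times4$ matrix places $t$ at position $(1,3)$. It also makes $z_{1,3}^{(n)}=\pm(C_n)_{14}$ and $z_{2,1}^{(n)}=\pm\det C_n[\{1,2\},\{2,3\}]$ vanish. Then one interior monomial is identically zero and the other is $\doteq t$.
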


\begin{proof}
The permutation matrices below are symmetric involutions.  They are
realized in an alternating word by taking one earlier factor equal to the
chosen involution and all other factors equal to the identity.

\smallskip
\noindent\emph{Base-edge variable for even $n$.}
Take
\[ C_n(t)=\begin{pmatrix}t&0\\0&1\end{pmatrix}\oplus I_{\ell-2}. \]
Then
\[ z_{1,0}^{(n)}=t,\qquad z_{2,0}^{(n)}\doteq t, \qquad z_{0,1}^{(n)}\doteq1, \qquad z_{1,1}^{(n)}=0. \]
The base-edge numerator has order one.  This includes $\ell=2$, where
$z_{2,0}^{(n)}=\Delta_n=t$.

\smallskip
\noindent\emph{Base-edge variable for odd $n$.}
Let
\[ G=\begin{pmatrix}0&1\\1&0\end{pmatrix}\oplus I_{\ell-2}, \qquad C_n(t)=\begin{pmatrix}t&1\\1&1\end{pmatrix}\oplus I_{\ell-2}, \qquad D=t-1. \]
Choose the preceding word so that $A_{\mathbf q_n}=GC_n$.  For
$\ell\ge3$,
\[ z_{1,0}^{(n)}=t, \qquad z_{2,0}^{(n)}\doteq D, \qquad z_{0,1}^{(n)}\doteq1, \qquad z_{1,1}^{(n)}\doteq D. \]
Thus $N_{1,0}=aD+bD^2$ with $a,b\ne0$.  Polynomial regularity gives
$t\mid N_{1,0}$, so $a=b$ and $N_{1,0}=aDt$.  For $\ell=2$, the same
argument applied to the corner numerator $aD+b$ gives $N_{1,0}=at$.
Interchanging $C_1$ and $C_2$ in the even $n=2$ curve treats
$z_{0,1}^{(2)}$.

\smallskip
\noindent\emph{Interior variable for even $n$.}
For $z_{1,2}^{(n)}$, take
\[
 C_n(t)=
 \begin{pmatrix}
  0&1&t&0\\
  1&0&0&1\\
  t&0&0&0\\
  0&1&0&0
 \end{pmatrix}\oplus I_{\ell-4}.
\]
The relevant chamber determinants are
\[ z_{1,2}^{(n)}\doteq t, \qquad (z_{1,1},z_{2,2},z_{0,3};z_{1,3},z_{0,2},z_{2,1}) \doteq(-1,t,1;0,1,0). \]
The two terms in \eqref{eq:interior} therefore have sum of order one.

\smallskip
\noindent\emph{Interior variable for odd $n$ when $\ell=4$.}
Let $G$ exchange $e_1$ with $e_3$ and $e_2$ with $e_4$.  Choose the preceding
matrices so that $C_{n-1}=I_4$ and $A_{\mathbf q_{n-1}}=G$; hence
$A_{\mathbf q_n}=GC_n$.  Put
\[
 C_n(t)=
 \begin{pmatrix}
  1&0&0&0\\
  0&0&1&0\\
  0&1&0&t\\
  0&0&t&1
 \end{pmatrix}.
\]
Then
\[ z_{1,2}^{(n)}\doteq t, \qquad (z_{1,1},z_{2,2},z_{0,3};z_{1,3},z_{0,2},z_{2,1}) \doteq(1,1,t;0,0,-1). \]
Thus the first term in \eqref{eq:interior} has order one and the second
vanishes.

\smallskip
\noindent\emph{Interior variable for odd $n$ when $\ell\ge5$.}
Let $G$ exchange $e_1$ with $e_4$ and $e_2$ with $e_5$,
and put
\[
 C_n(t)=
 \begin{pmatrix}
  0&-1&0&-1&0\\
  -1&1&0&0&1\\
  0&0&-1&0&t\\
  -1&0&0&0&0\\
  0&1&t&0&0
 \end{pmatrix}\oplus I_{\ell-5}.
\]
With $A_{\mathbf q_n}=GC_n$,
\[ z_{1,2}^{(n)}\doteq-t, \qquad (z_{1,1},z_{2,2},z_{0,3};z_{1,3},z_{0,2},z_{2,1}) \doteq(-1,t,1;1,-1,0). \]
Again the numerator in \eqref{eq:interior} has order one.
\end{proof}

\section{Row formulas for a folded half-diamond}
\label{app:triangular-mutation}

We prove the row formulas for the triangular mutation sequence used in
Lemma~\ref{lem:folded-half-diamond-removal}.  Only the mutable principal
matrix enters the argument; determinant coefficients and frozen boundary
columns are omitted.

Let
\[ \mathsf T_N=\{(a,b)\in\mathbb Z_{\ge0}^2:1\le a+b\le N\}, \qquad \mathsf T_N^+=\{(a,b)\in\mathsf T_N:a>0\}. \]
For a label $(a,b)$, let $e_{a,b}$ be the corresponding standard basis row
vector, and set $e_{a,b}=0$ when the label lies outside $\mathsf T_N$.
Consider the extended matrix with row indices $\mathsf T_N^+$ and column
indices $\mathsf T_N$.  The row indexed by $(a,b)$ is
\begin{align}
 r_{a,0}
 &=e_{a+1,0}+2e_{a-1,1}-e_{a-1,0}-2e_{a,1},
 \label{eq:appendix-base-row}\\
 r_{a,b}
 &=e_{a,b-1}+e_{a+1,b}+e_{a-1,b+1}
   -e_{a,b+1}-e_{a-1,b}-e_{a+1,b-1}
 \quad(b>0).
 \label{eq:appendix-interior-row}
\end{align}
These are the mutable rows of an exposed even folded half-diamond.
Replacing the entire matrix by its negative only reverses every source
and sink, so the chosen orientation causes no loss of generality.

\begin{figure}[H]
\centering
\begin{tikzpicture}[
  every node/.style={font=\scriptsize},
  mutated/.style={circle,draw,minimum size=5.5mm,inner sep=0pt},
  plain/.style={circle,draw,minimum size=4.5mm,inner sep=0pt},
  edge/.style={line width=.35pt}
]
  \node[mutated] (v30) at (0,3) {$1$};
  \node[mutated] (v31) at (1.15,3) {$2$};
  \node[mutated] (v32) at (2.30,3) {$3$};
  \node[mutated] (v33) at (3.45,3) {$4$};
  \node[mutated] (v20) at (.575,2) {$5$};
  \node[mutated] (v21) at (1.725,2) {$6$};
  \node[mutated] (v22) at (2.875,2) {$7$};
  \node[mutated] (v10) at (1.15,1) {$8$};
  \node[mutated] (v11) at (2.30,1) {$9$};
  \node[plain,line width=.8pt] (d) at (1.725,0) {$d_i$};
  \node[plain] (p) at (.05,1) {};
  \node[plain] (c) at (-.525,2) {};
  \node[plain] (t) at (-1.10,3) {};

  \foreach \x/\y in {v30/v31,v31/v32,v32/v33,
                       v20/v21,v21/v22,v10/v11,
                       v30/v20,v31/v20,v31/v21,v32/v21,
                       v32/v22,v33/v22,v20/v10,v21/v10,
                       v21/v11,v22/v11,v10/d,v11/d,
                       t/c,c/p,p/d}
    \draw[edge] (\x)--(\y);
  \draw[dashed] (t)--(p);
  \node[anchor=east] at (-1.20,2) {adjacent side};
  \draw[-{Latex[length=2mm]},line width=.6pt]
    (3.9,3.25) .. controls (4.45,2.25) and (3.85,.95) .. (2.65,.25);
  \node[anchor=west,align=left] at (4.0,1.85)
    {anti-diagonals\\from top to bottom};
  \node[below=2pt of d] {$[r;i,0]$};
\end{tikzpicture}
\caption{The anti-diagonal mutation order when $N-i=3$.  The numbered
vertices are $v_{q,u}=(i+u,q-u)$ in mutation order; the numbers do not
indicate arrow orientation.}
\label{fig:half-diamond-mutation-order}
\end{figure}

Fix $1\le i<N$, delete the base vertices
$(1,0),\ldots,(i-1,0)$, and put
\[ h:=N-i, \qquad c:=(i-1,h+1), \]
\[ v_{q,u}:=(i+u,q-u) \qquad(1\le q\le h,\ 0\le u\le q). \]
The mutation sequence \eqref{eq:positive-removal-sequence}, with the triangle
index suppressed, is
\[ (v_{h,0},v_{h,1},\ldots,v_{h,h}; v_{h-1,0},\ldots,v_{h-1,h-1};\ldots; v_{1,0},v_{1,1}). \]

\begin{lemma}\label{lem:anti-diagonal-row-formula}
Immediately before mutation at $v_{q,u}$, the row $r_{v_{q,u}}$ is given
below.  In the cases $0<u<q$, put $a=i+u$ and $b=q-u$.  For the top anti-diagonal $q=h$,
\begin{equation}
 r_{v_{h,u}}=
 \begin{cases}
  -e_{i-1,h}+e_c+e_{i,h-1}-e_{i+1,h-1},&u=0,\\
  e_c-e_{a-1,b+1}+e_{a,b-1}-e_{a+1,b-1},&0<u<h,\\
  2e_c+e_{N-1,0}-2e_{N-1,1},&u=h.
 \end{cases}
 \label{eq:top-mutation-rows}
\end{equation}
For $1\le q<h$,
\begin{equation}
 r_{v_{q,u}}=
 \begin{cases}
  -e_{i-1,q}+e_{i,q-1}+e_{i,q+1}-e_{i+1,q-1},&u=0,\\
  -e_{a-1,b+1}+e_{a,b-1}+e_{a,b+1}-e_{a+1,b-1},&0<u<q,\\
  e_{i+q-1,0}-2e_{i+q-1,1}+e_{i+q+1,0},&u=q.
 \end{cases}
 \label{eq:lower-mutation-rows}
\end{equation}
All other entries are zero.
\end{lemma}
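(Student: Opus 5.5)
The plan is an induction along the mutation sequence, anti-diagonal by anti-diagonal from $q=h$ down to $q=1$, and within each anti-diagonal from $u=0$ to $u=q$. The inductive claim is slightly stronger than the lemma: immediately before mutating at $v_{q,u}$, the rows of all not-yet-mutated vertices in the region $\{v_{q',u'}\}$, together with the rows already recorded for mutated vertices, have an explicit shape. The unmutated rows are the original rows \eqref{eq:appendix-base-row}--\eqref{eq:appendix-interior-row}, except for corrections in the entries indexed by already mutated neighbours and by $c$. We only need the row being mutated. By the matrix mutation rule, $r_{v_{q,u}}$ just before its mutation equals the original row plus contributions $[b_{v,k}]_+[b_{k,\bullet}]_+-[-b_{v,k}]_+[-b_{k,\bullet}]_+$ from earlier mutated vertices $k$ adjacent to $v=v_{q,u}$ at the time of mutation. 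So the work is to track which earlier vertices are adjacent and with what sign.

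\textbf{Base step: the top anti-diagonal $q=h$.} The vertices $v_{h,u}=(i+u,h-u)$ lie on the line $a+b=N$. Their original rows point outward to nonexistent labels (set to zero) and into $(a,b-1)$, $(a+1,b-1)$ and $(a-1,b)$. Mutation at $v_{h,0}=(i,h)$ uses the original interior row. The labels $(i+1,h)$ and $(i-1,h+1)$ lie outside $\mathsf T_N$, except that $(i-1,h+1)=c$ survives because it lies on the adjacent unmutated side. This gives the $u=0$ case of \eqref{eq:top-mutation-rows}. For $0<u<h$, the only earlier mutated neighbour is $v_{h,u-1}=(a-1,b+1)$, and it is joined to $v_{h,u}$ through the $-e_{a-1,b+1}$ term. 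Mutation at it transfers the entry $e_c$ along the anti-diagonal: the composite path through $v_{h,u-1}$ to $c$ adds $+e_c$. It also cancels the $-e_{a-1,b}$ entry against the triangle $v_{h,u-1}\to(a-1,b)\to v_{h,u}$. One step of the mutation formula checks this, and the induction propagates $e_c$ to the end of the row. At $u=h$ the vertex is $(N,0)$, a base vertex with row \eqref{eq:appendix-base-row}. Here the weight-$2$ arrow to $(N-1,1)=v_{h,h-1}$ doubles the propagated $e_c$, which produces $2e_c$, and the surviving base entries give $e_{N-1,0}-2e_{N-1,1}$.

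\textbf{Inductive step: $q<h$.} After the full anti-diagonal $q+1$ has been mutated, each vertex $v_{q,u}$ is adjacent to $v_{q+1,u}$ and $v_{q+1,u+1}$. Two arrow reversals then reconstitute a row of the same interior pattern, but translated one anti-diagonal down. The key feature is that $e_c$ is absorbed. Vertex $c$ is adjacent only to $v_{h,\bullet}$, and only after they are mutated. Its entries in rows of lower anti-diagonals therefore cancel in pairs through the formula: positive contributions through $v_{q+1,u}$ are offset by negative contributions through $v_{q+1,u+1}$. This gives \eqref{eq:lower-mutation-rows}. The last vertex on each anti-diagonal is a base vertex $(i+q,0)$. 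Its factor-$2$ arrows to $(i+q-1,1)$ produce the coefficient $-2e_{i+q-1,1}$. The term $+e_{i+q+1,0}$ survives because the base vertex $(i+q+1,0)$ is mutated and restored, and therefore remains adjacent with its original sign.

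\textbf{Main obstacle.} The hard part is the bookkeeping of the entries toward $c$ and toward the base vertices with their valuations $2$ and $1$. Sign-coherence must be verified at each step so that the positive and negative parts do not mix. I would handle this first by writing the full inductive hypothesis for every row of the region after each complete anti-diagonal, including rows of unmutated vertices and the column $c$. I would then verify the hypothesis by a direct local computation on the $3\times3$ neighbourhood of the mutated vertex, separately in the cases $u=0$, $0<u<q$ and $u=q$. Small cases ($h=1,2$) would be checked by the accompanying exact-arithmetic code as a sanity test.
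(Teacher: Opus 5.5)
Your framework is the same as the paper's: induct along the sequence, using that a row changes only when a current neighbour is mutated. Your top anti-diagonal computation is essentially right, apart from a slip: the labels outside $\mathsf T_N$ are $(i+1,h)$ and $(i,h+1)$, and $(i-1,h+1)=c$ lies inside. The gap is in the step $q<h$, which is where the content lies. There the mechanism you describe does not occur. It is false that, after anti-diagonal $q+1$ has been mutated, $v_{q,u}$ is adjacent to $v_{q+1,u}$ and $v_{q+1,u+1}$. Take $0<u<q$ and $(a,b)=v_{q,u}$. When $v_{q+1,u}=(a,b+1)$ is mutated, the connecting entry is $-1$, so you subtract the negative part $e_{a-1,b+2}+e_{a+1,b}$ of its row. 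This removes the arrow to $v_{q+1,u+1}$ before that vertex is mutated. It also creates a new entry $-1$ at $v_{q+1,u-1}=(a-1,b+2)$, so the row becomes
\[ -e_{a-1,b}+e_{a-1,b+1}-e_{a-1,b+2}+e_{a,b-1}+e_{a,b+1}-e_{a+1,b-1}. \]
The mutation at $v_{q+1,u+1}$ then leaves this row unchanged. The four-term row in \eqref{eq:lower-mutation-rows} appears only after the mutation at the left predecessor $v_{q,u-1}=(a-1,b+1)$ on the same anti-diagonal. There the entry is $+1$, and the positive part $e_{a-1,b}+e_{a-1,b+2}$ of that vertex's row cancels both stray negative terms. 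Your sketch never uses this step, and without it the induction does not close.

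The base vertex $(a,0)$, $a=i+q$, has the same problem. After $\mu_{(a,1)}$ and $\mu_{(a+1,0)}$ its row is $-e_{a-1,0}+2e_{a-1,1}-2e_{a-1,2}+e_{a+1,0}$. The entry at $(a+1,0)$ becomes $-1$ under $\mu_{(a,1)}$ and is reversed back by $\mu_{(a+1,0)}$. That vertex is mutated once, not ``mutated and restored.'' The mutation at $(a-1,1)$, with entry $2$, then removes $-2e_{a-1,2}$ and changes $-e_{a-1,0}$ to $+e_{a-1,0}$. Your account does not explain either of these changes.

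Your reason for the absence of $c$ is also wrong: nothing cancels in pairs. Whenever a vertex of anti-diagonal $h-1$ is adjacent to a top vertex at the moment the latter is mutated, the connecting entry is negative, while the $c$-entry of the top row is positive. Hence $c$ never enters these rows, and therefore none below. If $v_{h-1,u}$ were still adjacent to $v_{h,u+1}$ with its original entry $+1$, as your adjacency claim suggests, you would pick up a positive multiple of $e_c$.

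The paper handles this by noting that, among later vertices, the vertex $v=(a,b)$ being mutated is adjacent only to $R=(a+1,b-1)$ and $D=(a,b-1)$. It tracks exactly these two rows, including the intermediate six-term row of $D$ and its correction by the mutation at $D$'s left predecessor.
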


\begin{proof}
We use the extended-matrix mutation formula
\begin{equation}
 b'_{xj}=b_{xj}+[b_{xv}]_+[b_{vj}]_+
                 -[-b_{xv}]_+[-b_{vj}]_+
 \qquad(x,j\ne v),
 \label{eq:appendix-mutation-formula}
\end{equation}
together with $b'_{xv}=-b_{xv}$.  We induct along the mutation sequence, retaining every column,
including those on the adjacent unmutated side.

The $u=0$ case of \eqref{eq:top-mutation-rows} is
\eqref{eq:appendix-interior-row} for $(a,b)=(i,h)$: the two terms outside
the triangle vanish because $i+h=N$.  For the vertex $v=(a,b)=v_{q,u}$ being mutated, with $b>0$, set
\[ R:=(a+1,b-1),\qquad D:=(a,b-1). \]
Among vertices mutated later, $v$ is adjacent only to $R$ and $D$.
Thus mutation at $v$ leaves the rows indexed by all other later vertices
unchanged.  It suffices to compute $r_R$ and $r_D$, treating separately
the case where $R$ is a base vertex.

\smallskip
\noindent\emph{The right neighbor.}
Assume first that $b>1$.  Immediately before $\mu_v$, the row of
$R=(a+1,b-1)$ is
\begin{equation}
 r_R=
 \begin{cases}
  e_v-e_{a,b-1}+e_{a+1,b-2}-e_{a+2,b-2},&q=h,\\
  e_v-e_{a,b-1}-e_{a,b+1}
      +e_{a+1,b-2}+e_{a+1,b}-e_{a+2,b-2},&q<h.
 \end{cases}
 \label{eq:right-intermediate-row}
\end{equation}
For $q=h$ this is the original row
\eqref{eq:appendix-interior-row}; for $q<h$ it is the intermediate row obtained after mutating
the vertex $(a+1,b)$ directly above $R$, as recorded in the
calculation for the lower neighbor below.  Since $b_{Rv}=1$, substitution of
\eqref{eq:top-mutation-rows} or \eqref{eq:lower-mutation-rows} into
\eqref{eq:appendix-mutation-formula} cancels the negative terms involving
$a,b$ in \eqref{eq:right-intermediate-row}, changes $e_v$ to $-e_v$, and gives
the $0<u<q$ case of \eqref{eq:top-mutation-rows} or
\eqref{eq:lower-mutation-rows} at the next vertex.  Thus the induction advances from
$v_{q,u}$ to $v_{q,u+1}$.

If $b=1$, then $R=(a+1,0)$ is a transpose-fixed base vertex and
$b_{Rv}=2$.  Before $\mu_v$ its row is
\[
 r_R=
 \begin{cases}
  -e_{a,0}+2e_v,&q=h,\\
  -e_{a,0}+2e_v-2e_{a,2}+e_{a+2,0},&q<h.
 \end{cases}
\]
Formula \eqref{eq:appendix-mutation-formula} adds twice the positive part
of $r_v$ and reverses the $v$-entry.  It gives respectively
\[ 2e_c+e_{a,0}-2e_v, \qquad e_{a,0}-2e_v+e_{a+2,0}, \]
which are the $u=h$ and $u=q$ cases of
\eqref{eq:top-mutation-rows} and \eqref{eq:lower-mutation-rows}, respectively.

\smallskip
\noindent\emph{The lower neighbor.}
Suppose $b>1$.  If $u=0$, a direct substitution in
\eqref{eq:appendix-mutation-formula} gives
\[ r_D'=-e_{i-1,q-1}+e_{i,q-2}+e_v-e_{i+1,q-2}, \]
which is the $u=0$ case of \eqref{eq:lower-mutation-rows} for the next
anti-diagonal.  If $u>0$, the same calculation gives the intermediate row
\begin{equation}
 \overline r_D=
 -e_{a-1,b-1}+e_{a-1,b}-e_{a-1,b+1}
 +e_{a,b-2}+e_{a,b}-e_{a+1,b-2}.
 \label{eq:intermediate-lower-row}
\end{equation}
Before $D$ is reached in the next anti-diagonal, its left predecessor
$P=(a-1,b)$ is mutated.  Since $b_{DP}=1$, the terms
$e_{a-1,b-1}+e_{a-1,b+1}$ in the positive part of $r_P$ cancel the two
negative terms in \eqref{eq:intermediate-lower-row}.  Mutation at $P$
also reverses the $P$-entry, leaving
\[ -e_{a-1,b}+e_{a,b-2}+e_{a,b}-e_{a+1,b-2}, \]
which is the $0<u<q$ case of \eqref{eq:lower-mutation-rows} for $D$.
The $q<h$ case of \eqref{eq:right-intermediate-row} follows because
$R$ is the lower neighbor of $(a+1,b)$ in the preceding anti-diagonal.

For the last two vertices of an anti-diagonal, let
$v=(a,1)$ and $R=(a+1,0)$.  After the consecutive mutations
$\mu_v\mu_R$, the row of $D=(a,0)$ is
\begin{equation}
 \overline r_D=
 -e_{a-1,0}+2e_{a-1,1}-2e_{a-1,2}+e_{a+1,0}.
 \label{eq:intermediate-lower-base-row}
\end{equation}
On the next anti-diagonal, the last interior vertex
$P=(a-1,1)$ is mutated immediately before $D$.  Here $b_{DP}=2$, and the
positive part of $r_P$ relevant to
\eqref{eq:intermediate-lower-base-row} is
$e_{a-1,0}+e_{a-1,2}$.  Hence mutation at $P$ turns
\eqref{eq:intermediate-lower-base-row} into
$e_{a-1,0}-2e_{a-1,1}+e_{a+1,0}$,
the $u=q$ case of \eqref{eq:lower-mutation-rows}.

These calculations cover all rows indexed by later vertices that each
mutation changes.  Starting from $v_{h,0}$, they determine the rows on
the top anti-diagonal and then on each lower anti-diagonal.  This proves
\eqref{eq:top-mutation-rows}--\eqref{eq:lower-mutation-rows} by induction.
\end{proof}

\begin{proposition}\label{prop:triangular-mutation-sequence}
After deleting $(1,0),\ldots,(i-1,0)$ and applying
$\mathbf p_i^+$, the row of $(i,0)$ is
\begin{equation}
 r_{i,0}=e_{i+1,0}
 \qquad(1\le i<N).
 \label{eq:appendix-final-base-row}
\end{equation}
For $i=N$, the sequence is empty and, after the previous base vertices have
been deleted,
\begin{equation}
 r_{N,0}=2e_{N-1,1}.
 \label{eq:appendix-terminal-base-row}
\end{equation}
The same conclusions hold after deleting any additional unmutated boundary
vertices.  After deleting $(i,0)$, applying the mutations in reverse order
restores the original matrix on all surviving vertices.
\end{proposition}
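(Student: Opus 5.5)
The plan is to read the three assertions off Lemma~\ref{lem:anti-diagonal-row-formula}, using only that the base vertex $(i,0)$ is adjacent, at each stage, to the vertices whose rows the lemma controls. Because the matrix is skew-symmetrizable, I will track the column of $(i,0)$ through the sequence. For the final row of $(i,0)$ I will use the relation $d_ub_{uv}=-d_vb_{vu}$ of \eqref{eq:symmetrizer}; this relation is preserved by mutation.

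\emph{Case $1\le i<N$.} After $(1,0),\ldots,(i-1,0)$ are deleted, the initial row \eqref{eq:appendix-base-row} of $(i,0)$ becomes
\[ r_{i,0}=e_{i+1,0}+2e_{i-1,1}-2e_{i,1}, \]
since the $e_{i-1,0}$ term is gone. I will follow $r_{i,0}$ along the sequence. The vertex $(i,0)$ is exactly the vertex $D$ of the last two vertices $v=(i,1)=v_{1,0}$ and $R=(i+1,0)=v_{1,1}$ on the bottom anti-diagonal $q=1$. The derivation of \eqref{eq:intermediate-lower-base-row} in the proof of Lemma~\ref{lem:anti-diagonal-row-formula} computes the row of such a $D$ after $\mu_v\mu_R$, but with the left neighbour $(a-1,\cdot)$ still present. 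Here $a=i$, and the column $(i-1,0)$ has been deleted. For the mutations preceding $v_{1,0}$, I will check that each mutated vertex $v_{q,u}$ with $q\ge2$ has $b_{(i,0),v_{q,u}}=0$ at that moment. This follows from \eqref{eq:top-mutation-rows}--\eqref{eq:lower-mutation-rows} and skew-symmetrizability: no such row contains $e_{i,0}$. Hence $r_{i,0}$ changes only through entries in columns adjacent to the mutated vertex, and these are $(i-1,1)$ and $(i,1)$.

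I then apply \eqref{eq:appendix-mutation-formula} at $v_{1,0}=(i,1)$, using its row from \eqref{eq:lower-mutation-rows} (or \eqref{eq:top-mutation-rows} when $h=1$). Next I apply it at $v_{1,1}=(i+1,0)$, using its $u=q$ row. The expected outcome is that the $e_{i-1,1}$ and $e_{i,1}$ entries cancel, leaving $r_{i,0}=e_{i+1,0}$, which is \eqref{eq:appendix-final-base-row}. This bookkeeping is the main obstacle. Because of the factor-$2$ entries between fixed and paired vertices, the signs must be tracked carefully, and I must confirm that the column $c=(i-1,h+1)$ contributes nothing to $r_{i,0}$.

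\emph{Case $i=N$.} The sequence is empty. Deleting $(1,0),\ldots,(N-1,0)$ from \eqref{eq:appendix-base-row} with $a=N$ removes $e_{N-1,0}$. The terms $e_{N+1,0}$ and $e_{N,1}$ vanish because they lie outside $\mathsf T_N$. This leaves $r_{N,0}=2e_{N-1,1}$, which is \eqref{eq:appendix-terminal-base-row}.

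\emph{Robustness and reversal.} Deleting additional unmutated boundary vertices only removes columns, and by \eqref{eq:appendix-mutation-formula} a column is never used to update another entry unless it is the mutated index. Deletion therefore commutes with the sequence, and the conclusions persist. For the reversal, once $(i,0)$ is a source or sink it is deleted. Passage to a principal submatrix commutes with mutation at the surviving vertices, and each $\mu_v$ is an involution. So applying the mutations in reverse order to the submatrix on the surviving vertices recovers the restriction of the original matrix to those vertices.
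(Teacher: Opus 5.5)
Your proposal is the paper's argument: the row of $d=(i,0)$ is untouched by the mutations with $q\ge2$, only $\mu_x$ and $\mu_y$ at $x=(i,1)$, $y=(i+1,0)$ matter, and the $i=N$ case and the deletion/reversal step are handled exactly as in the paper. The computation you leave as ``expected'' takes two lines, and it does not reduce to \eqref{eq:intermediate-lower-base-row}, because $r_x$ is the $u=0$ row $-e_{i-1,1}+e_d+t-e_y$: $b_{dx}=-2$ gives $r_d\mapsto 2e_x-e_y$, and then $b_{dy}=-1$ with $r_y=e_d-2e_x+s$, $s\ge0$ (so $c$ contributes nothing), gives $r_d\mapsto e_y$.
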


\begin{proof}
Assume $i<N$.  Before the last anti-diagonal is reached, the row of
$d=(i,0)$ is unchanged, because none of the vertices with $q>1$ is adjacent
to it.  Since $(i-1,0)$ has already been deleted,
\begin{equation}
 r_d=2e_{i-1,1}-2e_{i,1}+e_{i+1,0}.
 \label{eq:appendix-tracked-row}
\end{equation}
Put $x=v_{1,0}=(i,1)$ and $y=v_{1,1}=(i+1,0)$.
Lemma~\ref{lem:anti-diagonal-row-formula} gives
\[ r_x=-e_{i-1,1}+e_d+t-e_y, \]
where $t=e_{i-1,2}$ if $h=1$ and $t=e_{i,2}$ if $h>1$; the term $t$ is
absent when its label is absent.  Since $b_{dx}=-2$, mutation at $x$
cancels the $2e_{i-1,1}$ term in
\eqref{eq:appendix-tracked-row}, changes the sign of the $x$-entry, and
gives
$r_d'=2e_x-e_y$.
Immediately before mutation at $y$, its row has the form
\[ r_y=e_d-2e_x+s, \]
where $s=2e_c$ if $h=1$ and $s=e_{i+2,0}$ if $h>1$.  In either case all
entries of $s$ are nonnegative.  Since $b_{dy}=-1$, mutation at $y$ cancels
$2e_x$ and reverses the $y$-entry.  Thus the row of $d$ passes through the
following three values:
\[ \underbrace{2e_{i-1,1}-2e_x+e_y}_{\text{before the last anti-diagonal}} \xrightarrow{\ \mu_x\ } \underbrace{2e_x-e_y}_{\text{after $\mu_x$}} \xrightarrow{\ \mu_y\ } \underbrace{e_y}_{\text{$d$ is a source}}. \]
This proves \eqref{eq:appendix-final-base-row}.

For $i=N$, formula \eqref{eq:appendix-base-row} has only the term
$2e_{N-1,1}$ after $(N-1,0)$ is deleted, which proves
\eqref{eq:appendix-terminal-base-row}.  Passing to a principal submatrix commutes with mutations at its retained
vertices.  Deleting any additional boundary vertices not in the sequence
therefore removes the corresponding coordinates from these identities.  The same commutation
shows that, after $d$ is deleted, the reverse sequence is the inverse of the
forward sequence on the restricted matrix.
\end{proof}

\section{Optimizing the frozen boundary}
\label{app:boundary-optimization}

We construct the mutation sequences used in
Proposition~\ref{prop:optimized-boundary-seeds}.  Throughout, $\ell\ge3$ is
odd, $m\ge2$ is even, and mutations are performed in the order written.

\subsection{The triangular chain and mutation sequences}

In every triangle with odd index $r=3,5,\ldots,m-1$, apply the hive twist
\begin{equation}
 \mathbf T_\ell^{(r)}
 =\bigl([r;a,b]\bigr)_{{\substack{
 h=1,\ldots,\ell-2;\ b=\ell-2,\ldots,h;\\
 a=\ell-b-1,\ldots,1}}},
 \qquad
 \mathbf T_{\ell,m}=\mathbf T_\ell^{(3)}\mathbf T_\ell^{(5)}
 \cdots\mathbf T_\ell^{(m-1)}.
 \label{eq:optimizer-twist}
\end{equation}
The indices run through the nested ranges in the displayed order.  This is the hive
sequence of \cite[Lemma~3.5]{FeiWeymanExtension}.  The interiors of the
two unfolded triangles have no arrows between them, so the paired
mutations fold by Lemma~\ref{lem:separated-pair}.  In the resulting seed,
retain the labels in even-indexed triangles and all interior and base
labels.  On the other boundaries, use
\[
 [r;0,k]_{\rm tw}=[r-1;k,\ell-k],\qquad
 [r;k,\ell-k]_{\rm tw}=[r;0,k]\quad(r\text{ odd}).
\]
Omitting the subscript, all consecutive triangles are now glued by
$[r;0,k]=[r-1;k,\ell-k]$.

For the outer-boundary calculation, delete the frozen columns indexed by
determinants.  Adjoin the frozen rows using the orbit-size symmetrizer,
and set $b_{f_kf_{k+1}}=-1$ for $f_k=[m;k,\ell-k]$.
These auxiliary frozen--frozen entries do not affect any mutable row.
Let $J$ be the full index set of this square matrix and write
$r_v=(b_{vw})_{w\in J}$.  All row identities retain every coordinate,
including those at unmutated boundary vertices; $e_v$ denotes a standard
basis row vector.  In column identities $B_{\bullet f}=\pm e_u$, only
mutable indices are retained and $e_u$ is a standard basis column vector.
Suppressing the triangle index, we have
\begin{align}
 r_{a,0}&=e_{a+1,0}+2e_{a-1,1}-e_{a-1,0}-2e_{a,1},\notag\\
 r_{a,b}&=e_{a,b-1}+e_{a+1,b}+e_{a-1,b+1}
          -e_{a,b+1}-e_{a-1,b}-e_{a+1,b-1}\quad(a,b>0,\ a+b<\ell).
 \label{eq:optimizer-triangle-rows}
\end{align}
A basis vector $e_v$ is set to zero when its label lies outside the
triangle or at a deleted corner.
For a vertex on the outer right boundary, use the second formula with
the same zero convention.  On the left boundary of the first triangle, the row is
$r_{[2;0,k]}=e_{[2;0,k-1]}+2e_{[2;1,k]}
-e_{[2;0,k+1]}-2e_{[2;1,k-1]}$.
At a common boundary $L_k=[r;0,k]$, the row is
\begin{equation}
 r_{L_k}=r_{L_k}^{\rm old}
       +e_{[r;1,k]}-e_{[r;1,k-1]},
 \label{eq:optimizer-seam-row}
\end{equation}
where $r_{L_k}^{\rm old}$ is the row for the preceding chain, completed
at its right boundary in the same way.  Thus the added terms have coefficient one, unlike those at the first
left boundary.
The hive twist, including its boundary arrows, and the identifications in
Theorem~\ref{thm:folded-relations} give these formulas.

Fix $1\le i<\ell/2$ and put $j=\ell-i$.  Define the mutation words
\begin{align}
 \mathbf Q_{3,1}&=((1,1),(1,0),(0,1),(1,1)),\notag\\
 \mathbf Q_{2i+1,i}
 &=\bigl((i,i),\ (i,t)_{t=0}^{i-1},\
               (t,i-t)_{t=0}^{i-1},\ (t,i)_{t=1}^{i-1}\bigr)
       \quad(i\ge2),\notag\\
 \mathbf Q_{\ell,i}
 &=\bigl((i,j-1),\ \mathbf Q_{\ell-2,i},\
           (i,j-2),(j-1,i),(j-2,i)\bigr)
       \quad(\ell>2i+1),\label{eq:optimizer-Q}\\
 \mathbf R_{\ell,i}
 &=\bigl((i,t)_{t=0}^{j-1},\
          (t,i-t)_{t=1}^{i-1},\ (t,j)_{t=1}^{i-1}\bigr).
 \label{eq:optimizer-R}
\end{align}
The subscripts increase in mutation order; empty sequences are omitted.
The sum of the two coordinates of each mutated vertex is less than $\ell$.
The only left-boundary vertex in $\mathbf Q_{\ell,i}$ is $(0,i)$, occurring
once; $\mathbf R_{\ell,i}$ uses only base and interior vertices.

\subsection{Row identities}

\begin{lemma}\label{lem:optimizer-transfer}
In the two-matrix triangle, put $f=(i,j)$, $h=(j,i)$,
$f_-=(i-1,j+1)$, and $h_+=(j+1,i-1)$.  After
$\mathbf Q_{\ell,i}$, for a mutable vertex $u$,
\begin{equation}
 r_f=-e_u+e_h+e_{f_-},\qquad
 r_h=e_u-e_f-e_{h_+}.
 \label{eq:optimizer-closed-row}
\end{equation}

For $m\ge4$, denote the vertices of the last two triangles by
$x_{a,b}=[m;a,b]$ and $y_{a,b}=[m-1;a,b]$, and let ``old'' denote the chain
through triangle $m-2$.
Put
\[
 \begin{gathered}
 F=x_{i,j},\quad H=x_{j,i},\quad
 F_-=x_{i-1,j+1},\quad H_+=x_{j+1,i-1},\\
 b=y_{0,i},\quad d=y_{0,j},\quad p=y_{0,i-1},\quad q=y_{0,j+1}.
 \end{gathered}
\]
Then $\mathbf Q_{\ell,i}^{(m)}\mathbf R_{\ell,i}^{(m-1)}$ gives
\begin{equation}
 \begin{aligned}
 r_F&=e_b-e_\gamma+e_{F_-},&
 r_H&=e_\delta-e_d-e_{H_+},\\
 r_b&=r_b^{\rm old}-e_p+e_\gamma-e_F,&
 r_d&=r_d^{\rm old}+e_q-e_\delta+e_H,
 \end{aligned}
 \label{eq:optimizer-transfer-rows}
\end{equation}
where $\gamma,\delta$ belong to the two new triangles.  Every old mutable
row is unchanged, including its zero entries at all new vertices.
\end{lemma}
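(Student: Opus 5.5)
The argument is a direct mutation calculation, organized as an induction along the mutation words, in the style of Lemma~\ref{lem:anti-diagonal-row-formula}. We start from the initial rows \eqref{eq:optimizer-triangle-rows}, the left-boundary row of the first triangle, and the seam rows \eqref{eq:optimizer-seam-row}. We then apply the mutation formula \eqref{eq:appendix-mutation-formula} one vertex at a time. Every column is retained, including the frozen outer-boundary columns $f_k=[m;k,\ell-k]$ and the auxiliary frozen--frozen entries $b_{f_kf_{k+1}}=-1$. At each step we track only the rows of $f$, $h$ (or $F,H,b,d$) and of the vertices still to be mutated. A mutation at $v$ changes only rows having a nonzero entry in column $v$, so it suffices to record the neighbourhood of each mutated vertex just before it is mutated.

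For the two-matrix triangle, the proof of \eqref{eq:optimizer-closed-row} is an induction on $\ell$ in steps of two, mirroring the recursive definition \eqref{eq:optimizer-Q}.
\begin{itemize}
\item The base cases $\ell=2i+1$ are treated first. For $\ell=3$, $i=1$, the four mutations of $\mathbf Q_{3,1}$ are checked by hand. For general $i$, $\mathbf Q_{2i+1,i}$ first mutates $(i,i)$ and then sweeps the base row $(i,t)$, the anti-diagonal $(t,i-t)$, and the column $(t,i)$. We would state an explicit intermediate row for each vertex just before it is mutated, as in \eqref{eq:top-mutation-rows}, and verify that the frozen rows $r_f$, $r_h$ accumulate exactly one mutable entry $\mp e_u$.
\item For the inductive step, the first mutation at $(i,j-1)$ should transform the neighbourhood of $f,h$ in the $\ell$-triangle so that it looks like the $(\ell-2)$-configuration, with the roles of $f_-$ and $h_+$ shifted. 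We then invoke the inductive hypothesis for $\mathbf Q_{\ell-2,i}$, and finish with the three mutations $(i,j-2)$, $(j-1,i)$, $(j-2,i)$. These final mutations should cancel the extra boundary terms and leave \eqref{eq:optimizer-closed-row}.
\item The transpose symmetry $(a,b)\leftrightarrow(b,a)$ of the orbit-sum matrix pairs the $f$ and $h$ computations. This explains the opposite signs of $e_u$ in the two rows and lets us compute only one of them.
\end{itemize}

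For $m\ge4$, we combine the previous computation with $\mathbf R^{(m-1)}_{\ell,i}$ in triangle $m-1$.
\begin{itemize}
\item $\mathbf R_{\ell,i}$ uses only base and interior vertices of triangle $m-1$. We first show that after $\mathbf R^{(m-1)}_{\ell,i}$ the seam vertices $b=y_{0,i}$ and $d=y_{0,j}$ are connected to the last triangle in the pattern needed to replace the roles of $u$ in \eqref{eq:optimizer-closed-row}.
\item We then run $\mathbf Q^{(m)}_{\ell,i}$ in triangle $m$ and reuse the two-triangle computation essentially verbatim. The only difference is that the left boundary now uses the seam rows \eqref{eq:optimizer-seam-row}, whose added terms have coefficient one rather than two. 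These boundary contributions should produce the terms $e_\gamma,e_\delta$ and the corrections $-e_p+e_\gamma-e_F$ and $e_q-e_\delta+e_H$ in \eqref{eq:optimizer-transfer-rows}.
\item To see that old mutable rows are unchanged, note that every mutated vertex lies in triangles $m-1$ and $m$. The only old vertices adjacent to these triangles are the seam vertices, whose rows we track explicitly. No mutated vertex is adjacent to any other old mutable vertex, so \eqref{eq:appendix-mutation-formula} alters no other old row.
\end{itemize}

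The main obstacle is the bookkeeping in the base case $\mathbf Q_{2i+1,i}$ and the boundary interaction at the seam. There, valued arrows of weight $2$ on the base edge and weight $1$ on the seam both appear, and sign conditions in $[\cdot]_+$ must be tracked carefully. To control this, we would first formulate a closed intermediate-row lemma analogous to Lemma~\ref{lem:anti-diagonal-row-formula} and check it against the computer verification mentioned in the paper for small $\ell$ before writing the general induction.
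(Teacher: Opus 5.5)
\textbf{Gap: you apply the two subwords in the wrong order.} In Appendix~\ref{app:boundary-optimization}, mutations are performed in the order written; see also the recursion \eqref{eq:optimizer-recursion}. So $\mathbf Q^{(m)}_{\ell,i}\mathbf R^{(m-1)}_{\ell,i}$ means $\mathbf Q^{(m)}$ first and then $\mathbf R^{(m-1)}$. You run $\mathbf R^{(m-1)}$ first.

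The two subwords do not commute. $\mathbf Q^{(m)}$ mutates the seam vertex $s_i=x_{0,i}=y_{i,j}$. This vertex is initially adjacent to $A=y_{i,j-1}$ and $B=y_{i-1,j}$, and $\mathbf R^{(m-1)}$ mutates both of them.

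Reversing the order really does change the outcome. Take $\ell=3$, $m=4$, $i=1$. Then $\mathbf Q^{(4)}=(x_{1,1},x_{1,0},x_{0,1},x_{1,1})$ with $x_{0,1}=y_{1,2}$, and $\mathbf R^{(3)}=(y_{1,0},y_{1,1})$. Also $b=y_{0,1}$, $d=y_{0,2}$, and $e_p=e_q=e_{F_-}=e_{H_+}=0$.
\begin{itemize}
\item In the correct order one obtains $r_F=e_b-e_{y_{1,1}}$, $r_H=e_{x_{0,1}}-e_d$, $r_b=r_b^{\mathrm{old}}+e_{y_{1,1}}-e_F$, and $r_d=r_d^{\mathrm{old}}-e_{x_{0,1}}+e_H$, exactly as claimed.
\item In your order one obtains $r_H=e_{x_{0,1}}-e_{y_{1,1}}$ and $r_d=r_d^{\mathrm{old}}+e_{y_{1,1}}-e_{y_{1,0}}$.
\end{itemize}
In your order $H$ is attached to an interior vertex rather than to $d$, and $r_d$ has no $H$-entry. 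So \eqref{eq:optimizer-transfer-rows} fails, and with it the induction in Proposition~\ref{prop:boundary-singleton-isolation}, which needs $H$ attached to $d$.

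\textbf{Your intermediate claim cannot hold.} You say that after $\mathbf R^{(m-1)}$ the vertices $b,d$ are connected to the last triangle. But $\mathbf R$ mutates only base and interior vertices of triangle $m-1$. Their neighbourhoods stay inside the closed triangle $m-1$ under mutation. So $\mathbf R$ alone never changes the rows of $F$ or $H$, and never creates arrows into triangle $m$ away from the seam.

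\textbf{The missing idea is an intermediate ``open'' form of \eqref{eq:optimizer-closed-row}.} The correct argument has two stages.
\begin{itemize}
\item First run $\mathbf Q^{(m)}$, with the seam as left boundary. This gives $r_F=e_A-e_\alpha+e_{F_-}$ and $r_H=e_\beta-e_B-e_{H_+}$, and it corrects $r_A$ and $r_B$ so that the $A$--$B$ arrow cancels.
\item Then $\mathbf R^{(m-1)}$, through its runs $y_{i,t}$, $y_{t,i-t}$, $y_{t,j}$, transports these attachments from $A,B$ across triangle $m-1$ to its left seam vertices $b,d$.
\end{itemize}
The corrections $-e_p$ and $+e_q$ come from the rows at $y_{1,i-1}$ and $y_{1,j}$ (for $i\ge2$). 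These rows contain $e_b-e_p$ and $e_q-e_d$ on the left seam of triangle $m-1$. They do not come from the coefficient-one seam terms of triangle $m$, as you suggest.

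\textbf{The swap shortcut also fails.} You cannot use $(a,b)\leftrightarrow(b,a)$ to compute only one of $r_f$, $r_h$. The words are not swap-invariant. For example, with $\ell=2i+1$ and $i\ge2$, the common vertex $u=(i-1,i)$ in \eqref{eq:optimizer-closed-row} is not swap-fixed. For $m\ge4$, the seam rows \eqref{eq:optimizer-seam-row} break the symmetry altogether.
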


\begin{proof}
We induct along the sequences, retaining all unmutated boundary
coordinates.  First suppose that triangle $x$ shares its left boundary
$s_k=x_{0,k}=y_{k,\ell-k}$ with $y$.  Set $A=y_{i,j-1}$ and $B=y_{i-1,j}$.
The row at $s_i$ is
\[
 r_{s_i}=e_{s_{i-1}}-e_{s_{i+1}}+e_{x_{1,i}}-e_{x_{1,i-1}}
          +e_A-e_B.
\]
In this case, applying $\mathbf Q_{\ell,i}$ gives the following
analogue of \eqref{eq:optimizer-closed-row}:
\begin{equation}
 \begin{aligned}
 r_F&=e_A-e_\alpha+e_{F_-},&
 r_H&=e_\beta-e_B-e_{H_+},\\
 r_A&=r_A^0+e_{s_i}-e_{s_{i+1}}-e_B+e_\alpha-e_F,&
 r_B&=r_B^0+e_{s_{i-1}}-e_{s_i}+e_A-e_\beta+e_H.
 \end{aligned}
 \label{eq:optimizer-open-row}
\end{equation}
The superscript $0$ denotes the row before the sequence is applied, and
$\alpha,\beta$ are vertices of $x$.  The rows at every other base or interior vertex
of the preceding triangle are unchanged.

To verify these identities and \eqref{eq:optimizer-closed-row}, first take $\ell=2i+1$, $i\ge2$,
and write $c=x_{i,i}$, $X_t=x_{i,t}$, $Y_t=x_{t,i-t}$, and $Z_t=x_{t,i}$.
After mutation at $c$ and the $X_t$,
\[
 r_F=e_{Y_{i-1}}-e_{X_{i-1}}+e_{F_-},\qquad
 r_H=e_c-e_{Z_{i-1}}-e_{H_+}.
\]
Along each sequence, the matrix mutation formula cancels the two side
entries contributed by the preceding mutation.  Mutation along the $Y_t$
replaces $e_{Y_{i-1}}-e_{X_{i-1}}$ in $r_F$ by $e_{Z_1}-e_{Y_{i-1}}$ in the two-matrix case,
and by $e_A-e_{Y_{i-1}}$ in the shared-boundary case.  Immediately before mutation at $Z_{i-1}$, its row is respectively
\[
 e_{Z_{i-2}}-e_c-e_F+e_H,
 \qquad e_{Z_{i-2}}-e_c-e_B+e_H.
\]
For $i=2$, interpret $Z_0$ here as $Y_{i-1}$ in the two-matrix case and as
$s_i$ in the shared-boundary case.  These give \eqref{eq:optimizer-closed-row} with
$u=Z_{i-1}$, and \eqref{eq:optimizer-open-row} with
$\alpha=Y_{i-1}$, $\beta=Z_{i-1}$; successive terms in $r_A,r_B$ cancel along the same sequences.  For $i=1$, the four mutations of $\mathbf Q_{3,1}$ give
$u=x_{1,1}$ in the two-matrix case and
$(\alpha,\beta)=(x_{1,1},s_1)$ in the shared-boundary case.

For the induction on $\ell$, write
$a=x_{i,j-1}$, $b_0=x_{i,j-2}$,
$c'=x_{j-1,i}$, and $d_0=x_{j-2,i}$.
Mutation at $a$ commutes with $\mathbf Q_{\ell-2,i}$, since every
vertex in the latter has coordinate sum at most $\ell-3$.  Apply the smaller word first, retaining the columns indexed by its
former boundary vertices, which are not mutated by that word.  The four further mutations
$a,b_0,c',d_0$ give in the two-matrix case
\[
 r_F:
 -e_a+e_{b_0}+e_{F_-}
 \longmapsto -e_{b_0}+e_{d_0}+e_{F_-}
 \longmapsto -e_{b_0}+e_{d_0}+e_{F_-}
 \longmapsto -e_{d_0}+e_H+e_{F_-},
\]
and $r_H=e_{d_0}-e_F-e_{H_+}$.  In the shared-boundary case, the same four mutations replace
$(\alpha,\beta)$ by $(b_0,d_0)$ in all four identities in
\eqref{eq:optimizer-open-row}.  This proves the triangular identities
inductively.  When side labels coincide, their coefficients are added.

It remains to apply $\mathbf R$ in $y$.  Substituting the initial values
of $r_A,r_B$ into \eqref{eq:optimizer-open-row} gives
\begin{align*}
 r_A&=e_{y_{i,j-2}}-e_{y_{i-1,j-1}}-e_{y_{i+1,j-2}}
          +e_\alpha-e_F,\\
 r_B&=e_{y_{i-1,j-1}}+e_{y_{i-2,j+1}}-e_{y_{i-2,j}}
          -e_\beta+e_H\quad(i\ge2).
\end{align*}
In particular the $A$--$B$ arrow has canceled.  Since $j>i$, no other arrows join the vertices of the first two sequences in
\eqref{eq:optimizer-R} to those of the last sequence.  For the sequence $X_t=y_{i,t}$, put
$v=y_{i-1,1}$.  After the initial base mutation, the row at each vertex immediately
before its mutation is
\[
 r_{X_t}=e_v-e_{X_{t-1}}+e_{y_{i-1,t+1}}
                 -e_{X_{t+1}}+e_{y_{i+1,t}}
 \quad(1\le t\le j-2),\qquad
 r_A=e_v-e_{X_{j-2}}+e_\alpha-e_F.
\]
Thus $r_F=e_v-e_A+e_{F_-}$.  For $i\ge2$, put
$Y_t=y_{t,i-t}$ and $Z_t=y_{t,j}$.  For $i\ge3$, the other two sequences give the telescoping identities
\begin{align*}
 r_b^{\rm old}+e_{y_{1,i}}-e_{Y_1}
 &\longmapsto r_b^{\rm old}-e_p+e_{Y_1}-e_{Y_2}
 \longmapsto\cdots\longmapsto
 r_b^{\rm old}-e_p+e_{Y_{i-1}}-e_F,\\
 r_d^{\rm old}+e_{Z_1}-e_{y_{1,j-1}}
 &\longmapsto r_d^{\rm old}+e_q-e_{Z_1}+e_{Z_2}
 \longmapsto\cdots\longmapsto
 r_d^{\rm old}+e_q-e_{Z_{i-1}}+e_H.
\end{align*}
The rows at $Y_1$ and $Z_1$, just before their mutations, contain
$e_b-e_p$ and $e_q-e_d$, respectively; these give the displayed corrections.  If $i=2$
each sequence has one vertex, with rows
$e_b-e_p-e_{y_{1,2}}+e_A-e_F$ and
$e_{y_{1,j-1}}+e_q-e_d-e_\beta+e_H$.
The last mutations also give the formulas for $r_F,r_H$ in
\eqref{eq:optimizer-transfer-rows}, with
$\gamma=Y_{i-1}$ and $\delta=Z_{i-1}=B$.
For $i=1$, $p,q$ are deleted corner labels, so $e_p=e_q=0$; the
vertical sequence alone gives the result with
$\gamma=A$, $\delta=\beta$.

Finally, the new triangles meet the old chain only along its boundary,
which $\mathbf Q^{(m)}\mathbf R^{(m-1)}$ does not mutate.  No old mutable
vertex is adjacent to a vertex in this sequence.  The mutation formula
preserves this nonadjacency, so every old mutable row is unchanged.
\end{proof}

\subsection{The induction and determinant columns}

\begin{proposition}\label{prop:boundary-singleton-isolation}
Every frozen column can be made $e_u$ or $-e_u$ for a mutable index $u$.
For the outer pair $F=[m;i,j]$, $H=[m;j,i]$, the word
$\mathbf T_{\ell,m}\mathbf q_{\ell,m,i}$, where
\begin{equation}
 \mathbf q_{\ell,2,i}=\mathbf Q_{\ell,i}^{(2)},\qquad
 \mathbf q_{\ell,m,i}
 =\mathbf Q_{\ell,i}^{(m)}\mathbf R_{\ell,i}^{(m-1)}
   \mathbf q_{\ell,m-2,i}\,([m-2;i,j],[m-2;j,i]),
 \label{eq:optimizer-recursion}
\end{equation}
gives $B_{\bullet F}=e_u$ and $B_{\bullet H}=-e_u$.
For $m\ge4$, $u=[m-2;j,i]$.
\end{proposition}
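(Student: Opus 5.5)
The proof will be an induction on even $m$, driven by the row identities of Lemma~\ref{lem:optimizer-transfer}, followed by a separate treatment of the determinant columns. The key observation is that, in the square matrix with frozen rows adjoined by the orbit-size symmetrizer, a frozen column $B_{\bullet f}$ restricted to mutable indices is determined by the mutable rows' entries at $f$. Skew-symmetrizability relates these entries to the $f$-row. Since $F=[m;i,j]$ and $H=[m;j,i]$ are not transpose-paired chambers, and the outer boundary for even $m$ consists of such pairs, I will read $B_{\bullet F}$ off the mutable entries of $r_F$ with sign reversed. Outer boundary vertices have orbit size two, like their mutable neighbours off the base. So it suffices to show that, after the word, $r_F$ and $r_H$ have exactly one mutable nonzero entry, at a common vertex $u$, with opposite signs.

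For the base case $m=2$, the word is $\mathbf T_{\ell,2}\mathbf Q_{\ell,i}^{(2)}$, and $\mathbf T_{\ell,2}$ is empty. Equation~\eqref{eq:optimizer-closed-row} gives $r_F=-e_u+e_h+e_{f_-}$ and $r_H=e_u-e_f-e_{h_+}$. The remaining terms $e_h,e_{f_-},e_f,e_{h_+}$ are outer-boundary frozen labels (or zero at deleted corners), so the mutable supports are $\{u\}$ with opposite signs. This yields $B_{\bullet F}=\pm e_u$ and $B_{\bullet H}=\mp e_u$, and fixing the sign convention gives the statement.

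For the inductive step, first apply the hive twist $\mathbf T_{\ell,m}$. It is paired across transposition with separated supports, so it folds by Lemma~\ref{lem:separated-pair} and produces the uniform chain gluing $[r;0,k]=[r-1;k,\ell-k]$. I then apply the recursion~\eqref{eq:optimizer-recursion} in its order of composition. The inner word $\mathbf q_{\ell,m-2,i}$ acts on the chain through triangle $m-2$. There, by induction, the old outer pair $[m-2;i,j]$ and $[m-2;j,i]$ had rows $-e_{u'}+\cdots$ and $e_{u'}-\cdots$. Those old outer vertices are now the seam vertices $b=y_{0,i}$ and $d=y_{0,j}$, and the extra mutations at $[m-2;i,j]$ and $[m-2;j,i]$ are the ones recorded there. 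Lemma~\ref{lem:optimizer-transfer} then states that $\mathbf Q^{(m)}\mathbf R^{(m-1)}$ leaves old mutable rows unchanged. It also gives $r_F=e_b-e_\gamma+e_{F_-}$ and $r_H=e_\delta-e_d-e_{H_+}$.

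The main obstacle is the bookkeeping that reduces these rows to a single common mutable entry. We need $\gamma$ and $\delta$ to be absorbed, or more precisely the mutable parts of $r_F$ and $r_H$ to become $\pm e_u$ with $u=[m-2;j,i]$, after the final two mutations at the old pair. I will track the rows of $F$ and $H$ through these two mutations with the mutation formula~\eqref{eq:appendix-mutation-formula}, using the inductive rows of $[m-2;i,j]$ and $[m-2;j,i]$ (each with one old mutable entry) together with the seam formulas for $r_b$ and $r_d$. I expect cancellations to eliminate $b,d,\gamma,\delta$ and leave $\mp e_{[m-2;j,i]}$; this identification of $u$ is the step I would verify most carefully. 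I would also check it against the code.

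The determinant columns are handled separately. The column of each $\partial_r$ has support only at the corner vertices listed in~\eqref{eq:app-corner-correction-table}. I will mutate at one of them, chosen so that $\partial_r$ is not adjacent to the others, or else argue with a short sequence that isolates a single corner entry.
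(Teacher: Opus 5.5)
Your treatment of the outer pair follows the paper's route: twist, induction on even $m$ through Lemma~\ref{lem:optimizer-transfer}, and final mutations at $b=[m-2;i,j]$ and $d=[m-2;j,i]$. Reading $B_{\bullet F}$ off $r_F$ is also correct. The sign is forced rather than a convention, since $d_u=d_F=2$ gives $b_{uF}=-b_{Fu}=1$.

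However, your inductive step stops exactly where the argument lives, and the ingredient it needs is not in your plan. The rows supplied by the induction hypothesis are rows of the \emph{smaller} seed. In the new seed, \eqref{eq:optimizer-transfer-rows} gives $r_b=r_b^{\rm old}-e_p+e_\gamma-e_F$ and $r_d=r_d^{\rm old}+e_q-e_\delta+e_H$ after $\mathbf Q^{(m)}_{\ell,i}\mathbf R^{(m-1)}_{\ell,i}$, which in the order written precede $\mathbf q_{\ell,m-2,i}$. You must show that these corrections survive $\mathbf q_{\ell,m-2,i}$ unchanged. That word mutates only old mutable vertices, whose rows are the smaller-seed rows padded by zeros, and none of $p,q,\gamma,\delta,F,H$ is mutated. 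Then by induction $r_b^{\rm old}\mapsto-e_w+e_d+e_p$ and $r_d^{\rm old}\mapsto e_w-e_b-e_q$, so the $e_p,e_q$ terms cancel. Mutating at $b$ and then at $d$ yields $r_F=-e_d+e_H+e_{F_-}$ and $r_H=e_d-e_F-e_{H_+}$, so $u=d$. The cancellation is essential, not cosmetic. For $i\ge2$ the seam vertices $p,q$ are mutable, and without it $r_F$ and $r_H$ would end with a second mutable entry, at $p$ and at $q$ respectively.

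The determinant columns are a genuine gap, and the single mutation you propose fails. For $1<r<m$, the column in the twisted seed is $e_{x_{\ell-1}}-e_{[r;\ell-1,1]}$ with $x_k=[r;k,0]$, as in \eqref{eq:optimizer-determinants}; for odd $r$ the second corner is the original $[r;0,\ell-1]$, relabelled by the twist. Moreover $b_{x_{\ell-1},[r;\ell-1,1]}=-2$. Consider the possible single mutations:
\begin{itemize}
\item Mutating at $x_{\ell-1}$ cancels the seam entry but creates one at $x_{\ell-2}$, giving $e_{x_{\ell-2}}-e_{x_{\ell-1}}$.
\item Mutating at $[r;\ell-1,1]$ turns the $x_{\ell-1}$-entry into $1-2=-1$, and both entries survive.
\item A mutation at any vertex outside the support leaves the column unchanged.
\end{itemize}
The missing idea is to keep going down the whole base edge. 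Mutating $x_{\ell-1},x_{\ell-2},\ldots,x_1$ in turn sends $e_{x_k}-e_{x_{k+1}}$ to $e_{x_{k-1}}-e_{x_k}$, and the last step gives $-e_{x_1}$ because $x_0$ is absent. The columns of $\partial_1$ and $\partial_m$ are already $-e_{[2;0,\ell-1]}$ and $e_{[m;\ell-1,0]}$, so they need no mutation.
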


\begin{proof}
We induct on even $m$ using the identities
$r_F=-e_u+e_H+e_{F_-}$ and $r_H=e_u-e_F-e_{H_+}$.
The base case is \eqref{eq:optimizer-closed-row}.  After the transfer,
the old mutation sequence uses only old mutable vertices.  Their rows
evolve as in the smaller seed, with zero entries at new coordinates.
Thus
\[
 r_b^{\rm old}\longmapsto-e_w+e_d+e_p,\qquad
 r_d^{\rm old}\longmapsto e_w-e_b-e_q.
\]
The differences from the old rows in \eqref{eq:optimizer-transfer-rows}
are supported on coordinates indexed by old boundary vertices or new
vertices, none of which occurs in the old mutation sequence.  These differences remain unchanged because the old
mutable rows vanish at all new coordinates.
The corrections $-e_p,+e_q$ therefore cancel, leaving
\[
 \begin{aligned}
 r_F&=e_b-e_\gamma+e_{F_-},&r_H&=e_\delta-e_d-e_{H_+},\\
 r_b&=-e_w+e_d+e_\gamma-e_F,&r_d&=e_w-e_b-e_\delta+e_H.
 \end{aligned}
\]
Mutation at $b$, then at $d$, gives
$r_F=-e_d+e_H+e_{F_-}$ and $r_H=e_d-e_F-e_{H_+}$.
Their only nonzero entry at a mutable index is at $d$, and the
symmetrizer entries at $d,F,H$ are all two.  Thus the frozen columns are $e_d,-e_d$.
Every vertex in \eqref{eq:optimizer-recursion} is mutable: the right
boundary of the old chain becomes internal after two triangles are added.

For determinants, return to the rectangular exchange matrix with all
frozen columns retained.  The same twist
and corner identifications give
\begin{equation}
 B_{\bullet\partial_1}=-e_{[2;0,\ell-1]},\qquad
 B_{\bullet\partial_m}=e_{[m;\ell-1,0]},\qquad
 B_{\bullet\partial_r}=e_{[r;\ell-1,0]}-e_{[r;\ell-1,1]}
 \quad(1<r<m).
 \label{eq:optimizer-determinants}
\end{equation}
For an intermediate $r$, mutate the base vertices
$x_k=[r;k,0]$ in order $k=\ell-1,\ldots,1$.
The base-row formula first gives $e_{x_{\ell-2}}-e_{x_{\ell-1}}$;
subsequently the selected column changes by
$e_{x_k}-e_{x_{k+1}}\mapsto e_{x_{k-1}}-e_{x_k}$.
At $k=1$ this is $-e_{x_1}$, since $x_0$ is absent.  Thus every remaining frozen column can be made $e_u$ or $-e_u$.
\end{proof}

\bibliographystyle{amsalpha}
\bibliography{bosonic_plethysm}

\end{document}